\newtheorem{theorem}[equation]{Theorem}
\newtheorem*{theorem*}{Theorem}
\newtheorem{proposition}[equation]{Proposition}
\newtheorem*{proposition*}{Proposition}
\newtheorem{lemma}[equation]{Lemma}
\newtheorem{corollary}[equation]{Corollary}
\newtheorem*{corollary*}{Corollary}
\newtheorem{conjecture}[equation]{Conjecture} 
\theoremstyle{definition}
\newtheorem{definition}[equation]{Definition}
\theoremstyle{remark}
\newtheorem{remark}[equation]{Remark}
\newcommand{\symbitem}[1]{\item[#1]%
\renewcommand{\@currentlabel}{#1}\ignorespaces}
\def \AA {\mathcal{A}}
\def \CC {\mathcal{C}}
\def \D {\mathcal{D}}
\def \G {\mathbb{G}}
\def \L {\mathcal{L}}
\def \N {\mathcal{N}}
\def \O {\mathcal{O}}
 \def \cS {\mathcal{S}} 
\def \T {\mathcal{T}}
\def \U {\mathcal{U}}
\def \V {\mathcal{V}}
\def \C {\mathbb{C}}
\def \F {\mathbb{F}}
\def \P {\mathbb{P}}
\def \Q {\mathbb{Q}}
\def \R {\mathbb{R}}
\def \Z {\mathbb{Z}}
\def \k {\C}
\def \u {\mathbf{u}}
\def \iu {\mathtt{i}}
\def \Gg {\widehat{\Gamma}} 
\DeclareMathOperator{\ch}{ch} 
\DeclareMathOperator{\Ch}{Ch} 
\def \im {\operatorname{Im}}
\def \ge {\geqslant}
\def \le {\leqslant}
\def \leq {\leqslant}
\newcommand{\Ang}[1]{\left\langle #1 \right\rangle} 
\newcommand{\Ker}{\operatorname{Ker}}
\newcommand{\Spec}{\operatorname{Spec}}
\newcommand{\udot}{{\:\raisebox{3pt}{\text{\circle*{1.5}}}}}
\newcommand{\ldot}{{\:\raisebox{2pt}{\text{\circle*{1.5}}\:\!}}}
\DeclareMathOperator{\Hom}{Hom}
\DeclareMathOperator{\Ext}{Ext}
\DeclareMathOperator{\Res}{Res}
\DeclareMathOperator{\rank}{rank}
\DeclareMathOperator{\End}{End} 
\DeclareMathOperator{\id}{id}
\def\rev-v{\color{red}}
\def\be{\begin{equation}}
\def\ee{\end{equation}}
\def\parfrac#1#2{{\frac{\partial #1}{\partial #2}}}
\newcommand{\Eff}{\operatorname{Eff}} 
\newcommand{\diag}{\operatorname{diag}} 
\newcommand{\MRS}{\mathfrak{Ref}} 
\newcommand{\re}{\operatorname{Re}} 
\newcommand{\LA}{\operatorname{Lead}} 
\newcommand{\Td}{\operatorname{Td}} 
\newcommand{\td}{\operatorname{td}} 
\newcommand{\pt}{\operatorname{pt}} 
\newcommand{\Sat}{\operatorname{Sat}} 
\newcommand{\Sym}{\operatorname{Sym}} 
\newcommand{\Elem}{\operatorname{Elem}} 
\newcommand{\Euler}{\operatorname{Euler}} 
\newcommand{\sgn}{\operatorname{sgn}} 
\newcommand{\Auteq}{\operatorname{Auteq}} 
\newcommand{\Stab}{\operatorname{Stab}}
\newcommand{\hnabla}{\widehat{\nabla}} 
\newcommand{\tnabla}{\widetilde{\nabla}} 
\newcommand{\hy}{\hat{y}} 
\newcommand{\hs}{\hat{s}} 
\newcommand{\my}{y^-} 
\newcommand{\mY}{Y^-}
\newcommand{\hA}{\widehat{A}}
\newcommand{\tchi}{\tilde{\chi}} 
\newcommand{\tS}{\widetilde{S}} 
\newcommand{\hG}{\widehat{G}} 
\newcommand{\bx}{\mathbf{x}} 
\newcommand{\frM}{\mathfrak{M}} 
\newcommand{\tfrM}{\widetilde{\mathfrak{M}}}
\newcommand{\frS}{\mathfrak{S}}
\title[Gamma Conjectures for Fano Manifolds]
{Gamma classes and quantum cohomology of Fano manifolds: 
Gamma Conjectures} 
\author[Galkin]{Sergey Galkin} 
\address{National Research University Higher School of Economics \\
Faculty of Mathematics and Laboratory of Algebraic Geometry \\ 
Vavilova str. 7 \\ 
Moscow 117312 \\ Russia}
\email{sergey.galkin@phystech.edu} 
\author[Golyshev]{Vasily Golyshev}
\address{Algebra and Number Theory Sector \\ Institute 
for Information Transmission Problems \\ 
Bolshoy Karetny per.19 \\
Moscow 127994 \\ Russia} 
\email{golyshev@mccme.ru} 
\author[Iritani]{Hiroshi Iritani}
\address{Department of Mathematics \\ 
Graduate School of Science \\ 
Kyoto University \\  
Kitashirakawa-Oiwake-cho \\ Sakyo-ku \\ 
Kyoto \\ 606-8502 \\ Japan} 
\email{iritani@math.kyoto-u.ac.jp} 
\subjclass[2010]{53D37 (primary), 14N35, 14J45, 14J33, 11G42 (secondary)}
\keywords{Fano varieties; Grassmannians; 
quantum cohomology; Frobenius manifolds;  
mirror symmetry; Dubrovin's conjecture; 
Gamma class; Apery limit; 
abelian/non-abelian correspondence; quantum Satake principle;
derived category of coherent sheaves; exceptional collection; 
Landau-Ginzburg model}
\begin{document}
\begin{abstract} 
We propose \emph{Gamma Conjectures} for Fano manifolds 
which can be thought of as a square root of the index theorem. 
Studying the exponential asymptotics of solutions to the quantum differential 
equation, we associate a \emph{principal asymptotic class} $A_F$ to 
a Fano manifold $F$. We say that $F$ satisfies \emph{Gamma Conjecture I} if 
$A_F$ equals the Gamma class $\Gg_F$. 
When the quantum cohomology of $F$ is semisimple, we say that $F$ satisfies 
\emph{Gamma Conjecture II} if the columns of the central connection 
matrix of the quantum cohomology are formed by $\Gg_F \Ch(E_i)$ 
for an exceptional collection $\{E_i\}$ in the derived category of coherent sheaves
$\D^b_{\rm coh}(F)$. Gamma Conjecture II refines part (3) of 
Dubrovin's conjecture \cite{Dubrovin98}. 
We prove Gamma Conjectures 
for projective spaces 
and Grassmannians.  
\end{abstract} 
\maketitle 

\tableofcontents

\section{Introduction} 
\subsection{Gamma class} 
\label{subsec:Gamma} 
The \emph{Gamma class} of a complex manifold $X$ is the cohomology class 
\[
\Gg_X = \prod_{i=1}^n \Gamma(1+ \delta_i) \in H^\udot(X,\R)
\]
where $\delta_1,\dots,\delta_n$ are the Chern roots of the tangent 
bundle $TX$ and $\Gamma(x)$ is Euler's Gamma function. 
A well-known Taylor expansion for the Gamma function implies 
that it is expanded in the Euler constant $C_{\rm eu}$ and 
the Riemann zeta values $\zeta(k)$, $k=2,3,\dots$:  
\[
\Gg_X = \exp\left(- C_{\rm eu} c_1(X) + 
\sum_{k\ge 2} (-1)^k (k-1)! \zeta(k) \ch_k(TX) \right). 
\]
The Gamma class $\Gg_X$ has a loop space 
interpretation \cite{Lu, Iritani:ttstar}. 
Let $\L X$ denote the free loop space of $X$ and consider 
the locus $X\subset \L X$ of constant loops. The normal 
bundle $\N$ of $X$ in $\L X$ has a natural $S^1$-action 
(by loop rotation) and splits into the sum $\N_+ \oplus \N_-$ 
of positive and negative representations. 
By the $\zeta$-function regularization (see \cite{Lu} 
and Appendix \ref{app:zeta}), we obtain  
\begin{equation} 
\label{eq:regularization}
\frac{1}{e_{S^1}(\N_+)} = \frac{1}{
\prod_{k=1}^\infty e_{S^1}(TX \otimes L^k)}  
\sim (2\pi)^{-\frac{\dim X}{2}} z^{-\mu} z^{c_1(X)} \Gg_X  
\end{equation} 
where $L$ denotes the $S^1$-representation of weight one, 
$z$ denotes a generator of $H_{S^1}^2(\pt)$ such that 
$\ch(L) = e^z$, $\mu\in \End(H^\udot(X))$ denotes 
the grading operator defined by $\mu(\phi) = (p-\frac{\dim X}{2}) 
\phi$ for $\phi \in H^{2p}(X)$ 
and $z^{-\mu} =e^{-(\log z) \mu}$. 
Therefore $\Gg_X$ can be regarded as 
a localization contribution from constant maps in Floer theory, 
cf.~Givental's equivariant Floer theory \cite{Givental:ICM}.  

The Gamma class can be also regarded as a `square root' of the 
Todd class (or $\hA$-class). The Gamma function identity: 
\[
\Gamma(1-z) \Gamma(1+z) = \frac{\pi z}{\sin \pi z} 
= \frac{2\pi\iu z}{1- e^{-2\pi\iu z}} e^{-\pi \iu z} 
\]
implies that we can factorize the Todd class in 
the Hirzebruch-Riemann--Roch (HRR) formula 
as follows: 
\begin{align}
\label{eq:HRR_Gamma} 
\begin{split}  
\chi(E_1,E_2) & = \int_F \ch(E_1^\vee) \cup \ch(E_2) \cup \td_X \\
& = \left [ \Gg_X \Ch(E_1), \Gg_X \Ch(E_2) \right) 
\end{split} 
\end{align} 
where $\chi(E_1,E_2) = \sum_{p=0}^{\dim X} (-1)^p 
\dim \Ext^p(E_1,E_2)$ is the Euler pairing of vector bundles $E_1$, $E_2$, 
$\Ch(E_i) = \sum_{p=0}^{\dim X} (2\pi\iu)^p \ch_p(E_i)$ is the 
modified Chern character and 
\[
[A,B) := \frac{1}{(2\pi)^{\dim X}} \int_X 
(e^{\pi\iu c_1(X)} e^{\pi\iu \mu} A) \cup B
\]
is a non-symmetric pairing on $H^\udot(X)$. 
Geometrically this factorization corresponds to the decomposition 
$\N = \N_+ \oplus \N_-$ of the normal bundle. 
Recall that Witten and Atiyah \cite{Atiyah:circular} derived 
heuristically the index 
theorem by identifying the $\hA$-class with $1/e_{S^1}(\N)$. 
In this sense, the Gamma conjectures can be regarded as 
\emph{a square root of the index theorem}. 

\subsection{Gamma conjectures} 
The Gamma conjectures relate the quantum cohomology 
of a Fano manifold and the Gamma class in terms of differential 
equations. For a Fano manifold $F$, 
the quantum cohomology algebra $(H^\udot(F), \star_0)$ 
(at the origin $\tau=0$) defines the \emph{quantum connection} 
\cite{Dubrovin94}: 
\[
\nabla_{z\partial_z} = z \parfrac{}{z} - \frac{1}{z} (c_1(F)\star_0) + \mu 
\]
acting on $H^\udot(F) \otimes \C[z,z^{-1}]$. 
It has a regular singularity at $z=\infty$ and 
an irregular singularity at $z=0$. Flat sections near $z=\infty$ are 
constructed by the so-called Frobenius method and 
can be put into correspondence with cohomology classes 
in a natural way. 
Flat sections near $z=0$ are classified by their 
exponential growth order (along a sector). 
Our underlying assumption is Property $\O$ 
(Definition \ref{def:conjO}) which roughly says that 
$c_1(F)\star_0$ has a simple eigenvalue $T>0$ of the 
biggest norm. Under Property $\O$, we can single out 
a flat section $s_1(z)$ with the smallest asymptotics 
$\sim e^{-T/z}$ as $z\to +0$ along $\R_{>0}$; then we 
transport the flat section $s_1(z)$ to $z=\infty$ and identify the 
corresponding cohomology class $A_F$. 
We call $A_F$ the \emph{principal asymptotic class} of a 
Fano manifold. We say that $F$ satisfies 
\emph{Gamma Conjecture I} (Conjecture \ref{conj:GammaI}) 
if $A_F$ equals the Gamma class: 
\[
A_F = \Gg_F. 
\] 
More generally (under semisimplicity assumption), we can identify 
a cohomology class $A_i$ such that the corresponding flat section 
has an exponential asymptotics $\sim e^{-u_i/z}$ as $z\to 0$ 
along a fixed sector (of angle bigger than $\pi$) for each 
eigenvalue $u_i$ of $(c_1(F)\star_0)$, $i=1,\dots,N$. 
These classes $A_1,\dots,A_N\in H^\udot(F)$ form a basis 
which we call the \emph{asymptotic basis} of $F$. 
We say that $F$ satisfies \emph{Gamma Conjecture II} 
(Conjecture \ref{conj:conj-g2}) 
if the basis can be written as: 
\[
A_i = \Gg_F \Ch(E_i) 
\]
for a certain exceptional collection $\{E_1,\dots,E_N\}$ of 
the derived category $\D^b_{\rm coh}(F)$. 
Gamma Conjecture I says that the exceptional object $\O_F$ 
corresponds to the biggest real positive eigenvalue $T$ 
of $(c_1(F) \star_0)$. 

The quantum connection has an isomonodromic deformation 
over the cohomology group $H^\udot(F)$. 
By Dubrovin's theory \cite{Dubrovin94, Dubrovin98a}, 
the asymptotic basis $\{A_i\}$ changes by mutation 
\[
(A_1,\dots,A_i, A_{i+1},\dots, A_N) 
\longrightarrow (A_1,\dots, A_{i+1}, A_i - [A_i,A_{i+1}) A_{i+1}, 
\dots, A_N) 
\]
when the eigenvalues $u_i$ and $u_{i+1}$ are interchanged 
(see Figure \ref{fig:braid}). 
Via the HRR formula \eqref{eq:HRR_Gamma} this corresponds 
to a mutation of the exceptional collection $\{E_i\}$. 
The braid group acts on the set of asymptotic bases by mutation 
and we formulate this in terms of a \emph{marked reflection 
system} (MRS) in \S \ref{subsec:MRS}. 
Note that Gamma Conjecture II implies (part (2) of) 
Dubrovin's conjecture \cite{Dubrovin98} (see \S \ref{subsec:GammaII}), 
which says that  
the Stokes matrix $S_{ij} = [A_i, A_j)$ of the quantum connection equals 
the Euler pairing $\chi(E_i,E_j)$. 
While we were writing this paper, we were informed that 
Dubrovin \cite{Dubrovin:Strasbourg} gave a new formulation 
of his conjecture that includes Gamma Conjecture II above. 

\subsection{Gamma Conjectures for Grassmannians} 
In this paper, we establish Gamma conjectures for projective 
spaces and for Grassmannians. 
The Gamma Conjectures for $\P^n$ were implicit but 
essentially shown in the work of Dubrovin \cite{Dubrovin98a}; 
they also follow from mirror symmetry computations 
in \cite{Iritani07,Iritani09,KKP08}. 
In \S \ref{sec:Gamma_P}, we give an elementary proof of 
the following theorem. 

\begin{theorem}[Theorem \ref{thm:Gamma_P}]
Gamma Conjectures I and II hold for the projective space 
$\P=\P^{N-1}$. An asymptotic basis of $\P$ is formed by mutations 
of the Gamma basis $\Gg_\P \Ch(\O(i))$ associated to 
Beilinson's exceptional collection 
$\{\O(i) : 0\le i\le N-1\}$. 
\end{theorem}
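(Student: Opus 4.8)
The plan is to make everything explicit for $\P=\P^{N-1}$, where the quantum differential equation collapses to a single scalar ODE of hypergeometric type whose two asymptotic regimes can be compared through a Mellin--Barnes integral. First I would recall that $QH^\udot(\P)=\C[h]/(h^{\star N}-q)$ with $c_1(\P)=Nh$; rescaling $z$ we may set $q=1$, so that $(c_1\star_0)$ is, in the basis $1,h,\dots,h^{N-1}$, the companion matrix of $x^N=N^N$ and has eigenvalues $N\zeta$ with $\zeta$ ranging over the $N$-th roots of unity. In particular $T=N$ is a simple eigenvalue of largest modulus and is real and positive, so Property $\O$ holds for $\P$ and the principal asymptotic class $A_\P$ together with the asymptotic basis $A_0,\dots,A_{N-1}$ are well defined.

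Next I would turn the quantum connection into a scalar equation. Writing a flat section as $s(z)=\sum_{k=0}^{N-1}f_k(z)h^k$, the equation $\nabla_{z\partial_z}s=0$ is the triangular system $(z\tfrac{d}{dz}+k-\tfrac{N-1}{2})f_k=\tfrac{N}{z}f_{k-1}$ with indices read cyclically, and eliminating $f_1,\dots,f_{N-1}$ gives an order-$N$ ODE for $f_0$ of hypergeometric type --- essentially $(z\tfrac{d}{dz})^N\varphi=(N/z)^N\varphi$ up to the lower-order shifts coming from $\mu$ --- which is precisely the equation satisfied by the (small) $J$-function of $\P$. At $z=\infty$ this equation has a regular singularity with maximal unipotent monodromy; its Frobenius solutions are the components of $z^{-\mu}z^{c_1(\P)}$ applied to the standard cohomology-valued hypergeometric series $\sum_{d\ge0}\prod_{k=1}^d(h+kz)^{-N}$, and the Gamma class $\Gg_\P=\Gamma(1+h)^N$ appears as the normalization of the flat section attached to $1\in H^\udot(\P)$.

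The heart of the argument is to produce, and then identify, the distinguished flat section with exponential asymptotics. I would represent solutions by a Mellin--Barnes integral of the form
\[
\varphi(z)=\frac{1}{2\pi\iu}\int_{\gamma}\Gamma(s)^N\,(N/z)^{-Ns}\,ds ,
\]
and read off the two behaviours by deforming the contour: shifting it past the poles $s=0,-1,-2,\dots$ recovers the Frobenius expansion at $z=\infty$, the Taylor coefficients of $\Gamma(s)^N$ at its poles supplying exactly the Euler constant and the zeta values that occur in $\Gg_\P$, while a steepest-descent estimate in the opposite direction yields the exponential asymptotics $e^{-N/z}$ as $z\to+0$, singling out $s_1(z)$. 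Matching the two computations --- carrying the multivalued twists $z^{-\mu}$ and $z^{c_1(\P)}=e^{(\log z)Nh}$ along --- shows that the cohomology class transported from $z=0$ to $z=\infty$ is exactly $\Gg_\P$, which is Gamma Conjecture I, and that $\O_\P$ is the exceptional object attached to $T$. For Gamma Conjecture II, the remaining eigenvalues $N\zeta$, $\zeta=e^{2\pi\iu j/N}$, give flat sections with asymptotics $e^{-N\zeta/z}$; running the same analysis along a fixed admissible line of slope close to $\R_{>0}$ and of angular width bigger than $\pi$, and tracking how $z^{c_1(\P)}$ permutes the branches, shows that the associated classes are $\Gg_\P\Ch(\O(j))$, $j=0,\dots,N-1$, up to the mutations forced by the chosen linear ordering of the points $N\zeta\in\C$; since $\{\O,\O(1),\dots,\O(N-1)\}$ is Beilinson's exceptional collection this is the assertion.

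The step I expect to be the main obstacle is the matching in the third paragraph: deforming the Mellin--Barnes contour and carrying out the saddle-point estimate precisely enough to pin down the transported class \emph{on the nose}, not just up to a scalar, and keeping honest track of the multivalued factors $z^{-\mu}$, $z^{c_1(\P)}$ together with the choice of sector, so that the $\Gamma$-factors assemble into $\Gg_\P\Ch(\O(i))$ and the residual ambiguity is exactly a mutation of the Gamma basis.
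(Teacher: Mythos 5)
Your proposal follows essentially the same route as the paper: reduce the quantum connection to the scalar ODE $D^N f_0 = N^N(-t)^N f_0$, compare the Frobenius solutions $\Pi(t;h)$ at the regular singularity with the Mellin--Barnes integral $\Psi(t)=\frac{1}{2\pi\iu}\int\Gamma(s)^N t^{-Ns}\,ds$ whose steepest-descent asymptotics single out the flat section of smallest growth, extract the connection formula $\Psi(t)=\int_\P\Gg_\P\cup\Pi(t;h)$ by closing the contour and summing residues (which is what produces $\Gamma(1+h)^N$ from the Taylor expansion of $\Gamma$), and finally rotate by $e^{-2\pi\iu j/N}$ to obtain $\Gg_\P\Ch(\O(j))$ with the asymptotic basis recovered after mutations. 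The step you flag as the main obstacle — tracking branches and sectors precisely so the $y_j$'s (which a priori have their asymptotics in $j$-dependent sectors) can be compared to a single asymptotically exponential fundamental solution — is exactly where the paper invests the most effort, resolving it via the Laplace-integral representation of \S3.3 and bending/straightening the integration paths, so your outline is sound.
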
 




We deduce the Gamma Conjectures for Grassmannians $G(r,N)$ 
from the truth of the Gamma Conjectures for projective spaces. 
The main ingredient in the proof is 
quantum Satake principle \cite{GMa} or abelian/non-abelian
correspondence \cite{BCFK,CFKS}, which says that the quantum 
connection of $G(r,N)$ is the $r$-th wedge of the quantum connection 
of $\P^{N-1}$. 
\begin{theorem}[Theorem \ref{thm:Gamma_G}] 
Gamma Conjectures I and II hold for Grassmannians $\G= G(r,N)$. 
An asymptotic basis of $\G$ is formed by mutations of the Gamma basis 
$\Gg_\G \Ch(S^\nu V^*)$ associated to 
Kapranov's exceptional collection 
$\{ S^\nu V^* : \nu \subset \text{$r\times (N-r)$-box} \}$, 
where $V$ is the tautological bundle and $S^\nu$ is the 
Schur functor. 
\end{theorem}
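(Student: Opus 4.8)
The plan is to deduce everything from the already-established case of $\P = \P^{N-1}$ together with the quantum Satake / abelian-nonabelian correspondence, which identifies the quantum connection of $\G = G(r,N)$ with the $r$-th exterior power $\wedge^r$ of the quantum connection of $\P$. First I would set up the wedge construction at the level of the quantum $D$-modules: if $(H^\udot(\P), \nabla^\P)$ is the quantum connection of $\P$ with flat sections $s_i(z)$, then $\wedge^r H^\udot(\P)$ carries a natural connection $\wedge^r\nabla^\P$ whose flat sections are the decomposable wedges $s_{i_1}(z)\wedge\cdots\wedge s_{i_r}(z)$, and whose eigenvalues of the quantized $c_1$-operator are the $r$-fold sums $\zeta^{i_1}+\cdots+\zeta^{i_r}$ of the eigenvalues of $(c_1(\P)\star_0)$. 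I would then invoke the quantum Satake principle to produce an isomorphism of $D$-modules $\wedge^r(H^\udot(\P),\nabla^\P) \cong (H^\udot(\G),\nabla^\G)$, carefully tracking how this isomorphism interacts with (a) the grading operator $\mu$, (b) the Euler/Poincaré pairing, and (c) the cohomology identification of flat sections at $z=\infty$ — the last being exactly the point where the Gamma class enters, since $\wedge^r$ of the $\P$-identification should send $\Gg_\P\Ch(\O(i_1))\wedge\cdots\wedge\Gg_\P\Ch(\O(i_r))$ to $\Gg_\G\Ch(S^\nu V^*)$ for the partition $\nu$ determined by $\{i_1,\dots,i_r\}$.

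Next I would verify Property $\O$ for $\G$: the eigenvalues of $(c_1(\G)\star_0)$ are the sums $\zeta^{i_1}+\cdots+\zeta^{i_r}$ with $\zeta=N$-th roots of (a suitable constant times) $N$, and the unique one of maximal absolute value is the one where the $r$ chosen roots cluster as tightly as possible around the positive real axis — so $T_\G = \zeta^0 + \zeta^1 + \cdots + \zeta^{r-1}$ in appropriate normalization, which is real, positive, and simple because the $\zeta^i$ are distinct and the maximum of $|\sum \zeta^{i_j}|$ is attained only at this configuration (up to the obvious symmetry, which the quantum product breaks). With Property $\O$ in hand, Gamma Conjecture I for $\G$ follows: the flat section with smallest asymptotics $\sim e^{-T_\G/z}$ is the wedge $s_0(z)\wedge s_1(z)\wedge\cdots\wedge s_{r-1}(z)$ of the $\P$-sections realizing eigenvalues $\zeta^0,\dots,\zeta^{r-1}$; applying Gamma Conjecture I for $\P$ (each $s_i$ transports to a mutation-translate of $\Gg_\P\Ch(\O(i))$, with $s_0 \leftrightarrow \Gg_\P$) and the Satake identification, this transports at $z=\infty$ to $\Gg_\G\Ch(S^{\emptyset}V^*) = \Gg_\G\Ch(\O_\G) = \Gg_\G$, as required.

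For Gamma Conjecture II, I would take the full asymptotic basis of $\P$, which by the $\P^n$ theorem is a mutation of $\{\Gg_\P\Ch(\O(i))\}_{i=0}^{N-1}$, and form all decomposable wedges $A_{i_1}\wedge\cdots\wedge A_{i_r}$ over $r$-subsets $\{i_1<\cdots<i_r\}$. Choosing the sector and the ordering of eigenvalues $u_{i_1}+\cdots+u_{i_r}$ compatibly, these $\binom{N}{r}$ wedges form the asymptotic basis of $\G$; under the Satake isomorphism they become $\Gg_\G\Ch(S^{\nu}V^*)$ over partitions $\nu$ in the $r\times(N-r)$ box, which is precisely Kapranov's exceptional collection. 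That $\{S^\nu V^*\}$ is indeed exceptional is classical (Kapranov), and the mutation/braid-group compatibility of the construction — wedge of a mutation is a mutation, via the HRR formula \eqref{eq:HRR_Gamma} and its $\wedge^r$ on both the topological (Euler pairing) and Stokes sides — shows the asymptotic basis is a mutation of this Gamma basis. The main obstacle, and the step demanding the most care, is the precise compatibility of the quantum Satake isomorphism with the three structures ($\mu$, the $[\,\cdot\,,\cdot)$-pairing, and the $z=\infty$ cohomology frame): one must check that the natural $\wedge^r$-pairing on $\wedge^r H^\udot(\P)$ matches (up to an explicit scalar and sign, absorbed into normalization) the pairing $[\,\cdot\,,\cdot)$ on $H^\udot(\G)$, and that the shift in the grading operator under $\wedge^r$ is the correct one for $\G$ — this is where the combinatorics of Chern roots of the tautological bundle versus those of $\O(1)^{\oplus r}$, and the identity $\Gg_{\G} = $ ($\wedge^r$-image of $\Gg_\P^{\otimes r}$ up to the relevant normalization) coming from the factorization of $T\G$ in the Satake picture, must be pinned down exactly.
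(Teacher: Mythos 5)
Your plan follows the same outline as the paper---deduce the Grassmannian case from the projective-space case via the quantum Satake identification $H^\udot(\G)\cong\wedge^r H^\udot(\P)$, identify the decomposable wedges of $\P$'s asymptotic basis with $\G$'s, and verify the cohomological identity $\Gg_\G\Ch(S^\nu V^*)\leftrightarrow\Gg_\P\Ch(\O(\nu_1+r-1))\wedge\cdots\wedge\Gg_\P\Ch(\O(\nu_r))$ (Proposition \ref{prop:wedge-basis}). The ``careful tracking'' of $\mu$, the pairing, and the $z=\infty$ frame that you flag as the crux is indeed where the factor $(2\pi\iu)^{-\binom{r}{2}}e^{-(r-1)\pi\iu\sigma_1}$ and the shift of base point $\tau_\P = (r-1)\pi\iu\sigma_1$ come from, and the paper pins these down exactly as you anticipate.

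There is, however, a genuine gap in the way you set up the wedge construction. You work entirely at the level of the small quantum connection in the $z$-variable, take the decomposable wedges $y_{i_1}\wedge\cdots\wedge y_{i_r}$, and assert that these ``form the asymptotic basis of $\G$.'' That step requires knowing that the normalized idempotents of $(H^\udot(\G),\star_0)$ are $\pm\iu^{r(r-1)/2}\Sat(\Psi_{i_1}\wedge\cdots\wedge\Psi_{i_r})$, because the asymptotic basis elements are characterized by their $z\to 0$ asymptotics toward a normalized idempotent (Proposition \ref{prop:repeated_eigenvalues}). The small quantum Satake only identifies the single operator $(c_1(\G)\star_0)$ with $\wedge^r(c_1(\P)\star_{(r-1)\pi\iu\sigma_1})$; it does not by itself identify the idempotent decomposition of the full quantum ring $\star_0$ for $\G$. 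Moreover, the eigenvalue sums $u_{i_1}+\cdots+u_{i_r}$ are not always pairwise distinct (Remark \ref{rem:eigenvalues_G}), so Dubrovin's generic theory does not apply directly: one needs both the semisimple-with-repeated-eigenvalues machinery developed in \S\ref{subsec:UV}--\S\ref{subsec:mutation} and an extension of the Satake identification to the \emph{big} quantum cohomology of $\P$ and $\G$. The paper supplies this via the abelian/non-abelian correspondence and the Hertling--Manin universal unfolding (Theorem \ref{thm:wedge_conn}), from which semisimplicity of $\star_\tau$ near $\tau=0$ and the precise wedge formula for the idempotents of $\G$ are deduced (Corollary \ref{cor:semisimple_G}). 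You mention ``abelian/non-abelian correspondence'' in passing, but without invoking it to control the big quantum product and idempotents, the claim that the wedges $y_{i_1}\wedge\cdots\wedge y_{i_r}$ are the asymptotically exponential flat sections of $\G$ is unjustified at exactly the points where the spectrum of $(c_1(\G)\star_0)$ degenerates.
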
 

The grounds are not sufficient at the moment to claim
the Gamma Conjectures for \emph{all} Fano varieties. 
However the following cases are established. 
Golyshev and Zagier have announced
a proof of Gamma Conjecture I for Fano 3-folds of Picard rank one.  
In a separate paper \cite{GI:mirror}, we use mirror symmetry to 
show Gamma Conjectures for certain toric manifolds or 
toric complete intersections; we will also discuss 
the compatibility of Gamma Conjecture I 
with taking hyperplane sections (quantum Lefschetz). 
Together with the method in the present paper 
(i.e.~compatibility with abelian/non-abelian correspondence), 
the current techniques would allow us to prove Gamma Conjectures for 
a wide class of Fano manifolds. 

\subsection{Limit formula and Apery's irrationality} 
Let $J(t)=J(-\log(t)K_F)$ denote Givental's $J$-function \eqref{eq:J} restricted to 
the anti-canonical line $\C c_1(F)$. This is a cohomology-valued 
solution to the quantum differential equation. 
We show the following  (continuous and discrete) limit formulae: 
\begin{theorem}[Corollary \ref{cor:limit_J}]
Suppose that a Fano manifold $F$ satisfies Property $\O$ and 
Gamma Conjecture I. 
Then the Gamma class of $F$ can be obtained as the limit of the ratio 
of the $J$-function
\[
\Gg_F = \lim_{t\to + \infty} \frac{J(t)}{\langle [\pt], J(t)\rangle}.  
\]
\end{theorem}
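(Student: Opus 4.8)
The plan is to combine the constraint that Property $\O$ puts on the growth of the $J$-function with the identification $A_F=\Gg_F$ supplied by Gamma Conjecture I. First I would recast $J(t)=J(-\log(t)K_F)$ in the language of the quantum connection: along the anticanonical line $J(t)$ is a cohomology-valued solution of the quantum differential equation, and by the homogeneity of genus-zero Gromov--Witten theory --- equivalently, the relation $c_1(F)\star_{(\log t)c_1(F)}=t^{-\mu}\bigl(t\,(c_1(F)\star_0)\bigr)t^{\mu}$ valid on this line --- the grading gauge $\widetilde J(t):=t^{\mu}J(t)$ is, after a substitution relating $t$ to $z$, a flat section of $\nabla_{z\partial_z}$; thus the limit $t\to+\infty$ along $\R_{>0}$ amounts to probing the irregular singularity at $z=0$. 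The next step is to expand $\widetilde J$ in the basis $\{s_i\}$ of flat sections with $s_i(z)\sim e^{-u_i/z}(\psi_i+O(z))$ underlying the asymptotic basis, and to check that exactly one term survives the limit.

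Here Property $\O$ is decisive: $u_1=T$ is real, positive and simple, and $|u_i|<T$ (hence $\re(u_i)<T$) for $i\ne 1$ after reordering, so along the relevant ray the section $s_1$ strictly dominates all others as $t\to+\infty$; consequently $J(t)$ is asymptotic to a scalar multiple of the cohomology class that $s_1$ carries when transported to $z=\infty$, which is $A_F$ \emph{by definition}, provided the coefficient of $s_1$ in the expansion is nonzero. I would obtain this nonvanishing from the fact that $J(t)$ is, up to normalisation, the canonical fundamental solution of the quantum connection applied to $\mathbf 1\in H^0(F)$: its coefficient along $s_1$ is then, up to a fixed nonzero factor, the pairing of $\mathbf 1$ with $A_F$, which by Gamma Conjecture I equals the top-degree component of $\Gg_F$ and is therefore nonzero. (Alternatively, Property $\O$ gives, via a Perron--Frobenius type positivity of the leading eigendata, that the positive class $\mathbf 1$ --- which $J$ represents --- cannot be annihilated in the $s_1$-direction.) Granting this, $J(t)/\langle[\pt],J(t)\rangle\to A_F/\langle[\pt],A_F\rangle$ as $t\to+\infty$. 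Now invoke Gamma Conjecture I: $A_F=\Gg_F$; and since $[\pt]$ is the top cohomology class, $\langle[\pt],\Gg_F\rangle$ is precisely the degree-zero component of $\Gg_F$, which is $1$. Hence the limit equals $\Gg_F$.

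I expect the main obstacle to be the passage from the $z\to 0$ picture of $s_1$ --- where its leading term is only the eigenvector $\psi_1$, not $\Gg_F$ --- to the assertion that $J(t)$ grows in the direction of the class $A_F$ defined at $z=\infty$. This requires controlling the central connection matrix / Stokes data linking the $z=0$ and $z=\infty$ normalisations of the flat sections, and it is exactly the uniqueness of the dominant exponential $e^{-T/z}$ guaranteed by Property $\O$ that makes the analytic continuation yield a genuine limit in one fixed cohomological direction rather than an oscillatory or merely subleading contribution. The accompanying bookkeeping --- the gauge factors $t^{\pm\mu}$ and $z^{\mp\mu}z^{c_1(F)}$, and the spurious $\log t$ and power-of-$t$ prefactors carried by $J(t)$, which must be seen to cancel in the ratio against $\langle[\pt],J(t)\rangle$ --- is routine but deserves to be written out.
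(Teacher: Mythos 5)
Your high-level plan matches the paper's: Property $\O$ isolates the dominant exponential $e^{-T/z}$ along $\R_{>0}$; the corresponding cohomology class is $A_F$ by definition; Gamma Conjecture~I gives $A_F=\Gg_F$; and $\langle[\pt],\Gg_F\rangle=1$ because the degree-zero component of $\Gg_F$ is $1$. The paper organizes the asymptotics through the \emph{dual} quantum connection: it writes $\langle\alpha,J(t)\rangle=(z/2\pi)^{\dim F/2}\langle\Phi^\vee(\alpha)(z),1\rangle$ and shows (Propositions \ref{prop:lead} and \ref{prop:LA}) that $\lim_{z\to+0}e^{-T/z}\Phi^\vee(\alpha)(z)=\langle\alpha,A_F\rangle\,\psi_0^*$ for a fixed nonzero $\psi_0^*\in E(T)^*$, rather than expanding a gauged $J$ in flat sections of $\nabla$; your claim that $t^\mu J(t)$ becomes a $\nabla_{z\partial_z}$-flat section after the substitution is not literally correct (the operators $\mu$ and $\rho$ do not commute, so the gauge factors do not cancel that way), though this is a fixable framing issue.

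The genuine gap is in your nonvanishing argument. The leading coefficient of $e^{T/z}$ in $\langle\alpha,J(t)\rangle$ is proportional to the \emph{product} $\langle\alpha,A_F\rangle\cdot\langle\psi_0^*,1\rangle$, and you account only for the first factor. The second factor $\langle\psi_0^*,1\rangle$ is the $E(T)$-component of the identity class $1$ in the decomposition $H^\udot(F)=E(T)\oplus H'$; it is a purely quantum-cohomological quantity and is not the top-degree component (nor any other component) of $\Gg_F$, so it is not supplied by Gamma Conjecture I. Your Perron--Frobenius alternative does not follow from Property $\O$ either, since Property $\O$ gives neither nonnegativity nor irreducibility of the matrix of $(c_1(F)\star_0)$. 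The paper supplies the missing input as Lemma~\ref{lem:1_ET}, proved algebraically: one shows $\psi_0\star_0 H'=0$ using that $T$ is a simple eigenvalue, writes $1=c\psi_0+\psi'$ with $\psi'\in H'$, and applies $\psi_0\star_0$ to conclude $c\neq0$. That lemma is independent of Gamma Conjecture I; it is exactly what makes Theorem~\ref{thm:asymptotic_class} (the limit equals $A_F/\langle[\pt],A_F\rangle$ whenever $\langle[\pt],A_F\rangle\neq0$) hold \emph{before} the Gamma Conjecture is invoked, and your argument needs it to close.
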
 
\begin{theorem}[see Theorem \ref{thm:Apery} for precise statements]  
Under the same assumptions as above,  
the primitive part of the Gamma class can be obtained as 
the following discrete limit (assuming the 
limit exists): 
\begin{equation} 
\label{eq:Apery_limit} 
\langle \gamma, \Gg_F\rangle = 
\lim_{n\to \infty} \frac{\langle \gamma, J_{rn} \rangle}
{\langle [\pt], J_{rn}\rangle} 
\end{equation} 
for every $\gamma \in H_{\ldot}(F)$ with $c_1(F) \cap \gamma =0$. 
Here we write 
$J(t) = e^{c_1(F) \log t} \sum_{n=0}^\infty J_{rn} t^{rn}$ 
with $r$ the Fano index. 
\end{theorem}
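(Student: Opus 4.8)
The plan is to derive the Apery-type limit \eqref{eq:Apery_limit} from the continuous limit formula $\Gg_F = \lim_{t\to+\infty} J(t)/\langle [\pt],J(t)\rangle$ (Corollary \ref{cor:limit_J}) by a Tauberian-type argument that extracts the coefficient asymptotics of the $J$-function from its growth as $t\to+\infty$. First I would recall that $J(t) = e^{c_1(F)\log t}\sum_{n\ge 0} J_{rn} t^{rn}$ is a solution of the quantum differential equation restricted to the anticanonical line, so that $f(t) := \langle[\pt],J(t)\rangle = \sum_{n\ge0} \langle[\pt],J_{rn}\rangle t^{rn}$ is a scalar power series (in $t^r$) whose radius of convergence is $e^{-T}$ by Property $\O$ — indeed $e^{-T}$ is the value of $t^r$ at which the nearest singularity of the quantum differential equation along this ray sits, corresponding to the largest eigenvalue $T$ of $(c_1(F)\star_0)$. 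Under Property $\O$ and Gamma Conjecture I, the flat section $s_1(z)\sim e^{-T/z}\Gg_F$ governs the leading exponential asymptotics, which translates into $f(t)$ having a \emph{simple} dominant singularity at $t^r = e^{-T}$; hence by singularity analysis (or a direct application of the known asymptotics $J(t)\sim C\, t^{?}\, e^{T t}$ type expansion, in the appropriate variable) the coefficients satisfy $\langle[\pt],J_{rn}\rangle \sim c\cdot e^{Tn} n^{\kappa}$ for some constant $c$ and exponent $\kappa$ determined by the Frobenius exponents at $z=0$.

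Next I would handle the numerator. For $\gamma\in H_\udot(F)$ with $c_1(F)\cap\gamma=0$, the series $g(t):=\langle\gamma,J(t)\rangle = \sum_{n\ge0}\langle\gamma,J_{rn}\rangle t^{rn}$ (the $e^{c_1(F)\log t}$ prefactor acts trivially on such $\gamma$ by the condition $c_1(F)\cap\gamma=0$, after pairing) is again a power series in $t^r$ with the \emph{same} radius of convergence $e^{-T}$, because all entries of $J(t)$ solve the same scalar ODE (the minimal/quantum differential operator), so their singularities on the circle $|t^r|=e^{-T}$ are among the same finite set. The key point is that the continuous limit $\langle\gamma,\Gg_F\rangle = \lim_{t\to+\infty} g(t)/f(t)$ pins down the ratio of the leading singular parts of $g$ and $f$ at $t^r=e^{-T}$: the dominant singularity of $g$ is of the same type as that of $f$, with the coefficient multiplied precisely by $\langle\gamma,\Gg_F\rangle$. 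Transferring this back to coefficients by the same singularity-analysis transfer theorem yields $\langle\gamma,J_{rn}\rangle \sim \langle\gamma,\Gg_F\rangle\cdot\langle[\pt],J_{rn}\rangle$, which is \eqref{eq:Apery_limit}.

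I expect the main obstacle to be the transfer step: moving from the \emph{radial} limit $t\to+\infty$ along $\R_{>0}$ (equivalently $z\to+0$ along $\R_{>0}$, which is all Property $\O$ and Gamma Conjecture I directly give) to a statement about \emph{coefficients} requires ruling out other singularities of equal modulus on the circle $|t^r| = e^{-T}$, or at least controlling their contribution; this is exactly why the theorem is stated "assuming the limit exists." The clean way to organize this is to work with the scalar quantum differential operator $L$ annihilating $f$ and $g$, use Property $\O$ to identify $e^{-T}$ as the unique dominant singularity \emph{on the positive real axis}, and then invoke a Tauberian hypothesis (the assumed existence of the limit in \eqref{eq:Apery_limit}) to upgrade the Abelian statement from Corollary \ref{cor:limit_J} to the Tauberian one. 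Concretely I would: (i) set $Q = t^r$, rewrite $f,g$ as power series in $Q$ with convergence radius $e^{-T}$; (ii) use Property $\O$ + Gamma Conjecture I to show $f(Q)\sim \langle[\pt],\Gg_F\rangle\cdot (\text{universal singular factor})$ and $g(Q)\sim\langle\gamma,\Gg_F\rangle\cdot(\text{same factor})$ as $Q\to e^{-T}$ from below; (iii) divide, cancel the common factor, and conclude via the assumed convergence that the coefficient ratio converges to the same value. The precise form of the "universal singular factor" — whether a pole, a logarithmic term, or an algebraic branch point of exponent governed by $\kappa$ — is dictated by the local structure of the quantum connection at its first singularity and will need to be spelled out in Theorem \ref{thm:Apery}, but it divides out of the ratio regardless of its shape, which is what makes the argument go through.
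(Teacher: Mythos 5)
Your central analytic claim --- that $f(t)=\langle[\pt],J(t)\rangle = \sum_{n}G_n t^n$ has radius of convergence $e^{-T}$ in $t^r$ --- is false, and it undermines the whole singularity-analysis strategy. The $J$-function is an \emph{entire} function of $t$: the coefficients $G_n$ decay factorially (Gromov--Witten invariants of a Fano manifold, divided by $n!$, up to polynomial factors), and the scalar quantum differential operator has singular points only at $t=0$ (regular) and $t=\infty$ (irregular); there is no finite singularity on $|t^r|=e^{-T}$ to analyse. The growth $f(t)\sim C\,t^{-\dim F/2}e^{Tt}$ as $t\to+\infty$ is a statement about the irregular singularity at $\infty$, not about a branch point at finite $t$. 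Consequently "singularity analysis" and "transfer theorems" do not apply to $f$ and $g$ directly, and your claimed coefficient asymptotics $G_n\sim c\,e^{Tn}n^\kappa$ is wrong (it would make the series divergent).

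The object that does have finite radius of convergence is the Borel-type transform $\hG(\kappa)=\sum_{n}n!\,G_n\kappa^n$ (the ``regularized quantum period''). Indeed the paper's proof runs through exactly this: $\hG(\kappa)$ is, up to a shift $\nu$, a component of a flat section of the Laplace-dual connection $\tnabla^{(\nu)}$, whose singularities sit at $\lambda = -u_i$ for $u_i\in\Spec(c_1(F)\star_0)$; this forces the radius of convergence to be $\ge 1/T$ (Lemma \ref{lem:radius_Ghat}), and the normalization $\langle[\pt],A_F\rangle\ne0$ plus Lemma \ref{lem:1_ET} forces it to be exactly $1/T$ (Lemma \ref{lem:radius_Ghat_exact}). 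The paper then replaces $\gamma$ by $\beta=\gamma-\tfrac{\langle\gamma,A_F\rangle}{\langle[\pt],A_F\rangle}[\pt]$ and shows that the corresponding transform $\sum_n \Gamma(n+\nu+\tfrac{\dim F}{2})\,b_n\kappa^n$ has radius \emph{strictly larger} than $1/T$, and this is where Property $\O$ part (2) (the $r$-th-root-of-unity symmetry of the spectrum) enters essentially --- one needs regularity of $\langle\varphi_\beta(\lambda),1\rangle$ not just at $\lambda=-T$ but on the whole circle $|\lambda|=T$, and the spectral symmetry propagates the regularity at $-T$ around the circle. Your proposal never invokes this part of Property $\O$, and without it the argument does not close. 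Finally, from the strict gap in radii of convergence the paper concludes only $\liminf|b_n/G_n|=0$, which is what Theorem \ref{thm:Apery} actually asserts; upgrading to $\lim$ under the additional hypothesis that the limit exists (your Tauberian step) is a separate and essentially trivial remark, not the substance of the proof.

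In short: replace "power series in $t^r$ with radius $e^{-T}$" by "Borel transform $\sum n!\,G_n\kappa^n$ with radius $1/T$," relate this transform to the Laplace-dual connection, and use the $\mu_r$-symmetry of the spectrum (Property $\O$ (2)) to control the full circle of singularities; that is the missing engine of the proof.
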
 
Discrete limits similar to \eqref{eq:Apery_limit} were  
studied by Almkvist-van Straten-Zudilin \cite{AvSZ} in 
the context of Calabi--Yau differential equations 
and are called \emph{Apery limits} (or Apery constants). 
Golyshev \cite{Golyshev08a} and Galkin \cite{Galkin:Apery} 
studied the limits \eqref{eq:Apery_limit} for Fano manifolds. 
In fact, these limits are related to famous Apery's proof of the irrationality of 
$\zeta(2)$ and $\zeta(3)$. 
An Apery limit of the Grassmannian $G(2,5)$ 
gives a fast approximation of $\zeta(2)$ 
and an Apery limit of the orthogonal Grassmannian $OGr(5,10)$ 
gives a fast approximation of $\zeta(3)$; 
they prove the irrationality of 
$\zeta(2)$ and $\zeta(3)$. 
It would be extremely interesting to find 
a Fano manifold whose Apery limits give fast approximations 
of other zeta values. 

\subsection{Mirror symmetry} 
Gamma Conjectures are closely related to mirror symmetry 
and relevant phenomena have been observed since its early days. 
The following references serve as motivation to the Gamma conjectures. 
\begin{itemize} 
\item 
The Gamma class of a Calabi--Yau threefold $X$ is given by 
$\Gg_X = 1- \frac{\pi^2}{6}c_2(X) - \zeta(3)c_3(X)$; 
the number $\zeta(3)\chi(X)$ appeared in the computation 
of mirror periods by Candelas et al.~\cite{CDGP}; 
it also appeared in the conifold period formula of 
van Enckevort--van Straten \cite{vEnckevort-vStraten}. 
\item 
Libgober \cite{Lib99} found the (inverse) Gamma class from 
hypergeometric solutions to the Picard--Fuchs equation 
of the mirror, inspired by the work of Hosono et al.~\cite{HKTY}.  
\item 
Kontsevich's homological mirror symmetry 
suggests that the monodromy of the Picard--Fuchs equation of mirrors 
should be related to $\Auteq(\D^b_{\rm coh}(X))$. 
In the related works of Horja \cite{Horja}, 
Borisov--Horja \cite{Borisov-Horja} and Hosono \cite{Hosono}, 
Gamma/hypergeometric series play an important role. 
\item 
In the context of Fano/Landau--Ginzburg mirror symmetry, 
Iritani \cite{Iritani07,Iritani09} and Katzarkov-Kontsevich-Pantev 
\cite{KKP08} introduced a rational or integral structure of 
quantum connection in terms of the Gamma class, by shifting 
the natural integral structure of the Landau--Ginzburg model 
given by Lefschetz thimbles. 
\end{itemize} 
Although not directly related to mirror symmetry, we remark that 
the Gamma class also appears in a recent progress 
\cite{Hori-Romo,HJLM} in physics on the sphere/hemisphere 
partition functions. 
\begin{remark} 
In their studies of a `generalized' Hodge structure 
(\emph{TERP/nc-Hodge} structure), 
Hertling-Sevenheck \cite{Hertling-Sevenheck:nilpotent} 
and Katzarkov-Kontsevich-Pantev \cite{KKP08}  
discussed the compatibility of the Stokes structure with the real/rational structure. 
Our Gamma conjecture can be regarded as 
an explicit Fano-version of 
the following more general conjecture: 
\emph{the $\Gg$-integral structure in quantum cohomology should 
be compatible with the Stokes structure}. 
\end{remark}

\subsection{Plan of the paper} 
In \S \ref{sec:qcoh}, we review quantum cohomology 
and quantum differential equation for Fano manifolds. 
We explain asymptotically exponential flat sections, 
their mutation and Stokes matrices. 
The content in this section is mostly taken from 
Dubrovin's work \cite{Dubrovin94,Dubrovin98a}, 
but we make the following technical refinement: we carefully deal with 
the case where the quantum cohomology 
is semisimple but the Euler multiplication $(E\star_\tau)$ 
has repeated eigenvalues 
(see Propositions \ref{prop:repeated_eigenvalues} and \ref{prop:Stokes}). 
In particular, we show that a semisimple point $\tau$ is 
not a turning point even when $E\star_\tau$ has multiple eigenvalues. 
We need this case since it happens for Grassmannians. 
In \S \ref{sec:GammaI}, we formulate Property $\O$ 
and Gamma Conjecture I. We also prove limit formulae 
for the principal asymptotic class. 
In \S \ref{sec:GammaII}, we formulate Gamma Conjecture II 
and explain a relationship to (original) Dubrovin's conjecture. 
In \S \ref{sec:Gamma_P}, we prove 
Gamma Conjectures for projective spaces. 
In \S \ref{sec:Gamma_G}, we deduce the Gamma Conjectures 
for Grassmannians from the truth of the Gamma Conjectures 
for projective spaces. Main tools in the proof are 
isomonodromic deformation and quantum Satake principle. 
For this purpose we extend quantum Satake principle 
\cite{GMa} to big quantum cohomology 
(Theorem \ref{thm:wedge_conn}) using abelian/non-abelian 
correspondence \cite{BCFK, CFKS,KimSab}.

\section{Quantum cohomology, quantum connection and solutions.} 
\label{sec:qcoh} 
In this section we discuss background material on quantum cohomology 
and quantum connection of a Fano manifold. Quantum connection 
is defined as a meromorphic flat connection of the trivial cohomology 
bundle over the $z$-plane, with singularities at $z=0$ and $z=\infty$. 
We discuss two fundamental solutions associated to the regular 
singularity ($z=\infty$) and the irregular singularity ($z=0$). 
Under the semisimplicity assumption, we discuss mutations and 
Stokes matrices.  

\subsection{Quantum cohomology} 
\label{subsec:qcoh} 
Let $F$ be a Fano manifold, i.e.~ a smooth projective variety 
such that the anticanonical line bundle $\omega_F^{-1} = \det(TF)$ 
is ample. 
Let $H^\udot(F)= H^{\rm even}(F;\C)$ denote the even part of the 
Betti cohomology group over $\C$. 
For $\alpha_1,\alpha_2,\dots, \alpha_n \in H^\udot(F)$, 
let $\Ang{\alpha_1,\alpha_2,\dots,\alpha_n}_{0,n,d}^F$ 
denote the genus-zero $n$ points Gromov--Witten invariant 
of degree $d\in H_2(F;\Z)$, see e.g.~\cite{Man99}. 
Informally speaking, this counts the (virtual) number of rational curves in 
$F$ which intersect the Poincar\'{e} dual cycles of 
$\alpha_1,\dots,\alpha_n$. 
It is a rational number when $\alpha_1,\dots,\alpha_n \in H^\udot(F;\Q)$. 
The quantum product $\alpha_1 \star_\tau \alpha_2\in H^\udot(F)$ 
of two classes $\alpha_1,\alpha_2\in H^\udot(F)$ 
with parameter $\tau \in H^\udot(F)$ 
is given by 
\begin{equation} 
\label{eq:quantumproduct}
 (\alpha_1\star_\tau \alpha_2, \alpha_3)_F 
= \sum_{d\in \Eff(F)} \sum_{n=0}^\infty 
\frac{1}{n!} \Ang{\alpha_1,\alpha_2,
\alpha_3,\tau,\dots,\tau}_{0,3+n,d}^F 
\end{equation} 
where $(\alpha,\beta)_F= \int_F \alpha \cup \beta$ 
is the Poincar\'{e} pairing 
and $\Eff(F) \subset H_2(F;\Z)$ is the set of effective curve classes. 
The quantum product is associative and commutative, 
and recovers the cup product in the limit where 
$\re\left( \int_d \tau\right) \to -\infty$ for all non-zero effective curve classes $d$. 
It is not known if the quantum product $\star_\tau$ 
converges in general, however it does for all the examples in 
this paper (see also Remark \ref{rem:qc_formalseries}). 
The quantum product $\star_\tau$ with $\tau\in H^2(F)$ is 
called the \emph{small} quantum product; for general $\tau\in H^\udot(F)$ 
it is called the \emph{big} quantum product. 

We are particularly interested in the quantum product $\star_0$ 
specialized to $\tau=0$. 
An effective class $d$ contributing to the sum 
\[
\sum_{d\in \Eff(F)} \Ang{\alpha_1,\alpha_2,\alpha_3}_{0,3,d}^F
\]
has to satisfy $\frac{1}{2}\sum_{i=1}^3\deg \alpha_i = 
\dim F + c_1(F) \cdot d$ and there are only finitely many 
such $d$ when $F$ is Fano. Therefore the specialization at $\tau=0$ 
makes sense. 

\begin{remark} 
\label{rem:qc_formalseries}
Writing $\tau = h+ \tau'$ with $h \in H^2(F)$ 
and $\tau' \in \bigoplus_{p\neq 1} H^{2p}(F)$ and using 
the divisor axiom in Gromov--Witten theory, we 
have: 
\[
(\alpha_1\star_\tau\alpha_2, \alpha_3)_F  = 
\sum_{d\in \Eff(F)} \sum_{n=0}^\infty \frac{1}{n!} 
\Ang{\alpha_1,\alpha_2,\alpha_3, \tau' ,\dots,\tau' }_{0,n+3,d}^F 
e^{\int_{d} h}. 
\]
Therefore the quantum product 
\eqref{eq:quantumproduct} makes sense as a formal power series 
in $\tau'$ and the exponentiated $H^2$-variables $e^{h_1},\dots, 
e^{h_r}$, where we write $h = h_1 p_1 + \cdots + h_r p_r$ 
by choosing a nef basis $\{p_1,\dots,p_r\}$ of $H^2(X;\Z)$. 
\end{remark} 
\subsection{Quantum connection} 
\label{subsec:q_conn} 
Following Dubrovin \cite{Dubrovin94, Dubrovin98, 
Dubrovin98a}, we introduce a 
meromorphic flat connection 
associated to the quantum product. 
Consider a trivial vector bundle $H^\udot(F) \times \P^1 
\to \P^1$ over $\P^1$ and fix an 
inhomogeneous co-ordinate $z$ on $\P^1$. 
Define the quantum connection $\nabla$ on the trivial bundle 
by the formula 
\begin{equation}
\label{eq:nabla_tau_zero}
\nabla_{z\partial_z} = z\parfrac{}{z} - \frac{1}{z} (c_1(F) \star_0) 
+ \mu 
\end{equation} 
where $\mu\in \End(H^\udot(F))$ is the grading operator 
defined by $\mu|_{H^{2p}(F)} = (p-\frac{\dim F}{2})\id_{H^{2p}(F)}$ 
($\dim F$ denotes the complex dimension of $F$). 
The connection is smooth away from $\{0,\infty\}$; the 
singularity at $z=\infty$ is regular (or more precisely logarithmic) 
and the singularity at $z=0$ is irregular. 
The quantum connection preserves the Poincar\'{e} pairing 
in the following sense: we have 
\begin{equation} 
\label{eq:nabla_preserves_pairing}
z\parfrac{}{z} (s_1(-z), s_2(z))  =  
((\nabla_{z\partial_z} s_1)(-z), s_2(z)) + 
(s_1(-z), \nabla_{z\partial_z} s_2 (z)) 
\end{equation} 
for $s_1,s_2 \in H^\udot(F) \otimes \C[z,z^{-1}]$. 
Here we need to flip the sign of $z$ for the first 
entry $s_1$. 

What is important in Dubrovin's theory is the fact 
that $\nabla$ admits an 
\emph{isomonodromic deformation} over $H^\udot(F)$. 
Suppose that $\star_\tau$ converges on a region 
$B\subset H^\udot(F)$. Then the above 
connection is extended to a meromorphic flat connection 
on $H^\udot(F) \times (B \times \P^1) 
\to (B \times \P^1)$ as follows: 
\begin{align} 
\label{eq:big_q_conn}
\begin{split}
\nabla_\alpha & = \partial_\alpha + \frac{1}{z} (\alpha\star_\tau)
\qquad \alpha \in H^\udot(F) 
\\ 
\nabla_{z\partial_z} & = z\parfrac{}{z} 
- \frac{1}{z} (E \star_\tau) 
+ \mu 
\end{split} 
\end{align} 
with $(\tau,z)$ a point on the base $B \times \P^1$. 
Here $\partial_\alpha$ denotes 
the directional derivative in the direction of $\alpha$ 
and 
\[
E = c_1(F) + \sum_{i=1}^N 
\left(1-\frac{1}{2}\deg \phi_i\right) \tau^i \phi_i 
\]
is the Euler vector field, 
where we write $\tau = \sum_{i=1}^N \tau^i \phi_i$ 
by choosing a homogeneous basis $\{\phi_1,\dots,\phi_N\}$ 
of $H^\udot(F)$. 
We refer to this extension as the \emph{big} quantum connection. 
The Poincar\'{e} pairing $(\cdot,\cdot)_F$ 
is flat with respect to the big quantum connection $\nabla$, 
i.e.~$\partial_\alpha(s_1(\tau,-z),s_2(\tau,z)) = 
((\nabla_\alpha s_1)(\tau,-z), s_2(\tau,z)) + 
(s_1(\tau,-z),\nabla_\alpha s_2(\tau,z))$. 

\begin{remark} 
\label{rem:anticanonical}
The connection in the $z$-direction can be identified 
with the connection in the anticanonical direction 
after an appropriate rescaling. 
Consider the quantum product $\star_\tau$ 
restricted to the anticanonical line 
$\tau = c_1(F) \log t$, $t\in \C^\times$:  
\begin{equation}
\label{eq:qprod_antican}
(\alpha_1\star_{c_1(F) \log t} \alpha_2, \alpha_3)_F
= \sum_{d\in \Eff(F)} \Ang{\alpha_1,\alpha_2,\alpha_3}_{0,3,d} 
t^{c_1(F)\cdot d}. 
\end{equation} 
This is a polynomial in $t$ since $F$ is Fano, and 
coincides with $\star_0$ when $t=1$. 
It can be recovered from the product $\star_0$ by the 
formula: 
\begin{equation} 
\label{eq:qprod_scaling} 
(\alpha \star_{c_1(F) \log t})= t^{\deg \alpha/2} 
t^{-\mu} (\alpha \star_0) t^\mu. 
\end{equation} 
The quantum connection restricted to the anticanonical line 
and $z=1$ is 
\[
\nabla_{c_1(F)}\Big|_{z=1} = t\parfrac{}{t} +  
(c_1(F) \star_{c_1(F) \log t}).  
\]
On the other hand we have
\[
z^\mu \Big[ \nabla_{z\partial_z} \Big]_{\tau=0} z^{-\mu}
= z\parfrac{}{z} - (c_1(F)\star_{-c_1(F)\log z})  
\]
Therefore, the connections $\nabla_{c_1(F)}|_{z=1}$ 
and $\nabla_{z\partial_z}|_{\tau=0}$ 
are gauge equivalent via $z^\mu$ 
under the change of variables $t= z^{-1}$. 
\end{remark}

\subsection{Canonical fundamental solution around $z=\infty$} 
\label{subsec:fundsol_regular}
We consider the connection $\nabla$ 
\eqref{eq:nabla_tau_zero} defined by the quantum product 
$\star_0$ at $\tau=0$. We have a (well-known) canonical 
fundamental solution $S(z) z^{-\mu} z^\rho$ for $\nabla$ 
associated to the regular singular point $z=\infty$. 

\begin{proposition}
\label{prop:fundsol} 
There exists a unique holomorphic function 
$S \colon \P^1\setminus \{0\} 
\to \End(H^\udot(F))$ with $S(\infty) = \id_{H^\udot(F)}$ 
such that 
\begin{align*} 
&\nabla ( S(z)z^{-\mu}z^{\rho} \alpha) = 0
\qquad 
\text{for all $\alpha \in H^\udot(F)$}; \\ 
&\text{$T(z)= z^\mu S(z) z^{-\mu}$ is regular at $z=\infty$ 
and $T(\infty) = \id_{H^\udot(F)}$},   
\end{align*} 
where $\rho= (c_1(F)\cup)\in \End(H^\udot(F))$ and we define 
$z^{-\mu} = \exp(-\mu\log z)$, 
$z^{\rho} = \exp(\rho \log z)$. 
Moreover we have 
\[
(S(-z)\alpha, S(z) \beta )_F= (\alpha,\beta)_F 
\qquad \alpha,\beta\in H^\udot(F). 
\] 
\end{proposition}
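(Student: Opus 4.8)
The plan is to construct $S(z)$ as a power series at $z=\infty$ by the Frobenius method, modified to absorb the resonances of the residue. Set $\rho=(c_1(F)\cup)$; since cup product with a degree-$2$ class raises the $\mu$-grading by one, $[\mu,\rho]=\rho$. With the ansatz $S(z)=\id+\sum_{n\ge 1}S_nz^{-n}$, $S_n\in\End(H^\udot(F))$, substitute into $\nabla(S(z)z^{-\mu}z^\rho\alpha)=0$; using $z\partial_z z^{-\mu}=-\mu z^{-\mu}$, $z\partial_z z^\rho=\rho z^\rho$ and $z^{-\mu}\rho\, z^\mu=z^{-1}\rho$ (the last from $[\mu,\rho]=\rho$), and then stripping $z^{-\mu}z^\rho$ off on the right, the flatness equation becomes $z\partial_z S+[\mu,S]+\tfrac1z\bigl(S\rho-(c_1(F)\star_0)S\bigr)=0$, i.e.\ the recursion $\bigl([\mu,\cdot]-n\bigr)S_n=(c_1(F)\star_0)S_{n-1}-S_{n-1}\rho=:R_n$ for $n\ge 1$ (with $S_0=\id$). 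Two structural facts will be used: $[\mu,\cdot]$ acts semisimply on $\End(H^\udot(F))$ with integer eigenvalues (the degree shifts), and by the dimension axiom recalled in \S\ref{subsec:qcoh} one has $c_1(F)\star_0=\rho+N$ where the quantum part $N$ has $[\mu,\cdot]$-eigenvalues $1-c_1(F)\cdot d\le 0$.

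The main obstacle is that $[\mu,\cdot]-n$ is not invertible precisely when $n$ is a degree shift (a resonance), and this is exactly what the $z^{-\mu}z^\rho$-factor and the normalization via $T(z)$ are designed to repair. Decompose $S_n=\sum_j S_n^{(j)}$ and $R_n=\sum_j R_n^{(j)}$ into $[\mu,\cdot]$-eigencomponents, $j$ being the degree shift; then the requirement that $T(z)=z^\mu S(z)z^{-\mu}$ be regular at $z=\infty$ with $T(\infty)=\id$ translates exactly into $S_n^{(j)}=0$ for all $j\ge n\ge 1$. I would prove by induction on $n$ that these conditions pin down a unique $S_n$ solving the recursion: writing $R_n=[\rho,S_{n-1}]+NS_{n-1}$, the inductive hypothesis (that $S_{n-1}$ carries $[\mu,\cdot]$-eigenvalues $\le n-2$) forces $R_n$ to carry eigenvalues $\le n-1$, so $R_n^{(j)}=0$ for $j\ge n$; the recursion then gives $S_n^{(j)}=R_n^{(j)}/(j-n)$ for $j<n$ and leaves only the resonant component $S_n^{(n)}$, which the normalization sets to $0$, restoring the hypothesis at level $n$. (Equivalently one may run the recursion directly for $T$, whose residue is the \emph{nilpotent} operator $[\rho,\cdot]$, so that no resonance occurs, and then verify the degree-shift positivity that makes $S=z^{-\mu}Tz^\mu$ holomorphic at $\infty$.)

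Since $|j-n|\ge 1$ in the formula for $S_n^{(j)}$, one obtains $\|S_n\|\le\|R_n\|\le C\|S_{n-1}\|$ with $C=C(\|\rho\|,\|N\|)$, so the series converges for $|z|>C$ and $S$ is holomorphic near $z=\infty$ with $S(\infty)=\id$. Because $S(z)z^{-\mu}z^\rho$ satisfies the connection $\nabla$, which is nonsingular on $\C^\times$, the function $S$ continues holomorphically over all of $\C^\times$, and it is single-valued there since near $z=\infty$ it is given by a convergent series in $z^{-1}$, so its monodromy is trivial; thus $S\colon\P^1\setminus\{0\}\to\End(H^\udot(F))$ as claimed. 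Uniqueness as a function is then clear: any $S$ satisfying the hypotheses is holomorphic at $\infty$, and its Taylor coefficients obey the recursion and the vanishing conditions above, hence coincide with those just constructed.

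For the last identity, fix $\alpha,\beta\in H^\udot(F)$ and set $Y(z)=S(z)z^{-\mu}z^\rho$. The columns of $Y$ are $\nabla$-flat, so by \eqref{eq:nabla_preserves_pairing} the quantity $(S(-z)(-z)^{-\mu}(-z)^\rho\alpha,\,S(z)z^{-\mu}z^\rho\beta)_F$ is independent of $z$. Letting $z\to+\infty$, so that $S(\pm z)\to\id$, and using that $\mu$ is anti-self-adjoint and $\rho$ self-adjoint for $(\cdot,\cdot)_F$ together with $[\mu,\rho]=\rho$ (hence $(-z)^\mu z^{-\mu}=e^{\pi\iu\mu}$ and $e^{\pi\iu\mu}z^\rho=z^{-\rho}e^{\pi\iu\mu}$), this constant equals $(\alpha,e^{\pi\iu\rho}e^{\pi\iu\mu}\beta)_F$. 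Now substituting $\alpha\mapsto(-z)^{-\rho}(-z)^\mu\alpha$ and $\beta\mapsto z^{-\rho}z^\mu\beta$ and simplifying --- another application of the same exponential identities shows the resulting operator equals the identity --- one gets $(S(-z)\alpha,S(z)\beta)_F=(\alpha,\beta)_F$. Apart from the resonance dealt with in the second paragraph, the entire argument is bookkeeping with this recursion and with the two operators $\mu$ and $\rho$, which commute up to $\rho$.
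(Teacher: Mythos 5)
Your proof is correct, but it is linearly conjugate to the paper's rather than identical. You run the Frobenius recursion for $S(z)$ itself, so the left-hand operator $[\mu,\cdot]-n$ can fail to be invertible at resonances $j=n$, and you must check the consistency $R_n^{(n)}=0$ and invoke the normalization to set $S_n^{(n)}=0$; you do this by an induction on $[\mu,\cdot]$-eigenvalues. The paper instead solves for $T(z)=z^\mu S(z)z^{-\mu}$: there the recursion is $\bigl(m+[\rho,\cdot]\bigr)T_m=-\sum_{k\ge 1}G_kT_{m-k}$, and since $\rho$ is nilpotent the left-hand operator is always invertible, so the resonance problem never appears --- this is exactly the alternative you flag in parentheses. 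Your degree bound ``$S_n$ has eigenvalues $\le n-1$'' is the translate of the paper's ``$T_k$ has degree $\ge 1-k$'', so the two inductions carry the same information; the $T$-formulation merely turns the resonance check into the observation that $[\rho,\cdot]$ is nilpotent. Two points to tidy. First, the inductive hypothesis as stated (``$S_{n-1}$ has eigenvalues $\le n-2$'') is false at the base $n=1$, since $S_0=\id$ has eigenvalue $0$, not $\le -1$; the step still succeeds because $[\rho,S_0]=0$ kills the only potentially offending piece, but you should treat $n=1$ separately. Second, in the pairing argument ``letting $z\to\infty$, so that $S(\pm z)\to\id$'' needs a word of justification: the sandwiching factors $(-z)^{\pm\mu}$, $z^{\pm\rho}$ are unbounded, so the limit does not pass through for free. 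What makes the error term vanish is precisely the degree bound you proved: after conjugating by $(-z)^\mu(\cdot)z^{-\mu}$, the $z^{-k}$-tail of $S(-z)^*S(z)-\id$ has only negative powers of $z$, and multiplying by the $\log z$-polynomials $z^{\pm\rho}$ still tends to zero. The paper encodes this by rewriting the invariant pairing as $\bigl(e^{-\pi\iu\mu}T(-z)(-z)^\rho\alpha,\;T(z)z^\rho\beta\bigr)_F$, which manifestly lies in $\C[\![z^{-1}]\!][\log z]$, and then reading off the constant term.
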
 
\begin{proof} 
The endomorphism $S(z)$ is a gauge transformation giving 
the Levelt normal form (see e.g.~\cite[Exercise 2.20]{Sabbah:Frobenius_manifold}) 
of the connection $\nabla$ near $z=\infty$. 
A similar fundamental solution was given by Dubrovin for 
a general Frobenius manifold \cite[Lemma 2.5, 2.6]{Dubrovin98a}; 
we also have an explicit formula in terms of Gromov-Witten invariants  
(see Remark \ref{rem:S_general_tau} below). 
Note that $S(z)$ in our case satisfies the additional properties  
that $T(z) = z^\mu S(z) z^{-\mu}$ is regular at $z=\infty$ and that 
$T(\infty) = \id$, which ensure the uniqueness of $S$. 
We give a construction of $S(z)$ to verify these points. 

Consider the equivalent differential equation 
$\nabla (z^{-\mu} T(z) z^\rho \alpha)=0$ 
for $T(z) = z^\mu S(z) z^{-\mu}$  
with the initial condition $T(\infty) = \id$.  
The differential equation for $T$ reads: 
\[
z\parfrac{}{z} T(z) - \frac{1}{z} 
z^{\mu} (c_1(F)\star_0) z^{-\mu} T(z) + T(z) \rho =0.   
\]
Expand: 
\begin{align*} 
T(z) & = \id + T_1 z^{-1}+ T_2 z^{-2} + T_3 z^{-3} + 
\cdots  \\ 
(c_1(F)\star_0) & = G_0+G_1 + G_2 +  
\cdots \quad \text{(finite sum)} 
\end{align*} 
where $G_k\in \End(H^\udot(F))$ is an endomorphism of
degree $1-k$, i.e.~$z^{\mu} G_k z^{-\mu} = z^{1-k} G_k$ 
and $G_0 = c_1(F) \cup = \rho$. 
The above equation 
is equivalent to the system of equations:  
\begin{align*} 
0 & = \rho - \rho  \\ 
0 & =T_1 + G_1 + [\rho, T_1]  \\ 
&\ \  \vdots  \\
0 & = m T_m + G_m  + G_{m-1} T_1 + \cdots + G_1 T_{m-1} 
+ [\rho, T_m].  
\end{align*} 
These equations can be solved recursively for $T_1,T_2,T_3,\dots$ 
because the map $X \mapsto m X + [\rho, X]$ is invertible 
(since $\rho$ is nilpotent). 
One can easily show the convergence of $T(z)$. 

By construction, $T_k$ is an endomorphism of degree $\ge (1-k)$ 
and hence $z^{-k} z^{-\mu} T_k z^\mu$ contains only negative 
powers in $z$ for $k\ge 1$. Therefore 
$S(z) = z^{-\mu} T(z) z^\mu$ is regular at $z=\infty$ 
and satisfies $S(\infty) = \id$. 

Finally we show $(S(-z)\alpha,S(z) \beta)_F = (\alpha,\beta)_F$. 
We claim that 
\[
\left(S(-z) (e^{\pi\iu} z)^{-\mu} (e^{\pi\iu} z)^\rho\alpha, 
S(z) z^{-\mu} z^\rho \beta \right)_F
= (e^{-\pi\iu\mu} e^{\pi\iu\rho} \alpha, \beta)_F. 
\]
Because $\nabla$ preserves the Poincar\'{e} pairing 
\eqref{eq:nabla_preserves_pairing}, the left-hand side 
is independent of $z$. 
On the other hand, the left-hand side equals 
\[
\left(e^{-\pi\iu \mu} T(-z) (e^{\pi\iu}z)^\rho \alpha, T(z) z^\rho \beta\right)_F 
\]
and can be expanded in $\C[\![z^{-1}]\!][\log z]$. 
The constant term of 
the Taylor expansion in $z^{-1}$ and $\log z$ 
equals the right-hand side. The claim follows. 
Replacing $\alpha, \beta$ with $(e^{\pi\iu} z)^{-\rho}
(e^{\pi\iu}z)^{\mu}\alpha$ and $z^{-\rho} z^\mu \beta$, 
we arrive at the conclusion. 
\end{proof} 

\begin{remark}[\cite{Dubrovin94, Givental:elliptic, Pan98, Iritani09}] 
\label{rem:S_general_tau}
The fundamental solution $S(z)$ is given by 
descendant Gromov--Witten invariants. 
Let $\psi$ denote the first Chern class of the 
universal cotangent line bundle over $\overline{M}_{0,2}(F,d)$ 
at the first marking. Then we have: 
\[
(S(z) \alpha, \beta)_F 
= (\alpha, \beta)_F  
+ 
\sum_{m\ge 0} 
\frac{1}{z^{m+1}} 
\sum_{d\in \Eff(F)\setminus \{ 0 \}} 
(-1)^{m+1} 
\Ang{\alpha \psi^{m}, \beta}_{0,2,d}^F.  
\]
Here again the summation in $d$ is finite (for a fixed $m$) 
because $F$ is Fano. 
A similar fundamental solution exists 
for the isomonodromic deformation of $\nabla$ 
associated to the big quantum cohomology.  
The big quantum connection $\nabla$ over $H^\udot(F)\times \P^1$ 
admits a fundamental solution of 
the form $S(\tau,z) z^{-\mu} z^\rho$ 
extending the one in Proposition \ref{prop:fundsol} 
such that 
\[
\nabla (S(\tau,z) z^{-\mu} z^\rho \alpha)=0, \quad 
S(\tau,\infty) = \id, \quad 
(S(\tau,-z) \alpha, S(\tau,z) \beta)_F = (\alpha,\beta)_F. 
\]
Here $z^\mu S(\tau,z) z^{-\mu}$ is not necessarily 
regular at $z=\infty$ for $\tau\notin H^2(F)$ 
(cf.~Lemma \ref{lem:S_shift}). 
\end{remark}

\subsection{$UV$-system: semisimple case} 
\label{subsec:UV}
Suppose that the quantum product $\star_\tau$ is convergent 
and semisimple at some $\tau \in H^\udot(F)$. 
The semisimplicity means that the algebra $(H^\udot(F), \star_{\tau})$ 
is isomorphic to a direct sum of $\C$ as a ring. 
Let $\psi_1,\dots,\psi_N$ denote the idempotent basis 
of $H^\udot(F)$ such that 
\[
\psi_i \star_\tau \psi_j = \delta_{i,j} \psi_i 
\]
where $N = \dim H^\udot(F)$. 
Let $\Psi_i := \psi_i/\sqrt{(\psi_i,\psi_i)_F}$, $i=1,\dots,N$ 
be the \emph{normalized} idempotents. 
They form an orthonormal basis and are unique up to sign. 
We write 
\[
\Psi = \begin{pmatrix} \Psi_1, \dots, \Psi_N \end{pmatrix} 
\]
for the matrix with the column vectors $\Psi_1,\dots,\Psi_N$. 
We regard $\Psi$ as a homomorphism $\C^N \to H^\udot(F)$. 
Let $u_1,\dots,u_N$ be the eigenvalues of $(E\star_\tau)$ 
given by $E\star_\tau \Psi_i = u_i \Psi_i$. 
Define $U$ to be the diagonal matrix with entries $u_1,\dots,u_N$: 
\begin{equation}
\label{eq:U} 
U = \begin{pmatrix} 
u_1 &  &  &  \\
 & u_2 &  &  \\ 
 & & \ddots & \\
&  & & u_N
\end{pmatrix}.  
\end{equation} 
We allow $E\star_\tau$ (or $U$) to have repeated eigenvalues. 
By the constant gauge transformation $\Psi$, the quantum connection 
$\nabla_{z\partial_z}$ \eqref{eq:big_q_conn} is transformed 
to the connection 
\begin{equation}
\label{eq:UV}
\Psi^* \nabla_{z\partial_z} = z\parfrac{}{z} - \frac{1}{z} U + V 
\end{equation} 
with $V = \Psi^{-1} \mu \Psi$. 
We call this the \emph{$UV$-system}, cf.~\cite[Lecture 3]{Dubrovin94}. 
Notice that the semisimplicity is an open condition: when $\tau$ 
varies\footnote{If quantum cohomology is not known to converge 
except at $\tau$, we can work with the formal neighbourhood of 
$\tau$ in $H^\udot(F)$ in the following discussion.}, 
the matrices $\Psi$ and $U$ 
depends analytically on $\tau$ as far as $\star_\tau$ is 
semisimple. 
Moreover $\tau\mapsto (u_1,\dots,u_N)$ 
gives a local co-ordinate system on $H^\udot(F)$ 
and one has $\psi_i = \parfrac{\tau}{u_i}$ 
(see \cite[Lecture 3]{Dubrovin98a}). 
The $UV$-system is extended in the $H^\udot(F)$-direction 
as follows \cite[Lemma 3.2]{Dubrovin98a}: 
\begin{equation}
\label{eq:UV_extension}
\Psi^* \nabla_{\partial_{u_i}} = 
\parfrac{}{u_i} + \frac{1}{z} E_i + V_i 
\end{equation}
where $E_i = \diag[0,\dots,0,\overset{\text{$i$th}}{1},0,\dots,0]$ 
and $V_i = \Psi^{-1} \partial_{u_i} \Psi$.  
\begin{lemma} 
\label{lem:V} 
The matrix $V = (V_{ij})$ is anti-symmetric $V_{ij} = - V_{ji}$. 
Moreover, $V_{ij} = 0$ whenever $u_i = u_j$. 
\end{lemma}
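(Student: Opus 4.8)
The plan is to read off both statements from the pairing-compatibility property of the quantum connection together with the conjugation formula $V = \Psi^{-1}\mu\Psi$. First I would record the algebraic meaning of anti-symmetry of $\mu$: since $\mu|_{H^{2p}(F)} = (p-\tfrac{\dim F}{2})\id$, the operator $\mu$ is \emph{anti-self-adjoint} for the Poincar\'e pairing, i.e. $(\mu\alpha,\beta)_F + (\alpha,\mu\beta)_F = 0$, because a class in $H^{2p}$ pairs nontrivially only with a class in $H^{2(\dim F - p)}$, and the two eigenvalues $p - \tfrac{\dim F}{2}$ and $(\dim F - p) - \tfrac{\dim F}{2}$ are negatives of each other. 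Next, because $\{\Psi_i\}$ is an orthonormal basis, $\Psi$ is an isometry $\C^N \to H^\udot(F)$ with the standard symmetric bilinear form on $\C^N$; hence $\Psi^{-1} = \Psi^{\mathsf T}$ with respect to these forms, and $V = \Psi^{-1}\mu\Psi = \Psi^{\mathsf T}\mu\Psi$ is the matrix of the anti-self-adjoint operator $\mu$ in an orthonormal basis, which is exactly an anti-symmetric matrix. Concretely $V_{ij} = (\Psi_i, \mu\Psi_j)_F = -(\mu\Psi_i,\Psi_j)_F = -V_{ji}$.

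For the second assertion, $V_{ij} = 0$ when $u_i = u_j$, the plan is to use the $z$-direction flatness of the $UV$-system together with the $u_i$-direction extension to derive a commutator relation, or more directly to exploit grading. The cleanest route: the quantum connection in the form \eqref{eq:UV} must have the property that the residue $V$ at $z=\infty$ and the leading term $U$ at $z=0$ interact through the flatness of the full connection; differentiating the relation $E\star_\tau\Psi_i = u_i\Psi_i$ and using \eqref{eq:UV_extension} one gets constraints relating $V$ to the $V_i$. However, the simplest self-contained argument uses the homogeneity of $\mu$ directly: since $E\star_0 = c_1(F)\star_0$ and $\mu$ satisfy the commutation relation $[\mu, c_1(F)\star_0] = c_1(F)\star_0$ (this is the statement that $\nabla_{z\partial_z}$ is homogeneous — equivalently that $(c_1(F)\star_0)$ raises degree by $2$, shifting $\mu$-eigenvalue by $1$), conjugating by $\Psi$ gives $[V, U] = U$ on the subspace picture, so $(V_{ij})(u_j - u_i) = \delta$-type terms... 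Let me instead go with: in the idempotent basis, $[\mu, E\star_\tau]$ expressed via $\Psi$ yields $(u_i - u_j)V_{ij} = $ (something antisymmetric built from the structure constants), and on the locus $u_i = u_j$ one shows the right side also vanishes.

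The honest main obstacle — and the place I expect the real work — is the vanishing $V_{ij} = 0$ when $u_i = u_j$; anti-symmetry is essentially formal. For this I would use the flat connection \eqref{eq:UV_extension} in the $u$-directions. Flatness $[\Psi^*\nabla_{\partial_{u_i}}, \Psi^*\nabla_{z\partial_z}] = 0$ and $[\Psi^*\nabla_{\partial_{u_i}}, \Psi^*\nabla_{\partial_{u_j}}] = 0$ produce (after matching powers of $z$) the \emph{Darboux--Egoroff} type equations; comparing the $z^{-1}$ and $z^0$ terms gives $\partial_{u_i} U = [E_i, V] + \text{(diagonal)}$, and the off-diagonal $(i,j)$ entry of $\partial_{u_k}U = 0$ for $i\neq j$ forces $(u_i - u_j)$ times a derivative of $V_{ij}$ to equal $V_{ij}$ up to lower-order, from which — together with the known behaviour as eigenvalues collide during an isomonodromic deformation and the absence of a turning point at a semisimple $\tau$ (Proposition \ref{prop:repeated_eigenvalues}) — one concludes $V_{ij}\to 0$ as $u_i - u_j \to 0$, hence $V_{ij} = 0$ identically on the coincidence locus. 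Alternatively, and perhaps more in the spirit of this paper, one invokes the explicit relation $[\mu, \bullet\star_\tau]$: homogeneity of the big quantum connection gives $[V, U] + V = \Psi^{-1}[\mu, E\star_\tau]\Psi + V = 0$ only at $\tau \in \C c_1(F)$, so I would restrict to that locus first, read off $(u_i - u_j)V_{ij} = 0$ hence $V_{ij} = 0$ when $u_i = u_j$, and then propagate to general semisimple $\tau$ by analyticity of $V$ and the fact that the coincidence locus $\{u_i = u_j\}$ is not contained in the turning-point set. I would present the anti-symmetry in two lines and devote the bulk of the proof to this collision argument.
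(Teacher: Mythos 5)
Your argument for anti-symmetry is correct and is exactly the paper's: $\mu$ is skew-adjoint for the Poincar\'e pairing because the eigenvalues on $H^{2p}$ and $H^{2(\dim F - p)}$ are negatives of each other, and $\Psi$ takes an orthonormal basis to the standard one, so $V = \Psi^{-1}\mu\Psi$ is anti-symmetric.

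For the second assertion, however, your proposal has a real gap, and both of the alternative routes you sketch are flawed. You correctly identify the right tool — flatness of the extended $UV$-system \eqref{eq:UV}--\eqref{eq:UV_extension} — but you never extract the relation that actually does the work. The $z^{-1}$ coefficient of $[\Psi^*\nabla_{\partial_{u_i}}, \Psi^*\nabla_{z\partial_z}] = 0$ gives $[E_i,V] = [V_i,U]$, and taking the $(i,j)$ entry with $i\neq j$ yields $V_{ij} = (u_j - u_i)(V_i)_{ij}$. Since $V_i$ is a holomorphic matrix on a semisimple neighbourhood, setting $u_i = u_j$ forces $V_{ij} = 0$ immediately — no limit, no collision argument, no appeal to turning points. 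Your statement ``$\partial_{u_i}U = [E_i,V] + \text{(diagonal)}$'' is not the correct $z^{-1}$ relation, and the discussion that follows it does not close the argument.

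The homogeneity route is also problematic: at $\tau = 0$ the operator $(c_1(F)\star_0)$ is not homogeneous of degree one for the grading (it splits as $G_0 + G_1 + \cdots$ with $[\mu,G_k] = (1-k)G_k$, and the terms $G_k$ with $k\geq 1$ are the quantum corrections), so the relation $[\mu, c_1(F)\star_0] = c_1(F)\star_0$ does not hold, and neither does $[V,U]+V = 0$ on the anticanonical line. Finally, the proposed fallback — deduce $V_{ij}=0$ on the coincidence locus by analyticity and the absence of turning points — would be circular in spirit, since the fact that a semisimple point with repeated eigenvalues is not a turning point (Proposition \ref{prop:repeated_eigenvalues}) is established in this paper using Lemma \ref{lem:V}, not the other way around. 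Replace the second half of your proof with the direct identity $[E_i,V]=[V_i,U]$ and its $(i,j)$ entry.
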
 
\begin{proof} 
The anti-symmetricity of $V$ follows from the fact that 
$\mu$ is skew-adjoint: $(\mu \alpha, \beta)_F= - (\alpha, \mu \beta)_F$ 
and that $\Psi_1,\dots,\Psi_N$ are orthonormal. 
To see the latter statement, we use the isomonodromic deformation 
\eqref{eq:UV}--\eqref{eq:UV_extension}.   
The flatness of $\Psi^*\nabla$ implies: 
\begin{equation} 
\label{eq:EiV}
[E_i,V] = [V_i, U] 
\end{equation} 
and it follows that $V_{ij}= (u_j-u_i) (V_i)_{ij}$ if $i\neq j$. 
Therefore $V_{ij} =0$ if $i\neq j$ and $u_i= u_j$. 
If $i=j$, $V_{ij}=0$ by the anti-symmetricity. 
\end{proof} 

\subsection{Asymptotically exponential flat sections} 
\label{subsec:asymptotic_solution}
Under semisimplicity assumption, we will construct a basis of flat sections 
for the quantum connection near the irregular singular point $z=0$ 
which have exponential asymptotics $\sim e^{-u_i/z}$ as $z\to 0$ 
along an angular sector. 

The so-called \emph{Hukuhara-Turrittin theorem}  
\cite{Trjitzinsky,Hukuhara_II_III,Malmquist_I_II_III,Turrittin}, 
\cite[Theorem 19.1]{Wasow}, 
\cite[II, 5.d]{Sabbah:Frobenius_manifold}, 
\cite[\S 8]{Hertling-Sevenheck:nilpotent}   
says that $\nabla_{z\partial_z}$ admits, after a change\footnote
{In view of mirror symmetry, we might hope that 
the order $r$ of ramification equals $1$ for a wide class of Fano manifolds; 
this is referred to as $\nabla$ being of 
``exponential type'' in \cite[Definition 2.12]{KKP08}.} 
of variables $z = w^r$ with $r\in \Z_{>0}$, 
a fundamental matrix solution around $z=0$ of the form:  
\begin{equation*}
P(w) w^C e^{\Lambda(w^{-1})} 
\end{equation*}
where $\Lambda(w^{-1})$ is a diagonal 
matrix whose entries are polynomials in $w^{-1}$, 
$C$ is a constant matrix, and $P(w)$ is an invertible matrix-valued 
function having an asymptotic expansion 
$P(w) \sim P_0 + P_1 w + P_2 w^2 + \cdots$ 
as $|w| \to 0$ in an angular sector. 

When the eigenvalues $u_1,\dots,u_N$ of $E\star_\tau$ are 
pairwise distinct, the fundamental solution takes 
a simpler form: we have $r=1$, $z=w$ and 
$\Lambda(w^{-1}) = -U z^{-1}$ with $U = \diag[u_1,\dots,u_N]$. 
This case has been studied by many people, including 
Wasow \cite[Theorem 12.3]{Wasow}, 
Balser-Jurkat-Lutz \cite[Theorem A, Proposition 7]{BJL79}, 
\cite{BJL81} (for general irregular connections) 
and Dubrovin \cite[Lectures 4, 5]{Dubrovin98a} 
(in the context of Frobenius manifolds). 
Bridgeland--Toledano-Laredo \cite{BTL} studied 
the case where $E\star_\tau$ is semisimple but has repeated 
eigenvalues. Under a certain condition \cite[\S 8 (F)]{BTL}, 
they showed that one can take $r=1$ and $\Lambda(w^{-1}) = - U/z$; 
their condition (F) is ensured, in our setting, by Lemma \ref{lem:V}. 
We extend the results in \cite{BJL79,BJL81,Dubrovin98a, BTL} to 
isomonodromic deformation (where $u_1,\dots,u_N$ are not necessarily
distinct) 
and show that both formal and actual solutions depend
analytically on $\tau$. 

We say that a phase $\phi\in\R$ (or a direction $e^{\iu\phi}\in S^1$) 
is \emph{admissible}\footnote
{Our admissible direction is perpendicular to the one 
in \cite[Definition 4.2]{Dubrovin98a}.}  for a multiset 
$\{u_1,\dots,u_N\} \subset \C$ if $e^{-\iu\phi}(u_i-u_j) 
\notin \R_{>0}$ holds for every $(i,j)$, i.e.~
$e^{\iu\phi}$ is not 
parallel to any non-zero difference $u_i-u_j$.

\begin{proposition}  
\label{prop:repeated_eigenvalues}
Assume that the quantum product $\star_{\tau}$ 
is analytic and semisimple in a neighbourhood $B$ of 
$\tau_0\in H^\udot(F)$. 
Consider the big quantum connection $\nabla$ \eqref{eq:big_q_conn} 
over $B \times \P^1$. 
Let $\phi\in \R$ be an admissible phase for the spectrum 
$\{u_{1,0},\dots,u_{N,0}\}$ 
of $(E\star_{\tau_0})$.  
Then, shrinking $B$ if necessary, we have an analytic fundamental 
solution $Y_\tau(z) = (y_1(\tau,z),\dots,y_N(\tau,z))$ for $\nabla$ 
and $\epsilon>0$ such that 
\begin{equation} 
\label{eq:Y_asymp} 
Y_\tau(z) e^{U/z} \to  \Psi \qquad
\text{as $z\to 0$ in the sector $|\arg z -\phi| < \frac{\pi}{2} + \epsilon$,}
\end{equation} 
where $U = \diag[u_1,\dots,u_N]$ 
and $\Psi=(\Psi_1,\dots,\Psi_N)$ are as in \S\ref{subsec:UV}. 
Moreover, we have:  
\begin{enumerate} 
\item A fundamental solution $Y_\tau(z)$ satisfying this 
asymptotic condition is unique; 
we call it the \emph{asymptotically exponential fundamental solution 
associated to $e^{\iu\phi}$}. 

\item Let $Y_\tau^-(z)=(\my_1(\tau,z),\dots,\my_N(\tau,z))$ 
be the fundamental solution associated to 
$-e^{\iu\phi}$. Then we have $(y_i(\tau,-z), y_j(\tau,z))_F= \delta_{ij}$. 
\end{enumerate} 
\end{proposition}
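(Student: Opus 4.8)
The plan is to construct $Y_\tau(z)$ in two stages: first produce a \emph{formal} solution as a power series in $z$, then appeal to the Hukuhara–Turrittin / Borel summation machinery to upgrade it to an actual analytic solution on the sector, with analyticity in $\tau$ obtained by tracking the parameter through the construction. Concretely, I would first pass to the $UV$-system \eqref{eq:UV} via the gauge transformation $\Psi$, so that the connection reads $z\partial_z - U/z + V$, and seek a fundamental solution of the form $Y_\tau(z) = \Psi\, R(\tau,z)\, e^{-U/z}$ where $R(\tau,z) \sim \id + R_1 z + R_2 z^2 + \cdots$. Substituting into the flatness equation $\nabla_{z\partial_z}(\Psi R e^{-U/z}) = 0$ gives a recursion of the shape $[R_m, U] = (\text{lower order terms involving } R_1,\dots,R_{m-1},V)$; the off-diagonal entries of $R_m$ are determined because $\mathrm{ad}_U$ is invertible on the off-diagonal part (its eigenvalues are the nonzero differences $u_i - u_j$), and the diagonal entries are fixed by a separate equation that is solvable precisely because $V_{ij} = 0$ whenever $u_i = u_j$ (Lemma \ref{lem:V}) — this is exactly condition (F) of \cite{BTL}. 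The coefficients $R_m$ depend analytically on $\tau$ as long as the spectrum stays in the locus where admissibility of $e^{\iu\phi}$ holds, which is an open condition, so after shrinking $B$ we get a formal solution analytic in $\tau$.

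Next I would invoke the asymptotic existence theorem (the sectorial version of Hukuhara–Turrittin, e.g.\ \cite[Theorem 19.1]{Wasow}, or the Borel–Laplace summation of \cite{BJL79,BJL81}): the formal series $R(\tau,z)$ is Borel summable in any direction that is not a Stokes ray, and since $e^{\iu\phi}$ is admissible for the spectrum, the sector $|\arg z - \phi| < \frac{\pi}{2} + \epsilon$ contains no Stokes ray for small enough $\epsilon$; hence there is a genuine holomorphic $R(\tau,z)$ on that sector with the prescribed asymptotic expansion, giving the desired $Y_\tau(z)$ with $Y_\tau(z) e^{U/z} = \Psi R(\tau,z) \to \Psi$. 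Analyticity in $\tau$ of the \emph{actual} solution follows because the isomonodromic deformation equations \eqref{eq:UV_extension} are compatible, so one can either quote the parameter-dependent version of the summation theorem or, more elementarily, characterize $Y_\tau(z)$ as the unique solution of the integral equation obtained from the variation-of-constants formula relative to the diagonal part $z\partial_z - U/z$ and check that its Picard iterates depend analytically on $\tau$; the contraction estimates are uniform on the shrunk $B$.

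For uniqueness (part (1)): if $\tilde Y_\tau(z)$ is another fundamental solution with the same asymptotics, then $Y_\tau(z)^{-1}\tilde Y_\tau(z)$ is a flat (hence $z$-independent up to the $e^{\pm U/z}$ twist) matrix $e^{U/z} C e^{-U/z}$ with $C$ constant, and $e^{U/z} C e^{-U/z} \to \id$ forces $C = \id$: the $(i,j)$ entry is $C_{ij} e^{(u_i - u_j)/z}$, and along a sector of angular width greater than $\pi$ around the admissible direction $e^{\iu\phi}$, for each $i \neq j$ there is a ray where $\re((u_i - u_j)/z) \to +\infty$, so $C_{ij} = 0$, while boundedness on the opposite ray gives the diagonal entries equal to $1$. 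This is where admissibility (width $>\pi$) is essential. For part (2): $Y^-_\tau(z)$ associated to $-e^{\iu\phi}$ satisfies $Y^-_\tau(-z) e^{-U/z} \to \Psi$, and since $\nabla$ preserves the Poincaré pairing \eqref{eq:nabla_preserves_pairing}, the matrix $\bigl((y_i(\tau,-z), y_j(\tau,z))_F\bigr)_{ij}$ is $z$-independent; evaluating its asymptotics using $Y^-_\tau(-z) \sim \Psi e^{U/z}$ and $Y_\tau(z) \sim \Psi e^{-U/z}$ gives $\Psi^{\mathsf T} G_F \Psi \cdot e^{U/z} e^{-U/z} = \Psi^{\mathsf T} G_F \Psi$, where $G_F$ is the Poincaré pairing matrix; since the $\Psi_i$ are orthonormal this equals $\id$, proving $(y_i(\tau,-z), y_j(\tau,z))_F = \delta_{ij}$.

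The main obstacle I anticipate is not any single algebraic step but the careful bookkeeping of \emph{analytic dependence on $\tau$} in the presence of coalescing eigenvalues: the naive recursion for $R_m$ divides by $u_i - u_j$, which degenerates as eigenvalues collide, so one must check that the combination actually appearing stays regular (this is the role of Lemma \ref{lem:V} and of the isomonodromy equations \eqref{eq:EiV}), and then ensure the summation/Picard-iteration estimates are uniform near such coalescence points. This is precisely the technical refinement over \cite{BJL79,Dubrovin98a} that the paper advertises, and handling it cleanly — rather than the eventual uniqueness and orthogonality arguments, which are short — is where the real work lies.
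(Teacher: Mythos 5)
Your route (formal power series solution in the $UV$-gauge, then Borel--Laplace or Sibuya--Wasow summation on a sector that misses the Stokes rays) is a genuinely different strategy from the paper's, which works entirely on the Laplace-dual side: the paper passes to the Fourier--Laplace dual connection $\hnabla$ on the $\lambda$-plane (logarithmic at $\lambda=u_1,\dots,u_N,\infty$), uses Lemma~\ref{lem:hnabla_flat} together with the nilpotency of the residues (from Lemma~\ref{lem:V}) and the Yoshida--Takano normal form for logarithmic connections to produce $\hnabla$-flat sections $\hy_i(\tau,\lambda)$ that are jointly analytic in $(\tau,\lambda)$ near $(\tau_0,u_{i,0})$, and then \emph{defines} $y_i(\tau,z)$ as the Laplace integral $z^{-1}\int_{u_i+\R_{\ge0}e^{\iu\phi}}\hy_i(\tau,\lambda)e^{-\lambda/z}\,d\lambda$, obtaining the asymptotics from Watson's lemma. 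No formal series is assembled first; analyticity in $\tau$ is built in from the start.

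The issue with your proposal is that the crux of the proposition is exactly the part you leave unresolved, and it is not merely a bookkeeping matter. Your recursion $[R_m,U]=(\cdots)$ solves for the off-diagonal entries $(R_m)_{ij}$ by dividing by $u_i-u_j$, and for indices with $u_i(\tau_0)=u_j(\tau_0)$ but $i\neq j$ this denominator vanishes at $\tau_0$ while being nonzero at generic nearby $\tau$. Condition (F) of \cite{BTL} (our Lemma~\ref{lem:V}) guarantees that a formal solution exists at the coalescence point $\tau_0$ itself, but it does \emph{not} by itself show that the family $\tau\mapsto R_m(\tau)$ produced by the naive recursion over the semisimple locus extends holomorphically through the coalescence locus; one has to check cancellations in the ``lower order'' right-hand side, and this is the content the paper deliberately routes around. (The remark after Proposition~\ref{prop:repeated_eigenvalues} makes this point explicitly: the analytic dependence of $R_k$ on $\tau$ ``is not clear from the standard recursive construction of a formal solution.'') Likewise, quoting ``the parameter-dependent version of the summation theorem'' or a uniform Picard iteration is precisely what needs to be established, not assumed: the Borel singularities move with $\tau$ and can coalesce, so uniform estimates near the coalescence locus are the whole difficulty. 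You identify this gap honestly at the end of your write-up, but identifying it is not the same as closing it, and without that step the proposal does not prove the proposition; what it proves is the (known) case of distinct $u_i$ plus a pointwise existence at $\tau_0$, not joint analyticity.

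Two smaller points in the parts you do carry out. In the uniqueness argument, after writing $\tilde Y=YC$, the relevant matrix is $e^{-U/z}Ce^{U/z}$, whose $(i,j)$ entry is $C_{ij}e^{(u_j-u_i)/z}$, not $C_{ij}e^{(u_i-u_j)/z}$; the sign is harmless but worth fixing. More importantly, when $i\neq j$ but $u_i=u_j$ there is no ray on which the exponential blows up, so one cannot conclude $C_{ij}=0$ from divergence; the correct conclusion in that case is immediate because the entry is just the constant $C_{ij}$ and the limit being $\id$ forces $C_{ij}=\delta_{ij}=0$. Your argument as written silently skips this subcase. Part (2) is fine and is essentially the paper's computation.

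In short: a plausible alternative strategy, but incomplete precisely at the step where the proposition differs from the classical distinct-eigenvalue statement; the paper's Laplace-dual construction is specifically designed so that Lemma~\ref{lem:hnabla_flat} does that work once and for all.
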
 

Note that the fundamental solution $Y_\tau(z)$ depends on 
the choice of sign and ordering of the normalized idempotents 
$\Psi_1,\dots,\Psi_N$. 

We construct $Y_\tau$ using Laplace transformation. 
Under the formal substitution 
$z^{-1} \to \partial_\lambda$, $\partial_{z^{-1}} \to -\lambda$, 
the differential equation $\nabla (z^{-1}y(\tau,z)) = 0$ is transformed 
to the equations: 
\begin{align}
\label{eq:2nd_conn}
\begin{split}  
\hnabla_\alpha \hy(\tau,\lambda) & := \left(\partial_\alpha - 
(\alpha\star_\tau)(\lambda - E\star_\tau)^{-1} \right) \hy(\tau,\lambda) = 0, \\  
\hnabla_{\partial_\lambda} \hy(\tau,\lambda) & := 
\left(\partial_\lambda + (\lambda - E \star_\tau)^{-1} \mu \right) 
\hy(\tau,\lambda) =0, 
\end{split}
\end{align} 
where $\alpha \in H^\udot(F)$. 
See e.g.~\cite[Lecture 5]{Dubrovin98a}, \cite{Man99}. 
The connection $\hnabla$ is flat, and has only logarithmic 
singularities at $\lambda = u_1,\dots,u_N, \infty$ under 
the semisimplicity assumption. In fact, by the gauge 
transformation by $\Psi$, $\hnabla$ is transformed into 
the following form\footnote{Here we used 
$\sum_{i=1}^N V_i = \Psi^{-1} 
(\partial_{u_1} + \cdots + \partial_{u_N}) \Psi = 0$.
}: 
\[
\Psi^*\hnabla = d + \sum_{j=1}^N 
\left(\frac{E_j V}{\lambda -u_j} - V_j \right) 
d (\lambda -u_j) 
\]
where $E_j$, $V$, $V_j$ are as in \S \ref{subsec:UV}. 
This has logarithmic singularities along the normal crossing divisors 
$\prod_{j=1}^N (\lambda - u_j) =0$ in $B\times \C_\lambda$. 
\begin{lemma} 
\label{lem:hnabla_flat} 
Let $(\tau_0,\lambda_0) \in B\times \C_\lambda$ be a point 
on the singularity of $\hnabla$. 
\begin{enumerate} 
\item  
For every divisor $\{\lambda = u_i\}$ passing through $(\tau_0,\lambda_0)$, 
there exists a $\hnabla$-flat section $\hy_i(\tau,\lambda)$ 
which is holomorphic near $(\tau,\lambda) = (\tau_0,\lambda_0)$ 
and $\hy_i(\tau,u_i(\tau)) = \Psi_i$. 
\item 
Let $M$ be the monodromy transformation with respect to 
an anti-clockwise loop around $\lambda = \lambda_0$ in 
the $\lambda$-plane. For a $\hnabla$-flat section $\hy$, 
$M \hy - \hy$ is a linear combination of the flat sections 
$\hy_i$ in (1) such that $\{\lambda = u_i\}$ passes through 
$(\tau_0,\lambda_0)$. 
\end{enumerate}
\end{lemma}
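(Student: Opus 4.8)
The plan is to exploit the explicit logarithmic normal form
\[
\Psi^*\hnabla = d + \sum_{j=1}^N \left(\frac{E_j V}{\lambda - u_j} - V_j\right) d(\lambda - u_j)
\]
together with Lemma \ref{lem:V}, which tells us that $V_{ij} = 0$ whenever $u_i = u_j$; this vanishing is exactly what will make the residue along each divisor $\{\lambda = u_i\}$ nilpotent with eigenvalue $0$, giving non-monodromic flat sections. First I would work in the gauge $\Psi$, so that the connection has the displayed form. Fix the point $(\tau_0,\lambda_0)$ and let $I = \{i : u_i(\tau_0) = \lambda_0\}$ be the set of divisors passing through it. Changing coordinates by $\lambda \mapsto \lambda - u_i(\tau)$ for a chosen $i \in I$ only affects the $V_j$-terms (holomorphic) and the residue $E_i V$, so the singular part along $\{\lambda = u_i\}$ is governed by the residue endomorphism $R_i := E_i V$, which is the rank-one matrix supported in the $i$th row, namely $(R_i)_{ik} = V_{ik}$. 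By Lemma \ref{lem:V}, $V_{ik} = 0$ whenever $u_k(\tau_0) = u_i(\tau_0) = \lambda_0$, i.e.\ whenever $k \in I$; hence $R_i$ annihilates the coordinate subspace $\C^I = \operatorname{span}\{e_k : k \in I\}$ and has image contained in $\C e_i \subset \C^I$. Consequently $R_i^2 = 0$, so $R_i$ is nilpotent, and more importantly the product $\prod_{i\in I} R_i$ and all the relevant residue operators along the crossing are simultaneously nilpotent on a common flag.

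For part (1): I would invoke the standard theory of flat connections with logarithmic poles along a normal crossing divisor (Deligne). Because the residue $R_i$ along $\{\lambda = u_i\}$ has $0$ as its only eigenvalue — indeed $R_i$ is nilpotent — there are no resonance obstructions, and one obtains a flat section $\hy_i(\tau,\lambda)$ holomorphic in a neighbourhood of $(\tau_0,\lambda_0)$ whose restriction to $\{\lambda = u_i\}$ is flat for the residual connection there. Since along that divisor the residual connection $d - V_i\,d(\cdots)|_{\lambda = u_i} + (\text{terms from other divisors})$ still has nilpotent residues and we may normalise the restriction, I can arrange $\hy_i(\tau, u_i(\tau)) = \Psi_i$ (in the $\Psi$-gauge this is the $i$th standard basis vector $e_i$); transporting back out of the $\Psi$-gauge gives the stated $\hy_i(\tau, u_i(\tau)) = \Psi_i$. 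Concretely one can even build $\hy_i$ by a convergent power series ansatz $\hy_i = e_i + \sum_{m\ge 1}\hy_i^{(m)}$ solving the recursion coming from $\hnabla \hy_i = 0$, the solvability at each order being guaranteed precisely by the nilpotency/absence of positive-integer resonances just established.

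For part (2): the monodromy $M$ around $\lambda = \lambda_0$ in the $\lambda$-plane (with $\tau$ fixed near $\tau_0$) is, in the $\Psi$-gauge, conjugate to $\exp(2\pi\iu\, \Theta)$ where $\Theta$ is the total residue of $\hnabla$ along the locus $\lambda = \lambda_0$, namely $\Theta = \sum_{i\in I} R_i$ (the $V_j$ terms are holomorphic hence contribute trivially to the local monodromy). Now $\Theta$, as shown above, kills $\C^I$ and has image in $\C^I$; therefore $\Theta^2 = 0$, so $M - \id = 2\pi\iu\,\Theta$, a map whose image lies in $\C^I = \operatorname{span}\{\Psi_i : i \in I\}$. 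For any $\hnabla$-flat section $\hy$, the section $M\hy - \hy$ is again flat, and at the reference point its value lies in $\C^I$; since the flat sections $\hy_i$, $i \in I$, restrict to the basis $\{\Psi_i\}_{i\in I}$ of that subspace and flat sections are determined by a single value, $M\hy - \hy$ is a linear combination of the $\hy_i$, $i \in I$. That is exactly the claim.

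The main obstacle I anticipate is not conceptual but bookkeeping: one must be careful that after the coordinate change $\lambda \mapsto \lambda - u_i(\tau)$ straightening one divisor, the \emph{other} divisors $\{\lambda = u_j\}$, $j \in I\setminus\{i\}$, and the holomorphic $V_j$-terms genuinely do not spoil the residue computation, and that the Deligne-type extension/flat-section results apply uniformly in the parameter $\tau$ so that $\hy_i$ is holomorphic in $(\tau,\lambda)$ jointly (not just in $\lambda$). This uniformity is already implicit in the isomonodromic setup of \S\ref{subsec:UV} — the $V_i$ depend analytically on $\tau$ and Lemma \ref{lem:V} holds identically in $\tau$ near $\tau_0$ — so the argument goes through after shrinking $B$, but it is the point that deserves the most care in writing out.
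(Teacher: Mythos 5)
Your overall strategy coincides with the paper's: pass to the $\Psi$-gauge, observe that the residue $R_i = E_i V|_{\tau_0}$ along each divisor $\{\lambda=u_i\}$ is nilpotent with image in $\C e_i$ and kernel containing $\C^I$ (by Lemma \ref{lem:V}), and invoke the standard local theory of connections with logarithmic poles along a normal crossing divisor to produce the holomorphic fundamental solution (the paper cites Yoshida--Takano for this). Your treatment of part (2) is also the same argument the paper uses, phrased via $\Theta=\sum_{i\in I}R_i$ rather than via the explicit form of the fundamental solution.

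There is, however, a genuine gap in your part (1) at the step you flag as ``we may normalise the restriction.'' Normalising at the single point $(\tau_0,\lambda_0)$ gives only $\hy_i(\tau_0,\lambda_0)=e_i$. To conclude the full claim $\hy_i(\tau,u_i(\tau))=\Psi_i$ \emph{for all $\tau$ near $\tau_0$}, one must show that the constant vector $e_i$ is flat for the residual connection $\hnabla^{(i)}$ induced on the divisor $\{\lambda=u_i\}$,
\[
\hnabla^{(i)} = d + \sum_{j\neq i}\Bigl(\tfrac{E_jV}{u_i-u_j}-V_j\Bigr)\,d(u_i-u_j),
\]
and the vanishing $\hnabla^{(i)}e_i=0$ is equivalent to the identity $E_jVe_i=(u_i-u_j)V_je_i$ for $j\neq i$. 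This identity is not a formal consequence of nilpotency of residues or of Deligne's extension theory; it is a genuine use of the isomonodromy flatness equations, namely $[E_j,V]=[V_j,U]$ from \eqref{eq:EiV}. You describe the remaining issue as ``uniformity in $\tau$'' / bookkeeping, but it is a concrete algebraic verification without which $\hy_i(\tau,u_i(\tau))$ could drift away from $e_i$ as $\tau$ moves, even though $\hy_i$ is holomorphic in $(\tau,\lambda)$. Supplying that one computation closes the gap and makes your proof match the paper's.

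One smaller caution: the nilpotency $R_i^2=0$ and the orthogonality $V_{ik}=0$ for $i,k\in I$ hold at $\tau=\tau_0$, not identically near $\tau_0$ (the eigenvalues $u_i$, $i\in I$, separate under deformation). The paper's fundamental solution \eqref{eq:fundsol_hnabla} is careful to use $R_j=E_jV|_{\tau=\tau_0}$; your $R_i:=E_iV$ should be read the same way, and in part (2) the monodromy should be taken for $\tau$ close enough to $\tau_0$ that the small loop encloses exactly the cluster $\{u_j(\tau):j\in I\}$.
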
 
\begin{proof} 
(1) It suffices to find a $\Psi^*\hnabla$-flat section $\hs_i(\tau,\lambda)$ 
with the property $\hs_i(\tau,u_i) = e_i$. 
Since we do not assume that $u_1,\dots,u_N$ are pairwise 
distinct, we can have several singularity divisors 
passing through the point $(\tau_0,\lambda_0)$; 
let $\{\lambda = u_j\}$ be one of them. 
The residue $R_j = E_jV|_{\tau=\tau_0}$ 
of $\Psi^*\hnabla$ at $(\tau_0, \lambda_0)$ 
\emph{along} the divisor $\{\lambda = u_j\}$ 
is nilpotent by Lemma \ref{lem:V}. 
Thus, in a neighbourhood of 
$(\tau,\lambda) = (\tau_0, \lambda_0)$, we have a fundamental solution 
for $\hnabla$ of the form 
(see, e.g.~\cite[Theorem 2, Remark 2]{Yoshida-Takano}): 
\begin{equation} 
\label{eq:fundsol_hnabla}
U(\tau,\lambda) 
\exp \left( 
- \sum_{j : u_{j}(\tau_0)= \lambda_0} R_j \log (\lambda - u_j)
\right) 
\end{equation} 
where $U(\tau,\lambda)$ is a matrix-valued 
holomorphic function defined near $(\tau,\lambda)= (\tau_0, \lambda_0)$ 
such that $U(\tau_0,\lambda_0)= \id$. 
We define a $\Psi^*\hnabla$-flat section $\hs_i(\tau,\lambda)$ by 
applying the above fundamental solution to the $i$th basis vector $e_i$. 
Note that $R_j e_i = 0$ 
whenever $u_j(\tau_0) = u_i(\tau_0) =\lambda_0$ by Lemma \ref{lem:V}, 
therefore $\hs_i(\tau,\lambda ) = U(\tau,\lambda) e_i$ 
is holomorphic near $(\tau_0,\lambda_0)$.  
We claim that $\hs_i(\tau,u_i) = e_i$. 
By definition we have $\hs_i(\tau_0, \lambda_0)= e_i$. 
On the other hand, the residual connection $\hnabla^{(i)}$ 
on the divisor $\{\lambda =u_i\}$ induced from $\Psi^*\hnabla$ reads: 
\[
\hnabla^{(i)} = d + \sum_{j :j\neq i} 
\left( \frac{E_j V}{u_i - u_j} - V_j \right) d(u_i-u_j). 
\]
Using the formula \eqref{eq:EiV}, one finds that 
$E_j V e_i = (u_i-u_j)V_je_i$ for $i\neq j$; hence 
$\hnabla^{(i)} e_i =0$. Since $\hs_i(\tau,u_i)$ is flat 
with respect to $\hnabla^{(i)}$, the claim follows. 

(2) This follows from the form \eqref{eq:fundsol_hnabla} 
of the fundamental solution 
and the fact that $\im R_j \subset \C e_j$. 
\end{proof}

\begin{proof}[Proof of Proposition \ref{prop:repeated_eigenvalues}]
We closely follow the method of Balser-Jurkat-Lutz \cite[Theorem 2]{BJL81}
and Bridgeland--Toledano-Laredo \cite[\S8.4]{BTL}. 
Using the flat section $\hy_i(\tau,\lambda)$ from 
Lemma \ref{lem:hnabla_flat}, we define 
\begin{equation}
\label{eq:yi_Laplace}
y_i(\tau,z) = 
\frac{1}{z}
\int_{u_i+ \R_{\ge 0} e^{\iu\phi}} \hy_i(\tau,\lambda) e^{-\lambda/z} d\lambda.
\end{equation} 
Shrinking $B$ if necessary, we may assume that 
$e^{-\iu\phi}(u_i(\tau) - u_j(\tau))\notin \R$ 
for all $\tau\in B$ and for all $(i,j)$ with $u_{i,0} \neq u_{j,0}$.  
Then the flat section $\hy_i(\tau,\lambda)$ can be analytically 
continued along the contour $u_i + \R_{\ge 0} e^{\iu\phi}$ 
in the $\lambda$-plane when $\tau \in B$. 
Indeed, by assumption, the contour $u_i + \R_{\ge 0} e^{\iu\phi}$ 
can only contain singular points $u_j$ (of $\hnabla$) 
such that $u_{i,0} = u_{j,0}$; 
but we know from Lemma \ref{lem:hnabla_flat} that 
$\hy_i(\tau,\lambda)$ is holomorphic in a neighbourhood of 
$(\tau_0,u_{i,0})$
and thus it is regular at $(\tau,u_j)$ whenever $u_{i,0} = u_{j,0}$. 
Because $\hnabla$ is regular singular at $\lambda = \infty$, 
$\hy_i(\tau,\lambda)$ grows at most polynomially 
as $\lambda \to \infty$. Thus the integral \eqref{eq:yi_Laplace} converges 
if $|\arg z - \phi|<\frac{\pi}{2}$; by changing the slope of the contour a little,  
we can analytically continue $y_i(\tau,z)$ to a bigger sector 
$|\arg z -\phi|<\frac{\pi}{2}+ \epsilon$ with $\epsilon>0$ 
sufficiently small. 
By an elementary calculation using integration by parts, we can show 
that $y_i(\tau,z)$ is $\nabla$-flat, where we need the fact that 
$\hy_i(\tau,u_i)$ is a $u_i$-eigenvector of $E\star_\tau$, 
see~\cite[\S 8.4]{BTL}. 
Watson's lemma \cite[6.2.2]{Ablowitz-Fokas} shows that 
$y_i(\tau,z) \to \Psi_i$ as $z\to 0$ in the sector 
$|\arg z - \phi|<\frac{\pi}{2} + \epsilon$. 

The uniqueness of $Y_\tau$ 
follows from the fact that the angle of the sector 
is bigger than $\pi$ \cite[Remark 1.4]{BJL79}. 
Indeed, suppose that we have two solutions $Y_1$, $Y_2$ subject to  
\eqref{eq:Y_asymp}.  Then there exists a constant matrix $C$ 
such that $Y_1 = Y_2 C$. We have $e^{-U/z} C e^{U/z}\to \id$ 
as $z\to 0$ in the sector. Since the angle of the sector 
is bigger than $\pi$, it happens only when $C=\id$. 

Finally we show Part (2). We omit $\tau$ from the notation. 
Since $\my_i(z)$ and $y_j(z)$ are flat, the pairing 
$(\my_i(-z), y_j(z))_F$ does not depend on $z$ 
by \eqref{eq:nabla_preserves_pairing}. 
By the asymptotic condition, we have 
$e^{(u_j-u_i)/z}(\my_i(-z), y_j(z))_F 
= (e^{-u_i/z} \my_i(-z), e^{u_j/z} y_j(z))_F 
\to (\Psi_i, \Psi_j)_F = \delta_{ij}$ 
as $|z|\to 0$ in the sector $|\arg z - \phi|<\frac{\pi}{2}+\epsilon$. 
Thus $(\my_i(-z),y_j(z))_F = \delta_{ij}$ if $u_i = u_j$. 
If $u_i \neq u_j$, we have $(\my_i(-z), y_j(z))_F=0$ 
since the angle of the sector is bigger than $\pi$. 
\end{proof}

\begin{remark} 
Applying Watson's lemma to \eqref{eq:yi_Laplace}, we obtain 
the asymptotic expansion
\[
Y_\tau(z) e^{U/z} \sim \Psi (\id + R_1 z + R_2 z^2+ \cdots) 
\]
as $z\to 0$ in the sector $|\arg z - \phi|<\frac{\pi}{2}+ \epsilon$. 
The right-hand side is called a formal solution which is typically 
divergent. 
The existence of a formal solution in the case 
where $u_1,\dots,u_N$ are not distinct was also remarked 
by Teleman \cite[Theorem 8.15]{Tel10}. Our construction 
shows that each $R_k$ depends analytically on $\tau$, which 
is not clear from the standard recursive construction of a formal solution. 
In other words, a semisimple point of a Frobenius manifold 
is never a turning point. 
\end{remark} 

\begin{remark} 
\label{rem:bigger_sector} 
Each flat section $y_i(\tau, z)$ in \eqref{eq:yi_Laplace}
can have the asymptotic expansion $y_i(\tau, z) \sim e^{-u_i/z} \Psi_i$ 
in a bigger sector. 
Set $\Sigma_i = \{(u_j-u_i)/|u_j-u_i| : u_j \neq u_i \}$ 
and let $\{e^{\iu\theta} : \xi_1 < \theta< \xi_2\}$ be the connected 
component of $S^1 \setminus \Sigma_i$ which contains the 
admissible direction $e^{\iu\phi}$. 
By construction, the flat section 
$y_i(\tau,z)$ has the asymptotic expansion in the sector 
$\xi_1-\frac{\pi}{2} < \arg z <\xi_2 + \frac{\pi}{2}$ 
as in Figure \ref{fig:sector_from_u_i}. 
(The sector here can be bigger than $2\pi$.)
\end{remark} 

\begin{figure}[htbp]
\begin{center} 
\includegraphics[bb=176 535 476 720]{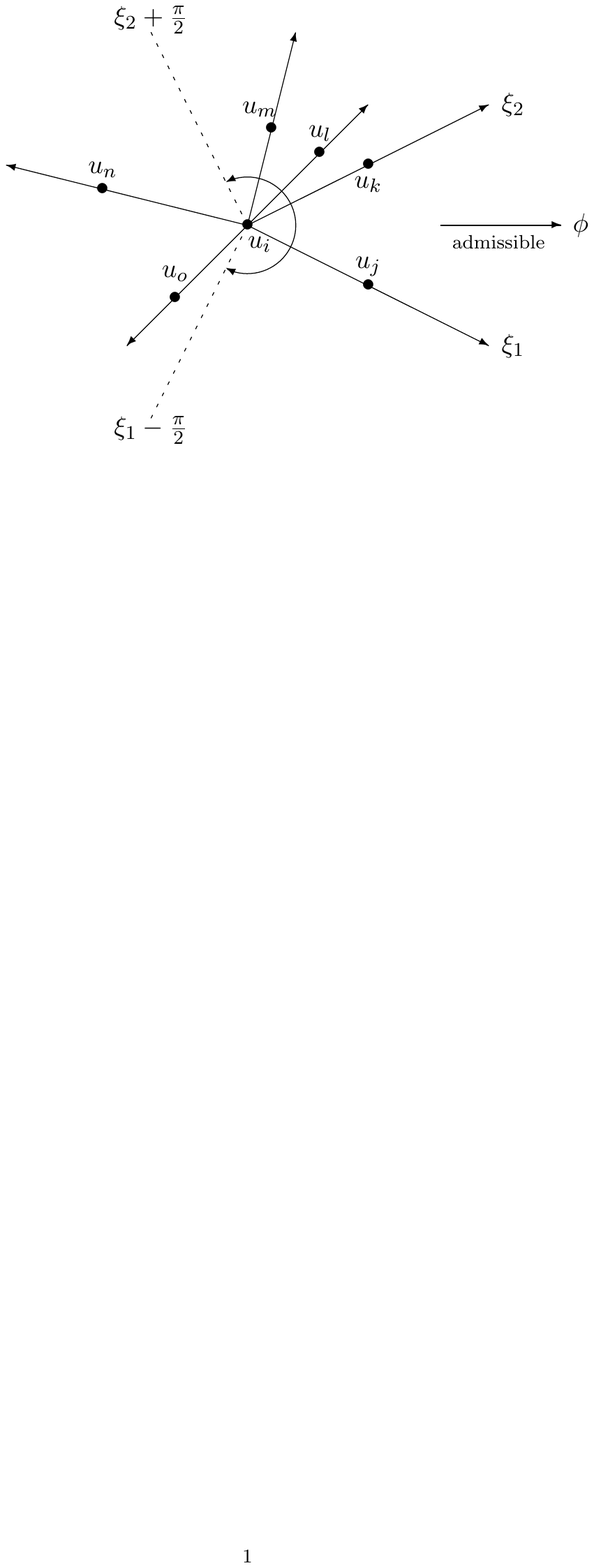}
\end{center} 
\caption{The flat section $y_i(\tau,z)$ 
has the asymptotics $y_i(\tau,z) \sim e^{-u_i/z} \Psi_i$ in the 
sector $(\xi_1-\frac{\pi}{2}, \xi_2+ \frac{\pi}{2})$.}
\label{fig:sector_from_u_i} 
\end{figure}

\subsection{Mutation and Stokes matrix} 
\label{subsec:mutation}
In this section, we discuss mutation of flat sections 
and Stokes matrices. The braid group action on 
the irregular monodromy data $(Y_\tau,S)$ via mutation 
was discussed by Dubrovin \cite[Lecture 4]{Dubrovin98a}. 
We use the idea of Balser-Jurkat-Lutz \cite{BJL81} expressing 
Stokes data in terms of monodromy of the Laplace-dual 
connection $\hnabla$ and extend the result of Dubrovin 
to the case where some of $u_1,\dots,u_N$ may coincide. 
The results here lead us to the formulation of 
a marked reflection system in \S\ref{subsec:MRS}-\ref{subsec:MRS_mutation}, 
\S\ref{subsec:MRS_Fano}.

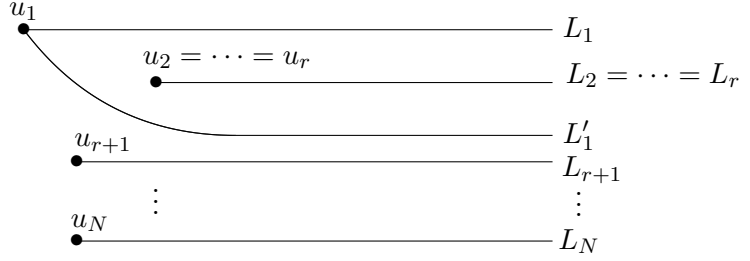
\begin{figure}[htbp]
\begin{center} 
\begin{picture}(300,85)(0,-20)
\put(100,60){\makebox(0,0){$\bullet$}} 
\put(100,67){\makebox(0,0){$u_1$}} 
\put(150,40){\makebox(0,0){$\bullet$}} 
\put(145,47){{$u_2=\cdots = u_r$}} 
\put(120,10){\makebox(0,0){$\bullet$}} 
\put(130,17){\makebox(0,0){$u_{r+1}$}}
\put(150,-2){\makebox(0,0){$\vdots$}} 
\put(310,-3){\makebox(0,0){$\vdots$}} 
\put(125,-13){\makebox(0,0){$u_N$}} 
\put(120,-20){\makebox(0,0){$\bullet$}} 
\put(120,-20){\line(1,0){180}} 
\put(310,-20){\makebox(0,0){$L_N$}} 
\put(100,60){\line(1,0){200}}
\put(150,40){\line(1,0){150}} 
\put(310,60){\makebox(0,0){$L_1$}} 
\put(305,40){$L_2 = \cdots =L_r$} 
\put(310,20){\makebox(0,0){$L_1'$}}
\put(180,20){\line(1,0){120}}
\qbezier(100,60)(130,20)(180,20)
\put(120,10){\line(1,0){180}} 
\put(315,7){\makebox(0,0){$L_{r+1}$}}
\end{picture} 
\end{center} 
\caption{Right mutation of $L_1$ (where $e^{\iu\phi}=1$ is admissible)} 
\label{fig:mutation} 
\end{figure}
Recall that we constructed the asymptotically exponential flat section 
$y_i(\tau,z)$ as the Laplace transform \eqref{eq:yi_Laplace} 
of the $\hnabla$-flat section $\hy_i(\tau,\lambda)$ 
over a straight half-line $L_i =u_i +\R_{\ge 0} e^{\iu\phi}$. 
We study the change of flat sections under a change of integration paths. 
To illustrate, we consider the change of paths from $L_1$ to $L_1'$ 
depicted in Figure \ref{fig:mutation}. 
In the passage from $L_1$ to $L_1'$, the path is assumed to 
cross only one eigenvalue $u_2=\cdots = u_{r}$ of multiplicity $r-1$. 
We use the straight paths $L_1,\dots, L_N$ as 
branch cuts for $\hnabla$-flat sections and regard 
$\hy_i(\lambda)$ as a single-valued analytic function 
on $\C\setminus \bigcup_{j\neq i} L_j$. 
Let $M$ denote the anti-clockwise monodromy transformation 
around $\lambda =u_2$ acting on the space of $\hnabla$-flat sections. 
By Lemma \ref{lem:hnabla_flat} (2), the monodromy transform of $\hy_1$ 
can be written as: 
\[
M \hy_1 = \hy_1 - c_{12} \hy_2 - \cdots - c_{1r} \hy_r 
\]
for some coefficients $c_{12},\dots,c_{1r}\in \C$. 
From this it follows that the flat section $y_1'(z)$ defined 
by the integral over $L_1'$ is given by: 
\begin{equation} 
\label{eq:y1_mutation} 
y_1'(z) = y_1(z) - c_{12} y_2(z) - \cdots - c_{1r} y_r(z). 
\end{equation}
We call the flat section $y_1'(z)$ (or the path $L_1'$) 
the \emph{right mutation} of $y_1(z)$ (resp.~$L_1$) 
with respect to $u_2 = \cdots = u_r$. 
The \emph{left mutation} of a flat section or a path 
is the inverse operation: see Figure \ref{fig:left_mutation}. 
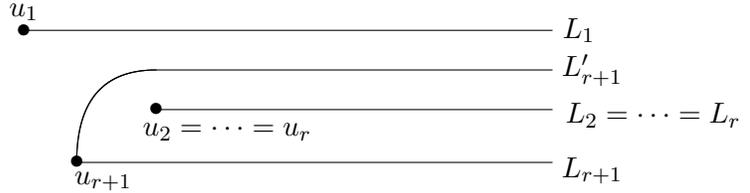
\begin{figure}[htbp]
\begin{center} 
\begin{picture}(300,70)
\put(100,60){\makebox(0,0){$\bullet$}} 
\put(100,67){\makebox(0,0){$u_1$}} 
\put(150,30){\makebox(0,0){$\bullet$}} 
\put(145,20){{$u_2=\cdots = u_r$}} 
\put(120,10){\makebox(0,0){$\bullet$}} 
\put(130,3){\makebox(0,0){$u_{r+1}$}}
\put(100,60){\line(1,0){200}}
\put(150,30){\line(1,0){150}} 
\put(310,60){\makebox(0,0){$L_1$}} 
\put(305,25){$L_2 = \cdots =L_r$} 
\qbezier(120,10)(120,45)(150,45)
\put(150,45){\line(1,0){150}} 
\put(315,45){\makebox(0,0){$L_{r+1}'$}}
\put(315,7){\makebox(0,0){$L_{r+1}$}} 
\put(120,10){\line(1,0){180}} 
\end{picture} 
\end{center} 
\caption{Left mutation of $L_{r+1}$} 
\label{fig:left_mutation} 
\end{figure}

A mutation occurs when we vary the direction $e^{\iu\phi}$ 
and $e^{\iu\phi}$ becomes non-admissible. 
We now let the phase $\phi$ decrease by the angle $\pi$ continuously. 
Then the asymptotically exponential flat sections 
$y_1,\dots,y_N$ undergo a sequence of right mutations. 
Let $\my_1,\dots,\my_N$ be the basis of asymptotically 
exponential flat sections associated to $-e^{\iu\phi}$ 
as in Proposition \ref{prop:repeated_eigenvalues}. 
In the situation of Figure \ref{fig:mutation}, we successively 
right-mutate $L_1'$ across $u_{r+1}, u_{r+2}, \dots, u_N$, 
arriving at: 
\begin{equation} 
\label{eq:Stokes_example}
\my_1(z) = y_1(z) - c_{12} y_2(z) - \cdots - c_{1r}y_r(z) 
- \left( 
\substack{\text{\scriptsize linear combinations of $y_j(z)$} \\  
\text{with $\im(e^{-\iu\phi}u_j)< \im(e^{-\iu\phi}u_2)$}}  
\right). 
\end{equation} 
In this way we can write $\my_i$ as a linear combination of 
$y_j$'s and vice versa. The transition matrix is called 
the Stokes matrix. 

\begin{figure}[htbp]  
\begin{center} 
\includegraphics[bb=206 560 444 718]{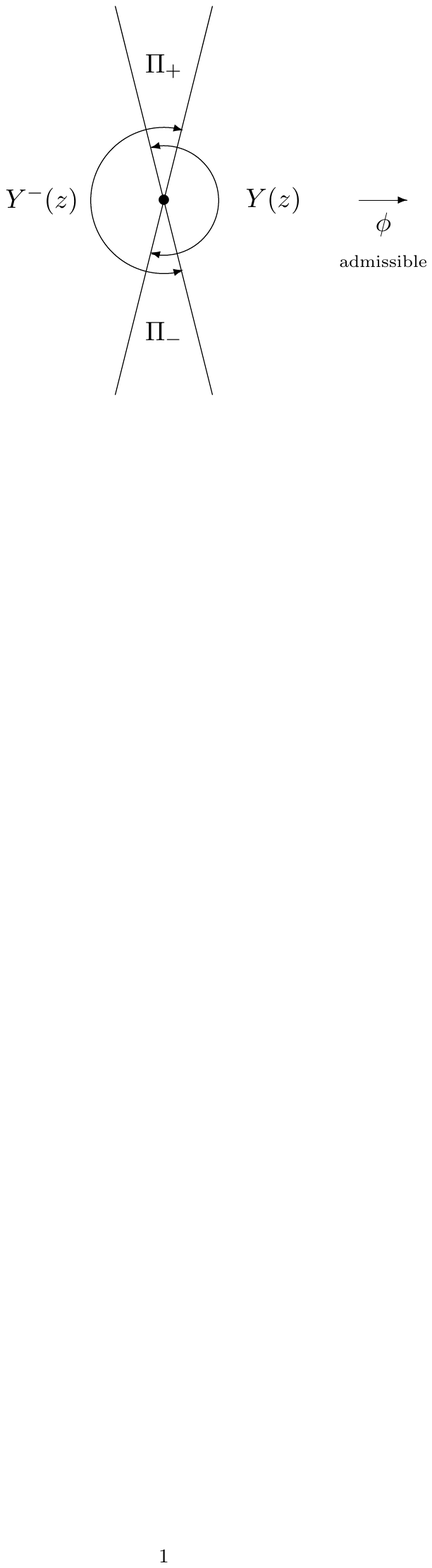}
\end{center} 
\caption{Domains of the two solutions $Y$ and $\mY$}
\label{fig:Stokes} 
\end{figure} 
\begin{definition}[{\cite[Remark 1.6]{BJL79}, \cite[Definition 4.3]{Dubrovin98a}}] 
\label{def:Stokes} 
Let $Y=(y_1,\dots,y_N)$, $\mY=(\my_1,\dots,\my_N)$ 
be the fundamental solution 
associated to an admissible direction 
$e^{\iu\phi}$ and $-e^{\iu\phi}$ respectively 
as in Proposition \ref{prop:repeated_eigenvalues}.  
Let $\Pi_+ \cup \Pi_-$ be the domain in Figure \ref{fig:Stokes} 
where both $Y$ and $\mY$ are defined. 
The \emph{Stokes matrices} (or Stokes multipliers) are 
the constant matrices\footnote
{Our Stokes matrices are inverse to the ones in 
\cite[Definition 4.3]{Dubrovin98a}.} 
$S$ and $S_-$ satisfying  
\begin{alignat*}{2} 
Y(z) & = \mY(z) S   \quad &  \text{for $z\in \Pi_+$} \\
Y(z) & = \mY(z) S_-  & \text{for $z\in \Pi_-$} 
\end{alignat*} 
\end{definition} 

\begin{proposition}[{\cite[Theorem 4.3, (4.39)]{Dubrovin98a}}] 
\label{prop:Stokes} 
Let $(y_1,\dots,y_N)$ and $(\my_1,\dots,\my_N)$ be the 
asymptotically exponential 
fundamental solution associated to admissible directions 
$e^{\iu\phi}$ and $-e^{\iu\phi}$ respectively and 
let $S=(S_{ij}), S_-=(S_{-,ij})$ be the Stokes matrices. 
We have 
\begin{enumerate} 
\item $S_{ij} = S_{-,ji} = (y_i(\tau,e^{-\pi\iu} z),y_j(\tau,z))_F$ 
for $z\in \Pi_+$. 
Here we write $e^{-\pi\iu}z$ instead of $-z$ to specify the path of 
analytic continuation. Similarly, $(\my_i(\tau,e^{\pi\iu}z), \my_j(\tau,z))_F$ 
gives the coefficient $(S^{-1})_{ij}$ of the inverse matrix of $S$. 

\item The Stokes matrix $S$ is a triangular matrix 
with diagonal entries all equal to one. 
More precisely, we have $S_{ii} = 1$ for all $i$ and 
\begin{align*} 
& S_{ij} = 0 \qquad \text{if $i\neq j$ and $u_i = u_j$;} \\
& S_{ij} = 0 \qquad \text{if $\im(e^{-\iu\phi} u_i) < \im(e^{-\iu\phi} 
u_j)$.}
\end{align*} 
\end{enumerate} 
In particular, $S$, $S_-$ do not depend on $\tau$ 
and $y_1,\dots,y_N$ are semiorthonormal. 
\end{proposition}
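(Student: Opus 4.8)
The plan is to extract the entries of $S$ directly as Poincar\'e pairings of the Laplace-transformed flat sections of Proposition~\ref{prop:repeated_eigenvalues}, and to read off the triangularity from the exponential asymptotics (equivalently, from the explicit mutation formula \eqref{eq:Stokes_example}). The $\tau$-independence is then immediate: a pairing of two $\nabla$-flat sections is constant in $z$ by \eqref{eq:nabla_preserves_pairing}, and constant in $\tau$ as well since the big quantum connection preserves $(\cdot,\cdot)_F$ and the sections $y_i(\tau,z)$ are flat for the full connection on $B\times\P^1$; hence the expressions for $S_{ij}$ and $S_{-,ij}$ from part (1) do not depend on $\tau$, and parts (1)--(2) together are exactly the assertion that $y_1,\dots,y_N$ are semiorthonormal.

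For part (1) I would rewrite $Y(z)=\mY(z)S$ on $\Pi_+$ as $y_j(\tau,z)=\sum_k \my_k(\tau,z)\,S_{kj}$ and pair both sides against $y_i(\tau,e^{-\pi\iu}z)$. The key input is the ``dual'' orthonormality relation $(y_i(\tau,e^{-\pi\iu}z),\my_k(\tau,z))_F=\delta_{ik}$, which I would deduce from the relation $(\my_i(\tau,-z),y_j(\tau,z))_F=\delta_{ij}$ of Proposition~\ref{prop:repeated_eigenvalues}(2) using the symmetry $(\alpha,\beta)_F=(\beta,\alpha)_F$ together with the $z$-independence of flat pairings (the substitution $z\mapsto e^{-\pi\iu}z$ is exactly why the path of continuation must be specified). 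Plugging in gives $S_{ij}=(y_i(\tau,e^{-\pi\iu}z),y_j(\tau,z))_F$ for $z\in\Pi_+$. Carrying out the same computation on $\Pi_-$ and using the symmetry of $(\cdot,\cdot)_F$ once more to interchange the two slots---which exchanges $i\leftrightarrow j$ together with $e^{-\pi\iu}z\leftrightarrow e^{\pi\iu}z$---yields $S_{-,ji}=S_{ij}$; inverting $Y=\mY S$ to $\mY=YS^{-1}$ and repeating with the roles of $Y$ and $\mY$ interchanged identifies $(S^{-1})_{ij}$ with $(\my_i(\tau,e^{\pi\iu}z),\my_j(\tau,z))_F$.

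For part (2) I would use that on $\Pi_+$ both $Y(z)e^{U/z}\to\Psi$ and $\mY(z)e^{U/z}\to\Psi$ as $z\to0$, whence $e^{-U/z}Se^{U/z}=(\mY(z)e^{U/z})^{-1}(Y(z)e^{U/z})\to\id$. Its $(i,j)$-entry is $e^{(u_j-u_i)/z}S_{ij}$; on $\Pi_+$ one has $\re\!\big((u_j-u_i)/z\big)\to+\infty$ precisely when $\im(e^{-\iu\phi}u_i)<\im(e^{-\iu\phi}u_j)$, so in that range the entry blows up and forces $S_{ij}=0$; when $i\neq j$ and $u_i=u_j$ the entry is the constant $S_{ij}$, which must vanish; and the diagonal entries give $S_{ii}=1$. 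Alternatively this can be read off from the mutation formula \eqref{eq:Stokes_example}, which is the route prepared in \S\ref{subsec:mutation}: it already exhibits $\mY$ as a unitriangular transform of $Y$ with off-diagonal support confined to the prescribed set of index pairs. For pairwise distinct $u_i$ all of this is classical, due to Dubrovin \cite[Theorem~4.3]{Dubrovin98a}; the content here is the uniform treatment of the case with repeated eigenvalues.

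The step I expect to be the main obstacle is the sector bookkeeping underlying both parts: pinning down where $\Pi_+$ and $\Pi_-$ sit relative to the admissible phase $\phi$, which of the exponentials $e^{-u_i/z}$ is dominant there, and along which path ($e^{+\pi\iu}z$ or $e^{-\pi\iu}z$) each $y_i$ or $\my_i$ is continued, so that every asymptotic estimate and every orthonormality relation is invoked where it is valid. The genuinely new input is the coincident-eigenvalue case, where Lemma~\ref{lem:V}---equivalently the nilpotence of the residues of $\hnabla$ and the vanishing $R_je_i=0$ when $u_i=u_j$ from Lemma~\ref{lem:hnabla_flat}---guarantees that $\hy_i$ is single-valued around a coincident eigenvalue, so the Laplace contour of \eqref{eq:yi_Laplace} can be pushed past such a point without generating an extra term; this is precisely what gives $S_{ij}=0$ for $i\neq j$, $u_i=u_j$. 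Everything else is the integration-by-parts and Watson's-lemma estimates already set up in the proof of Proposition~\ref{prop:repeated_eigenvalues}.
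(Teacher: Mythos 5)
Your proposal is correct and matches the paper's proof in all essentials. For part (1) you use exactly the relation the paper invokes from Proposition~\ref{prop:repeated_eigenvalues}(2) (which, as you correctly read it, states $(\my_i(\tau,-z),y_j(\tau,z))_F=\delta_{ij}$ -- the printed $y_i$ there is a typo for $\my_i$), combined with the symmetry of $(\cdot,\cdot)_F$ and the $z$-independence of flat pairings, paired against the expansion $y_j=\sum_k\my_k S_{kj}$; this is verbatim the paper's computation. Your $\tau$-independence argument (flatness of $(\cdot,\cdot)_F$ under the full big quantum connection, together with the $y_i(\tau,z)$ being flat on $B\times\P^1$) is also exactly the implicit mechanism behind the paper's "in particular" remark.

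The one point of genuine divergence is part (2). The paper derives the triangularity and the vanishing $S_{ij}=0$ for $i\neq j$, $u_i=u_j$ from the mutation formula \eqref{eq:Stokes_example}, i.e.\ by tracking the Laplace-contour deformation; you lead with the asymptotic argument $e^{-U/z}Se^{U/z}=\bigl(\mY(z)e^{U/z}\bigr)^{-1}\bigl(Y(z)e^{U/z}\bigr)\to\id$ on $\Pi_+$, which forces $e^{(u_j-u_i)/z}S_{ij}\to\delta_{ij}$ and hence the stated vanishing pattern, and you offer the mutation route only as an alternative. Both are valid; the asymptotic argument is slightly more self-contained (it needs only the defining asymptotics of $Y$ and $\mY$ and the direction of $\Pi_+$, not the contour-bending analysis of \S\ref{subsec:mutation}), whereas the mutation route the paper chose has the advantage of making visible why the vanishing $S_{ij}=0$ for coincident $u_i=u_j$, $i\neq j$ is tied to Lemma~\ref{lem:V} and the nilpotence of the residues of $\hnabla$ -- the ingredient you also correctly flag as the genuinely new input beyond the classical Dubrovin case.
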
 
\begin{proof} 
Dubrovin \cite{Dubrovin98a} discussed the case 
where $u_1,\dots,u_N$ are distinct. 
We have 
$y_j(z) = \sum_{k=1}^N \my_k(z) S_{kj}$ 
for $z\in \Pi_+$ and 
$y_j(z) = \sum_{k=1}^N \my_k(z) S_{-,kj}$ 
for $z\in \Pi_-$. 
Taking the pairing with $y_i(-z)$ and 
using the property $(\my_k(z), y_i(-z))_F= \delta_{ki}$ 
from Proposition \ref{prop:repeated_eigenvalues}, we 
obtain 
\begin{align*} 
& (y_i(-z), y_j(z))_F  = S_{ij} \quad \text{for $z\in \Pi_+$} \\ 
& (y_i(-z), y_j(z))_F = S_{-,ij} \quad \text{for $z\in \Pi_-$} 
\end{align*} 
Replacing $z$ with $-z$ in the second formula and taking into 
account the direction of analytic continuation, we see that (1) and (2) hold. 
(The discussion for $(\my_i(e^{\pi\iu}z), \my_j(z))_F$ is similar.) 
As already discussed, the coefficients of the Stokes matrix 
arise from a sequence of 
mutations. Part (3) is obvious from this. 
\end{proof} 
\begin{remark} 
We often choose an ordering of normalized idempotents 
$\Psi_1,\dots,\Psi_N$ 
so that the corresponding eigenvalues satisfy 
$\im(e^{-\iu\phi}u_1) \ge \cdots \ge \im(e^{-\iu\phi}u_N)$. 
Then the Stokes matrix becomes \emph{upper}-triangular. 
Note also that flat sections $y_i$ corresponding 
to the same eigenvalue are mutually \emph{orthogonal}. 
\end{remark} 

Let us go back to the situation of Figure \ref{fig:mutation}. 
The coefficient $c_{1i}$ appearing in \eqref{eq:Stokes_example} 
coincides with the coefficient $S_{1i}$ of the Stokes matrix, 
because by inverting \eqref{eq:Stokes_example} we obtain 
\[
y_1(z) = \my_1(z) + c_{12} \my_2(z) + \cdots + c_{1r} \my_r(z) 
+ \left( 
\substack{\text{\scriptsize linear combinations of $\my_j(z)$} \\  
\text{with $\im(e^{-\iu\phi}u_j)< \im(e^{-\iu\phi}u_2)$}}  
\right)
\]
for $z\in \Pi_-$ and therefore $c_{1i} = S_{-,i1}= S_{1i}$. 
Recall that $c_{1i}$ arises as the coefficient  
of the right mutation \eqref{eq:y1_mutation}. 
Therefore we obtain the following corollary: 

\begin{corollary}[{\cite[Theorem 4.6]{Dubrovin98a}}] 
\label{cor:mutation} 
Let $e^{\iu\phi}$ be an admissible direction and let 
$(y_1,\dots,y_N)$ be the asymptotically exponential 
fundamental solution associated to $e^{\iu\phi}$ 
in Proposition \ref{prop:repeated_eigenvalues}. 
Let $u_a$, $u_b$ be distinct eigenvalues such that 
there are no eigenvalues $u_j$ with 
$\im(e^{-\iu\phi}u_a) > \im(e^{-\iu\phi} u_j)> \im(e^{-\iu\phi}u_b)$. 
Then the right mutation of the flat section $y_a(z)$ with respect to $u_b$ is 
given by 
\[
y_a \mapsto y_a - \sum_{j : u_j = u_b} S_{aj} y_j. 
\]
Similarly, the left mutation of $y_b(z)$ with respect to $u_a$ is given by: 
\[
y_b \mapsto y_b - \sum_{j:u_j = u_a} S_{jb} y_j 
\]
where $S_{ij} = (y_i(\tau,e^{-\pi\iu}z), y_j(\tau,z))_F$ are the coefficients 
of the Stokes matrix. 
\end{corollary}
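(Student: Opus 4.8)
The plan is to build directly on the contour-deformation picture developed in \S\ref{subsec:mutation}: mutations of the asymptotically exponential flat sections are realized as changes of the Laplace integration paths, and the transition coefficients are read off from the local monodromy of the dual connection $\hnabla$ via Lemma \ref{lem:hnabla_flat}(2). Essentially all of the analytic input is already in place, so the proof reduces to matching those monodromy coefficients with the entries of the Stokes matrix under the adjacency hypothesis on $u_a$ and $u_b$. Throughout I would suppress $\tau$ from the notation, as in the proof of Proposition \ref{prop:repeated_eigenvalues}.

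First I would recall how the right-mutation formula \eqref{eq:y1_mutation} arises, now applied with the roles $1 \mapsto a$ and $\{2,\dots,r\}\mapsto\{j : u_j = u_b\}$. Deform the straight contour $L_a = u_a + \R_{\ge 0}e^{\iu\phi}$ to a contour $L_a'$ that dips below and crosses exactly the eigenvalues equal to $u_b$; the adjacency hypothesis, combined with the fact that admissibility forces the level sets of $\im(e^{-\iu\phi}\,\cdot\,)$ on the spectrum to be single clusters of coinciding eigenvalues, guarantees that no other singular point of $\hnabla$ is swept in this passage. Writing $M$ for the anti-clockwise monodromy of $\hnabla$ around $\lambda = u_b$, Lemma \ref{lem:hnabla_flat}(2) gives $M\hy_a = \hy_a - \sum_{j : u_j = u_b} c_{aj}\hy_j$ for constants $c_{aj}$, and taking the Laplace transform \eqref{eq:yi_Laplace} along $L_a'$ (the difference of the two integrals being an integral of $(M-\id)\hy_a$ along a ray) produces precisely $y_a' = y_a - \sum_{j:u_j = u_b} c_{aj} y_j$, which is the right mutation of $y_a$ with respect to $u_b$.

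Next I would identify $c_{aj}$ with $S_{aj}$, following the argument given just before the corollary for the $(1,i)$-case. Letting $\phi$ decrease continuously by $\pi$, the sections $y_1,\dots,y_N$ undergo a sequence of right mutations and end at the fundamental solution $\my_1,\dots,\my_N$ attached to $-e^{\iu\phi}$; tracking $y_a$, the first mutation it meets is exactly the crossing of $u_b$ (adjacency again), so that, as in \eqref{eq:Stokes_example}, $\my_a = y_a - \sum_{j:u_j = u_b} c_{aj} y_j - (\text{combination of } y_j \text{ with } \im(e^{-\iu\phi}u_j) < \im(e^{-\iu\phi}u_b))$. Inverting this unit-triangular relation expresses $y_a$ on $\Pi_-$ in terms of the $\my_j$ with the same coefficient $c_{aj}$ in front of $\my_j$ whenever $u_j = u_b$; comparing with $Y = \mY S_-$ on $\Pi_-$ (Definition \ref{def:Stokes}) and invoking the identity $S_{-,ja} = S_{aj}$ of Proposition \ref{prop:Stokes}(1) yields $c_{aj} = S_{aj}$, which proves the right-mutation formula. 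The left-mutation formula for $y_b$ with respect to $u_a$ follows by the mirror-image argument: deform $L_b$ upward past the eigenvalues equal to $u_a$, which changes $y_b$ by a combination of the $y_j$ with $u_j = u_a$ whose coefficients, by the same Laplace/monodromy mechanism together with the relation $S_{ij} = S_{-,ji}$ of Proposition \ref{prop:Stokes}(1), are exactly the $S_{jb}$; this gives $y_b \mapsto y_b - \sum_{j:u_j = u_a} S_{jb} y_j$.

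I expect the only delicate point to be the bookkeeping in the third step: one must verify that as $\phi$ sweeps through the angle $\pi$ the coefficients attached to the sections $y_j$ with $u_j = u_b$ receive a contribution solely from the single crossing of $u_b$ and are left untouched by the later mutations of $y_a'$ across eigenvalues lying strictly below $u_b$. This is exactly where the triangularity of the Stokes matrix (Proposition \ref{prop:Stokes}(2)) and the adjacency hypothesis on $u_a,u_b$ are both used; everything else is a routine translation of Figures \ref{fig:mutation}–\ref{fig:left_mutation} into the pairing identities of Proposition \ref{prop:Stokes}.
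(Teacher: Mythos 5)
Your overall architecture matches the paper's: the right‑mutated section is produced by deforming the Laplace contour across the $u_b$ cluster and reading the coefficients from the local monodromy of $\hnabla$ (Lemma \ref{lem:hnabla_flat}(2)), and these coefficients are then matched with Stokes entries via the $\pi$‑sweep of the phase $\phi$ and the relation $Y=\mY S_-$. However, the key step in your third paragraph — ``tracking $y_a$, the first mutation it meets is exactly the crossing of $u_b$ (adjacency again)'' — does not follow from the adjacency hypothesis, and is in general false. Adjacency is a statement about the ordering by $h_\phi(u)=\im(e^{-\iu\phi}u)$, whereas the order in which the rotating ray $L_a$ meets the other eigenvalues is governed by $\arg(u_j-u_a)$, and these two orders need not agree. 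Concretely, take $\phi=0$, $u_a=\iu$, $u_b=0$ and $u_c=M-\iu$ with $M\gg 0$: then $h_0(u_a)=1>h_0(u_b)=0>h_0(u_c)=-1$, so $(u_a,u_b)$ are adjacent and $u_c$ lies strictly below $u_b$; yet $\arg(u_c-u_a)\approx -2/M$ while $\arg(u_b-u_a)=-\pi/2$, so as $\phi$ decreases $L_a$ sweeps past $u_c$ long before it reaches $u_b$. Thus the first right mutation of $y_a$ in the sweep is with respect to $u_c$, an eigenvalue \emph{below} $u_b$ in $h_\phi$, and not with respect to $u_b$.

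Consequently the coefficient of $y_j$ (for $u_j=u_b$) in $\my_a$ is not, on the face of it, the monodromy coefficient $c_{aj}$ from the direct right mutation at phase $\phi$: it is $[y_a^{(k)},y_b^{(m)})$, evaluated on the partially mutated sections present when the crossing of $u_b$ actually occurs in the sweep. Showing that this equals $[y_a,y_b)=S_{ab}$ is a genuine cancellation argument; it uses $[y_j,y_b)=0$ whenever $h_\phi(u_j)<h_\phi(u_b)$ (Proposition \ref{prop:Stokes}(2)) and the structure of the earlier mutations, and it is not the bookkeeping issue your closing paragraph flags (which concerns only mutations occurring \emph{after} the $u_b$-crossing). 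The paper sidesteps this by working ``in the situation of Figure \ref{fig:mutation}'', a configuration in which the $\arg(u_j-u_1)$-ordering and the $h_\phi$-ordering happen to coincide; one clean way to repair your argument is to record that both $c_{aj}$ and $S_{aj}$ are invariant under any deformation of the spectrum $\{u_j\}$ that keeps $\phi$ admissible, and then deform to a Figure-\ref{fig:mutation}-type configuration where the two orderings agree, so that the ``first mutation'' step becomes literally true.
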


\subsection{Isomonodromic deformation}  
\label{subsec:isomonodromy} 
In semisimple case, the base of the big quantum 
connection \eqref{eq:big_q_conn} can be extended to the 
universal covering of a configuration space. 

Suppose that the quantum product is convergent and 
semisimple in a neighbourhood of $\tau=0\in H^\udot(F)$. 
Since the eigenvalues $u_1,\dots,u_N$ of $E\star_\tau$ form 
a local co-ordinate system $H^\udot(F)$, we can make 
$u_1,\dots,u_N$ pairwise distinct by a small deformation 
of $\tau$. We take a base point $\tau^\circ\in H^\udot(F)$ 
such that the corresponding eigenvalues 
$\u^\circ = \{u_1^\circ, \dots, u_N^\circ\}$ 
are pairwise distinct. 
The quantum connection $\nabla|_{\tau^\circ}$ then admits a 
unique isomonodromic deformation 
over the universal cover $C_N(\C)\sptilde$ of the 
configuration space 
\begin{equation} 
\label{eq:config} 
C_N(\C) = \{(u_1,\dots,u_N) \in \C^N: u_i \neq u_j \ 
\text{for all $i\neq j$}\} \big/ \frS_N.  
\end{equation} 
\begin{proposition}[{\cite[Lemma 3.2, Exercise 3.3, Lemma 3.3]{Dubrovin98a}}] 
\label{prop:isomonodromic_deformation}
We have a unique meromorphic flat connection 
on the trivial $H^\udot(F)$-bundle over $C_N(\C)\sptilde \times \P^1$ 
of the form:  
\begin{align*} 
\nabla_{\parfrac{}{u_i}} & = \parfrac{}{u_i} + \frac{1}{z} \CC_i \\ 
\nabla_{z\parfrac{}{z}}  &=z  \parfrac{}{z} - \frac{1}{z} \U + \mu
\end{align*} 
where $\CC_i$ and $\U$ are $\End(H^\udot(F))$-valued holomorphic functions  
on $C_N(\C)\sptilde$, 
such that it restricts to the big quantum connection \eqref{eq:big_q_conn} 
in a neighbourhood of the base point $\u^\circ$. 
Here the eigenvalues of $\U$ give the co-ordinates 
$u_1,\dots,u_N$ on the base. 
\end{proposition}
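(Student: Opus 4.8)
The plan is to realize the required connection as the isomonodromic deformation of the fixed-$z$ connection $\nabla|_{\tau^\circ}$, following Dubrovin \cite[Lemma 3.2, Exercise 3.3, Lemma 3.3]{Dubrovin98a} together with the general theory of monodromy-preserving deformations, and then to identify that deformation, near $\u^\circ$, with the big quantum connection. First I would pass to the normalized frame: by \cite[Lecture 3]{Dubrovin98a} and \S\ref{subsec:UV}, in a neighbourhood of $\u^\circ$ the functions $u_1,\dots,u_N$ are holomorphic coordinates with $\psi_i = \partial\tau/\partial u_i$, and conjugation by the normalized idempotent matrix $\Psi$ puts the big quantum connection into the $UV$-form \eqref{eq:UV}--\eqref{eq:UV_extension}, so that in the original frame $\U = \Psi U \Psi^{-1} = E\star_\tau$ and $\CC_i = \Psi E_i\Psi^{-1} = \psi_i\star_\tau$. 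Since the $u_i^\circ$ are pairwise distinct, $\nabla|_{\tau^\circ}$ has at $z=0$ an irregular singularity of Poincar\'e rank one whose leading term $-\frac{1}{z}\diag(u_1^\circ,\dots,u_N^\circ)$ is non-resonant, and at $z=\infty$ a regular singularity as in Proposition \ref{prop:fundsol}; thus it carries well-defined monodromy data --- the Stokes matrices $S,S_-$ of Proposition \ref{prop:Stokes}, the exponents at infinity, and the connection matrix relating the fundamental solutions at $0$ and $\infty$.

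Next I would invoke the theory of monodromy-preserving deformations (Jimbo--Miwa--Ueno, Malgrange, Sibuya; see \cite[Lemma 3.3]{Dubrovin98a}). Flatness of a connection of the prescribed form is, in the $UV$-frame, equivalent to \eqref{eq:EiV} --- which recovers the off-diagonal part of each $V_i$ from $V$ and $U$ --- together with the remaining flatness relations of the $UV$-system, which fix the diagonal parts of the $V_i$ and govern the $u$-evolution of $V$ by commutator equations $\partial_{u_i}V=[V,V_i]$. This system is Frobenius-integrable: its cross-derivative conditions hold identically, so by the holomorphic Frobenius theorem a local solution is determined by its value at $\u^\circ$, it can be analytically continued along any path in $C_N(\C)$, and homotopic paths yield the same continuation; hence the solution descends to the universal cover $C_N(\C)\sptilde$. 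By the Painlev\'e property of such isomonodromic systems the continuation is meromorphic, its polar locus being a Malgrange $\Theta$-divisor, and conjugating back by $\Psi$ produces $\CC_i$ and $\U$. The monodromy of the resulting family around loops in $C_N(\C)$ is precisely the braid-group action by mutation of \S\ref{subsec:mutation}, which is why the universal cover cannot be avoided, while the deformation parameters $u_i$ never collide on $C_N(\C)$, so the irregular type stays non-resonant throughout.

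To conclude, the big quantum connection \eqref{eq:big_q_conn} is defined and flat on the semisimple neighbourhood $B\ni\tau^\circ$ of $\tau=0$ and is of the prescribed form in the coordinates $(u_i)$; it therefore furnishes the initial data at $\u^\circ$ for the system above, and by construction the global extension restricts to it near $\u^\circ$. Uniqueness follows from the identity theorem: any two connections of the prescribed form agreeing in a neighbourhood of $\u^\circ$ have connection matrices $\CC_i$, $\U$ that are meromorphic on the connected complex manifold $C_N(\C)\sptilde$ and coincide on an open set, hence coincide everywhere.

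I expect the main obstacle to be the global step --- controlling the analytic continuation of this nonlinear system, since a priori a solution might fail to continue or might acquire essential singularities. The decisive input is the Painlev\'e property of monodromy-preserving deformations, equivalently Malgrange's description of the bad locus as a $\Theta$-divisor: the movable singularities are at worst poles. A more routine but still necessary check is that the Laplace-transform construction of Propositions \ref{prop:repeated_eigenvalues} and \ref{prop:Stokes}, and hence the Stokes data, varies holomorphically as $\u$ moves over $C_N(\C)\sptilde$, so that the deformation is genuinely isomonodromic.
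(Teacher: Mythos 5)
Your proposal is correct and follows the same route as the paper's cited references (Dubrovin's Lemma 3.2, Exercise 3.3, Lemma 3.3): pass to the $UV$-frame, reduce flatness to the isomonodromy (Schlesinger-type) equations with non-resonant leading term at $z=0$, and invoke the Painlev\'e property/Malgrange $\Theta$-divisor to obtain meromorphic continuation over $C_N(\C)\sptilde$, with the big quantum connection supplying the initial data and the identity theorem giving uniqueness. The paper offers no independent proof beyond the citation, and your reconstruction faithfully captures the argument behind it.
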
 

\begin{remark}[{\cite[Lemma 3.3]{Dubrovin98a}}] 
This isomonodromic deformation defines a Frobenius manifold structure 
on an open dense subset of $C_N(\C)\sptilde$. 
\end{remark}

\section{Gamma Conjecture I} 
\label{sec:GammaI} 
In this section we formulate Gamma conjecture I 
for arbitrary Fano manifolds.  
We do not need to assume the semisimplicity 
of the quantum cohomology algebra. 

\subsection{Property $\O$}
We introduce Property $\O$ for a Fano manifold $F$. 
\begin{definition}
\label{def:conjO}
Let $F$ be a Fano manifold and let $c_1(F)\star_0 \in \End(H^\udot(F))$ 
be the quantum product at $\tau=0$ (see \S \ref{subsec:qcoh}). 
Set 
\begin{equation*} 
T := \max\{ |u| : \text{$u$ is an eigenvalue of $(c_1(F)\star_0)$}\}. 
\end{equation*} 
We say that $F$ satisfies \emph{Property $\O$} if 
\begin{enumerate} 
\item $T$ is an eigenvalue of $(c_1(F)\star_0)$. 

\item If $u$ is an eigenvalue of $(c_1(F)\star_0)$ with $|u| =T$, 
we have $u = T \zeta$ for some $r$-th root of unity 
$\zeta \in \C$, where $r$ is the Fano index of $F$. 

\item  The multiplicity of the eigenvalue $T$ is one. 
\end{enumerate} 
\end{definition} 
\begin{conjecture}[Conjecture $\O$]
Every Fano manifold satisfies Property $\O$. 
\end{conjecture}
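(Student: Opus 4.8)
This final statement is a conjecture, not a result proved here, so what follows is the line of attack I would pursue; a uniform proof seems out of reach with current techniques. The statement has two components of quite different flavour. Part~(2), the $r$-fold rotational symmetry of the top-modulus spectrum, should be essentially formal. Taking $\alpha = c_1(F)$ in \eqref{eq:qprod_scaling} gives
\[
(c_1(F)\star_{c_1(F)\log t}) = t\, t^{-\mu}\,(c_1(F)\star_0)\, t^{\mu},
\]
and for $t=\zeta$ a primitive $r$-th root of unity the left-hand side equals $(c_1(F)\star_0)$, since every effective $d$ has $c_1(F)\cdot d\in r\Z$, hence $\zeta^{c_1(F)\cdot d}=1$ in \eqref{eq:qprod_antican}. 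Thus $c_1(F)\star_0$ is conjugate to $\zeta^{-1}(c_1(F)\star_0)$, so its spectrum is invariant under multiplication by $\zeta$; together with Part~(1) this forces $T\zeta^k$ to be eigenvalues for all $k$. That there are no \emph{other} eigenvalues of modulus $T$ should then be subsumed by the Perron--Frobenius picture for Parts~(1) and~(3), where the maximal eigenvalues of a primitive non-negative operator are automatically $T$ times roots of unity.

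For Parts~(1) and~(3) the natural route is a Perron--Frobenius/Pringsheim argument. By Remark~\ref{rem:anticanonical}, $c_1(F)\star_0$ governs the anticanonical quantum differential equation, and solutions such as the restriction of Givental's $J$-function to the anticanonical line, $J(t)=e^{c_1(F)\log t}\sum_n J_{rn}t^{rn}$, grow like $e^{ut}$ as $t\to+\infty$ with $u$ running over the eigenvalues. The plan is: first, establish that the scalar series $\langle[\pt],J(t)\rangle=\sum_n\langle[\pt],J_{rn}\rangle\,t^{rn}$ has non-negative coefficients, an expected positivity of one-point anticanonical descendant Gromov--Witten invariants; then, by Pringsheim's theorem, the radius-of-convergence singularity of this series lies on $\R_{>0}$; finally, identify that singularity with $1/T$, so that $T$ is a real positive eigenvalue, giving Part~(1). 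For Part~(3) one would upgrade this to a genuine Perron--Frobenius statement: exhibit a basis of $H^\udot(F)$ --- a Schubert-type basis, or a basis dual to an exceptional collection --- in which $c_1(F)\star_0$ is represented by a non-negative, irreducible and aperiodic matrix, whose entries are the genus-zero three-point invariants with one insertion $c_1(F)$; the classical theorem would then give simplicity of $T$ and, simultaneously, pin down the modulus-$T$ eigenvalues as $T$ times the $r$-th roots of unity, closing Part~(2) as well.

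The main obstacle is precisely this positivity input: quantum multiplication by $c_1(F)$ has no manifestly non-negative structure constants in a natural basis of $H^\udot(F)$ for a general Fano manifold --- Gromov--Witten invariants can be negative --- so Perron--Frobenius is not directly available, and no uniform source of the required positivity/irreducibility is known. The workable plan, which is the one the paper actually follows, is therefore to verify Property~$\O$ case by case and propagate it: compute $c_1(F)\star_0$ explicitly for projective spaces, where $(H^\udot(\P^{N-1}),\star_0)\cong\C[h]/(h^N-1)$ so that $c_1(F)\star_0$ has spectrum $\{N\zeta:\zeta^N=1\}$ and Property~$\O$ is transparent; deduce the case of Grassmannians via the quantum Satake principle, realising $c_1\star_0$ as an exterior power of the $\P^{N-1}$ operator; and then transport Property~$\O$ through hyperplane sections (quantum Lefschetz) and through the abelian/non-abelian correspondence, together with the Golyshev--Zagier analysis for Fano threefolds of Picard rank one. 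A general proof would presumably require either a structural positivity theorem for anticanonical quantum multiplication, or a mirror-theoretic identification of $T$ with the dominant critical value of a Landau--Ginzburg superpotential plus control of the critical locus over the relevant fibre.
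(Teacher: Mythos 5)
The statement is a conjecture, and the paper offers no proof, only several heuristic remarks labelled explicitly as ``weak evidences''; your proposal is therefore appropriately framed as a strategy discussion rather than a proof. Your heuristics track the paper's own remarks closely: the rotational-symmetry argument via \eqref{eq:qprod_scaling} and \eqref{eq:qprod_antican} reproduces the paper's remark establishing \eqref{eq:symmetry_c1star}; the Perron--Frobenius angle is the paper's remark attributed to Kaoru Ono; the conifold-point/mirror intuition is Remark~\ref{rem:conifold}; and the case-by-case verification and propagation (projective spaces, then Grassmannians via quantum Satake, then hyperplane sections via quantum Lefschetz) is what the paper and its companion \cite{GI:mirror} actually carry out. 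Your Pringsheim observation, that positivity of the quantum period coefficients $\langle[\pt],J_{rn}\rangle$ would force the dominant singularity of the quantum period onto $\R_{>0}$, is a sensible further angle that the paper does not spell out but is entirely consistent with its viewpoint (compare Lemma~\ref{lem:radius_Ghat}).

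There is one internal inconsistency worth flagging. You ask for $c_1(F)\star_0$ to be represented by a non-negative, irreducible and \emph{aperiodic} matrix, and then assert the classical theorem would pin down the modulus-$T$ eigenvalues as $T$ times the $r$-th roots of unity. But for Fano index $r>1$, the symmetry you yourself derive forces $T e^{2\pi\iu k/r}$ for $k=0,\dots,r-1$ all to lie in the spectrum with modulus $T$, whereas an aperiodic (primitive) irreducible non-negative matrix has $T$ as its \emph{unique} eigenvalue of maximal modulus. What one actually wants is irreducibility together with period equal to $r$; Perron--Frobenius for imprimitive irreducible non-negative matrices then gives the peripheral spectrum as $T$ times the $r$-th roots of unity, which is exactly Part~(2). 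The paper's remark invokes only irreducibility, which yields simplicity of $T$ (Part~(3)) without contradicting the rotational symmetry, and is the correct formulation; adding ``aperiodic'' over-constrains the picture whenever $r>1$.
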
 

The number $T\in \R_{\ge 0}$ above is an algebraic number.  
This is an invariant of a monotone \emph{symplectic} 
manifold.  Conjecture $\O$ says that $T$ is non-zero unless $F$ is a point.  
We do not have a general argument to show Conjecture $\O$, but 
there are a few weak evidences as follows. 

\begin{remark} 
Let $r$ be the Fano index. The formula \eqref{eq:qprod_antican} 
shows that we have $\star_0 = \star_\tau$ with $\tau = c_1(F) (2\pi\iu/r)$. 
This together with \eqref{eq:qprod_scaling} gives the symmetry: 
\begin{equation} 
\label{eq:symmetry_c1star}
(c_1(F) \star_0) = e^{2\pi\iu/r} e^{-2\pi\iu \mu/r} (c_1(F)\star_0) 
e^{2\pi\iu \mu/r}   
\end{equation} 
Therefore the spectrum of $(c_1(F)\star_0)$ is invariant under 
multiplication by $e^{2\pi\iu/r}$. 
\end{remark} 

\begin{remark}[$\O$ is the structure sheaf] 
Mirror symmetry for Fano manifolds claims that 
$F$ is mirror to a holomorphic function $f \colon Y \to \C$ 
on a complex manifold $Y$. 
Under mirror symmetry it is expected that 
the eigenvalues of $(c_1(F)\star_0)$ should coincide with 
the critical values of $f$. 
Each Morse critical point of $f$ associates a vanishing 
cycle (or Lefschetz thimble) which gives an object of the 
Fukaya-Seidel category of $f$.  
Under homological mirror symmetry, 
the vanishing cycle associated to $T$ should correspond 
to the structure sheaf $\O$ of $F$. 
This explains the name `Conjecture $\O$'. 
Our Gamma conjectures give a direct link between $T$ and $\O$ 
in terms of differential equations (without referring to mirror 
symmetry): the asymptotically exponential 
flat section associated to $T$ should correspond to 
$\Gg_F = \Gg_F \Ch(\O)$; 
more generally a simple eigenvalue should correspond 
to $\Gg_F \Ch(E)$ for an exceptional object $E \in \D^b_{\rm coh}(F)$ 
(see Gamma Conjecture I, II in \S \ref{subsec:GammaI}, \S \ref{subsec:GammaII}). 
\end{remark} 

\begin{remark}[Conifold point \cite{Galkin:conifoldpoint}] 
\label{rem:conifold} 
Let $f\colon (\C^\times)^n \to \C$ 
be a convenient\footnote{A Laurent polynomial is said to be 
\emph{convenient} if the Newton polytope of $f$ contains the origin 
in its interior.} Laurent polynomial mirror to a Fano manifold $F$. 
Suppose that all the coefficients 
of $f$ are positive real. Then the Hessian 
\[
\parfrac{^2f}{\log x_i \partial \log x_j}(x_1,\dots,x_n) 
\]
is positive definite on the subspace $(\R_{>0})^n$. 
Thus $f|_{(\R_{>0})^n}$ attains a global minimum at a unique 
critical point $x_{\rm con}$ in the domain $(\R_{>0})^n$; 
also the critical point $x_{\rm con}$ is non-degenerate. 
We call the point $x_{\rm con}$ the \emph{conifold point}. 
Many examples suggest that $T$ equals 
$T_{\rm con} := f(x_{\rm con})$. 
\end{remark} 

\begin{remark}[Perron-Frobenius eigenvalue] 
This remark is due to Kaoru Ono. We say that a square matrix $M$ of 
size $n$ is \emph{irreducible} if the linear map $\C^n \to \C^n$, 
$v\mapsto M v$ admits no invariant 
co-ordinate subspace. 
If $(c_1(F)\star_0)$ is represented by an irreducible matrix 
with nonnegative entries, $T$ is a simple eigenvalue of $(c_1(F)\star_0)$ 
by Perron-Frobenius theorem. 
\end{remark} 

\begin{remark}[Fano orbifold] 
For a Fano orbifold, $T$ may have multiplicity bigger than 
one. Consider the $\Z/3\Z$ action on $\P^2$ given by 
$[x,y,z] \mapsto [x, e^{2\pi\iu/3} y, e^{4\pi\iu/3}z]$ 
and let $F$ be the quotient Fano orbifold $\P^2/(\Z/3\Z)$. 
The mirror is $f = x_1^{-1}x_2^{-1} + 
x_1^2  x_2^{-1} + x_1^{-1}x_2^2$ and $T= 3$ has multiplicity three. 
Under homological mirror symmetry, 
three critical points in $f^{-1}(T)$ correspond to three 
flat line bundles which are mutually 
orthogonal in the derived category. 
One could guess that the multiplicity of $T$ equals 
the number of irreducible representations of 
$\pi_1^{\rm orb}(F)$ for a Fano orbifold $F$. 
\end{remark} 

\begin{remark} 
In the rest of the section we assume that our Fano manifold $F$ satisfies 
Property $\O$. 
However, the argument in this section (i.e.~\S \ref{sec:GammaI}) 
except for \S \ref{subsec:Apery} 
works under the following weaker assumption: there exists a 
complex number $T$ such that (1) $T$ is an eigenvalue of $(c_1(F)\star_0)$ of 
multiplicity one and (2) if $\tilde{T}$ is an eigenvalue of $(c_1(F)\star_0)$ 
with $\re(\tilde{T})\ge \re(T)$, then $\tilde{T} = T$. 
In \S \ref{subsec:Apery}, we use the condition part (2) in 
Definition \ref{def:conjO}. 
\end{remark} 

When $F$ satisfies Property $\O$, we write 
\begin{equation}
\label{eq:secondbiggest}
T' := \max \left\{\re(u): u \neq T, \ 
\text{$u$ is an eigenvalue of $(c_1(F)\star_0)$}\right \}<T 
\end{equation}
for the second biggest real part for the eigenvalues of $(c_1(F)\star_0)$. 

\subsection{Asymptotic solutions along positive real line}
\label{subsec:positivereal}
We construct a fundamental solution for $\nabla$ 
with nice asymptotic properties over the positive real line; 
here we do not assume semisimplicity, 
cf.~\S \ref{subsec:asymptotic_solution}. 

Suppose that a Fano manifold $F$ satisfies Property $\O$. 
Let $\lambda_1,\dots,\lambda_k$ be the distinct eigenvalues of 
$(c_1(F)\star_0)$ and let $N_i$ be the multiplicity of $\lambda_i$. 
Note that $N=N_1+ \cdots +N_k$. 
We may assume that $T= \lambda_1$; Property $\O$ implies $N_1=1$. 
Note that the Euler vector field $E$ at $\tau=0$ equals $c_1(F)$. 
We may also assume that the matrix $U$ \eqref{eq:U} of 
eigenvalues of $(E\star_\tau)|_{\tau=0} = (c_1(F) \star_0)$ is of the form: 
\[
U = \begin{pmatrix} 
u_1 &&& \\ 
& u_2 && \\
 && \ddots & \\
 &&& u_N 
\end{pmatrix}
= 
\begin{pmatrix} 
T  & & & \\
& \lambda_2 I_{N_2} & & \\
& & \ddots & \\
& & & \lambda_k I_{N_k} 
\end{pmatrix}  
\]
where $I_{N_i}$ is the identity matrix of size $N_i$. 
Choose a linear isomorphism $\Psi \colon \C^N \to H^\udot(F)$ which 
transforms $(c_1(F)\star_0)$ to the block-diagonal form: 
\[
\Psi^{-1} (c_1(F)\star_0) \Psi = \begin{pmatrix} 
B_1 &  & & \\
& B_2  & & \\ 
& & \ddots & \\ 
& & & B_k 
\end{pmatrix} 
\]
where $B_i$ is a matrix of size $N_i$ and $B_i - \lambda_i I_{N_i}$ 
is nilpotent. 
\begin{proposition} 
\label{prop:asymptoticsol_positivereal}
Suppose that a Fano manifold $F$ satisfies Property $\O$. 
With the notation as above, 
there exists a fundamental matrix solution for the quantum 
connection $\nabla_{z\partial_z}$ \eqref{eq:nabla_tau_zero} 
at $\tau=0$ of the form 
\begin{equation*} 
P(z) e^{-U/z} 
\begin{pmatrix} 
F_1(z) & & & \\ 
& F_2(z) & &  \\ 
& & \ddots & \\
& & & F_k(z) 
\end{pmatrix} 
\end{equation*}
over a sufficiently small sector 
$\cS = \{z \in\C^\times: |z|\le 1, |\arg(z)|\le \epsilon\}$ 
with $\epsilon>0$ such that 
\begin{enumerate} 
\item $P(z)$ has an asymptotic expansion 
$P(z) \sim \Psi + P_1 z + P_2 z^2 + \cdots$ 
as $z\to 0$ in $\cS$; 

\item $F_i(z)$ is a $GL_{N_i}(\C)$-valued function 
satisfying the estimate $\max(\|F_i(z)\|, \|F_i(z)^{-1}\|) \le C 
\exp(\delta |z|^{-p})$ on $\cS$ for some $C,\delta>0$ 
and $0<p<1$. 

\item $F_1(z) =1$. 
\end{enumerate} 
\end{proposition}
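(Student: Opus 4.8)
The plan is to reduce the quantum connection at $\tau=0$ to a block-diagonal normal form by a formal gauge transformation, to realize this transformation on a narrow sector by the Hukuhara--Turrittin theorem, and then to treat the blocks one at a time: the $1$-dimensional block attached to $T$ will produce the factor $F_1=1$, while the remaining blocks, whose leading coefficients become nilpotent after subtracting the scalar $\lambda_i$, will produce the sub-exponential estimate in (2).

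First I would fix the isomorphism $\Psi$ with more care than the setup requires: since $(c_1(F)\star_0)$ is self-adjoint for the Poincar\'{e} pairing (Frobenius property), its generalized eigenspaces for distinct eigenvalues are mutually $(\cdot,\cdot)_F$-orthogonal, $(\cdot,\cdot)_F$ is nondegenerate on each of them, and over $\C$ each carries an orthonormal basis; so $\Psi$ may be chosen to be $(\cdot,\cdot)_F$-orthonormal while still putting $(c_1(F)\star_0)$ in the prescribed block form $\diag(B_1,\dots,B_k)$ with $B_i-\lambda_iI_{N_i}$ nilpotent. After the constant gauge transformation $\Psi$, the operator $\nabla_{z\partial_z}$ becomes $\frac{d}{dz}-\frac{1}{z^2}\diag(B_i)+\frac{1}{z}\mu'$ with $\mu'=\Psi^{-1}\mu\Psi$, and $\mu'$ is skew-symmetric because $\mu$ is skew-adjoint and $\Psi$ orthonormal. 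As the leading coefficients $B_1,\dots,B_k$ have pairwise disjoint spectra, the splitting lemma for meromorphic connections (the analytic form of Hukuhara--Turrittin, cf.~\cite{Wasow,BJL81}) yields, on a sufficiently small sector $\cS=\{\,|z|\le1,\ |\arg z|\le\epsilon\,\}$, an invertible holomorphic $P(z)$ with asymptotic expansion $P(z)\sim\Psi+P_1z+P_2z^2+\cdots$ --- this is (1) --- conjugating the connection to one that is exactly block-diagonal, $\frac{d}{dz}-\diag(\tilde M_1(z),\dots,\tilde M_k(z))$ with $\tilde M_i(z)=\frac{1}{z^2}B_i+\frac{1}{z}R_i+(\text{holomorphic on }\cS)$.

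Then I would build the $F_i$ block by block. For $i=1$ the block is $1\times1$ with $B_1=T$; computing the $\frac1z$-coefficient $R_1$ of $\tilde M_1$ from the formal transformation, the contribution of the $\frac{1}{z^2}$-term is a commutator with the scalar $T$ and so vanishes, whence $R_1=-(\mu')_{11}=0$ by skew-symmetry of $\mu'$. Thus $h_1(z):=\tilde M_1(z)-T/z^2$ is holomorphic on $\cS$ with bounded limit at $0$, and $F_1(z):=\exp\!\bigl(\int_0^z h_1(\zeta)\,d\zeta\bigr)$ is a scalar fundamental solution of the reduced block with $F_1(0)=1$; absorbing $F_1$ into $P$ (replacing $P$ by $P\cdot\diag(F_1,I_{N_2},\dots,I_{N_k})$, which still satisfies (1)) we may take $F_1\equiv1$, giving (3). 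For $i\ge2$ the reduced block $\tilde M_i(z)-\lambda_iI_{N_i}/z^2=\frac{1}{z^2}(B_i-\lambda_iI_{N_i})+\cdots$ has nilpotent leading coefficient, so its Hukuhara--Turrittin fundamental solution $F_i(z)=Q_i(w)\,w^{C_i}\,e^{\Lambda_i(w^{-1})}$ (with $z=w^{r_i}$, $r_i\in\Z_{>0}$) has a diagonal exponential part whose entries are polynomials in $w^{-1}$ of degree $\le r_i-1$: a term of degree $r_i$ (i.e.~of degree $1$ in $1/z$) would have an eigenvalue of the nilpotent leading coefficient as its coefficient, hence is absent. Therefore $\|e^{\pm\Lambda_i(w^{-1})}\|\le C\exp(\delta|z|^{-(r_i-1)/r_i})$, and together with the polynomial bounds on $Q_i$, $Q_i^{-1}$, $w^{\pm C_i}$ this gives (2) with $p=\max_{i\ge2}(r_i-1)/r_i<1$ (any fixed $p\in(0,1)$ if all $r_i=1$). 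Finally one verifies that $P(z)\,e^{-U/z}\,\diag(1,F_2(z),\dots,F_k(z))$ is a fundamental solution of $\nabla_{z\partial_z}$, using that $P$ conjugates it to the block-diagonal form, that $e^{-\lambda_i/z}F_i$ solves the $i$-th reduced block (with $F_1\equiv1$), and that $e^{-U/z}$ is scalar on each block and hence commutes with $\diag(1,F_2,\dots,F_k)$.

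The main obstacle is the passage from the formal splitting to an \emph{actual} holomorphic gauge transformation $P(z)$ on $\cS$ realizing the block-diagonal normal form exactly (and, inside each block, the Hukuhara--Turrittin reduction): one must invoke the appropriate sectorial normalization theorem and use that, since the opening of $\cS$ is small, no Stokes ray obstructs the normalization. Everything downstream --- the orthonormal choice of $\Psi$, the vanishing of the residue $R_1$ of the scalar block, and the degree bound on the exponential parts coming from nilpotency of $B_i-\lambda_iI_{N_i}$ --- is then elementary.
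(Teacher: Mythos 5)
Your proof is correct and follows the same overall architecture as the paper's: a sectorial block-diagonalization (Sibuya--Wasow) conjugating the quantum connection to $\partial_z - z^{-2}\operatorname{diag}(C_1(z),\dots,C_k(z))$ with $C_{i}(0)-\lambda_i I_{N_i}$ nilpotent, then a sub-exponential estimate for the blocks $i\ge 2$, and finally a trivialization of the $T$-block. Your argument for part (2) via Hukuhara--Turrittin (degree of the exponential part drops below $r_i$ because the leading coefficient is nilpotent) is the same content as the paper's citation to \cite[Lemma 5.4.1]{Sibuya:book}, just spelled out in terms of the formal normal form.

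The genuinely different step is part (3). The paper leaves $\Psi$ arbitrary, solves the scalar block by the Frobenius method to get $F_1(z)=z^c g(z)$, and then uses Lemma~\ref{lem:mu_block} together with the leading-order asymptotics of the flat section to conclude $c=0$. You instead use the freedom in choosing $\Psi$: since $(c_1(F)\star_0)$ is self-adjoint for the Poincar\'e pairing, its generalized eigenspaces are mutually orthogonal, the pairing is nondegenerate on each, and one may take $\Psi$ to be $(\cdot,\cdot)_F$-orthonormal. Then $\mu'=\Psi^{-1}\mu\Psi$ is transpose-skew, so its diagonal vanishes; and since $B_1$ is the scalar $T$, the $z^{-1}$-coefficient of the $(1,1)$-block of the formally block-diagonalized connection is exactly $-(\mu')_{11}=0$, whence $F_1=\exp\!\int_0^z h_1$ with $F_1(0)=1$ can be absorbed into $P$. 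This is cleaner: it avoids the regular-singular Frobenius analysis and the passage through the asymptotics of a flat section. The underlying ingredient is the same as Lemma~\ref{lem:mu_block} (skew-adjointness of $\mu$ plus orthogonality of $E(T)$ and $H'$), so the two arguments are essentially equivalent in content, but your choice of orthonormal $\Psi$ makes the vanishing of the residue visible at the level of the formal transformation rather than being extracted a posteriori from the form of the solution.
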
 
\begin{proof} 
By a result of Sibuya \cite{Sibuya:simplification} 
and Wasow \cite[Theorem 12.2]{Wasow}, we can 
find a gauge transformation $P(z)$ over $\cS$ satisfying (1) above 
such that the new connection matrix $C(z)$ 
\[
P(z)^{-1} \nabla_{\partial_z} P(z) 
= \partial_z - z^{-2} C(z) 
\]
satisfies: 
\begin{itemize} 
\item $C(z)= \diag[C_1(z),\dots,C_k(z)]$ 
is block-diagonal with $C_i(z)$ being of size $N_i$;
\item $C_i(z)$ has an asymptotic expansion $C_i(z) \sim C_{i,0} + C_{i,1}z + 
C_{i,2}z^2 + \cdots $ as $z\to 0$ in $\cS$; 
\item $C_{i,0} - \lambda_i I_{N_i}$ is nilpotent. 
\end{itemize} 
Now it suffices to find a fundamental solution for 
the $i$th block $\nabla^{(i)}:=\partial_z - z^{-2} C_i(z)$. 
We have $e^{\lambda_i/z} \nabla^{(i)} e^{-\lambda_i/z} 
= \partial_z- z^{-2} (C_i(z) - \lambda_i I_{N_i})$ 
and the leading term of the asymptotic expansion of 
$C_i(z) - \lambda_i I_{N_i}$ is nilpotent. 
By \cite[Lemma 5.4.1]{Sibuya:book}, 
any fundamental solution $F_i(z)$ for 
$\partial_z - z^{-2} (C_i(z) - \lambda_i I_{N_i})$ 
satisfies the estimate in part (2) for $1-1/N_i<p<1$. 

We show that part (3) can be achieved. 
Set $D(z) = C_1(z) - \lambda_1$. Then $F_1(z)$ is a solution 
to the scalar differential equation $(\partial_z - z^{-2} D(z))F_1(z) =0$. 
Arguing as in \cite[Theorem 12.3]{Wasow}, we find that 
$F_1(z) = z^c g(z)$ for some $c\in \C$ and 
a holomorphic function $g(z)$ on $\cS$ which has 
an asymptotic expansion $g(z) \sim a_0 + a_1 z + a_2 z^2 + \cdots$ 
at $z=0$ with $a_0\neq 0$. 
Thus we obtain a $\nabla$-flat section $s_1(z) = e^{-T/z}z^c g(z) P_1(z)$ 
where $P_1(z)$ is the first column of $P(z)$. 
Studying the asymptotics of the equation 
$\nabla s_1(z) =0$ (using e.g.~\cite[Theorem 8.8]{Wasow}), we have 
\[
(\mu + c) a_0 P_1(0) \in \im((c_1(F)\star_0) - T).  
\]
Recall that $P_1(0)$ is a non-zero $T$-eigenvector of $(c_1(F)\star_0)$. 
By Lemma \ref{lem:mu_block} below, we must have $c=0$. 
Thus we can absorb $F_1(z) = g(z)$ into the first column of $P(z)$. 
\end{proof} 

\begin{lemma} 
\label{lem:mu_block} 
Suppose that a Fano manifold $F$ satisfies Property $\O$. 
Let $E(T)\subset H^\udot(F)$ denote the one-dimensional 
$T$-eigenspace of $(c_1(F)\star_0)$ and 
define $H' := \im(T - (c_1(F)\star_0))$ 
which is a complementary subspace of $E(T)$. 
Then $E(T)$ and $H'$ are 
orthogonal with respect to the Poincar\'{e} pairing and 
the endomorphism $\mu$ satisfies $\mu(E(T)) \subset H'$. 
\end{lemma}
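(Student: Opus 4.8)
The plan is to deduce both assertions from two formal properties of the operators involved with respect to the Poincaré pairing: the \emph{self-adjointness} of $(c_1(F)\star_0)$ and the \emph{skew-adjointness} of $\mu$. Neither the symmetry \eqref{eq:symmetry_c1star} nor semisimplicity of quantum cohomology is needed.

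Write $A:=(c_1(F)\star_0)\in\End(H^\udot(F))$. First I would record that $A$ is self-adjoint for the Poincaré pairing: the Frobenius identity $(\alpha\star_0\beta,\gamma)_F=(\alpha,\beta\star_0\gamma)_F$ — immediate from \eqref{eq:quantumproduct}, since the structure constants $(\alpha\star_0\beta,\gamma)_F$ are symmetric in $\alpha,\beta,\gamma$ — gives, taking $\alpha=c_1(F)$ and using commutativity, $(A\beta,\gamma)_F=(c_1(F),\beta\star_0\gamma)_F=(c_1(F),\gamma\star_0\beta)_F=(A\gamma,\beta)_F$. Hence $A-T$ is self-adjoint, and therefore $\im(A-T)\subseteq(\ker(A-T))^\perp$: if $v=(A-T)w$ and $u\in\ker(A-T)=E(T)$, then $(v,u)_F=((A-T)w,u)_F=(w,(A-T)u)_F=0$. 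Because $T$ has multiplicity one, $\dim\im(A-T)=N-1=\dim E(T)^\perp$ by non-degeneracy of the Poincaré pairing, so in fact $H'=\im(A-T)=E(T)^\perp$. In particular $E(T)\perp H'$, which is the first assertion (and re-confirms that $H'$ is a complement of $E(T)$, since $T$ being a simple eigenvalue puts $\ker(A-T)$ and $\im(A-T)$ in Fitting position).

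For the second assertion, let $v$ span the one-dimensional space $E(T)$. Since $E(T)=\C v$, to prove $\mu v\in E(T)^\perp=H'$ it suffices to check $(\mu v,v)_F=0$. But $\mu$ is skew-adjoint, $(\mu\alpha,\beta)_F=-(\alpha,\mu\beta)_F$, because both sides vanish unless $\alpha,\beta$ lie in complementary degrees $2p$ and $2(\dim F-p)$, in which case $(\mu\alpha,\beta)_F+(\alpha,\mu\beta)_F=(p+(\dim F-p)-\dim F)(\alpha,\beta)_F=0$. Applying this with $\alpha=\beta=v$ and using the symmetry of the Poincaré pairing gives $(\mu v,v)_F=-(v,\mu v)_F=-(\mu v,v)_F$, hence $(\mu v,v)_F=0$, i.e.\ $\mu(E(T))\subset H'$.

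There is no substantial obstacle here; the entire argument is a short piece of linear algebra once one has the structural inputs. The only point that deserves to be stated carefully is the identification $H'=E(T)^\perp$, which is precisely where self-adjointness of $A$ together with non-degeneracy of the Poincaré pairing enter; given that, both conclusions follow formally from the symmetry of the Poincaré pairing and the skew-symmetry of $\mu$.
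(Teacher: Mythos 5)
Your proof is correct and follows essentially the same route as the paper's: orthogonality of $E(T)$ and $H'$ comes from the self-adjointness of $(c_1(F)\star_0)$ under the Poincar\'e pairing, and $\mu(E(T))\subset H'$ comes from the skew-adjointness of $\mu$ applied to a generator $v$ of the one-dimensional space $E(T)$. You merely spell out two points the paper leaves implicit — the Frobenius-algebra derivation of the self-adjointness of $(c_1(F)\star_0)$, and the dimension count giving $H'=E(T)^\perp$.
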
 
\begin{proof} 
Take $\alpha \in E(T)$ and $\beta \in H'$. 
There exists an element $\gamma \in H^\udot(F)$ 
such that $\beta = (T - c_1(F)\star_0) \gamma$. 
Then we have 
\[
(\alpha,\beta)_F = (\alpha, T\gamma - c_1(F)\star_0\gamma)_F  
= (T\alpha -c_1(F)\star_0\alpha, \gamma)_F= 0. 
\]
Thus $E(T) \perp H'$ with respect to $(\cdot,\cdot)_F$.  
The skew-adjointness of $\mu$ with respect to $(\cdot,\cdot)_F$ 
shows that $(\mu\alpha,\alpha)_F =0$; hence $\mu(\alpha) \in H'$. 
\end{proof}

\subsection{Flat sections with the smallest asymptotics} 
\label{subsec:smallest_asymptotics} 
Under Property $\O$, one can find a one-dimensional 
vector space $\AA$ of flat sections $s(z)$ having 
the most rapid exponential decay $s(z) \sim e^{-T/z}$ 
as $z \to +0$ on the positive real line. 
We refer to such sections as flat sections 
\emph{with the smallest asymptotics}. 
It is easy to deduce the following result from 
Proposition \ref{prop:asymptoticsol_positivereal}. 

\begin{proposition} 
\label{prop:smallest_asymptotics}
Suppose that a Fano manifold $F$ satisfies Property $\O$. 
Consider the quantum connection $\nabla$ in \eqref{eq:nabla_tau_zero} and 
define: 
\begin{align} 
\label{eq:smallest_asymptotics}
\AA & := \left\{ s \colon \R_{>0} \to H^\udot(F) : \nabla s =0, \ 
\|e^{T/z} s(z)\| = O(z^{-m}) \ \text{as $z \to +0$} \ (\exists m)\right \} 
\end{align} 
Then: 
\begin{itemize} 
\item[(1)] 
$\AA$ is a one-dimensional complex vector space.
\item[(2)] 
For $s \in \AA$, 
the limit $\lim_{z\to +0} e^{T/z} s(z)$ exists and lies 
in the $T$-eigenspace $E(T)$ of $(c_1(F)\star_0)$. 
The map $\AA \to E(T)$ defined by this limit is an isomorphism. 
\item[(3)] 
Let $s \colon \R_{>0} \to H^\udot(F)$ be a flat section 
which does not belong to $\AA$. Then 
we have $\lim_{z\to +0} \|e^{\lambda /z} s(z)\| = \infty$ 
for all $\lambda >T'$ where $T'$ is in \eqref{eq:secondbiggest}. 
\end{itemize}  
\end{proposition}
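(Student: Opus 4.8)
\subsection*{Proof plan}

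The plan is to extract all three statements directly from the explicit fundamental solution of Proposition~\ref{prop:asymptoticsol_positivereal}. Shrinking the sector $\cS$ if necessary so that $P(z)$ is invertible, the columns of $P(z)\,e^{-U/z}\diag[F_1(z),\dots,F_k(z)]$ form a basis of $\nabla$-flat sections on $\cS$; since the conditions defining the space in \eqref{eq:smallest_asymptotics} only concern the behaviour as $z\to+0$ along $\R_{>0}\subset\cS$, I would work with this basis throughout. Because $u_1=T$, $N_1=1$ and $F_1\equiv1$, the first column is $s_1(z)=e^{-T/z}q(z)$, where $q(z)$ is the first column of $P(z)$; moreover $q(z)\to\Psi_1$ as $z\to+0$, where $\Psi_1$ is the first column of $\Psi$, and $\Psi_1$ is a nonzero $T$-eigenvector of $(c_1(F)\star_0)$ by the choice of $\Psi$. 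Hence $e^{T/z}s_1(z)\to\Psi_1\in E(T)\setminus\{0\}$, so $s_1\in\AA$.

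For the remaining content I would write a general flat section as $s(z)=P(z)\,e^{-U/z}\diag[F_1(z),\dots,F_k(z)]\,c$ with $c=(c^{(1)},\dots,c^{(k)})$ split according to the block sizes $N_1,\dots,N_k$, and conjugate away $P(z)$:
\[
P(z)^{-1}s(z)=\bigl(c^{(1)}e^{-T/z},\,e^{-\lambda_2/z}F_2(z)c^{(2)},\,\dots,\,e^{-\lambda_k/z}F_k(z)c^{(k)}\bigr).
\]
The decisive point is that along $\R_{>0}$ the blocks do not interfere in the Euclidean norm, so
\[
\|e^{\lambda/z}P(z)^{-1}s(z)\|^2=|c^{(1)}|^2e^{2(\lambda-T)/z}+\sum_{i=2}^{k}e^{2(\lambda-\re\lambda_i)/z}\|F_i(z)c^{(i)}\|^2,
\]
a sum of nonnegative terms. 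Since $P(z),P(z)^{-1}\to\Psi,\Psi^{-1}$, one has $\|s(z)\|\ge c_0\|P(z)^{-1}s(z)\|$ for $z$ near $0$, some $c_0>0$. If $c^{(i)}\ne0$ for some $i\ge2$, then, using $\|F_i(z)c^{(i)}\|\ge\|F_i(z)^{-1}\|^{-1}\|c^{(i)}\|\ge C^{-1}e^{-\delta|z|^{-p}}\|c^{(i)}\|$ from Proposition~\ref{prop:asymptoticsol_positivereal}(2) together with $\re\lambda_i\le T'<\lambda$, one gets $\|e^{\lambda/z}s(z)\|\ge\mathrm{const}\cdot e^{(\lambda-\re\lambda_i)/z-\delta|z|^{-p}}\to\infty$, because $0<p<1$ makes the $z^{-1}$-exponential dominate the $|z|^{-p}$-exponential. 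This proves (3); and specializing to $\lambda=T$ (legitimate since $T>T'$) shows that such an $s$ is not in $\AA$. Hence $\AA=\C s_1$ is one-dimensional, which is (1).

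Part (2) is then immediate: any $s\in\AA$ equals $c^{(1)}s_1$ for a unique $c^{(1)}\in\C$, so $e^{T/z}s(z)=c^{(1)}q(z)\to c^{(1)}\Psi_1\in E(T)$ exists, and $s\mapsto\lim_{z\to+0}e^{T/z}s(z)=c^{(1)}\Psi_1$ is an injective linear map $\AA\to E(T)$, hence an isomorphism because $\dim E(T)=1$ by Property~$\O$ (the orthogonality $E(T)\perp H'$ and $\mu(E(T))\subset H'$ of Lemma~\ref{lem:mu_block} are already on record). The only step needing care is the potential cancellation among the terms indexed by $i\ge2$: Property~$\O$ does not prevent several of the distinct eigenvalues $\lambda_2,\dots,\lambda_k$ from sharing the same real part, so a naive estimate working directly with $s(z)$ would have to control oscillatory sums $\sum e^{-\iu\,\im(\lambda_i)/z}F_i(z)c^{(i)}$ of equal modulus. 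Conjugating by $P(z)$ and passing to the Euclidean norm removes this difficulty entirely, reducing everything to the elementary comparison $e^{c/z}\gg e^{\delta|z|^{-p}}$ ($c>0$, $0<p<1$) furnished by Proposition~\ref{prop:asymptoticsol_positivereal}. I do not anticipate any further obstacle.
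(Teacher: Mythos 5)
Your argument is correct and follows essentially the same route as the paper: both express a general flat section in the fundamental-solution basis of Proposition~\ref{prop:asymptoticsol_positivereal}, peel off the invertible factor $P(z)$ (using $P(z)\to\Psi$) to work block-by-block, and combine the sub-exponential bound $\|F_i(z)^{-1}\|\le C\exp(\delta|z|^{-p})$ with $p<1$ together with $\re\lambda_i<T$ (resp.\ $\lambda>T'\ge\re\lambda_i$) to force the $i\ge 2$ coefficients to vanish (resp.\ to produce blow-up). The Euclidean-norm separation you make explicit to rule out cancellation among blocks sharing $\re\lambda_i$ is exactly what the paper's step ``$e^{(T-\lambda_i)/z}F_i(z)v_i$ is of polynomial growth'' does implicitly.
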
 
\begin{proof} 
With the notation as in Proposition \ref{prop:asymptoticsol_positivereal}, 
any $\nabla$-flat section $s(z)$ on the positive real line 
can be written in the form: 
\[
s(z) = P(z) e^{-U/z} (F_1(z) v_1 + \cdots + F_k(z) v_k) 
\]
with $v_i \in \C^{N_i}$ constant vectors. 
We claim that $s(z)$ belongs to $\AA$ if and only if 
$v_2 = v_3 = \cdots = v_k =0$. 
The `if' part of the claim 
is obvious from Proposition \ref{prop:asymptoticsol_positivereal}. 
Suppose that $s(z) \in \AA$. 
Take $i\ge 2$. 
The assumption gives that $e^{(T-\lambda_i)/z} F_i(z) v_i$  
is of polynomial growth as $z \to +0$. 
(The factor $P(z)$ is irrelevant, as it converges to the invertible matrix $\Psi$ 
as $z\to +0$.) 
On the other hand, from $\re(\lambda_i)<T$ and 
the estimate of the norm $\|F_i(z)^{-1}\|$ 
in Proposition \ref{prop:asymptoticsol_positivereal}, 
we obtain an estimate of the form 
$\|e^{(\lambda_i-T)/z} F_i(z)^{-1}\| \le C e^{-\varepsilon/z}$  
on $|z|\le 1$ for some $C, \varepsilon>0$.  
By taking the limit $z\to +0$ in the inequality 
\[
\|v_i\| \le  \left\|e^{(\lambda_i-T)/z} F_i(z)^{-1}\right\|  
\left \|e^{(T-\lambda_i)/z} F_i(z) v_i \right \| 
\le C e^{-\varepsilon/z} \left \|e^{(T-\lambda_i)/z} F_i(z) v_i \right \|, 
\]  
we find that $v_i=0$. The claim follows. 
Parts (1) and (2) of the proposition follows easily from the claim. 
To show part (3), it suffices to show that 
$\|e^{(\lambda -\lambda_i)/z} F_i(z) v_i\| \to \infty$ 
as $z\to +0$ for $i\ge 2$, $\lambda > T'$ and $v_i \neq 0$. 
Using again the estimate for $\|F_i(z)^{-1}\|$, we have an 
estimate of the form 
$\|e^{(\lambda_i-\lambda)/z} F_i(z)^{-1}\| \le C e^{-\varepsilon/z}$ 
on $|z|\le 1$ for some $C, \varepsilon >0$. 
The inequality 
\[
0 < \|v_i\| \le \left \|e^{(\lambda_i-\lambda)/z} F_i(z)^{-1}\right\| 
\left \|e^{(\lambda-\lambda_i)/z} F_i(z) v_i \right \| 
\]
now implies that $\lim_{z \to +0} \| e^{(\lambda -\lambda_i)/z} F_i(z) v_i\| 
= \infty$. 
\end{proof}

\subsection{Gamma Conjecture I: statement} 
\label{subsec:GammaI}
The \emph{Gamma class} \cite{Lib99,Lu,Iritani07,KKP08,Iritani09} 
of a smooth projective variety $F$ is the class 
\[
\Gg_F := \prod_{i=1}^{\dim F} \Gamma(1+ \delta_i) 
\in H^\udot(F) 
\]
where $\delta_1,\dots,\delta_{\dim F}$ are the Chern roots of 
the tangent bundle $TF$ so that 
\mbox{$c(TF) = \prod_{i=1}^{\dim F} (1+ \delta_i)$}. 
The Gamma function $\Gamma(1+ \delta_i)$ 
in the right-hand side should be expanded in the Taylor series 
in $\delta_i$. We have 
\[
\Gg_F = \exp\left (- C_{\rm eu} c_1(F) + \sum_{k=2}^\infty 
 (-1)^{k}(k-1)! \zeta(k)\ch_{k}(TF) \right)
\]
where $C_{\rm eu}=0.57721...$ is Euler's constant and 
$\zeta(k)$ is the special value of Riemann's zeta function. 

The fundamental solution $S(z) z^{-\mu} z^\rho$ from Proposition 
\ref{prop:fundsol} identifies the space of flat sections over the positive 
real line $\R_{>0}$ with the cohomology group: 
\begin{align}
\label{eq:coh_framing}
\begin{split}  
\Phi \colon 
H^\udot(F) & \longrightarrow \left\{ 
s \colon \R_{>0} \to H^\udot(F) : \nabla s =0 \right\} \\
\alpha & \longmapsto (2\pi)^{-\frac{\dim F}{2}} S(z) z^{-\mu}z^\rho \alpha
\end{split}  
\end{align} 
where we use the standard determination for 
$z^{-\mu} z^\rho = \exp(-\mu\log z) \exp(\rho \log z)$ 
such that $\log z\in\R$ for $z\in \R_{>0}$. 

\begin{definition} 
\label{def:principal_asymptotic_class} 
Suppose that a Fano manifold $F$ satisfies Property $\O$. 
The \emph{principal asymptotic class} of $F$ 
is a cohomology class $A_F \in H^\udot(F)$ such that 
$\AA= \C \Phi(A_F)$, where 
$\AA$ is the space \eqref{eq:smallest_asymptotics} 
of flat sections with the smallest asymptotics (recall that 
$\AA$ is of dimension one by Proposition 
\ref{prop:smallest_asymptotics}). 
The class $A_F$ is defined up to a constant; when 
$\langle [\pt], A_F \rangle \neq 0$, we can normalize 
$A_F$ so that $\langle [\pt], A_F \rangle =1$. 
\end{definition} 

\begin{conjecture}[Gamma Conjecture I]  
\label{conj:GammaI}
Let $F$ be a Fano manifold $F$ satisfying Property $\O$. 
The principal asymptotic class $A_F$ of $F$ is given by $\Gg_F$. 
\end{conjecture}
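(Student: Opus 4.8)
The plan is to reduce the conjecture to an asymptotic analysis of an explicit solution of the quantum differential equation. By Proposition~\ref{prop:smallest_asymptotics}, $A_F$ is characterized, up to a scalar, by the requirement that the flat section $\Phi(A_F)(z)$ decay like $e^{-T/z}$ as $z\to+0$ along $\R_{>0}$; so it is enough to show that $\Phi(\Gg_F)(z)$ has this decay, i.e.\ that $\Gg_F$ spans the one-dimensional space $\AA$. Using the Gromov--Witten formula for $S(z)$ in Remark~\ref{rem:S_general_tau} and the gauge equivalence between the $z$-direction and the anticanonical direction in Remark~\ref{rem:anticanonical}, this becomes a statement about the $t\to+\infty$ asymptotics of Givental's $J$-function restricted to the anticanonical line (closely related to the limit formula of Corollary~\ref{cor:limit_J}): one must extract the dominant exponential behaviour of $J(t)$ together with its subleading cohomology-valued coefficient, and recognize the latter as $\Gg_F$.

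The Gamma class should emerge from this analysis through the Euler integral $\int_0^\infty x^{s-1}e^{-x}\,dx=\Gamma(s)$ and Stirling's formula $\Gamma(x)\sim\sqrt{2\pi}\,x^{x-1/2}e^{-x}$; in particular the normalization $(2\pi)^{-\dim F/2}$ built into $\Phi$ in~\eqref{eq:coh_framing} is exactly the $\sqrt{2\pi}$-per-dimension produced by a Gaussian (stationary-phase) integral, and the operators $z^{-\mu}z^\rho$ account for the power-of-$t$ corrections. Conceptually, when $F$ has a Landau--Ginzburg mirror $f\colon Y\to\C$, flat sections of $\nabla$ are oscillatory integrals $\int_\Gamma e^{f/z}\omega$ over Lefschetz thimbles, the thimble through the conifold critical point of Remark~\ref{rem:conifold} (whose critical value equals $T$ in the cases at hand) gives the section spanning $\AA$, and the identification of this thimble-section with $\Phi(\Gg_F)=\Phi(\Gg_F\Ch(\O_F))$ is the content of the $\Gg$-integral structure of~\cite{Iritani07,Iritani09,KKP08}, with the reflection formula $\Gamma(1-z)\Gamma(1+z)=\pi z/\sin\pi z$ of~\eqref{eq:HRR_Gamma} providing the bookkeeping.

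In the unconditional cases this program is carried out directly. For $\P=\P^{N-1}$ one presents the quantum connection as a cyclic connection --- equivalently a single scalar ODE of order $N$ --- whose solutions are Mellin--Barnes integrals with integrand involving $\Gamma(\,\cdot\,)^N$; Property~$\O$ is checked by hand (the eigenvalues of $c_1(\P)\star_0$ are $N\zeta$ with $\zeta^N=1$, so $T=N$ is simple), the solution selected by the correct contour is read off by Stirling's formula to be the one corresponding to $\prod_i\Gamma(1+\delta_i)=\Gg_\P$, and the rest of the asymptotic basis is produced from Beilinson's collection $\{\O(i)\}$ via the mutation/braid combinatorics of~\S\ref{subsec:mutation}. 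For Grassmannians $\G=G(r,N)$ one avoids a direct mirror and instead invokes the quantum Satake principle, extended to big quantum cohomology in Theorem~\ref{thm:wedge_conn}: the quantum connection of $\G$ is the $r$-th exterior power of that of $\P^{N-1}$, hence its flat sections are wedges of flat sections of $\P^{N-1}$; the biggest eigenvalue of $c_1(\G)\star_0$ is the appropriate sum of the $r$ biggest eigenvalues for $\P^{N-1}$ and is simple (so Property~$\O$ holds for $\G$), the corresponding wedge of $\P^{N-1}$-sections with smallest asymptotics matches $\Gg_\G\Ch(\O_\G)$ because the $\Gg$-integral structures are compatible with the wedge operation, and isomonodromic deformation (Proposition~\ref{prop:isomonodromic_deformation}) to a configuration with pairwise distinct eigenvalues then extracts the asymptotic basis associated to Kapranov's collection $\{S^\nu V^*\}$.

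The main obstacle is twofold. First, Property~$\O$ is itself open: outside the situations where $c_1(F)\star_0$ is represented by an irreducible nonnegative matrix, or where a positive Laurent-polynomial mirror makes the conifold point visible, there is no general argument that $T$ is a simple eigenvalue of maximal modulus --- so a proof of Gamma Conjecture~I valid for \emph{all} Fano manifolds is out of reach, and one must proceed class by class. Second, even granting Property~$\O$ the asymptotic analysis is not formal: it needs either an explicit hypergeometric solution (projective spaces) or a structural reduction to one (Grassmannians via Satake); and in the Grassmannian case the delicate point is that $c_1(\G)\star_0$ has many repeated eigenvalues, so one must use the refinement of~\S\ref{subsec:asymptotic_solution}--\S\ref{subsec:isomonodromy} that a semisimple point is never a turning point, and carefully track how $\Gg$-classes and exceptional collections transform under mutation and under the $r$-th wedge.
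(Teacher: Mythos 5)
Gamma Conjecture I is a conjecture, not a theorem, and the paper establishes it only for projective spaces and Grassmannians; your outline correctly recognizes this and traces the paper's own special-case arguments in essentially the same way --- reduction to the smallest-asymptotics flat section via Proposition~\ref{prop:smallest_asymptotics} and Corollary~\ref{cor:limit_J}, the Mellin--Barnes/Stirling analysis for $\P^{N-1}$ in \S\ref{sec:Gamma_P}, and the quantum Satake wedge construction plus isomonodromic deformation for $G(r,N)$ in \S\ref{sec:Gamma_G}. One minor caveat: the Landau--Ginzburg/conifold-point picture you invoke is only heuristic motivation in the paper (Remark~\ref{rem:conifold}) and is not an ingredient in either actual special-case proof.
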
 

We explain that the Gamma class should be viewed 
as a square root of the Todd class (or $\hA$-class) 
in the Riemann-Roch theorem. 
Let $H$ denote the vector space of $\nabla$-flat sections 
over the positive real line: 
\[
H := \{ s \colon \R_{>0} \to H^\udot(F) : \nabla s =0\} 
\]
and let $[\cdot,\cdot)$ be a non-symmetric pairing 
on $H$ defined by 
\begin{equation}
\label{eq:pairing_H}
[s_1, s_2) := (s_1(e^{-\iu\pi} z) , s_2(z) )_F 
\end{equation} 
for $s_1,s_2 \in H$, 
where $s_1(e^{-\iu\pi} z)$ denotes the analytic continuation of $s_1(z)$ 
along the semicircle $[0,1]\ni \theta \mapsto e^{-\iu \pi \theta} z$. 
The right-hand side does not depend on $z$ because of the flatness 
\eqref{eq:nabla_preserves_pairing}. 
The non-symmetric pairing $[\cdot,\cdot)$ induces 
a pairing on $H^\udot(F)$ via the map $\Phi$ \eqref{eq:coh_framing}; 
we denote by the same symbol $[\cdot,\cdot)$ the 
corresponding pairing on $H^\udot(F)$. 
Since $S(z)$ preserves the pairing (Proposition \ref{prop:fundsol}), we have 
\begin{equation} 
\label{eq:[)_coh}
[\alpha, \beta) = \frac{1}{(2\pi)^{\dim F}}
(e^{\pi\iu \mu} e^{-\pi\iu \rho} \alpha, \beta)_F 
= \frac{1}{(2\pi)^{\dim F}} 
(e^{\pi\iu\rho} e^{\pi\iu \mu} \alpha, \beta)_F 
\end{equation} 
for $\alpha,\beta \in H^\udot(F)$. 
We write $\Ch(\cdot)$, $\Td(\cdot)$ for the 
following characteristic classes: 
\begin{align*} 
\Ch(V) &= (2\pi\iu)^{\frac{\deg}{2}} \ch(V) 
= \sum_{j=1}^{\rank V} e^{2\pi\iu \delta_j} \\
\Td(V) & = (2\pi\iu)^{\frac{\deg}{2}} \td(V) 
= \prod_{j=1}^{\rank V} 
\frac{2\pi \iu \delta_j}{1- e^{-2\pi\iu \delta_j}} 
\end{align*} 
where $\delta_1,\dots,\delta_{\rank V}$ are the 
Chern roots of a vector bundle $V$. 
We also write $\Td_F = \Td(TF)$. 

\begin{lemma}[\cite{Iritani07, Iritani09}]
\label{lem:Gamma}   
Let $V_1$, $V_2$ be vector bundles over $F$. Then 
\[
\left[ \Gg_{F}\Ch(V_1), \Gg_F\Ch(V_2)\right) = \chi(V_1,V_2) 
\]
where $\chi(V_1,V_2) = \sum_{i=0}^{\dim F} (-1)^i \dim \Ext^i(V_1,V_2)$. 
\end{lemma}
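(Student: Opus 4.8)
The plan is to reduce the identity to the Hirzebruch--Riemann--Roch theorem via the explicit formula \eqref{eq:[)_coh} for the pairing $[\cdot,\cdot)$ on $H^\udot(F)$, combined with the Gamma reflection identity $\Gamma(1-x)\Gamma(1+x)=\frac{\pi x}{\sin\pi x}=\frac{2\pi\iu x}{1-e^{-2\pi\iu x}}e^{-\pi\iu x}$ already recorded in \S\ref{subsec:Gamma}. In effect this lemma is the precise form of the factorization \eqref{eq:HRR_Gamma}, and I would follow the computation of \cite{Iritani07,Iritani09}.

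First I would unwind the operators in \eqref{eq:[)_coh}. Write $n=\dim F$ and let $\delta_1,\dots,\delta_n$ be the Chern roots of $TF$. Since $\mu|_{H^{2p}}=(p-\tfrac n2)\id$ and $\rho=c_1(F)\cup$, one has $e^{\pi\iu\mu}|_{H^{2p}}=(-\iu)^n(-1)^p\id$, so that $e^{\pi\iu\mu}$ equals $(-\iu)^n$ times the ring involution $(-1)^{\deg/2}$ (multiplication by $(-1)^p$ on $H^{2p}$), while $e^{\pi\iu\rho}$ is cup product with $e^{\pi\iu c_1(F)}=\prod_j e^{\pi\iu\delta_j}$. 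Expanding in Chern roots, $(-1)^{\deg/2}$ is a ring homomorphism satisfying $(-1)^{\deg/2}\Ch(V)=\Ch(V^\vee)$ and $(-1)^{\deg/2}\Gg_F=\prod_j\Gamma(1-\delta_j)$.

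Next I would substitute $\alpha=\Gg_F\Ch(V_1)$ and $\beta=\Gg_F\Ch(V_2)$ into \eqref{eq:[)_coh}. Applying $e^{\pi\iu\mu}$ to $\alpha$ gives $(-\iu)^n\prod_j\Gamma(1-\delta_j)\cdot\Ch(V_1^\vee)$, then $e^{\pi\iu\rho}$ multiplies by $\prod_j e^{\pi\iu\delta_j}$; cupping with $\beta$ and collecting Chern-root factors turns the product of Gamma factors, by the reflection identity, into $\prod_j\bigl(\Gamma(1-\delta_j)\Gamma(1+\delta_j)e^{\pi\iu\delta_j}\bigr)=\prod_j\frac{2\pi\iu\delta_j}{1-e^{-2\pi\iu\delta_j}}=\Td(TF)$, where $\Td(TF)=(2\pi\iu)^{\deg/2}\td(TF)$. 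Hence
\[
[\Gg_F\Ch(V_1),\Gg_F\Ch(V_2)) = \frac{(-\iu)^n}{(2\pi)^n}\int_F \Td_F\,\Ch(V_1^\vee)\,\Ch(V_2).
\]
Extracting the degree-$2n$ component of $\Td_F\Ch(V_1^\vee)\Ch(V_2)=(2\pi\iu)^{\deg/2}\bigl(\td_F\ch(V_1^\vee)\ch(V_2)\bigr)$ produces a single factor $(2\pi\iu)^n$, and since $(-\iu)^n(2\pi\iu)^n=(2\pi)^n$ the right-hand side collapses to $\int_F\td_F\,\ch(V_1^\vee)\,\ch(V_2)=\chi(V_1,V_2)$ by Hirzebruch--Riemann--Roch.

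There is no genuine obstacle here: the argument is entirely formal once HRR and the reflection formula are in hand. The only thing demanding care is the bookkeeping of the factors $\iu$, $2\pi$ and the signs $(-1)^p$, together with checking that the branch of $z^{-\mu}z^\rho$ used on $\R_{>0}$ to define $\Phi$ in \eqref{eq:coh_framing} is the one implicit in the determinations $e^{\pi\iu\mu}$, $e^{\pi\iu\rho}$ of \eqref{eq:[)_coh} — but this is already pinned down by Proposition \ref{prop:fundsol} and the paragraph preceding \eqref{eq:[)_coh}.
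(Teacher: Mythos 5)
Your argument is correct and is essentially the paper's own proof: both reduce the claim to Hirzebruch--Riemann--Roch via the explicit formula \eqref{eq:[)_coh}, convert the product of Gamma factors to $\Td_F$ using the reflection identity \eqref{eq:Gamma_identity}, and track the powers of $2\pi\iu$. Your write-up spells out a bit more of the bookkeeping (the factorization $e^{\pi\iu\mu}=(-\iu)^{\dim F}(-1)^{\deg/2}$ and $(-1)^{\deg/2}\Ch(V)=\Ch(V^\vee)$), but there is no difference of substance.
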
 
\begin{proof} 
Recall the Gamma function identity: 
\begin{equation} 
\label{eq:Gamma_identity}
\Gamma(1+z) \Gamma(1-z) = 
\frac{2\pi\iu z}{e^{\iu\pi z} -e^{-\iu\pi z}}. 
\end{equation} 
This immediately implies that 
\begin{equation*} 
e^{\pi\iu \rho} 
\left( e^{\pi\iu \frac{\deg}{2}} \Gg_F \right) \cdot \Gg_F = 
\Td_F. 
\end{equation*} 
Therefore 
\[
\left[ \Gg_{F}\Ch(V_1), \Gg_F\Ch(V_2)\right) = 
\frac{1}{(2\pi \iu)^{\dim F}} 
\int_F \left(e^{\pi\iu \frac{\deg}{2}} \Ch(V_1) \right) \Ch(V_2) 
\Td_F.  
\] 
This equals $\chi(V_1,V_2)$ by the Hirzebruch-Riemann-Roch formula. 
\end{proof} 

Under Gamma Conjecture I, the canonical generator $\Phi(\Gg_F)$ 
of $\AA$ satisfies the following normalization. 
\begin{proposition} 
\label{prop:normalization} 
Suppose that a Fano manifold $F$ satisfies Property $\O$ and 
Gamma Conjecture I. Set $s_1(z) = \Phi(\Gg_F)(z)$. Then we have: 
\begin{enumerate}
\item The limit 
$v:=\lim_{z\to +0} e^{T/z}s_1(z) \in E(T)$ satisfies $(v,v)_F = 1$.  
\item Let $\hnabla_{\partial_\lambda}$ be the Laplace dual \eqref{eq:2nd_conn} 
of the quantum connection $\nabla$ at $\tau=0$. 
There exists a $\hnabla_{\partial_\lambda}$-flat section 
$\varphi(\lambda)$ which is holomorphic near $\lambda=T$ 
such that $\varphi(T)=v$ and that 
$s_1(z) = 
z^{-1}\int_T^\infty \varphi(\lambda) e^{-\lambda/z} d\lambda$.  
\end{enumerate}
\end{proposition}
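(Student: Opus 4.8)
The plan is to derive both parts from the structural results already established, principally Proposition~\ref{prop:asymptoticsol_positivereal}, Proposition~\ref{prop:smallest_asymptotics}, Lemma~\ref{lem:mu_block}, and the Laplace-transform construction behind Proposition~\ref{prop:repeated_eigenvalues}. By Gamma Conjecture~I the section $s_1(z) = \Phi(\Gg_F)(z)$ spans the line $\AA$ of flat sections with the smallest asymptotics, so by Proposition~\ref{prop:smallest_asymptotics}(2) the limit $v := \lim_{z\to+0} e^{T/z} s_1(z)$ exists and lies in the one-dimensional $T$-eigenspace $E(T)$. For part~(1) I would compute $(v,v)_F$ using the pairing $[\cdot,\cdot)$ on flat sections from \eqref{eq:pairing_H}. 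The key observation is that the analytic continuation $s_1(e^{-\iu\pi}z)$ has asymptotics governed by $e^{+T/z}$ (the opposite sign) as $z\to+0$, so the product $(s_1(e^{-\iu\pi}z), s_1(z))_F$, which is $z$-independent, has as its leading term exactly $(v',v)_F$ where $v' = \lim e^{-T/z} s_1(e^{-\iu\pi}z)$; one checks $v' = v$ since the $T$-eigenspace is one-dimensional and the continuation preserves it. On the other hand, by Lemma~\ref{lem:Gamma} applied with $V_1 = V_2 = \O_F$, we have $[\Gg_F, \Gg_F) = \chi(\O_F,\O_F) = 1$, and $[s_1,s_1) = (2\pi)^{-\dim F}[\Gg_F \Ch(\O_F), \Gg_F\Ch(\O_F))$ up to the normalization constants in $\Phi$; tracking these constants carefully yields $(v,v)_F = 1$. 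The main subtlety here is bookkeeping the $(2\pi)^{-\dim F/2}$ factors in $\Phi$ and confirming the limit of the $z$-independent pairing is genuinely the leading asymptotic term — this requires knowing that the subleading flat-section contributions (the blocks $F_i(z)$ with $i\ge 2$) contribute nothing in the limit, which follows from the norm estimates in Proposition~\ref{prop:asymptoticsol_positivereal}(2) together with $\re(\lambda_i) < T$.

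For part~(2) I would construct $\varphi(\lambda)$ via the Laplace-dual connection $\hnabla$ of \eqref{eq:2nd_conn}. Since $T$ is a simple eigenvalue of $c_1(F)\star_0 = E\star_0$ (Property~$\O$(3)), $\lambda = T$ is a logarithmic singularity of $\hnabla$ of the mildest type; more precisely, the residue of $\hnabla_{\partial_\lambda}$ at $\lambda = T$ is (conjugate to) a rank-one nilpotent, but because $E(T)$ is one-dimensional this residue actually vanishes on the relevant invariant line. Concretely, following the argument of Lemma~\ref{lem:hnabla_flat}(1) adapted to the non-semisimple setting — the matrix $(\lambda - E\star_0)^{-1}\mu$ has residue $-\Pi_T \mu$ at $\lambda=T$ where $\Pi_T$ is the spectral projection onto $E(T)$, and by Lemma~\ref{lem:mu_block} we have $\mu(E(T)) \subset H' = \ker\Pi_T$, so $\Pi_T \mu \Pi_T = 0$ — there exists a $\hnabla_{\partial_\lambda}$-flat section $\varphi(\lambda)$ holomorphic near $\lambda = T$ with $\varphi(T) = v$. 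Then I would define $\tilde s(z) = z^{-1}\int_T^\infty \varphi(\lambda) e^{-\lambda/z}\,d\lambda$ along the positive real axis (the integral converges since $\varphi$ grows at most polynomially, $\hnabla$ being regular singular at $\lambda=\infty$), verify via integration by parts that $\tilde s$ is $\nabla$-flat — using that $\varphi(T)=v$ is a $T$-eigenvector of $c_1(F)\star_0$, exactly as in the proof of Proposition~\ref{prop:repeated_eigenvalues} — and apply Watson's lemma to conclude $\tilde s(z) \sim e^{-T/z} v$, hence $\tilde s \in \AA$. Since $\dim\AA = 1$ and $\lim e^{T/z}\tilde s(z) = v = \lim e^{T/z} s_1(z)$, we get $\tilde s = s_1$.

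The hard part will be part~(2): establishing holomorphicity of $\varphi$ at $\lambda = T$ in the \emph{non-semisimple} situation. Lemma~\ref{lem:hnabla_flat} was proved under the semisimplicity assumption, so I cannot invoke it directly; instead I must run the analogous local analysis of $\hnabla_{\partial_\lambda} = \partial_\lambda + (\lambda - E\star_0)^{-1}\mu$ near its singularity $\lambda = T$ using only Property~$\O$. The essential inputs are: (a) the simplicity of the eigenvalue $T$, which makes $(\lambda - E\star_0)^{-1}$ have a \emph{simple} pole at $\lambda = T$ with residue $-\Pi_T$ (spectral projector), and (b) Lemma~\ref{lem:mu_block}, which gives $\Pi_T \mu|_{E(T)} = 0$, so that the vector $v \in E(T)$ lies in the kernel of the residue $-\Pi_T\mu$ of the connection — this is precisely the condition guaranteeing a holomorphic flat extension of the constant section $v$. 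One must also verify that $\varphi$ extends analytically along $[T,\infty)$, which holds because, by \eqref{eq:secondbiggest}, no other eigenvalue of $c_1(F)\star_0$ has real part $\ge T$, so the ray $[T,\infty)$ avoids the remaining singularities $\lambda = \lambda_i$. I would also double-check that the two candidate expressions for $s_1$ — the Laplace integral and the fundamental solution from Proposition~\ref{prop:asymptoticsol_positivereal} — are genuinely equal rather than merely asymptotic, which follows once both are seen to lie in the one-dimensional space $\AA$ and to have the same limit $v$.
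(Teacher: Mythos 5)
Your construction in part~(2) — residue analysis of $\hnabla_{\partial_\lambda}$ at $\lambda=T$ via the simplicity of the eigenvalue $T$, Lemma~\ref{lem:mu_block} giving $\Pi_T\mu\Pi_T=0$ so that the residue annihilates $v$, then the Laplace integral, integration by parts for flatness, and Watson's lemma for the asymptotics — is exactly the paper's argument (aside from a harmless sign slip: the residue of $(\lambda-E\star_0)^{-1}\mu$ at $\lambda=T$ is $+\Pi_T\mu$, not $-\Pi_T\mu$; only $\Pi_T\mu\Pi_T=0$ and $\Pi_T\mu\,v=0$ matter). But your part~(1) has a real gap, and it is caused by doing the two parts in the wrong order.

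For part~(1) you pair $s_1(e^{-\iu\pi}z)$ against $s_1(z)$ and want to take $z\to+0$. You claim $v':=\lim_{z\to+0}e^{-T/z}s_1(e^{-\iu\pi}z)$ equals $v$, with justification ``since the $T$-eigenspace is one-dimensional and the continuation preserves it.'' This does not suffice. First, one-dimensionality of $E(T)$ can at most give $v'=cv$ for some constant $c\in\C$; it cannot fix $c=1$, and that normalization is precisely what the proposition asserts. Second, and more seriously, it is not a priori clear that the limit $v'$ exists at all: Proposition~\ref{prop:asymptoticsol_positivereal} controls $s_1$ only in a thin sector $|\arg z|\le\epsilon$, not at $\arg z=-\pi$, and the sector in Proposition~\ref{prop:repeated_eigenvalues} requires a semisimplicity assumption not in force here. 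The paper resolves both issues by establishing part~(2) \emph{first}: once you have $s_1(z)=z^{-1}\int_T^\infty\varphi(\lambda)e^{-\lambda/z}\,d\lambda$ with $\varphi$ holomorphic near $T$ and $\varphi(T)=v$, you bend the contour to analytically continue this representation to the ray $\arg z=-\pi$ (possible because $T'<T$ keeps the other singularities of $\hnabla$ out of the way), and Watson's lemma applied to the bent integral gives $\lim_{z\to+0}e^{-T/z}s_1(e^{-\iu\pi}z)=\varphi(T)=v$ \emph{exactly}, not up to a constant. Then $[\Gg_F,\Gg_F)=\chi(\O,\O)=1$ by Lemma~\ref{lem:Gamma}, and taking $z\to+0$ in the $z$-independent pairing yields $(v,v)_F=1$. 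So the fix is simply to prove~(2) before~(1) and replace the one-dimensionality heuristic with the Watson's-lemma asymptotics of the Laplace integral.
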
 
\begin{proof} 
We give a construction of the generator $s_1$ of $\AA$ using 
Laplace transformation as in \S \ref{subsec:asymptotic_solution}. 
Recall that once we have a $\hnabla$-flat section $\varphi(\lambda)$ 
holomorphic near $\lambda =T$ such that $\varphi(T)$ 
is a non-zero eigenvector $v$ of $(c_1(F)\star_0)$ with eigenvalue $T$, 
the Laplace transform 
\[
s_1(z) = \frac{1}{z} \int_{T}^\infty \varphi(\lambda) e^{-\lambda/z} 
d\lambda
\]
gives a $\nabla$-flat section such that $\lim_{z\to +0} e^{T/z} s_1(z) =v$ 
(see the discussion around \eqref{eq:yi_Laplace}). 
Thus it suffices to 
construct a $\hnabla$-flat section $\varphi(\lambda)$ as above. 
Property $\O$ ensures that $\hnabla$ is logarithmic at $\lambda = T$, 
and Lemma \ref{lem:mu_block} ensures that the 
residue $R$ of $\hnabla$ at $\lambda=T$ is nilpotent 
and $R v =0$. Thus we have a fundamental matrix solution  
for $\hnabla$ of the form $U(\lambda) \exp(-R \log(\lambda-T))$ 
with $U(\lambda)$ holomorphic near $\lambda=T$ 
and $U(T) = \id$ \cite[Theorem 5.5]{Wasow}. 
Now $\varphi(\lambda) := U(\lambda) \exp(-R\log (\lambda -T)) v 
= U(\lambda) v$ gives the desired $\hnabla$-flat section. 

It now suffices to show that $v = \varphi(T)$ is of unit length. 
By bending the integration path $[T,\infty]$ 
as in Figure \ref{fig:smallest_asymptotics},  
we obtain the analytic continuation 
$s_1(e^{-\pi\iu} z)$ of $s_1(z)$. 
Watson's lemma \cite[\S 6.2.2]{Ablowitz-Fokas} shows that 
$\lim_{z\to +0} e^{-T/z}s_1(e^{-\pi\iu}z) = v$. 
Therefore we have 
\[
\left[\Gg_F, \Gg_F \right) = \left( 
\Phi(\Gg_F)(e^{-\pi \iu} z ), \Phi(\Gg_F)(z) \right)_F 
=(s_1(e^{-\pi\iu}z),s_1(z))_F
\to (v,v)_F 
\]   
as $z\to +0$. By Lemma \ref{lem:Gamma}, 
the left-hand side equals $\chi(\O_F,\O_F) = 1$ since 
$F$ is Fano. 
Thus we have $(v,v)_F =1$ 
and the conclusion follows. 
\end{proof} 

\begin{figure}[htbp]
\begin{center} 
\includegraphics[bb=77 610 523 726]{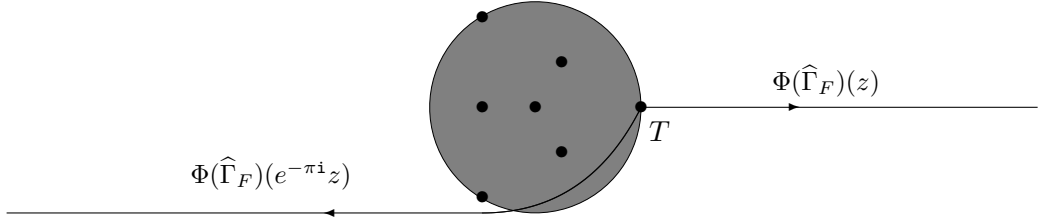} 
\end{center} 
\caption{Spectrum of $(c_1(F)\star_0)$ and 
the paths of integration (on the $\lambda$-plane). }
\label{fig:smallest_asymptotics}
\end{figure}

\subsection{The leading asymptotics of dual flat sections} 
\label{subsec:leading_asymp}
We now study flat sections of the dual quantum 
connection, which we call dual flat sections. 
We show that the limit $\lim_{z\to +0} e^{T/z} f(z)$ exists 
for every dual flat section $f(z)$.  

Let $H_{\ldot}(F)=H_{\rm even}(F;\C)$ denote the even part of 
the homology group with complex coefficients. 
The \emph{dual quantum connection} is a meromorphic flat connection on 
the trivial bundle $H_\ldot(F) \times \P^1 \to \P^1$ 
given by (cf.~\eqref{eq:nabla_tau_zero}) 
\begin{equation} 
\label{eq:dual_qconn}
\nabla^\vee_{z\parfrac{}{z}} = z\parfrac{}{z} + \frac{1}{z} 
(c_1(F)\star_0)^{\rm t} - \mu^{\rm t} 
\end{equation} 
where the superscript ``t" denotes the transpose. 
This is dual to the quantum connection $\nabla$ in the sense that 
we have 
\[
d\langle f(z), s(z) \rangle = \langle \nabla^\vee f(z), s(z) \rangle 
+ \langle f(z), \nabla s(z) \rangle 
\]
where $\langle \cdot, \cdot \rangle$ is the natural pairing between 
homology and cohomology. 
Because of the property \eqref{eq:nabla_preserves_pairing}, 
we can identify the dual flat connection $\nabla^\vee$ 
with the quantum connection $\nabla$ with the opposite sign of $z$. 
In order to avoid possible confusion about signs, 
we shall not use this identification.  
We write $E(T)^*\subset H_\ldot(F)$ for the one-dimensional 
eigenspace of $(c_1(F)\star_0)^{\rm t}$ with eigenvalue $T$ 
and ${H'}^* =\im(T - (c_1(F)\star_0)^{\rm t})$ for a  
complementary subspace. 
They are dual to $E(T)$, $H'$ from Lemma \ref{lem:mu_block}. 

\begin{proposition}
\label{prop:lead} 
Suppose that a Fano manifold $F$ satisfies Property $\O$. 
Then: 
\begin{enumerate} 
\item The limit 
\[ 
\LA(f) := \lim_{z\to +0} e^{-T/z} f(z)  
\]
exists for every $\nabla^\vee$-flat section $f(z)$ and lies in $E(T)^*$. 
In other words, $\LA$ defines a linear functional: 
\[
\LA \colon \left\{ 
f \colon \R_{>0} \to H_{\ldot}(F) : \nabla^\vee f(z) = 0
\right \} 
\longrightarrow E(T)^*.  
\]
\item 
If $\LA(f) = 0$, we have $
\lim_{z\to +0} e^{-\lambda/z} f(z) = 0$ 
for every $\lambda > T'$. 
\end{enumerate}
\end{proposition}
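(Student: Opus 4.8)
The plan is to deduce Proposition \ref{prop:lead} from Proposition \ref{prop:asymptoticsol_positivereal} applied to the dual connection, essentially repeating the argument of Proposition \ref{prop:smallest_asymptotics} with the roles of growth and decay reversed. First I would observe that $\nabla^\vee$ has exactly the same shape as $\nabla$ but with $(c_1(F)\star_0)$ replaced by its transpose and $\mu$ by $-\mu^{\rm t}$; in particular the spectrum of $(c_1(F)\star_0)^{\rm t}$ equals that of $(c_1(F)\star_0)$, so Property $\O$ holds for $\nabla^\vee$ with the \emph{same} number $T$ and the same second-largest real part $T'$ from \eqref{eq:secondbiggest}. Hence Proposition \ref{prop:asymptoticsol_positivereal} gives a fundamental solution $P^\vee(z) e^{U/z} \diag[G_1(z),\dots,G_k(z)]$ for $\nabla^\vee$ over a small sector around $\R_{>0}$, where the sign in the exponential is flipped because the $1/z$-term in \eqref{eq:dual_qconn} has the opposite sign; here $P^\vee(z) \sim \Psi^\vee + P^\vee_1 z + \cdots$, each $G_i(z)$ and its inverse are bounded by $C\exp(\delta|z|^{-p})$ with $0<p<1$, and $G_1(z) \equiv 1$. (The analogue of Lemma \ref{lem:mu_block} for $(c_1(F)\star_0)^{\rm t}$ and $-\mu^{\rm t}$, needed to kill the $z^c$ factor in the first block, follows by the same computation using skew-adjointness of $\mu^{\rm t}$ and the fact that $E(T)^*$ and ${H'}^*$ are dual to the orthogonal decomposition in Lemma \ref{lem:mu_block}.)

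Given this, write an arbitrary $\nabla^\vee$-flat section as
\[
f(z) = P^\vee(z) e^{U/z} \bigl( G_1(z) w_1 + \cdots + G_k(z) w_k \bigr)
\]
with constant vectors $w_i$. For part (1), multiply by $e^{-T/z}$: the first block contributes $e^{-T/z}e^{T/z} G_1(z) w_1 = w_1$, and for $i\ge 2$ the factor $e^{(\lambda_i - T)/z}$ decays exponentially as $z\to +0$ since $\re(\lambda_i) < T$, and it dominates the subexponential growth $\|G_i(z)\| \le C\exp(\delta|z|^{-p})$ because $p<1$; since $P^\vee(z) \to \Psi^\vee$, the limit exists and equals $\Psi^\vee w_1$, which lies in $E(T)^*$ because $\Psi^\vee$ carries the first coordinate axis to the $T$-eigenspace of $(c_1(F)\star_0)^{\rm t}$. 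So $\LA(f) = \Psi^\vee w_1$, visibly linear in $f$, proving (1).

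For part (2), $\LA(f) = 0$ forces $w_1 = 0$, so $f(z) = P^\vee(z) e^{U/z}\sum_{i\ge 2} G_i(z) w_i$ involves only the blocks $i\ge 2$. Fix $\lambda > T'$; then $\re(\lambda_i) \le T' < \lambda$ for every $i\ge 2$, so arguing exactly as in the proof of part (3) of Proposition \ref{prop:smallest_asymptotics} — using the lower bound $\|w_i\| \le \|e^{(\lambda_i-\lambda)/z}G_i(z)^{-1}\|\,\|e^{(\lambda-\lambda_i)/z}G_i(z)w_i\|$ together with the exponential decay of $\|e^{(\lambda_i-\lambda)/z}G_i(z)^{-1}\| \le C e^{-\varepsilon/z}$ — we get $\|e^{(\lambda-\lambda_i)/z}G_i(z)w_i\| \to \infty$ unless $w_i = 0$, and in any case $e^{-\lambda/z}f(z) = P^\vee(z)\sum_{i\ge 2} e^{(\lambda_i-\lambda)/z} G_i(z) w_i \to 0$ since each summand is dominated by $e^{(\lambda_i-\lambda)/z}\cdot C\exp(\delta|z|^{-p}) \to 0$. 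This gives (2).

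The only genuinely new point — and the one I'd expect to require the most care — is the dual version of Lemma \ref{lem:mu_block}, i.e.\ checking that the residue of the Laplace-dual connection $\hnabla^\vee$ at $\lambda = T$ is nilpotent and annihilates the $T$-eigenvector, so that no logarithmic (equivalently, no $z^c$ with $c\neq 0$) term appears in the first block of the fundamental solution; this is what pins $\LA(f)$ to be a genuine limit rather than a limit up to a power of $z$. Everything else is a transcription of the estimates already established, with signs adjusted.
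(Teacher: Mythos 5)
Your argument is correct, but you take a slightly longer route than the paper's. The paper observes that a fundamental solution for $\nabla^\vee$ is simply the inverse transpose $Y(z)^{-\rm t}$ of the fundamental solution $Y(z) = P(z)e^{-U/z}\diag[F_1(z),\dots,F_k(z)]$ from Proposition \ref{prop:asymptoticsol_positivereal}, since \eqref{eq:nabla_preserves_pairing} (in its homology/cohomology form) makes $\nabla^\vee$ literally the dual of $\nabla$. Expanding, $Y(z)^{-\rm t} = P(z)^{-\rm t}\, e^{U/z}\, \diag[F_1(z)^{-\rm t},\dots,F_k(z)^{-\rm t}]$ already has exactly the block form you want, with the sign in the exponential automatically flipped, the growth estimates inherited directly from those on $\|F_i\|$ and $\|F_i^{-1}\|$, and, crucially, $F_1^{-\rm t} = 1$ for free. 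This is the main place your approach diverges: you re-run Sibuya--Wasow on $\nabla^\vee$ and then, to pin down $G_1(z)\equiv 1$, invoke a dual version of Lemma \ref{lem:mu_block}. That step is fine (the dual eigenspace decomposition and the computation with the skew-adjointness of $\mu$ go through verbatim), but it is work that the inverse-transpose trick renders unnecessary. In effect you reprove Proposition \ref{prop:asymptoticsol_positivereal} for the dual connection, whereas the paper transports the original statement by a one-line duality observation. The remainder of your argument --- extracting $\LA(f)$ from the first block and decay estimates for the others, then observing that $\LA(f)=0$ forces $w_1 = 0$ and all remaining blocks decay faster than $e^{\lambda/z}$ --- matches the paper's intended "follows easily" step. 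One small stylistic caveat: in part (2) you simultaneously argue that $e^{(\lambda-\lambda_i)/z}G_i(z)w_i$ blows up if $w_i\neq 0$ and conclude $e^{-\lambda/z}f(z)\to 0$; the blow-up observation is borrowed from Proposition \ref{prop:smallest_asymptotics} part (3) but is not needed here --- the direct estimate $\|e^{(\lambda_i-\lambda)/z}G_i(z)w_i\| \le C\exp(\delta|z|^{-p})\, e^{\re(\lambda_i-\lambda)/z}\to 0$ alone gives the result.
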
  
\begin{proof} 
Note that a fundamental solution for $\nabla^\vee$ is given by 
the inverse transpose of that for $\nabla$. The conclusion 
follows easily by using the fundamental solution in Proposition 
\ref{prop:asymptoticsol_positivereal}. 
\end{proof}

\subsection{A limit formula for the principal asymptotic class} 
The leading asymptotic behaviour of dual flat sections as $z\to +0$ 
is given by the pairing with a flat section with the smallest asymptotics. 
From this we obtain a limit formula for $A_F$. 

The fundamental solution for the dual quantum connection $\nabla^\vee$ 
\eqref{eq:dual_qconn} is given by the inverse transpose 
$(S(z) z^{-\mu} z^\rho)^{-\rm t}$ of the fundamental 
solution for $\nabla$ in Proposition \ref{prop:fundsol}. 
This yields the following identification (cf.~\eqref{eq:coh_framing}) 
\begin{align}
\label{eq:ho_framing} 
\begin{split} 
\Phi^\vee \colon H_\ldot(F) & \longrightarrow \{ f \colon \R_{>0} \to H_\ldot(F) : 
\nabla^\vee f(z) = 0 \} \\ 
\alpha & \longmapsto (2\pi)^{\frac{\dim F}{2}} S(z)^{-\rm t} z^{\mu^{\rm t}} 
z^{-\rho^{\rm t}} \alpha  
\end{split} 
\end{align} 
between dual flat sections and homology classes. 
\begin{proposition} 
\label{prop:LA}
Suppose that $F$ satisfies Property $\O$. 
Let $A_F$ be a principal asymptotic class 
(Definition \ref{def:principal_asymptotic_class}). 
Let $\psi_0^*$ be an element of $E(T)^*$ which 
is dual to $\psi_0 := \lim_{z\to +0}e^{T/z} \Phi(A_F)(z) \in E(T)$ 
in the sense that $\langle \psi_0^*, \psi_0 \rangle =1$. 
Then we have: 
\[
\LA(\Phi^\vee(\alpha)) := \lim_{z\to +0} e^{-T/z}\Phi^\vee(\alpha)(z) 
= \langle \alpha, A_F \rangle \psi_0^*.  
\]
\end{proposition}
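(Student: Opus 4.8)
The plan is to exploit the compatibility between the cohomological framing $\Phi$, the homological framing $\Phi^\vee$, and the non-symmetric pairing, combined with the asymptotic characterization of flat sections from Proposition \ref{prop:smallest_asymptotics}. First I would note that, by the definitions \eqref{eq:coh_framing} and \eqref{eq:ho_framing}, the fundamental solution for $\nabla^\vee$ is the inverse transpose of that for $\nabla$, so for $\alpha\in H_\ldot(F)$ and $\beta\in H^\udot(F)$ the natural pairing satisfies $\langle \Phi^\vee(\alpha)(z), \Phi(\beta)(z)\rangle = \langle \alpha, \beta\rangle$ for all $z\in\R_{>0}$; the factors $(2\pi)^{\pm\dim F/2}$ cancel and $S(z)^{-\rm t}$ against $S(z)$ cancels. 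Applying this with $\beta = A_F$ gives $\langle \Phi^\vee(\alpha)(z), \Phi(A_F)(z)\rangle = \langle \alpha, A_F\rangle$, a constant in $z$.

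Next I would multiply through by $1$ written as $e^{-T/z}e^{T/z}$ and take the limit $z\to +0$. By definition $\psi_0 = \lim_{z\to+0} e^{T/z}\Phi(A_F)(z) \in E(T)$, which exists by Proposition \ref{prop:smallest_asymptotics}(2) since $\Phi(A_F)$ spans $\AA$. It remains to show that $f(z):=\Phi^\vee(\alpha)(z)$ has a limit $e^{-T/z}f(z)\to \LA(f)\in E(T)^*$, which is exactly Proposition \ref{prop:lead}(1). Since $E(T)$ is one-dimensional and $\psi_0$ a nonzero vector in it, and $E(T)^*$ is correspondingly one-dimensional, $\LA(\Phi^\vee(\alpha))$ is a scalar multiple $c(\alpha)\,\psi_0^*$ of the dual vector $\psi_0^*$. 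Then
\[
\langle \alpha, A_F\rangle = \lim_{z\to+0}\big\langle e^{-T/z} f(z),\; e^{T/z}\Phi(A_F)(z)\big\rangle = \langle \LA(\Phi^\vee(\alpha)), \psi_0\rangle = c(\alpha)\langle \psi_0^*,\psi_0\rangle = c(\alpha),
\]
using $\langle\psi_0^*,\psi_0\rangle = 1$. Hence $c(\alpha) = \langle\alpha, A_F\rangle$, which is the claimed identity.

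The only point requiring a little care is justifying that the limit can be distributed over the pairing, i.e.\ that $\lim_{z\to+0}\langle e^{-T/z}f(z), e^{T/z}\Phi(A_F)(z)\rangle = \langle \lim_{z\to+0} e^{-T/z}f(z), \lim_{z\to+0} e^{T/z}\Phi(A_F)(z)\rangle$; this is immediate because both individual limits exist (the first by Proposition \ref{prop:lead}, the second by Proposition \ref{prop:smallest_asymptotics}) and the pairing is continuous (bilinear on a finite-dimensional space). I do not expect any genuine obstacle here; the substance of the statement is entirely contained in the two earlier propositions about leading asymptotics, and the proof is essentially the observation that $\Phi$ and $\Phi^\vee$ are mutually dual framings so their pairing is $z$-independent. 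One should also double-check the bookkeeping of the $(2\pi)^{\pm\dim F/2}$ prefactors and the $z^{-\mu}z^\rho$ versus $z^{\mu^{\rm t}}z^{-\rho^{\rm t}}$ exponents to confirm the pairing is normalized to give exactly $\langle\alpha,\beta\rangle$ with no extraneous constant, but this is routine given Proposition \ref{prop:fundsol}.
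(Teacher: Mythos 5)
Your proof is correct and follows essentially the same route as the paper's: exploit the $z$-independence of $\langle \Phi^\vee(\alpha)(z), \Phi(A_F)(z)\rangle = \langle\alpha, A_F\rangle$, insert $e^{-T/z}e^{T/z}$, and take the limit using Propositions \ref{prop:smallest_asymptotics} and \ref{prop:lead}. The only difference is that you spell out in more detail the steps the paper leaves implicit (the normalization cancellation, the continuity of the pairing, and the one-dimensionality argument giving $\LA(\Phi^\vee(\alpha)) = c(\alpha)\psi_0^*$).
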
 
\begin{proof} 
By the definition of $\Phi$ and $\Phi^\vee$ (see \eqref{eq:coh_framing}, 
\eqref{eq:ho_framing}), we have: 
\[
\langle \alpha, A_F \rangle = \langle \Phi^\vee(\alpha)(z), \Phi(A_F)(z) 
\rangle = \langle e^{-T/z} \Phi^\vee(\alpha)(z), 
e^{T/z} \Phi(A_F)(z) \rangle.  
\]
This converges to $\langle \LA(\Phi^\vee(\alpha)), \psi_0 \rangle$ 
as $z\to +0$ by Propositions \ref{prop:smallest_asymptotics}, 
\ref{prop:lead}.  
\end{proof} 

\begin{definition}[\cite{Givental:symplectic}] 
\label{def:J} 
Let $S(z) z^{-\mu}z^\rho$ be the fundamental solution 
from Proposition \ref{prop:fundsol} 
and set $t := z^{-1}$. 
The \emph{$J$-function} $J(t)$ of $F$ is a cohomology-valued 
function defined by: 
\[
J(t)  := z^{\frac{\dim F}{2}} z^{-\rho} z^\mu S(z)^{-1} 1    
\]
where $1\in H^\udot(F)$ is the identity class. 
Alternatively we can define $J(t)$ by the requirement that 
\begin{equation}
\label{eq:pair_J_alpha}
\langle \alpha, J(t) \rangle  = \left(\frac{z}{2\pi}\right)^{\frac{\dim F}{2}}  
\langle \Phi^\vee(\alpha)(z), 1 \rangle 
\end{equation} 
holds for all $\alpha \in H_\ldot(F)$, using the dual flat sections 
$\Phi^\vee(\alpha)$ in \eqref{eq:ho_framing}. 
\end{definition} 

\begin{remark} 
\label{rem:QDE} 
The $J$-function is a solution to the \emph{scalar} differential 
equation associated to the quantum connection 
on the anticanonical line $\tau = c_1(F) \log t$. 
More precisely, we have 
\[
P\left(t, [\nabla_{c_1(F)}]_{z=1} \right) 1 = 0 \ 
\Longleftrightarrow \ 
P\left(t, \textstyle t \parfrac{}{t} \right) J(t) = 0 
\]
for any differential operators $P \in \C\langle t, t\parfrac{}{t} \rangle$, 
where $[\nabla_{c_1(F)}]_{z=1} 
= t\parfrac{}{t} + (c_1(F) \star_{c_1(F) \log t})$. 
Differential equations satisfied by the $J$-function 
are called the \emph{quantum differential equations}. 
See also Remark \ref{rem:anticanonical}. 
\end{remark} 

\begin{remark} 
\label{rem:J_explicit} 
Using the fact that $S(z)^{-1}$ equals the adjoint of $S(-z)$ 
(Proposition \ref{prop:fundsol}) 
and Remark \ref{rem:S_general_tau}, 
we have 
\begin{equation} 
\label{eq:J} 
J(t)  = e^{c_1(F) \log t} 
\left( 1 + \sum_{i=1}^N \sum_{d\in \Eff(F) \setminus \{0\}} 
\Ang{\frac{\phi_i}{1-\psi}}_{0,1,d}^F t^{c_1(F) \cdot d} \phi^i 
\right) 
\end{equation} 
where $\{\phi_i\}_{i=1}^N$ and $\{\phi^i\}_{i=1}^N$ are  
bases of $H^\udot(F)$ which are dual with respect to the Poincar\'{e} pairing 
$(\cdot,\cdot)_F$. 
This gives a well-known form of Givental's $J$-function 
\cite{Givental:symplectic} restricted to the anticanonical line 
$\tau = c_1(F) \log t$. 
\end{remark} 

\begin{theorem} 
\label{thm:asymptotic_class}
Suppose that a Fano manifold $F$ satisfies Property $\O$. 
Let $A_F$ be a principal asymptotic class 
(Definition \ref{def:principal_asymptotic_class}) 
and let $J(t)$ be the $J$-function. 
We have 
\[
\lim_{t\to +\infty} \frac{J(t)}{\langle [\pt], J(t)\rangle}
= \frac{A_F}{\langle [\pt],A_F\rangle} 
\]
where the limit is taken over the positive real line and 
exists if and only if $\langle [\pt], A_F \rangle \neq 0$. 
\end{theorem}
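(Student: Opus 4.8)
The plan is to reduce the statement to the leading exponential asymptotics of dual flat sections already recorded in Propositions \ref{prop:lead} and \ref{prop:LA}, via the definition of the $J$-function.

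Fix the relation $t=z^{-1}$ and let $z\to +0$ along $\R_{>0}$. By the defining property \eqref{eq:pair_J_alpha} of $J(t)$, for every $\gamma\in H_\ldot(F)$,
\[
\left(\tfrac{2\pi}{z}\right)^{\frac{\dim F}{2}} e^{-T/z}\,\langle\gamma,J(t)\rangle=\big\langle\, e^{-T/z}\Phi^\vee(\gamma)(z),\,1\,\big\rangle .
\]
Proposition \ref{prop:lead}(1) says the right-hand side converges as $z\to +0$ to $\langle\LA(\Phi^\vee(\gamma)),1\rangle$, and Proposition \ref{prop:LA} identifies $\LA(\Phi^\vee(\gamma))=\langle\gamma,A_F\rangle\,\psi_0^*$. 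Hence
\[
\lim_{z\to+0}\left(\tfrac{2\pi}{z}\right)^{\frac{\dim F}{2}} e^{-T/z}\,\langle\gamma,J(t)\rangle=\langle\gamma,A_F\rangle\,\langle\psi_0^*,1\rangle\qquad(\gamma\in H_\ldot(F)),
\]
i.e. $(2\pi/z)^{\dim F/2}e^{-T/z}J(t)\to\langle\psi_0^*,1\rangle\,A_F$ in $H^\udot(F)$. Granting the non-vanishing $\langle\psi_0^*,1\rangle\neq 0$ (proved below), the two cases follow at once: if $\langle[\pt],A_F\rangle\neq 0$ then the limit for $\gamma=[\pt]$ is non-zero, so for every $\gamma$ the ratio $\langle\gamma,J(t)\rangle/\langle[\pt],J(t)\rangle$ converges to $\langle\gamma,A_F\rangle/\langle[\pt],A_F\rangle$, and, $H^\udot(F)$ being finite-dimensional, $J(t)/\langle[\pt],J(t)\rangle\to A_F/\langle[\pt],A_F\rangle$; whereas if $\langle[\pt],A_F\rangle=0$, pick $\gamma_0$ with $\langle\gamma_0,A_F\rangle\neq 0$ (possible since $A_F$ spans the one-dimensional space $\AA$), and then $\langle\gamma_0,J(t)\rangle/\langle[\pt],J(t)\rangle$ fails to stay bounded as $t\to+\infty$, so the limit does not exist.

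It remains to prove $\langle\psi_0^*,1\rangle\neq 0$, which I expect to be the only non-formal step. By construction $\psi_0^*\in E(T)^*$ is the basis vector dual to $\psi_0:=\lim_{z\to+0}e^{T/z}\Phi(A_F)(z)$, which spans the line $E(T)$; since $E(T)^*$ pairs with $E(T)$ and annihilates the complement $H'=\im(T-(c_1(F)\star_0))$ of $E(T)$, we have $\langle\psi_0^*,1\rangle=0$ if and only if $1\in H'$. If $1=(T-(c_1(F)\star_0))\eta$ for some $\eta$, then, using that $c_1(F)\star_0$ is self-adjoint for the Poincar\'e pairing,
\[
\int_F\psi_0=(1,\psi_0)_F=\big((T-(c_1(F)\star_0))\eta,\psi_0\big)_F=\big(\eta,(T-(c_1(F)\star_0))\psi_0\big)_F=0 .
\]
So it suffices to show $\int_F\psi_0\neq 0$, i.e. that $\psi_0$ has non-zero component in $H^{2\dim F}(F)$.

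To see this, write $\psi_0=\sum_p\psi_0^{(p)}$ with $\psi_0^{(p)}\in H^{2p}(F)$, let $q$ be the largest $p$ with $\psi_0^{(p)}\neq 0$, and suppose $q<\dim F$. Decompose $c_1(F)\star_0=\sum_{k\ge0}G_k$ as in the proof of Proposition \ref{prop:fundsol}, where $G_0=c_1(F)\cup$ raises the complex degree by one and every $G_k$ with $k\ge 1$ does not raise it. Comparing the $H^{2(q+1)}(F)$-components of the two sides of $(c_1(F)\star_0)\psi_0=T\psi_0$, only $G_0\psi_0^{(q)}$ contributes on the left, so $c_1(F)\cup\psi_0^{(q)}=T\psi_0^{(q+1)}=0$. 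But $c_1(F)$ is ample, so by the Hard Lefschetz theorem the Lefschetz operator $c_1(F)\cup\colon H^{2q}(F)\to H^{2q+2}(F)$ is injective for $q\le\dim F-1$. Hence $\psi_0^{(q)}=0$, contradicting the choice of $q$. Therefore $q=\dim F$, $\int_F\psi_0\neq 0$, and hence $\langle\psi_0^*,1\rangle\neq 0$, which completes the argument. The hard part here is this last non-vanishing: everything else is a mechanical chaining of \eqref{eq:pair_J_alpha} with the dual-flat-section asymptotics of Propositions \ref{prop:lead} and \ref{prop:LA}.
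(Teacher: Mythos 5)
The first half of your argument — chaining \eqref{eq:pair_J_alpha} with Propositions \ref{prop:lead} and \ref{prop:LA} to get $\lim_{t\to+\infty}\langle\gamma,J(t)\rangle/\langle[\pt],J(t)\rangle=\langle\gamma,A_F\rangle/\langle[\pt],A_F\rangle$ modulo the non-vanishing $\langle\psi_0^*,1\rangle\neq 0$ — is exactly the paper's proof, and your handling of the ``if and only if'' clause is also fine. The issue is in the second half, where you attempt an independent proof of $\langle\psi_0^*,1\rangle\neq 0$ (this is the paper's Lemma \ref{lem:1_ET}).

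Your reduction to ``$\int_F\psi_0\neq 0$'' is correct, and the degree bookkeeping giving $c_1(F)\cup\psi_0^{(q)}=0$ when $q<\dim F$ is also correct. The fatal step is the claim that \emph{Hard Lefschetz gives injectivity of $c_1(F)\cup\colon H^{2q}(F)\to H^{2q+2}(F)$ for all $q\leq\dim F-1$}. Hard Lefschetz gives that $L=c_1(F)\cup$ is injective on $H^j$ only for $j\leq\dim_{\C}F-1$, i.e.\ for $2q\leq\dim_{\C}F-1$, not for $q\leq\dim_{\C}F-1$. For $q$ strictly above the middle degree the map $L\colon H^{2q}\to H^{2q+2}$ is typically surjective but not injective. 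A concrete counterexample: $F=\P^1\times\P^1$, $\dim F=2$, $c_1(F)=2h_1+2h_2$, and $c_1(F)\cup(h_1-h_2)=0$ with $h_1-h_2\in H^2(F)\setminus\{0\}$, so injectivity fails already at $q=1=\dim F-1$. Thus your argument does not rule out the case where the top-degree part of $\psi_0$ vanishes while $\psi_0^{(q)}$ lives in degree $q>\lfloor(\dim F-1)/2\rfloor$.

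The paper's proof of Lemma \ref{lem:1_ET} avoids any degree argument and uses the ring structure instead: one shows $\psi_0\star_0 H'=0$ (by writing $\alpha\in H'$ as $\alpha=T\gamma-c_1(F)\star_0\gamma$ and using associativity), then writes $1=c\psi_0+\psi'$ with $\psi'\in H'$ and applies $\psi_0\star_0$ to get $\psi_0=c\,\psi_0\star_0\psi_0$, forcing $c\neq 0$. I'd recommend either using that argument, or at least restricting your Hard Lefschetz step to where it is valid and supplementing it by Poincar\'e duality/quantum-product considerations for the remaining degrees.
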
 
\begin{proof} 
By Proposition \ref{prop:LA}, we have 
\begin{align*} 
\lim_{z\to +0} e^{-T/z} \Phi^\vee(\alpha)(z)  
& = \langle \alpha, A_F \rangle \psi_0^*\\ 
\lim_{z\to +0} e^{-T/z} \Phi^\vee([\pt])(z)  
& = \langle [\pt], A_F \rangle \psi_0^* 
\end{align*} 
for some non-zero $\psi_0^* \in E(T)^*$. 
Therefore by \eqref{eq:pair_J_alpha} we have 
\[
\lim_{t\to +\infty} \frac{\langle \alpha, J(t)\rangle}{\langle [\pt], J(t)\rangle} 
= \lim_{z\to +0} \frac{e^{-T/z} \langle \Phi^\vee(\alpha)(z), 1\rangle }
{e^{-T/z} \langle \Phi^\vee([\pt])(z), 1\rangle }
= \frac{\langle \alpha, A_F \rangle}{\langle [\pt], A_F \rangle}
\]
for all $\alpha \in H_\ldot(F)$. 
Here we used the fact $\langle \psi_0^*, 1\rangle\neq 0$ 
which is proved in Lemma \ref{lem:1_ET} below. 
The limit exists if and only if $\langle [\pt], A_F \rangle \neq 0$. 
The Theorem is proved. 
\end{proof}

\begin{corollary} 
\label{cor:limit_J} 
Suppose that a Fano manifold $F$ satisfies Property $\O$. 
The following statements are equivalent: 
\begin{enumerate} 
\item Gamma Conjecture I 
(Conjecture \ref{conj:GammaI}) holds for $F$. 
\item $\|\Phi(\Gg_F)(z)\|\le C e^{-\lambda/z}$ on $(0,1]$ 
for some $C>0$ and $\lambda > T'$,   
where $T'$ is given in \eqref{eq:secondbiggest} and $\Phi$ is 
given in \eqref{eq:coh_framing}. 
\item We have 
\[
\lim_{t\to +\infty} \frac{J(t)}{\langle [\pt], J(t)\rangle} 
= \Gg_F 
\]
where $J(t)$ is the $J$-function of $F$ given in Definition \ref{def:J} 
and the limit is taken over the positive real line. 
\end{enumerate} 
\end{corollary}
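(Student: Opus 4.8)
The plan is to derive all three equivalences from Proposition~\ref{prop:smallest_asymptotics} and Theorem~\ref{thm:asymptotic_class}, both already at our disposal. Throughout I set $s_1(z) := \Phi(\Gg_F)(z)$; this is a $\nabla$-flat section on $\R_{>0}$, and since $\Phi$ is an isomorphism and $\Gg_F = 1 + (\text{higher-degree terms})$ is nonzero, $s_1 \not\equiv 0$. Recall also that Gamma Conjecture~I for $F$ amounts to the assertion that $\Gg_F$ is a valid principal asymptotic class, i.e.\ that $\AA = \C\, s_1$; the scalar ambiguity in the definition of $A_F$ is harmless for this.

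For (1)$\Rightarrow$(2): under Gamma Conjecture~I we have $s_1 \in \AA$, so by Proposition~\ref{prop:smallest_asymptotics}(2) the limit $v := \lim_{z\to+0} e^{T/z} s_1(z) \in E(T)$ exists. Hence $\|s_1(z)\| \le (\|v\|+1)\, e^{-T/z}$ on a punctured neighbourhood of $0$, and by continuity of $s_1$ on $(0,1]$ this extends to $\|s_1(z)\| \le C\, e^{-T/z}$ on all of $(0,1]$; since $T' < T$ and $e^{-T/z} \le e^{-\lambda/z}$ for $0 < \lambda \le T$ and $z>0$, statement (2) holds for any $\lambda \in (T', T]$. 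For (2)$\Rightarrow$(1): if $\|s_1(z)\| \le C\, e^{-\lambda/z}$ on $(0,1]$ with $\lambda > T'$, then $\|e^{\lambda/z} s_1(z)\|$ stays bounded as $z\to+0$, whereas Proposition~\ref{prop:smallest_asymptotics}(3) tells us that every flat section \emph{not} lying in $\AA$ satisfies $\|e^{\lambda/z} s(z)\| \to \infty$; hence $s_1 \in \AA$. As $\AA$ is one-dimensional (Proposition~\ref{prop:smallest_asymptotics}(1)) and $s_1 \ne 0$, we get $\AA = \C\, s_1 = \C\, \Phi(\Gg_F)$, which is exactly Gamma Conjecture~I.

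For (1)$\Leftrightarrow$(3) I would invoke Theorem~\ref{thm:asymptotic_class}: the limit $\lim_{t\to+\infty} J(t)/\langle[\pt],J(t)\rangle$ exists if and only if $\langle[\pt],A_F\rangle \ne 0$, and then equals $A_F/\langle[\pt],A_F\rangle$. Since the degree-zero component of $\Gg_F$ equals $1$, we have $\langle[\pt],\Gg_F\rangle = 1 \ne 0$. If (1) holds, then $A_F$ is proportional to $\Gg_F$, so $\langle[\pt],A_F\rangle \ne 0$ and the normalized limit equals $\Gg_F$, giving (3). Conversely, if (3) holds then in particular the limit exists, so $\langle[\pt],A_F\rangle\ne0$ and $A_F/\langle[\pt],A_F\rangle = \Gg_F$, whence $A_F \in \C\,\Gg_F$, i.e.\ (1).

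The whole argument is essentially bookkeeping once Proposition~\ref{prop:smallest_asymptotics} and Theorem~\ref{thm:asymptotic_class} are granted; the points that require genuine care — and the closest thing here to an obstacle — are the upgrade from the mere polynomial-growth characterization of $\AA$ to the sharp exponential estimate in (2), which relies on the \emph{existence} (not just boundedness) of $\lim_{z\to+0}e^{T/z}s_1(z)$ in Proposition~\ref{prop:smallest_asymptotics}(2), the verification $\langle[\pt],\Gg_F\rangle\ne0$ needed for the normalization in Theorem~\ref{thm:asymptotic_class} to make sense, and keeping the scalar freedom in the definition of $A_F$ straight throughout.
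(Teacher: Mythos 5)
Your proof is correct and follows the same route as the paper's: the equivalence of (1) and (2) is exactly the content of Proposition~\ref{prop:smallest_asymptotics}, and the equivalence of (1) and (3) is Theorem~\ref{thm:asymptotic_class} together with the observation that $\langle[\pt],\Gg_F\rangle=1$. The paper states this in two lines; you have simply filled in the small checks, all of which are accurate.
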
 
\begin{proof} 
The equivalence of (1) and (2) is clear from Proposition 
\ref{prop:smallest_asymptotics}. 
The equivalence of (1) and (3) follows from Theorem 
\ref{thm:asymptotic_class}. 
\end{proof} 

\begin{lemma}
\label{lem:1_ET}  
Suppose that $F$ satisfies Property $\O$. 
For a non-zero eigenvector $\psi_0^* \in E(T)^*$, 
we have $\langle \psi_0^*, 1\rangle \neq 0$. 
In other words, the $E(T)$-component of $1$ 
with respect to the decomposition 
$H^\udot(F) = E(T) \oplus H'$ in Lemma \ref{lem:mu_block} 
does not vanish.  
\end{lemma}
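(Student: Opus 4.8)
The plan is to reduce the statement to the inequality $(w,w)_F\neq 0$ for a vector $w$ spanning $E(T)$, and then to prove that inequality by exploiting that $1$ is the unit of the quantum product $\star_0$.

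First I would set up the reduction. By Property $\O$(3) the eigenspace $E(T)=\ker(c_1(F)\star_0-T)$ is one-dimensional; fix a nonzero $w\in E(T)$. Since $c_1(F)\star_0$ is self-adjoint for the Poincar\'e pairing (the Frobenius identity $(\alpha\star_0\beta,\gamma)_F=(\alpha,\beta\star_0\gamma)_F$ is immediate from \eqref{eq:quantumproduct}), Poincar\'e duality $H^\udot(F)\xrightarrow{\sim}H_\ldot(F)$ carries $E(T)$ onto $E(T)^*$; hence any nonzero $\psi_0^*\in E(T)^*$ is a nonzero multiple of the Poincar\'e dual of $w$, so $\langle\psi_0^*,1\rangle$ is a nonzero multiple of $(w,1)_F$. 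By Lemma \ref{lem:mu_block} we have $E(T)\perp H'$ and $H^\udot(F)=E(T)\oplus H'$, so $H'=E(T)^\perp$; this shows both that the ``in other words'' reformulation (nonvanishing of the $E(T)$-component of $1$) is equivalent to $(w,1)_F\neq 0$, and that $(w,w)_F\neq 0$ (otherwise $w$ would be Poincar\'e-orthogonal to $E(T)$ and to $H'$, hence to all of $H^\udot(F)$, forcing $w=0$).

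Second, I would prove $(w,1)_F\neq 0$ using the ring structure. At $\tau=0$ the relevant Gromov--Witten sums are finite, so $(H^\udot(F),\star_0)$ is a genuine finite-dimensional commutative associative $\C$-algebra with unit $1$; decompose it into local Artinian factors $(H^\udot(F),\star_0)=\prod_j A_j$. Multiplication by $c_1(F)$ acts on $A_j$ as $(c_1\bmod\mathfrak m_j)$ plus a nilpotent, hence contributes the eigenvalue $c_1\bmod\mathfrak m_j$ with algebraic multiplicity $\dim A_j$. Since $T$ has multiplicity one, exactly one factor $A_{j_0}$ produces the eigenvalue $T$ and it is one-dimensional, so $A_{j_0}=\C$; comparing kernels one sees $E(T)=A_{j_0}=\C w$, where $w$ is the identity of $A_{j_0}$, a $\star_0$-idempotent. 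The unit axiom $w\star_0 1=w$ together with the Frobenius identity then gives
\[
(w,1)_F=(w\star_0 w,1)_F=(w,w\star_0 1)_F=(w,w)_F\neq 0,
\]
and the lemma follows.

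I do not expect a serious obstacle along this route; the only points needing care are the Poincar\'e-duality bookkeeping among $E(T)$, $E(T)^*$ and $H'$, and the observation that it is precisely $1$ being the unit that collapses $(w,1)_F$ to $(w,w)_F$. The main pitfall would be to take the tempting alternative of extracting $(w,1)_F$ from the $t\to+\infty$ asymptotics of $\langle[\pt],J(t)\rangle$ via Propositions \ref{prop:lead} and \ref{prop:LA}: that approach runs into a genuine connection-problem difficulty — one would have to show that the maximal exponential $e^{Tt}$ actually occurs in the asymptotic expansion of $\langle[\pt],J(t)\rangle$, which Property $\O$ alone does not obviously give — so I would avoid it.
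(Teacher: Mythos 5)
Your proof is correct, but it takes a more structural route than the paper's. The paper avoids the Artinian/idempotent machinery and the Frobenius identity entirely: it shows directly that $\psi_0 \star_0 H' = 0$ by writing $\alpha = T\gamma - c_1(F)\star_0\gamma$ for $\alpha\in H'$ and using commutativity and associativity of $\star_0$ (so $\psi_0\star_0\alpha = T\psi_0\star_0\gamma - (c_1(F)\star_0\psi_0)\star_0\gamma = 0$); then writing $1 = c\psi_0 + \psi'$ with $\psi'\in H'$ and applying $\psi_0\star_0$ gives $\psi_0 = c\,\psi_0\star_0\psi_0$, whence $c\neq 0$. Your argument reaches the same conclusion by first identifying $E(T)$ as a one-dimensional Artinian factor $\C w$ with $w$ a $\star_0$-idempotent, then using the Frobenius identity $(w,1)_F = (w\star_0 w,1)_F = (w,w)_F$ and the separate observation (from Lemma \ref{lem:mu_block} plus nondegeneracy of Poincar\'e pairing) that $(w,w)_F\neq 0$. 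Both are sound; the paper's version needs only the ring axioms and no pairing compatibility, while yours additionally yields the stronger statement that the eigenvector in $E(T)$ is an idempotent with $(w,w)_F\neq 0$ --- facts the paper establishes later only under semisimplicity assumptions. Your closing remark about why the $J$-function route is to be avoided is a good catch: indeed, that route would be circular, since Theorem \ref{thm:asymptotic_class} invokes this very lemma.
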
 
\begin{proof} 
Let $\psi_0\in E(T)$ be a non-zero eigenvector of $(c_1(F)\star_0)$ 
with eigenvalue $T$. We claim that $\psi_0 \star_0 H' =0$. 
Take $\alpha \in H'$ and write $\alpha = T \gamma - c_1(F) \star_0 \gamma$.  
Then we have 
\[
\psi_0 \star \alpha = T \psi_0 \star_0 \gamma - 
\psi_0 \star_0 c_1(F) \star_0 \gamma =0. 
\]
The claim follows. Write $
1 = c \psi_0 + \psi'$ 
with $\psi' \in H'$ and $c\in \C$. 
Applying $(\psi_0\star_0)$, we obtain $\psi_0 = c \psi_0 \star_0 \psi_0$. 
Thus $c\neq 0$ and Lemma follows. 
\end{proof} 

\begin{remark} 
See \cite[\S 7]{CCGGK} for a discussion of the limit formula 
in the classification problem of Fano manifolds. 
\end{remark} 

\subsection{Apery limit} 
\label{subsec:Apery} 
We can replace the continuous limit of the ratio of the $J$-function 
appearing in Theorem \ref{thm:asymptotic_class} with
the (discrete) limit of the ratio of the Taylor coefficients. 
Golyshev \cite{Golyshev08a} considered such limits and 
called them \emph{Apery constants} of Fano manifolds.  
A generalization to \emph{Apery class} was studied by 
Galkin \cite{Galkin:Apery}. 
This limit sees the primitive part of the Gamma class. 

Expand the $J$-function as 
\[
J (t) = e^{c_1(F) \log t} \sum_{n\ge 0} J_n t^{n}. 
\]
The coefficient $J_n\in H^\udot(F)$ is given by: 
\[
J_n := \sum_{i=1}^N 
\sum_{d\in \Eff(F): c_1(F)\cdot d = n} 
\left\langle \phi_i \psi^{n-2- \dim \phi_i} 
\right\rangle^F_{0,1,d} \phi^i 
\]
where we set $\dim \phi_i = \dim F - \frac{1}{2} \deg \phi_i$.  
See Remark \ref{rem:J_explicit}. 
The main theorem in this section is stated as follows: 
\begin{theorem} 
\label{thm:Apery} 
Suppose that $F$ satisfies Property $\O$ and suppose also that 
the principal asymptotic class $A_F$ satisfies $\langle [\pt], A_F \rangle \neq 0$. 
Let $r$ be the Fano index. 
For every $\alpha \in H_\ldot(F)$ such that $c_1(F) \cap \alpha=0$,  
we have 
\[
\liminf_{n\to \infty} \left|
\frac{\langle \alpha,J_{rn} \rangle}{\langle [\pt ],J_{rn}\rangle} 
- \frac{\langle \alpha, A_F \rangle}{\langle [\pt], A_F\rangle} \right| =0. 
\]
\end{theorem}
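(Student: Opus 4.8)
The plan is to reduce the discrete limit to the continuous limit formula of Theorem~\ref{thm:asymptotic_class} by an Abelian (Tauberian-type) argument, exploiting the special structure that the relevant flat section decays exponentially. First I would recall from Corollary~\ref{cor:limit_J} that Gamma Conjecture~I is \emph{not} assumed here, so I must work directly with $A_F$; the key input is that the dual flat section $f_\alpha(z) := \Phi^\vee(\alpha)(z)$ satisfies $\lim_{z\to+0} e^{-T/z} f_\alpha(z) = \langle \alpha, A_F\rangle \psi_0^*$ by Proposition~\ref{prop:LA}, and moreover, when $c_1(F)\cap\alpha = 0$, the class $\alpha$ pairs trivially against the $c_1$-eigenvalue grading in a way that makes $\langle \alpha, J(t)\rangle$ genuinely a power series in $t^r$ (using the Fano index-$r$ periodicity \eqref{eq:symmetry_c1star} of the spectrum and the resulting $\Z/r$-grading of the flat sections). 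Concretely, $\langle \alpha, J(t)\rangle = \sum_{n\ge 0} \langle \alpha, J_{rn}\rangle t^{rn}$ by the degree constraint $c_1(F)\cdot d = n$ together with $c_1(F)\cap\alpha = 0$.

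Second, I would set $g_\alpha(t) := \langle \alpha, J(t)\rangle / \langle [\pt], J(t)\rangle$. By Theorem~\ref{thm:asymptotic_class}, $g_\alpha(t) \to \langle \alpha, A_F\rangle / \langle [\pt], A_F\rangle =: c_\alpha$ as $t\to +\infty$ along $\R_{>0}$. Writing $a_n := \langle \alpha, J_{rn}\rangle$ and $b_n := \langle [\pt], J_{rn}\rangle$, Property~$\O$ (specifically that $T$ is the unique eigenvalue of maximal real part and is simple) forces $b_n$ to grow like $T^{-rn}$ up to subexponential factors — more precisely, $\langle [\pt], J(t)\rangle$ has radius of convergence $T^{-r}$ in the variable $s = t^r$ with a simple dominant singularity on the positive real axis at $s = T^{-r}$, coming from the smallest-asymptotics flat section. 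The statement to prove, $\liminf_n |a_n/b_n - c_\alpha| = 0$, is then an Abelian statement: I would show that $\sum_n (a_n - c_\alpha b_n) s^n$ converges on $|s| < T^{-r}$ but has \emph{strictly smaller} exponential growth of coefficients, i.e. radius of convergence $> T^{-r}$, because the leading $e^{-T/z}$ contributions of $\langle\alpha, J\rangle$ and $c_\alpha \langle[\pt],J\rangle$ cancel by Proposition~\ref{prop:LA}, leaving a section governed by $T'$ (Proposition~\ref{prop:lead}(2), the $\LA = 0$ case). Hence $\limsup_n |a_n - c_\alpha b_n|^{1/n} \le (T')^{\,r/1}\cdot(\text{something}) < T^{-r}$... — more carefully, $|a_n - c_\alpha b_n|$ grows at most like $(T'')^{-rn}$ for some $T < T''$... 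I mean $T' < T$ so $(T')$-growth is \emph{slower}; meanwhile $b_n \sim T^{-rn}$ up to subexponential factors, so $|a_n/b_n - c_\alpha| \to 0$ along a subsequence where $b_n$ is not too small.

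Third, the delicate point — and the one I expect to be the main obstacle — is controlling the denominator $b_n = \langle [\pt], J_{rn}\rangle$ from below along a subsequence. Since $\langle [\pt], J(t)\rangle$ is analytic with a dominant singularity at $s = T^{-r}$, a Pringsheim/Cauchy-estimate argument gives $\limsup_n |b_n|^{1/n} = T^{-r}$, so $|b_n|^{1/n} \ge T^{-r}(1-\epsilon)$ for infinitely many $n$; combined with the genuinely smaller exponential rate of $a_n - c_\alpha b_n$, this yields $\liminf_n |a_n/b_n - c_\alpha| = 0$. The subtlety is that I only get a $\liminf$ statement, not a genuine limit, precisely because $b_n$ could oscillate (e.g. if the periodicity \eqref{eq:symmetry_c1star} forces $b_n$ to vanish for $n$ in certain residue classes) — this is why the theorem is stated with $\liminf$. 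I would make the growth comparison rigorous by introducing the resolvent $(\lambda - c_1(F)\star_0)^{-1}$ and writing $\langle\alpha, J(t)\rangle$ as a Laplace-type integral of the Laplace-dual flat section $\hy$ over a contour, reading off the singularity structure in $\lambda$ from the eigenvalues of $c_1(F)\star_0$; the cancellation of the $\lambda = T$ residue when one subtracts $c_\alpha\langle[\pt],J\rangle$ is exactly the content of Proposition~\ref{prop:LA}, and the next singularity sits at $\re\lambda \le T'$. Assembling the exponential-growth estimates from this contour picture is routine complex analysis once the singularity positions are pinned down; the only genuinely new ingredient beyond the earlier propositions is the Cauchy/Pringsheim lower bound on $|b_n|$ along a subsequence.
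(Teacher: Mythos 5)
Your overall strategy is the same as the paper's: subtract off the $[\pt]$-component (so $\langle \beta, A_F\rangle = 0$), argue that the modified coefficients have strictly smaller exponential growth because the $\lambda = T$ singularity of the Laplace-dual connection is removed, and then use a radius-of-convergence lower bound on the denominator coefficients along a subsequence to conclude. But there is a genuine error in the growth-rate bookkeeping. You claim that $\langle[\pt], J(t)\rangle$ "has radius of convergence $T^{-r}$ in the variable $s=t^r$." This is false: $J(t)$ solves an ODE with a regular singularity at $t=0$ and an \emph{irregular} singularity at $t=\infty$, so $J(t)$ (and hence $\langle[\pt],J(t)\rangle$) is an entire function of $t$ — the coefficients $G_n = \langle[\pt],J_n\rangle$ decay factorially, and $\limsup|G_n|^{1/n} = 0$, not $T^{-r}$. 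Your Pringsheim argument is applied to the wrong power series. The correct object of finite radius of convergence is the Borel transform $\hG(\kappa) = \sum n!\,G_n\kappa^n$ (Lemma~\ref{lem:radius_Ghat}/\ref{lem:radius_Ghat_exact} in the paper): one must multiply the $n$-th coefficient by $n!$ (or $\Gamma(n+\nu+\dim F/2)$) to convert the exponential asymptotics of the flat section $\Phi^\vee([\pt])(t^{-1}) \sim e^{Tt}$ into a finite radius of convergence $1/T$. The paper achieves this through the Laplace transform $\varphi(\lambda) = \int_0^\infty t^{\nu-1}e^{\lambda t}\Phi^\vee([\pt])(t^{-1})\,dt$, which is $\tnabla^{(\nu)}$-flat and whose singularity structure on the $\lambda$-plane (not the $t$- or $s$-plane) controls the coefficient growth.

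Once this is corrected, a second issue surfaces that your proposal does not address: after Borel transform, removing the singularity at $\lambda = -T$ is \emph{not} enough to push the radius of convergence of the Borel-transformed $\sum n!\,b_n\kappa^n$ beyond $1/T$, because Property~$\O$ only says $T$ is the unique eigenvalue of maximal real part — there may be (and by the $\Z/r\Z$ symmetry typically are) other eigenvalues $e^{2\pi\iu k/r}T$ on the circle $|\lambda|=T$. The paper uses precisely part (2) of Property~$\O$ together with the quasi-periodicity $\langle\varphi_\beta(e^{2\pi\iu/r}\lambda),1\rangle = e^{-(\nu+\dim F/2-1)2\pi\iu/r}\langle\varphi_\beta(\lambda),1\rangle$ to propagate regularity from $\lambda=-T$ to every root-of-unity point on the circle, thereby clearing the whole boundary circle $|\lambda|=T$ of singularities. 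Your proposal gestures at the $\Z/r$ periodicity in the first paragraph but does not carry out this step, and your contour sketch at the end does not notice that the points $e^{2\pi\iu k/r}T$, $k\neq 0$, must also be dealt with. Without both fixes (the Borel transform and the periodicity argument), the inequality $|b_n n!| \le CR^{-n}$ with $R>1/T$ is unavailable and the comparison against the $\liminf$ lower bound on $|G_n n!|^{1/n}$ does not close.
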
 
\begin{remark} 
Note that $J_n =0$ unless $r$ divides $n$. 
In the left-hand side of the above formula, we set 
$\langle \alpha, J_{rn}\rangle/
\langle [\pt ],J_{rn}\rangle = \infty$ if 
$\langle [\pt], J_{rn} \rangle =0$. 
It is however expected that the following properties 
will hold for Fano manifolds: 
\begin{itemize} 
\item the Gromov--Witten invariants 
$\langle [\pt], J_{rn}\rangle$ are all positive for $n\ge 0$; 
\item $\liminf_{n\to\infty}$ in Theorem \ref{thm:Apery} can 
be replaced with $\lim_{n\to \infty}$. 
\end{itemize} 
\end{remark} 
\begin{remark} 
More generally, if $\alpha, \beta\in H_\ldot(F)$ are homology classes 
such that $\alpha \cap c_1(F) = \beta \cap c_1(F) =0$ and if 
we have $\langle \beta, A_F \rangle \neq 0$, 
we have the limit formula 
\[
\liminf_{n\to \infty} \left |
\frac{\langle \alpha,J_{rn} \rangle}{\langle \beta ,J_{rn}\rangle} 
- \frac{\langle \alpha, A_F \rangle}{\langle \beta , A_F\rangle} 
\right| =0. 
\]
This can be shown by the same argument as below. 
We shall restrict to the case $\beta = [\pt]$ as Gamma Conjecture I implies 
$\langle [\pt], A_F \rangle \neq 0$. 
\end{remark} 

Define the functions $G$ and $\hG$ as: 
\begin{align*} 
G(t) &:= \langle [\pt], J(t) \rangle = 
\sum_{n=0}^\infty G_n  t^n, \quad \text{where} \ 
G_n :=\langle [\pt], J_n \rangle,   \\ 
\hG(\kappa) & := \sum_{n=0}^\infty  n! G_n  \kappa^n 
= \frac{1}{\kappa} \int_0^\infty G(t) e^{-t/\kappa} dt. 
\end{align*} 
These functions $G$, $\hG$ are called (unregularized or regularized) 
\emph{quantum periods} of Fano manifolds \cite{CCGGK}. 
\begin{lemma}
\label{lem:radius_Ghat}
Let $F$ be an arbitrary Fano manifold. 
The convergence radius of $\hG(\kappa)$ is bigger than or equal to $1/T$. 
\end{lemma}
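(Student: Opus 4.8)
The statement to prove is that the convergence radius of $\hG(\kappa) = \sum_{n\ge 0} n! G_n \kappa^n$ is at least $1/T$, where $G_n = \langle [\pt], J_n\rangle$ are the Taylor coefficients of the scalar quantum period $G(t) = \langle [\pt], J(t)\rangle$. My plan is to bound the growth of $n! G_n$ by relating it to the asymptotics of a dual flat section for $\nabla^\vee$ near $z = +0$, using Proposition \ref{prop:lead}.

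The key link is the identity \eqref{eq:pair_J_alpha}, which with $\alpha = [\pt]$ gives
\[
G(t) = \langle [\pt], J(t)\rangle = \left(\tfrac{z}{2\pi}\right)^{\frac{\dim F}{2}} \langle \Phi^\vee([\pt])(z), 1\rangle, \qquad t = z^{-1}.
\]
So $G(1/z) = (z/2\pi)^{\dim F/2} \langle f(z), 1\rangle$ where $f(z) = \Phi^\vee([\pt])(z)$ is a $\nabla^\vee$-flat section over $\R_{>0}$. By Proposition \ref{prop:lead}(1), applied to $F$ which satisfies Property $\O$, the limit $\LA(f) = \lim_{z\to+0} e^{-T/z} f(z)$ exists in $E(T)^*$. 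In particular $\|f(z)\| = O(e^{T/z})$ as $z\to +0$; equivalently, as $t\to +\infty$ along the positive real line, $|G(t)| = O(t^{N} e^{Tt})$ for some power $N = (\dim F)/2$ coming from the prefactor (and the estimate holds for $t$ bounded away from zero). Actually I should be a little careful: I want a bound valid for all large real $t$, and Proposition \ref{prop:lead} gives exactly the behaviour of $f$ as $z\to +0$, i.e. $t\to+\infty$, so $|G(t)| \le C\, t^{N} e^{Tt}$ for $t \ge 1$ (say), with $G$ holomorphic for $t$ near $0$ so bounded near $0$ as well.

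Now the regularized period satisfies $\hG(\kappa) = \frac{1}{\kappa}\int_0^\infty G(t) e^{-t/\kappa}\, dt$, as stated. The growth bound $|G(t)| \le C(1+t)^{N} e^{Tt}$ for $t \ge 0$ implies that this Laplace-type integral converges absolutely and defines a holomorphic function of $\kappa$ on the region $\{\re(1/\kappa) > T\}$, which contains the disc $\{|\kappa| < 1/T\}$ (since $|\kappa| < 1/T$ and $\kappa$ real positive near the relevant axis — more precisely, the disc $\{|\kappa - \tfrac{1}{2T}| < \tfrac{1}{2T}\}$ lies in $\{\re(1/\kappa) > T\}$, and by combining with holomorphy at $\kappa = 0$ one gets a disc of radius $1/T$ centered at $0$). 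Equating the integral representation with the power series $\sum n! G_n \kappa^n$ on a neighbourhood of $0$ where both converge, and invoking the identity theorem, shows $\hG$ extends holomorphically to $\{|\kappa| < 1/T\}$, hence its convergence radius is $\ge 1/T$. Alternatively and more directly: $n! G_n = \hG^{(n)}(0)$ and the integral representation gives $n! G_n = \int_0^\infty G(t)\, \frac{d^n}{d\kappa^n}\big|_{\kappa=0}(\kappa^{-1} e^{-t/\kappa})\, dt$ — but the cleanest route is to just note $n! G_n = \int_0^\infty G(t)\, t^n e^{-t}\, dt / n!$... let me instead use the standard fact that $\sum_n n! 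G_n \kappa^n$ has the same radius of convergence as the abscissa of convergence analysis of the Borel/Laplace transform: the growth $|G(t)| \lesssim e^{Tt}$ forces $|n! G_n| \lesssim \int_0^\infty t^n e^{(T-\epsilon)t}\,dt \cdot(\text{const})$, which after dividing appropriately yields $\limsup |n! G_n|^{1/n} \le T$ — equivalently $\limsup (n!|G_n|)^{1/n} \le T$, giving radius $\ge 1/T$.

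The main obstacle I anticipate is making the growth estimate on $G(t)$ genuinely uniform for all large real $t$ (not merely an asymptotic statement), and pinning down the polynomial factor from the prefactor $(z/2\pi)^{\dim F/2}$ so that it does not affect the exponential rate. Proposition \ref{prop:lead} gives the existence of $\LA(f)$, which yields $f(z) = e^{T/z}(\LA(f) + o(1))$, a bound of the form $\|f(z)\| \le C e^{T/z}$ on $(0,1]$; translating to $G$ and handling the $t\in[0,\infty)$ range (using holomorphy of $G$ at the origin for the compact part) should then be routine. A secondary point is to justify interchanging the Laplace integral with the power-series expansion, which is standard once the exponential bound is in hand. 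I would write the argument as: (1) recall \eqref{eq:pair_J_alpha} and the integral formula for $\hG$; (2) invoke Proposition \ref{prop:lead} to get $|G(t)| \le C(1+t)^{\dim F/2} e^{Tt}$ on $\R_{\ge 0}$; (3) conclude $\limsup_{n}(n!|G_n|)^{1/n} \le T$ by estimating $n! G_n$ via the Laplace integral (or via Cauchy's inequality applied to the holomorphic extension of $\hG$ onto $\{|\kappa|<1/T\}$); (4) read off the radius of convergence.
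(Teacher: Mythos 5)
Your overall plan --- bound the growth of $G(t)$ as $t\to+\infty$ via Proposition~\ref{prop:lead}, then use the Laplace representation of $\hG$ --- starts on the same track as the paper, but there is a genuine gap at the crucial step.

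The growth bound $|G(t)| \le C(1+t)^{\dim F/2} e^{Tt}$ on $\R_{\ge 0}$ only shows that the Laplace integral $\hG(\kappa) = \kappa^{-1}\int_0^\infty G(t) e^{-t/\kappa}\,dt$ converges and is holomorphic for $\re(1/\kappa)>T$, i.e.\ on the disc $\bigl\{|\kappa - \tfrac{1}{2T}| < \tfrac{1}{2T}\bigr\}$ tangent to the imaginary axis at $0$. This is not the disc $\{|\kappa|<1/T\}$, and ``combining with holomorphy at $\kappa=0$'' is circular: that $\sum_n n! G_n \kappa^n$ converges near $\kappa=0$ is precisely the assertion to be proved. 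Your alternative step --- that $|G(t)|\lesssim e^{Tt}$ for real positive $t$ forces $\limsup_n (n!|G_n|)^{1/n}\le T$ --- is also false in general: take $G(t)=\cos(at)$, which satisfies $|G(t)|\le 1 \le e^{Tt}$ for every $T>0$ and all $t\ge 0$, yet $\hG(\kappa)=1/(1+a^2\kappa^2)$ has poles at $\kappa=\pm\iu/a$ and radius of convergence $1/a$, which is arbitrarily small relative to $1/T$. A one-sided exponential bound on the positive real axis does not control the Taylor coefficients; one would need exponential type $\le T$ in all complex directions.

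The paper closes this gap with a structural input that your proposal omits. It forms the Laplace transform $\varphi(\lambda) = (2\pi)^{-\dim F/2}\int_0^\infty t^{\nu-1} e^{\lambda t}\Phi^\vee([\pt])(t^{-1})\,dt$ with a fractional exponent $\nu\gg 0$, and observes that $\varphi$ is a flat section of the Fuchsian connection $\tnabla^{(\nu)}_{\partial_\lambda} = \partial_\lambda + (\lambda + (c_1(F)\star_0)^{\mathrm t})^{-1}(-\mu^{\mathrm t}+\nu)$, whose finite singularities all lie at $\lambda=-u$ for eigenvalues $u$ of $c_1(F)\star_0$, hence inside $\{|\lambda|\le T\}$. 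Therefore $\varphi$ continues analytically (as a possibly multivalued function) over the whole exterior region $\{|\lambda|>T\}$ --- well beyond the half-plane where the integral converges. Setting $\lambda=-1/\kappa$ and stripping off the monodromy prefactor $\kappa^{\nu+\dim F/2-1}$ then exhibits $\sum_n \Gamma(n+\nu+\tfrac{\dim F}{2}) G_n \kappa^n$ as a single-valued holomorphic function on $\{0<|\kappa|<1/T\}$ that is bounded near $\kappa=0$, so it extends across $0$ and has radius of convergence $\ge 1/T$. Since $\Gamma(n+\nu+\tfrac{\dim F}{2})/n!$ grows only polynomially in $n$, the same radius holds for $\hG(\kappa)=\sum_n n! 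G_n \kappa^n$. The Fuchsian structure of this Laplace-dual connection is what supplies the analytic continuation past the tangent disc; without it, the growth estimate alone does not yield the lemma.
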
 
\begin{proof} 
Recall that we have $G(t) =(2\pi t)^{-\dim F/2}\langle 
\Phi^\vee([\pt])(z), 1 \rangle$ (see \eqref{eq:pair_J_alpha}) 
and $\Phi^\vee([\pt])(z)$ is a $\nabla^\vee$-flat section 
(where $z= t^{-1}$). 
By Proposition \ref{prop:lead}, we have an estimate 
$\|\Phi([\pt])(t^{-1})\| \le C_0 e^{Tt}$ on $t\in [1,\infty)$ 
for some constant $C_0>0$,  
and thus the following Laplace 
transformation converges for $\lambda$ 
with $\re(\lambda)<-T$ and $\nu \gg 0$, $\nu\notin \Z$. 
\[
\varphi(\lambda) = (2\pi)^{-\frac{\dim F}{2}} 
\int_0^\infty t^{\nu-1} e^{\lambda t}\Phi^\vee([\pt])(t^{-1}) dt 
\]
The condition $\nu \gg 0$ ensures that the integral 
converges near $t=0$ (since $\nabla^\vee$ is regular singular 
at $t=0$, $f(t^{-1})$ is of at most polynomial growth 
near $t=0$). 
By the standard argument using integration by parts, we can show 
that $\varphi(\lambda)$ is flat for the connection 
$\tnabla^{(\nu)}$ given by 
\begin{equation*}
\tnabla^{(\nu)}_{\partial_\lambda} = \partial_\lambda 
+ (\lambda + (c_1(F)\star_0)^{\rm t})^{-1} (-\mu^{\rm t} + \nu). 
\end{equation*}
Then the convergence radius of 
\begin{align} 
\label{eq:Laplace_G}
\begin{split} 
\langle \varphi(-\kappa^{-1}), 1 \rangle 
& = \int_0^\infty t^{\nu+ \frac{\dim F}{2}-1} 
G(t) e^{-t/\kappa} dt \\
& = \kappa^{\nu+\frac{\dim F}{2} -1} 
\sum_{n=0}^\infty  \Gamma(n+\nu+\tfrac{\dim F}{2}) G_n \kappa^n 
\end{split} 
\end{align} 
is bigger than or equal to $1/T$ as 
$\tnabla^{(\nu)}$ has no singularities in $\{|\lambda|>T\}$. 
The function $\hG(\kappa)$ has the same radius of convergence 
as $\langle \varphi(-\kappa^{-1}),1\rangle$ and the conclusion follows. 
\end{proof}

\begin{lemma} 
\label{lem:radius_Ghat_exact} 
Suppose that $F$ satisfies Property $\O$ and 
$\langle [\pt], A_F \rangle \neq 0$. 
Then the convergence radius of $\hG(\kappa)$ is exactly $1/T$. 
\end{lemma}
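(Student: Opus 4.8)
The plan is to show that $\hG(\kappa)$ has a genuine singularity at $\kappa = 1/T$, which, combined with Lemma \ref{lem:radius_Ghat} (radius $\ge 1/T$), forces the radius to equal $1/T$. The idea is to analyze the behaviour of $G(t)$ as $t \to +\infty$ along the positive real line and transfer this to $\hG$ via the Laplace-type transform relating the two.

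First I would recall from Theorem \ref{thm:asymptotic_class} and Proposition \ref{prop:LA} that, since $\langle [\pt], A_F\rangle \neq 0$, the dual flat section $\Phi^\vee([\pt])(z)$ has leading asymptotics $\LA(\Phi^\vee([\pt])) = \langle [\pt], A_F\rangle \psi_0^* \neq 0$, i.e. $e^{-T/z}\Phi^\vee([\pt])(z) \to \langle [\pt], A_F\rangle\psi_0^*$ as $z \to +0$. By Lemma \ref{lem:1_ET} we have $\langle \psi_0^*, 1\rangle \neq 0$, so pairing with $1$ and using \eqref{eq:pair_J_alpha} gives
\[
G(t) = (2\pi t)^{-\dim F/2}\langle \Phi^\vee([\pt])(t^{-1}), 1\rangle \sim c\, t^{-\dim F/2} e^{Tt} \qquad (t \to +\infty)
\]
for a nonzero constant $c = (2\pi)^{-\dim F/2}\langle [\pt], A_F\rangle\langle \psi_0^*, 1\rangle$. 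In particular $G(t) e^{-Tt}$ does not decay and $G_n > 0$-type positivity is not needed; what matters is that $G(t)$ grows \emph{exactly} like $e^{Tt}$ up to a power of $t$.

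Next I would use the integral representation of $\hG$. From the definition $\hG(\kappa) = \kappa^{-1}\int_0^\infty G(t) e^{-t/\kappa}\,dt$ (more precisely one should work with the regularized version $\langle \varphi(-\kappa^{-1}),1\rangle$ from \eqref{eq:Laplace_G}, which differs from $\hG$ only by the factor $\Gamma(n+\nu+\tfrac{\dim F}{2})$ in each Taylor coefficient and hence has the same radius of convergence). The asymptotics $G(t) \sim c\, t^{-\dim F/2}e^{Tt}$ show that the integrand $G(t) e^{-t/\kappa}$ behaves like $c\, t^{-\dim F/2} e^{(T - 1/\kappa)t}$ for large $t$; this integral converges precisely when $\re(1/\kappa) > T$, i.e. for $\kappa$ in a disk-like region whose boundary passes through $\kappa = 1/T$, and as $\kappa \uparrow 1/T$ along the real axis the integral $\int^\infty t^{-\dim F/2} e^{(T-1/\kappa)t}\,dt$ diverges to $+\infty$ (since $\dim F/2$ may well be $\le 1$; even when it exceeds $1$ the Laplace transform of $t^{-\dim F/2}e^{Tt}$ still has a singularity at $1/\kappa = T$, being comparable to $(1/\kappa - T)^{\dim F/2 - 1}$ or a logarithm). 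Either way, $\hG(\kappa)$ (equivalently $\langle\varphi(-\kappa^{-1}),1\rangle$) cannot extend holomorphically past $\kappa = 1/T$ on the positive real axis, so $1/T$ lies on the circle of convergence.

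The main obstacle I anticipate is making the ``exactly like $e^{Tt}$'' statement rigorous enough to conclude divergence of the Laplace integral at the critical point — one must rule out the possibility that the subleading terms conspire to make $G(t)e^{-Tt}$ integrable against the borderline exponential, or that cancellation in the oscillatory tail (if $T'$-eigenvalues contribute oscillations) kills the singularity. This is handled by the clean leading-order statement $\lim_{t\to+\infty} t^{\dim F/2}e^{-Tt}G(t) = c \neq 0$ with $c$ of a definite sign (it is $c = (2\pi)^{-\dim F/2}\langle[\pt],A_F\rangle\langle\psi_0^*,1\rangle$, real and nonzero once one checks $\psi_0^*$, $\psi_0$, and $1$ are compatibly normalized over $\R$): a positive function asymptotic to $c\, t^{-\dim F/2}e^{Tt}$ has Laplace transform with a non-removable singularity at the abscissa of convergence by the classical Landau/Pringsheim-type theorem on Laplace transforms with eventually-positive integrand. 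Thus the radius of convergence of $\hG$, which by Lemma \ref{lem:radius_Ghat} is $\ge 1/T$, cannot exceed $1/T$, and equality follows.
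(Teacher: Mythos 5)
Your proposal follows the same route as the paper: establish from Proposition \ref{prop:LA} and Lemma \ref{lem:1_ET} that $\lim_{t\to+\infty} (2\pi t)^{\dim F/2} e^{-Tt} G(t) = \langle [\pt], A_F\rangle\langle\psi_0^*,1\rangle \neq 0$, then conclude that the Laplace-type transform $\langle\varphi(-\kappa^{-1}),1\rangle = \int_0^\infty t^{\nu+\dim F/2-1}G(t)e^{-t/\kappa}\,dt$ must blow up as $\kappa\uparrow 1/T$, so the radius cannot exceed $1/T$. One small remark: your second paragraph frets over whether the bare integral $\int^\infty t^{-\dim F/2}e^{(T-1/\kappa)t}\,dt$ might stay bounded when $\dim F/2>1$; the paper's choice of $\nu\gg 0$ in \eqref{eq:Laplace_G} is exactly what removes that worry, since the integrand then behaves like $c\,t^{\nu-1}e^{(T-1/\kappa)t}$ with $\nu$ as large as one likes, so divergence as $\kappa\uparrow 1/T$ is immediate once $c\neq 0$ — there is no need to invoke a Pringsheim-type theorem or to analyze fractional powers of $(1/\kappa - T)$. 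You do note that one should pass to the regularized version, so the gap is one of emphasis rather than substance.
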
 
\begin{proof} 
In view of the discussion in the previous lemma, 
it suffices to show that the function 
$\langle \varphi(-\kappa^{-1}),1\rangle$ in \eqref{eq:Laplace_G} has 
a singularity at $\kappa = 1/T$. 
By \eqref{eq:pair_J_alpha} we have $G(t) =(2\pi t)^{-\dim F/2}\langle 
\Phi^\vee([\pt])(z), 1 \rangle$. Thus by Proposition \ref{prop:LA} and 
Lemma \ref{lem:1_ET}, we have 
\[
\lim_{t\to + \infty} e^{-t T} (2\pi t)^{\frac{\dim F}{2}} G(t)  = 
\langle [\pt], A_F \rangle \langle \psi_0^*, 1\rangle 
\neq 0. 
\]
Therefore the Laplace transform \eqref{eq:Laplace_G} diverges 
as $\kappa$ approaches $1/T$ from the left for $\nu \gg 0$. 
\end{proof}

\begin{proof}[Proof of Theorem \ref{thm:Apery}]  
Set $\beta:=\alpha - \frac{\langle \alpha, A_F\rangle}
{\langle [\pt],A_F\rangle} [\pt]$. Then 
$\langle \beta, A_F \rangle =0$. 
It suffices to show that 
\begin{equation} 
\label{eq:beta_pt}
\liminf_{n\to \infty} \left| 
\frac{\langle \beta, J_{rn}\rangle}
{\langle [\pt], J_{rn}\rangle} \right| = 0.  
\end{equation} 
Set $b_n := \langle \beta, J_n \rangle$. 
Then we have 
$\sum_{n=0}^\infty b_n t^n = \langle \beta, J(t)\rangle 
= (2\pi t)^{-\dim F/2} \langle \Phi^\vee(\beta)(t^{-1}), 1\rangle$ 
by \eqref{eq:pair_J_alpha}. 
By Proposition \ref{prop:LA}, we have 
$\LA(\Phi^\vee(\beta))= \langle \beta, A_F \rangle \psi_0^*=0$. 
Thus by Proposition \ref{prop:lead} we have 
$\lim_{t\to +\infty} e^{- t \lambda}\Phi^\vee(\beta)(t^{-1})=0$ 
for every $\lambda > T'$.  
Consider again the Laplace transform 
\[
\varphi_\beta(\lambda)= 
(2\pi)^{-\frac{\dim F}{2}} 
\int_0^\infty t^{\nu-1}e^{t\lambda} 
\Phi^\vee(\beta)(t^{-1}) dt 
\]
for $\nu \gg 0$ (as in the proof of Lemma \ref{lem:radius_Ghat}). 
This is a $\tnabla^{(\nu)}$-flat section which is regular on 
$\{\lambda: \re(\lambda)<-T'\}$. Then its component 
\[
\langle \varphi_\beta(-\kappa^{-1}), 1 \rangle 
= \kappa^{\nu+\frac{\dim F}{2} -1} 
\sum_{n=0}^\infty  \Gamma(n+\nu+\tfrac{\dim F}{2}) b_n \kappa^n 
\]
has the radius of convergence strictly bigger than $1/T$. 
This is because the only singularities of $\tnabla^{(\nu)}$ on 
$\{|\lambda|\ge T\}$ are $\{-e^{2\pi \iu k/r} T : k=0,\dots,r-1\}$ 
and  $\langle \varphi_\beta(\lambda), 1 \rangle$  is 
regular at $\lambda= -T$ and satisfies 
$\langle \varphi_\beta(e^{2\pi\iu/r} \lambda), 1\rangle 
= e^{-(\nu+\frac{\dim F}{2} -1)2\pi\iu/r} 
\langle \varphi_\beta(\lambda),1\rangle$.   
(Here we used Part (2) of Property $\O$.) 
This implies that there exist numbers $C, R>0$ such that 
$R>1/T$ and 
\[
|b_n n!| \le  C R^{-n}. 
\]
On the other hand, Lemma \ref{lem:radius_Ghat_exact} implies that 
\[
\limsup_{n\to \infty} |G_n n!|^{1/n}  = T. 
\]
Let $\epsilon>0$ be such that $(1-\epsilon) RT>1$. 
Then we have a number $n_0$ such that for every $n \ge n_0$ 
we have $\sup_{m \ge n} |G_m m!|^{1/m} \ge T(1-\epsilon/2)$. 
Namely we can find a sequence $n_0 \le n_1<n_2<n_3< \cdots$ 
such that $|G_{n_i}| n_i! \ge T^{n_i}(1-\epsilon)^{n_i}$. 
Therefore 
\[
\left |\frac{b_{n_i}}{G_{n_i}} \right | 
= \left |\frac{b_{n_i} n_i !}{G_{n_i}n_i!} \right| 
\le  \frac{C R^{-n_i}}{T^{n_i}(1-\epsilon)^{n_i}}
= \frac{C}{(RT(1-\epsilon))^{n_i}}. 
\]
This implies $\liminf_{n \to \infty} |b_n/G_n|=0$, which 
proves \eqref{eq:beta_pt}. 
\end{proof} 

\subsection{Quantum cohomology central charges} 
\label{subsec:centralcharges} 
For a vector bundle $V$ on $F$, we define 
\begin{equation}
\label{eq:centralcharge}
Z(V) := (2\pi z)^{\frac{\dim F}{2}}
\left (1, \Phi(\Gg_F \Ch(V))(z)\right)_F. 
\end{equation} 
This quantity is called the \emph{quantum cohomology central charge}\footnote
{The central charge $Z(V)$ here is denoted by $(2\pi\iu)^{\dim F} Z(V)$ 
in \cite{Iritani09}.} of $V$ in \cite{Iritani09}.
Using the property $S(z)^* = S(-z)^{-1}$, 
\eqref{eq:coh_framing} and Definition \ref{def:J}, 
we can write $Z(V)$ in terms of the $J$-function: 
\begin{equation}
\label{eq:centralcharge_J} 
Z(V)  = (2\pi\iu)^{\dim F} 
\left[J(e^{\pi\iu} t), \Gg_F \Ch(V) \right) 
\end{equation} 
where $t=z^{-1}$ and $[\cdot, \cdot)$ is the pairing 
in \eqref{eq:[)_coh}. 
Therefore, as a component of the $J$-function, $Z(V)$ gives 
a scalar-valued solution to the quantum differential equations 
(see Remark \ref{rem:QDE}). 
If $F$ satisfies Gamma Conjecture I, the 
central charge $Z(\O)$ satisfies the following smallest asymptotics: 
\[
Z(\O) 
\sim  \sqrt{(\psi_0,\psi_0)_F}(2\pi z)^{\frac{\dim F}{2}} e^{-T/z}
\qquad 
\text{as $z\to +0$} 
\]
where $\psi_0 \in E(T)$ is the idempotent for $\star_0$. 
This follows from Proposition \ref{prop:normalization} and 
the proof of Lemma \ref{lem:1_ET}.

\section{Gamma Conjecture II}
\label{sec:GammaII} 

In this section we restrict to a Fano manifold $F$ with 
semisimple quantum cohomology and 
state a refinement of Gamma Conjecture I for such $F$. 
We represent the irregular monodromy of quantum connection 
by a marked reflection system  
given by a basis of asymptotically exponential flat sections. 
Gamma Conjecture II says that the marked reflection system coincides 
with a certain Gamma-basis $\{\Gg_F \Ch(E_i)\}$ for 
some exceptional collection $\{E_i\}$ 
of the derived category $\D^b_{\rm coh}(F)$. 
This also refines Dubrovin's conjecture \cite{Dubrovin98}. 

\subsection{Semiorthonormal basis and Gram matrix} 
\label{subsec:sobs}

Let $\V = (V;[\cdot,\cdot);v_1,\dots,v_N)$
be a triple of a vector space $V$, a bilinear form 
$[\cdot , \cdot) = [\cdot,\cdot)_V$,
and a collection of vectors $v_1,\dots,v_N \in V$.
The bilinear form $[\cdot,\cdot)$ is not necessarily symmetric 
nor skew-symmetric. 
We say that $\V$ is a \emph{semiorthonormal collection} 
if the Gram matrix $G_{ij} = [v_i, v_j)$ is uni-uppertriangular:
$\left[ v_i,v_i \right) = 1$
for all $i$, 
and 
$\left[v_i,v_j \right) = 0$
for all $i > j$.
This implies that $G$ is non-degenerate,
hence vectors $v_i$ are linearly independent so $N \leq \dim V$. 
We say that $\V$ is a \emph{semiorthonormal basis} (SOB) 
if $N = \dim V$ i.e.~$v_i$ form a basis of vector space $V$,
this implies that pairing $[\cdot,\cdot)$ is non-degenerate.


Set of SOBs admits an obvious action of $(\Z/2\Z)^N$: 
if $v_1,\dots,v_N$ is SOB then so is $\pm v_1,\dots,\pm v_N$,
in this case we say that SOBs are the same up to sign. 
For $u,v\in V$, define the \emph{right mutation} 
$R_u v$ and the \emph{left mutation} $L_u v$ by  
\[
R_u v := v - [v,u) u, \qquad 
L_u v := v - [u,v) u.  
\] 
The braid group $B_N$ on $N$ strands acts on the set of SOBs 
of $N$ vectors by:
\begin{align} 
\label{eq:braidaction}
\begin{split} 
\sigma_i (v_1,\dots,v_N) & = 
(v_1,\dots,v_{i-1}, v_{i+1}, R_{v_{i+1}} v_i, v_{i+2},\dots,v_N) \\ 
\sigma_i^{-1}(v_1,\dots,v_N) & = 
(v_1,\dots,v_{i-1}, L_{v_i}v_{i+1}, v_i, v_{i+2},\dots,v_N)
\end{split} 
\end{align} 
where $\sigma_1,\dots,\sigma_{N-1}$ are generators 
of $B_N$ and satisfy the braid relation 
\begin{align*}
\begin{split} 
& \sigma_i \sigma_{i+1} \sigma_i = \sigma_{i+1} \sigma_i \sigma_{i+1}  
\\
& \sigma_i \sigma_j = \sigma_j \sigma_i \quad \text{if $|i-j|\ge 2$}. 
\end{split} 
\end{align*}

\subsection{Marked reflection system} 
\label{subsec:MRS}
A \emph{marked reflection system} (MRS) of phase $\phi \in \R$ is a tuple 
$(V,[\cdot,\cdot), \{v_1,\dots,v_N\}, m, e^{\iu\phi})$ 
consisting of 
\begin{itemize} 
\item a complex vector space $V$; 
\item a bilinear form $[\cdot,\cdot)$ on $V$; 
\item an unordered basis $\{v_1,\dots,v_N\}$ of $V$; 
\item a marking $m \colon \{v_1,\dots,v_N\}\to \C$; 
we write $u_i = m(v_i)$ and $\u = \{u_1,\dots,u_N\}$; 
\item $e^{\iu\phi}\in S^1$; $\phi\in \R$ is called a phase  
\end{itemize} 
such that the vectors $v_i$ are semiorthonormal in the sense 
that: 
\begin{equation} 
\label{eq:semiorth_MRS} 
[v_i,v_j) = \begin{cases} 
1 & \text{if } i=j; \\ 
0 & \text{if } i\neq j \text{ and } 
\im(e^{-\iu\phi} u_i) \le \im(e^{-\iu \phi} u_j). 
\end{cases} 
\end{equation}  
When the basis is ordered appropriately, 
$(V,[\cdot,\cdot), v_1,\dots,v_N)$ gives an SOB. 
We say that $\phi\in \R$ (or $e^{\iu\phi} \in S^1$) 
is \emph{admissible} for a multiset $\u=\{u_1,\dots,u_N\}$ in $\C$ if  
$e^{\iu\phi}$ is not parallel to any non-zero difference $u_i-u_j$. 
An MRS is said to be \emph{admissible} if the phase $\phi$ is 
admissible for $\u$. 
By a small perturbation of the marking, one can always make 
an MRS admissible. 
The circle $S^1$ acts on the space of all MRSs 
by rotating simultaneously the phase and all markings:
$\phi \mapsto \alpha+\phi$, $u_i \mapsto  e^{\iu\alpha} u_i$. 

We write $h_\phi \colon \C \to \R$ for the $\R$-linear function 
$h_\phi(z) = \im(e^{-\iu\phi} z)$. 
 
\begin{remark}[\cite{Golyshev01}]
The data of admissible marked reflection system may be used to produce 
a polarized local system on $\C \setminus \u$ (cf.~the second 
structure connection \cite{Dubrovin98a,Man99}): 
by identifying the fiber with $V$, 
endowing it with a bilinear form $\{\cdot,\cdot\}$ 
(either with the symmetric form $\{v_1,v_2\} = [v_1,v_2) + [v_2,v_1)$
or with the skew-symmetric form $\{v_1,v_2\} = [v_1,v_2) - [v_2,v_1)$), 
choosing ``$ e^{\iu \phi}\infty$"  for the base point, 
joining it with the points $u_i$ 
with the level rays $u_i + \R_{\ge 0} e^{\iu\phi}$ as paths, 
trivializing the local system outside the union of the paths, 
and requiring that the turn around $u_i$ act 
in the monodromy representation 
by the reflection with respect to $v_i$ : $r_{v_i}(v) = v - \{v,v_i\} v_i$.
In case $u_i$ is a multiple marking all the respective reflections commute,
so the monodromy of the local system is well-defined as a product of such reflections.
The form $\{\cdot,\cdot\}$ could be degenerate,
however construction above also provides local systems with 
fiber the quotient-space $V/\Ker \{\cdot,\cdot\}$ 
(use reflections with respect to images of $v_i$ in the quotient-spaces).

When $u_1,\dots,u_N$ are pairwise distinct, 
this construction could be reversed with a little ambiguity:
if there is such a local system and the paths are given, 
we take $V$ to be a fiber, define $v_i\in V$ 
as a reflection vector of the monodromy at $u_i$ 
(which could be determined up to sign if 
$\{\cdot,\cdot\}$ is non-degenerate), 
and define $[v_i,v_i) = 1$ for all $i$, 
for $i\neq j$ put $[v_i,v_j) = 0$ if $h_\phi(u_i)<h_\phi(u_j)$
and $[v_i,v_j) = \{v_i,v_j\}$ otherwise. 
\end{remark}

\subsection{Mutation of MRSs} 
\label{subsec:MRS_mutation} 
In this section, we explain how an MRS changes by mutation 
as we vary the marking $\u$ and the phase $\phi$. 
First we give an intuitive explanation and then 
give a formal definition. 

The mutation of MRSs is parallel to the mutation of asymptotically 
exponential flat sections in \S \ref{subsec:mutation} 
(see Corollary \ref{cor:mutation}). 
For a given admissible MRS $(V,[\cdot,\cdot),
\{v_1,\dots,v_N\}, m(v_i) = u_i, e^{\iu\phi})$, we draw 
the ray $L_i= u_i + \R_{\ge 0} e^{\iu\phi}$ from 
each marking $u_i$ in the direction of $\phi$ 
to get a picture as in Figure \ref{fig:L_i}. 
We consider a variation of the parameters $\u$ and $\phi$. 
We allow that a multiple marking separates into distinct 
ones, but do \emph{not} allow that two different markings 
$u_i \neq u_j$ collide unless the corresponding vectors 
$v_i$, $v_j$ are orthogonal $[v_i,v_j) = [v_j,v_i) = 0$. 
The vectors $v_1,\dots,v_N$ stay constant 
while each $u_j$ does not hit any other rays $L_i$ with 
$u_j \neq u_i$. When $u_j$ crosses a ray $L_i$ 
from the right side of $L_i$ (resp.~the left side of $L_i$) 
towards the direction $\phi$, 
the vectors marked by $u_i$ undergo 
the right (resp.~ left) mutation. 
In the situation of Figure \ref{fig:rl_mutation}, 
if a vector $v_l$ is marked by $u_i$, it is transformed to 
\begin{align*} 
v_l' = \left(\prod_{k: u_j = u_k} R_{v_k}\right) v_l 
= v_l - \sum_{k:u_j = u_k} [v_l,v_k) v_k   
\end{align*} 
after the move. 
The vectors that are not assigned the marking $u_i$ remain the same. 
Note that the vectors with the same marking $u_j$ 
are orthogonal to each other and hence the above product of 
$R_{v_k}$ does not depend on the order.   
Note also that the vectors $v_l'$ marked by $u_i$ are 
mutually orthogonal even after the mutation. 

\begin{figure}[htbp]
\begin{center} 
\begin{picture}(300,100) 
\put(160,90){\makebox(0,0){$\bullet$}} 
\put(150,90){\makebox(0,0){$u_1$}}

\put(180,80){\makebox(0,0){$\bullet$}} 
\put(170,80){\makebox(0,0){$u_2$}}

\put(90,65){\makebox(0,0){$\bullet$}} 
\put(50,65){\makebox(0,0){$u_3=u_4=u_5$}} 

\put(150,50){\makebox(0,0){$\bullet$}} 
\put(125,50){\makebox(0,0){$u_6=u_7$}}

\put(200,35){\makebox(0,0){$\bullet$}} 
\put(190,37){\makebox(0,0){$u_8$}}

\put(120,20){\makebox(0,0){$\bullet$}} 
\put(110,20){\makebox(0,0){$u_9$}}

\put(160,90){\line(1,0){180}} 
\put(180,80){\line(1,0){160}} 
\put(90,65){\line(1,0){250}} 
\put(150,50){\line(1,0){190}} 
\put(200,35){\line(1,0){140}} 
\put(120,20){\line(1,0){220}} 

\put(345,90){{$L_1$}} 
\put(345,75){$L_2$} 
\put(345,60){$L_3=L_4=L_5$} 
\put(345,45){$L_6 = L_7$} 
\put(345,30){$L_8$} 
\put(345,15){$L_9$} 
\end{picture} 
\end{center} 
\caption{Rays $L_i$ in the admissible direction $e^{\iu\phi}=1$.} 
\label{fig:L_i}
\end{figure}
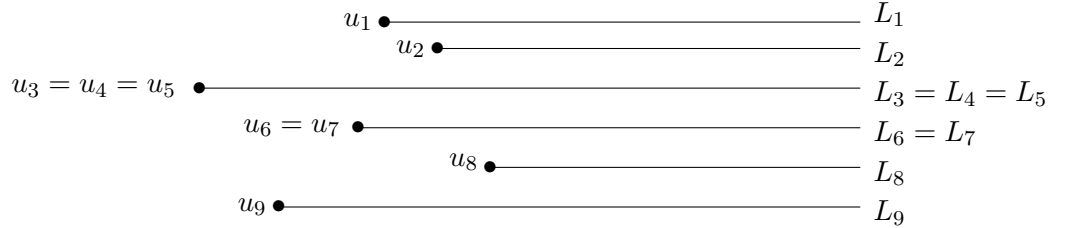

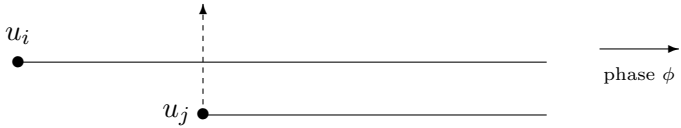
\begin{figure}[htbp]
\begin{center}
\begin{picture}(300,60) 
\put(80,30){\makebox(0,0){$\bullet$}}
\put(80,40){\makebox(0,0){$u_i$}}
\put(80,30){\line(1,0){200}}
\put(150,10){\makebox(0,0){$\bullet$}}
\put(140,10){\makebox(0,0){$u_j$}}
\put(150,10){\line(1,0){130}}

\put(150,10){\line(0,1){2}}
\put(150,14){\line(0,1){2}}
\put(150,18){\line(0,1){2}} 
\put(150,22){\line(0,1){2}} 
\put(150,26){\line(0,1){2}} 
\put(150,30){\line(0,1){2}} 
\put(150,34){\line(0,1){2}} 
\put(150,38){\line(0,1){2}} 
\put(150,42){\line(0,1){2}} 
\put(150,46){\line(0,1){2}} 
\put(150,50){\line(0,1){2}} 
\put(150,52){\vector(0,1){0}}

\put(300,35){\vector(1,0){30}} 
\put(315,25){\makebox(0,0){\tiny phase $\phi$}}
\end{picture} 
\end{center}
\caption{Right mutation} 
\label{fig:rl_mutation}
\end{figure}

\begin{remark} 
\label{rem:adm_MRS} 
Let $(V,[\cdot,\cdot),\{v_1,\dots,v_N\}, m(v_i) = u_i, e^{\iu\phi})$ 
be an admissible MRS with multiple markings. Since we have 
$[v_i,v_j) = [v_j,v_i) = 0$ if $u_i=u_j$ and $i\neq j$ 
(see \eqref{eq:semiorth_MRS}), there exists $\epsilon>0$ 
such that all the admissible MRSs of the form 
$(V,[\cdot,\cdot), \{v_1,\dots,v_N\}, m(v_i) = u_i', e^{\iu\phi})$ 
with pairwise distinct $u_1',\dots,u_N'$ 
and $|u_i - u_i'|<\epsilon$ are related to each other by mutation. 
\end{remark}

Now we give a formal and precise definition 
for the deformation of MRSs. 
We first deal with the case where $\u =(u_1,\dots,u_N)$ 
are pairwise distinct, i.e.~give an element of 
the configuration space $C_N(\C)$ \eqref{eq:config}. 
We construct a local system $\L=\L(V,[\cdot,\cdot))$ of MRSs with 
fixed $(V,[\cdot,\cdot))$ over $C_N(\C) \times S^1$. 
Then we extend the \'etal\'e space (sheaf space) of $\L$ 
to a branched cover $\frM$ over $S^N(\C)\times S^1$ 
(where $S^N(\C)$ is the symmetric power of $\C$); 
this amounts to pushing forward $\L$ 
to $S^N(\C)\times S^1$ as an orbi-sheaf. 
We will see that the fiber $\frM_{(\u,e^{\iu\phi})}$ at 
$(\u,e^{\iu\phi}) \in S^N(\C)\times S^1$ contains the set of 
all MRSs on $(V,[\cdot,\cdot))$ with marking $\u$ and phase $\phi$. 
Then we define a deformation of 
MRSs as a continuous path in $\frM$. 

\medskip 
\noindent 
{\bf Local system on the `distinct' locus.} 
Consider the codimension-one stratum 
$W\subset C_N(\C)\times S^1$ 
consisting of non-admissible parameters $(\u,e^{\iu\phi})$: 
\[
W = \{ (\u, e^{\iu\phi}) \in C_N(\C) \times S^1 : 
h_\phi(u_i) = h_\phi(u_j) \text{ for some $i\neq j$} \}. 
\]
Note that the $S^1$-action $(\u,e^{\iu\phi}) \mapsto 
(e^{\iu\alpha} \u, e^{\iu(\alpha+\phi)})$ on $C_N(\C) 
\times S^1$ preserves $W$. 
The complement $(C_N(\C) \times S^1)\setminus W$ 
is a connected open subset. 
Fix a vector space $V$ with a bilinear form $[\cdot,\cdot)$. 
We define $\L^\circ = \L^\circ(V,[\cdot,\cdot))$ to be the trivial local system on 
the open subset $(C_N(\C) \times S^1)\setminus W$ 
whose fiber at $(\u,e^{\iu\phi})
\in (C_N(\C)\times S^1)\setminus W$ is given by 
\[
\L^\circ_{(\u,e^{\iu\phi})} = \left \{ \text{MRSs on $(V,[\cdot,\cdot))$ 
with marking $\u$ and phase $\phi$} \right\} 
\] 
where by ``MRS on $(V,[\cdot,\cdot))$ with marking $\u$ and 
phase $\phi$'' we mean an MRS of the form 
$(V,[\cdot,\cdot),\{v_1,\dots,v_N\}, m, e^{\iu\phi})$ 
such that $\u = \{m(v_1),\dots,m(v_N)\}$. 
Note that the fiber $\L^\circ_{(\u,e^{\iu\phi})}$ 
is canonically identified with the set of SOBs on $V$ 
via the ordering given by $h_\phi$, i.e.~we can order 
$v_1,\dots,v_N$ in such a way that $h_\phi(u_1) > h_\phi(u_2) 
> \cdots > h_\phi(u_N)$ with $u_i = m(v_i)$. 
This identification defines the structure of a trivial local system 
on $\L^\circ$. 
Next we extend $\L^\circ$ to a local system $\L=\L(V,[\cdot,\cdot))$ on 
$C_N(\C)\times S^1$. 
Let $\jmath \colon (C_N(\C) \times S^1)\setminus W \to 
C_N(\C)\times S^1$ denote the inclusion. 
An extension of $\L^\circ$ is given by an action of 
$\pi_1(C_N(\C)\times S^1)$ on the set of SOBs of $V$ 
which is trivial on $\jmath_*\pi_1((C_N(\C)\times S^1)\setminus W)$. 
Via the $S^1$-action on $C_N(\C)\times S^1$, we have 
the isomorphism 
\[
\{(u_1,\dots,u_N) \in 
\C^N: \im(u_1) > \im(u_2) > \cdots > \im(u_N)\} \times S^1 
\cong 
(C_N(\C)\times S^1) \setminus W  
\]
which sends $((u_1,\dots,u_N),e^{\iu\phi})$ to 
$(\{e^{\iu\phi}u_1,\dots,e^{\iu\phi}u_N\}, e^{\iu\phi})$. 
Since the first factor of the left-hand side is contractible, 
we have $\pi_1((C_N(\C) \times S^1)\setminus W) 
\cong \Z$ and the image 
of $\jmath_* \colon \pi_1((C_N(\C) \times S^1)\setminus W) 
\to \pi_1(C_N(\C) \times S^1)$ is generated by the 
class of an $S^1$-orbit. 
Since $\pi_1(C_N(\C)\times S^1)$ is the direct product 
of $\pi_1(C_N(\C)\times\{1\})$ and $\pi_1$ of an $S^1$-orbit, 
an extension of the local system $\L^\circ$ to 
$C_N(\C) \times S^1$ is given by the action of 
the braid group $B_N \cong \pi_1(C_N(\C)\times \{1\})$ on the set of 
SOBs of $V$. 
We use the $B_N$-action 
on SOBs described in \S\ref{subsec:sobs} to define $\L$. 
More precisely, choosing 
\[
(\u^\circ=\{u_1 = (N-1) \iu, u_2 = (N-2) \iu, \dots, u_N = 0\}, 
e^{\iu\phi} = 1)
\]
as a base point of $(C_N(\C)\times S^1)\setminus W$, 
we define the parallel transport along the closed path 
in Figure \ref{fig:braid} (based at $(\u^\circ,1)$) 
as the action of $\sigma_i\in B_N$ on SOBs given in 
\eqref{eq:braidaction}. 
This defines the local system $\L$ of sets 
over $C_N(\C)\times S^1$. 

\begin{figure}[htbp]
\begin{center} 
\includegraphics[bb=205 592 501 713]{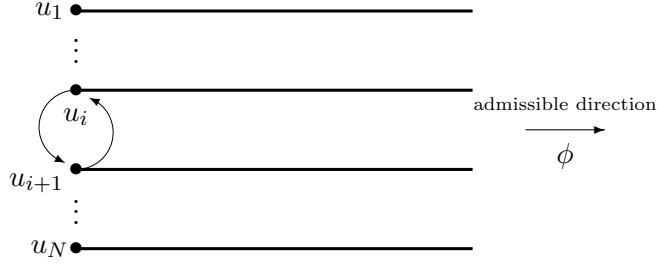} 
\end{center} 
\caption{Braid $\sigma_i$}
\label{fig:braid} 
\end{figure} 

\medskip 
\noindent 
{\bf Extension to the `non-distinct' locus.} 
Consider the following diagram: 
\[
\begin{CD} 
C_N^{\rm ord}(\C)\times S^1 @>{\iota}>> \C^N \times S^1\\ 
@V{p}VV  @V{q}VV\\ 
C_N(\C) \times S^1 @>{\overline{\iota}}>> S^N(\C) \times S^1 
\end{CD} 
\]
where $C_N^{\rm ord}(\C) = \{(u_1,\dots,u_N) \in \C^N: 
\text{$u_i \neq u_j$ for all $i\neq j$}\}$ is the configuration 
space of ordered distinct $N$ points on $\C$, 
$S^N(\C) = \C^N/\frS_N$ is the symmetric product of $\C$, 
$\iota,\overline{\iota}$ are the natural 
inclusions and $p,q$ are the natural projections.  
The local system $p^*\L$ on $C_N^{\rm ord}(\C)\times S^1$ can be extended 
to a constructible sheaf $\iota_* p^*\L$ on $\C^N\times S^1$. 
Since $p$ is an $\frS_N$-covering, $p^*\L$ 
and $\iota_*p^*\L$ are naturally 
$\frS_N$-equivariant. 
We topologize the space of MRSs on $(V,[\cdot,\cdot))$ using 
the \'etal\'e space (also known as the ``sheaf space'') 
of the sheaf $\iota_*p^* \L$. 
We define: 
\[
\frM := \tfrM/\frS_N, \quad 
\tfrM := \text{the \'etal\'e space of the sheaf $\iota_*p^*\L$}.   
\]
As we will see below, $\frM$ is a smooth manifold 
(which is non-separable if $N\ge 2$) 
and is a branched covering of $S^N(\C) \times S^1$. 
We will also see that $\frM$ contains the set of all MRSs on 
$(V,[\cdot,\cdot))$ as an open dense subset. 
We say that two MRSs on $(V,[\cdot,\cdot))$ 
are \emph{related by mutation} (or \emph{mutation-equivalent}) 
if they can be connected by a continuous path in $\frM$. 

\begin{remark} 
Using the language of orbifolds, we can regard $\frM$ as the \'etal\'e 
space of the orbi-sheaf $\overline{\iota}_*\L$ on $S^N(\C)$. 
\end{remark} 

Let $N_\bullet = (N_1\ge N_2\ge \cdots \ge N_k)$ be a 
partition of $N$ (i.e.~$N = \sum_{i=1}^k N_i$). 
Define a locally closed subset $S(N_\bullet)\subset S^N(\C)$ 
to be the set of multisets $\u$ of the form  
\begin{equation} 
\label{eq:u_nondistinct}
\u = 
\Big(\overbrace{\lambda_1,\dots,\lambda_1}^{N_1}, 
\overbrace{\lambda_2,\dots,\lambda_2}^{N_2},  
\cdots, \overbrace{\lambda_k,\dots,\lambda_k}^{N_k} \Big)
\end{equation} 
for some distinct $k$ points $\lambda_1,\dots,\lambda_k \in \C$. 
We have  
\[
S^N(\C) = \bigsqcup_{N_\bullet : \text{partition of $N$}}  S(N_\bullet). 
\]
Define $\tS(N_\bullet) := q^{-1}(S(N_\bullet)) \subset \C^N$. 
We have the following proposition. 
\begin{proposition} 
\label{prop:constructible} 
Let $N_\bullet = (N_1 \ge N_2 \ge \cdots \ge N_k)$ be a  
partition of $N$. 
The restriction of $\iota_*p^*\L$ to the stratum 
$\tS(N_\bullet)\times S^1$ is a local system whose fiber 
is isomorphic to 
the set of ``block semiorthonormal'' bases of $(V,[\cdot,\cdot))$, 
i.e.~bases $(v_{i,j})_{1\le i\le k, 1\le j\le N_i}$ of $V$ 
such that 
\begin{equation} 
\label{eq:block_SOB} 
[v_{i_1,j_1}, v_{i_2,j_2}) =
\begin{cases} 
0 & \text{if $i_1\ge i_2$ and $(i_1,j_1) \neq (i_2,j_2)$;} \\ 
1 & \text{if $(i_1,j_1) = (i_2,j_2)$}. 
\end{cases}  
\end{equation} 
The stabilizer $\Stab(\u) = \frS_{N_1}\times \cdots \times \frS_{N_k}$ 
of the $\frS_N$-action on $\C^N$ at $\u\in \tS(N_\bullet)$ acts on the stalk 
$(\iota^*p_*\L)_{(\u,e^{\iu\phi})}$ by permutation of 
the vectors $(v_{i,j})$ in the same block. 
\end{proposition}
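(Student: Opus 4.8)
The plan is to reduce everything to a local computation of the stalk of $\iota_* p^*\L$ along $\tS(N_\bullet)\times S^1$: assertion (1) is then essentially formal, while (2) and (3) amount to identifying the stalk and the $\Stab(\u)$-action on it. For (1): the image of $\iota$ is the complement of the hyperplane arrangement $\bigcup_{i\neq j}\{u_i=u_j\}$ (times $S^1$) in $\C^N\times S^1$, and it is a standard fact that pushing a local system forward along the open inclusion of the complement of a hyperplane arrangement yields a sheaf constructible with respect to the arrangement stratification. The locus $\tS(N_\bullet)\times S^1$ is a disjoint union of sets $L^\circ\times S^1$, where $L$ ranges over the flats of the arrangement with coincidence pattern $N_\bullet$ and $L^\circ$ is the corresponding open stratum (a connected smooth locally closed submanifold); the restriction of a constructible sheaf to such a union is a local system. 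As $\frS_N$ permutes these components transitively, it suffices to identify the fibre over one of them.

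For the stalk, pick a polydisc neighbourhood $U=\prod_{i=1}^N D_i\times I$ of $(\u,e^{\iu\phi})$ in $\C^N\times S^1$ (with $I\subset S^1$ a small arc) small enough that $D_i\cap D_j=\emptyset$ whenever $u_i,u_j$ lie in distinct blocks, taking all $D_i$ within a fixed block $\alpha$ equal to a single disc $D^{(\alpha)}$ about $\lambda_\alpha$. Then $(\iota_* p^*\L)_{(\u,e^{\iu\phi})}=\Gamma\big(U\cap(C_N^{\rm ord}(\C)\times S^1),\,p^*\L\big)$, and $U\cap C_N^{\rm ord}(\C)$ deformation retracts onto $\prod_\alpha\mathrm{Conf}_{N_\alpha}(\C)$, the product of the ordered configuration spaces of the blocks; hence $\pi_1\big(U\cap(C_N^{\rm ord}(\C)\times S^1)\big)=\prod_\alpha P_{N_\alpha}$, included in $\pi_1(C_N^{\rm ord}(\C))=P_N$ in the evident way. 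Since by (1) the fibre is locally constant along $\tS(N_\bullet)\times S^1$, we may assume $h_\phi(\lambda_1),\dots,h_\phi(\lambda_k)$ are pairwise distinct, so the blocks are consecutive for the $h_\phi$-ordering; a base point $\u_0\in U\cap C_N^{\rm ord}(\C)$ with $(p(\u_0),e^{\iu\phi})\notin W$ then identifies the fibre of $p^*\L$ there with the set of SOBs of $(V,[\cdot,\cdot))$, on which $\prod_\alpha P_{N_\alpha}\hookrightarrow B_N$ acts through the braid action \eqref{eq:braidaction}. Thus the stalk is the set of SOBs fixed by $\prod_\alpha P_{N_\alpha}$, and it remains to show these are exactly the block-semiorthonormal bases \eqref{eq:block_SOB}.

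The inclusion ``$\supseteq$'' is immediate: if all within-block Gram entries vanish, then for $l,l+1$ in the same block $\sigma_l$ sends $(\dots,v_l,v_{l+1},\dots)$ to $(\dots,v_{l+1},R_{v_{l+1}}v_l,\dots)=(\dots,v_{l+1},v_l,\dots)$, a plain transposition, so the block braid group $\prod_\alpha B_{N_\alpha}$ acts through $\prod_\alpha\frS_{N_\alpha}$ and its pure part acts trivially. The reverse inclusion is the heart of the matter and the step I expect to be the main obstacle. I would argue by induction on the index-distance $d$: given that all Gram entries $[v_k,v_l)$ with $0<l-k\le d$ already vanish, and a pair $i<j=i+d+1$ in one block, apply the pure braid $A_{ij}\in\langle\sigma_i,\dots,\sigma_{j-1}\rangle$ obtained by conjugating $\sigma_i^2$ with $\sigma_{i+1},\dots,\sigma_{j-1}$; the induction hypothesis forces all the conjugating generators to act as transpositions (the Gram entries involved have distance $\le d$), so $A_{ij}$ acts, up to a relabelling that moves $v_j$ next to $v_i$ and back, through the two-by-two computation $(v_i,v_j)\mapsto(v_i-av_j,\,av_i+(1-a^2)v_j)$ with $a=[v_i,v_j)$. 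Invariance therefore forces $a=0$, i.e.\ $[v_i,v_j)=0$; the base case is $\sigma_i^2$ acting on the single pair $(v_i,v_{i+1})$, which forces $[v_i,v_{i+1})=0$. Since the $A_{ij}$ generate $\prod_\alpha P_{N_\alpha}$, every fixed SOB is block-semiorthonormal, proving (2).

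For (3), keep $U$ as above and note it is $\Stab(\u)$-invariant, $\Stab(\u)=\prod_\alpha\frS_{N_\alpha}$ permuting the coordinate discs within each block. The tautological $\frS_N$-equivariant structure on $p^*\L$ makes $g\in\Stab(\u)$ act on a section $s$ over $U\cap(C_N^{\rm ord}(\C)\times S^1)$ by $(g\cdot s)(\u')=s(g^{-1}\u')$, both sides read in $\L_{p(\u')}=\L_{p(g^{-1}\u')}$; evaluating at $\u_0$ and transporting the value $s(\u_0)$ along a path in $U\cap C_N^{\rm ord}(\C)$ from $\u_0$ to $g^{-1}\u_0$ realises the action of $g$ as the braid action of any $\beta=\prod_\alpha\beta_\alpha$ with $\beta_\alpha\in B_{N_\alpha}$ lifting $g^{-1}|_\alpha$. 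Two such lifts differ by an element of $\prod_\alpha P_{N_\alpha}$, which acts trivially on block-semiorthonormal bases by the computation just made, so the action is well-defined; and by the ``$\supseteq$'' computation in (2) it is precisely the permutation of the vectors within each block induced by $g^{-1}$. This is the asserted action, which completes the proof.
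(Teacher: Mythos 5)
Your proof is correct and takes essentially the same approach as the paper: identify the stalk as sections of $p^*\L$ over a product-of-punctured-polydiscs neighbourhood, whose $\pi_1$ is $\prod_\alpha P_{N_\alpha}$, and then characterize the $P(N_\bullet)$-invariant SOBs as the block SOBs. Your main addition is the inductive argument (via the generators $A_{ij}$ and the explicit $2\times 2$ computation under $\sigma_i^2$) for the implication that the paper dismisses as ``easy to check,'' namely that pure-braid-invariant semiorthonormal collections must be mutually orthogonal; your appeal to general constructibility for part (1) is also a mild shortcut where the paper instead verifies local constancy directly from the stalk computation.
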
 
\begin{proof} 
Let $\u\in \tS(N_\bullet)$ be an element 
of the form \eqref{eq:u_nondistinct}. 
Let $D_i\subset \C$, $i=1,\dots,k$ be pairwise disjoint open discs 
such that $\lambda_i \in D_i$. 
We consider a $\Stab(\u)$-invariant open set $O$ 
containing $(\u,e^{\iu\phi})$ given by: 
\begin{equation} 
\label{eq:openset_O} 
O = D_1^{N_1} \times \cdots \times D_k ^{N_k} 
\times I \subset \C^N \times S^1 
\end{equation} 
where $I\subset S^1$ is an interval containing $e^{\iu\phi}$.  
The stalk $(\iota_*p^*\L)_{(\u,e^{\iu\phi})}$ can be 
identified with the space of sections of $p^*\L$ over 
\[
O \cap (C_N^{\rm ord}(\C) \times S^1) = C^{\rm ord}_{N_1}(D_1) 
\times \cdots \times C^{\rm ord}_{N_k}(D_k) \times I 
\]
where $C_{N_i}^{\rm ord}(D_i)$ denotes 
the set of ordered distinct $N_i$ points on $D_i$. 
The fundamental group of $O \cap (C_N^{\rm ord}(\C)\times S^1)$ is 
the product $P(N_\bullet):=P_{N_1} \times \cdots \times P_{N_k}$ of the 
pure braid groups $P_{N_i} \cong 
\pi_1(C_{N_i}^{\rm ord}(D_i))$. 
We choose a reference point 
\begin{equation} 
\label{eq:reference_point} 
\left(\bx =(x_{i,j})_{1\le i\le k, 1\le j\le N_i}, e^{\iu\theta}
\right)\in O \cap (C_N^{\rm ord}(\C) \times S^1) 
\end{equation}
such that 
\begin{itemize} 
\item $(x_{i,1},\dots,x_{i,N_i}) 
\in C_{N_i}^{\rm ord}(D_i)$ 
for $1\le i\le k$, 
$e^{\iu\theta} \in I$; 
\item $h_\theta(x_{i,1}) > h_\theta(x_{i,2}) > 
\cdots > h_\theta(x_{i,N_i})$ for all $1\le i\le k$;  
\item the intervals $[h_\theta(x_{i,1}),h_\theta(x_{i,N_i})]$, 
$i=1,\dots,k$ are mutually disjoint. 
\end{itemize} 
Renumbering the indices $i$, we may further assume that 
$h_\theta(x_{1,1})>h_\theta(x_{2,1}) > \cdots > h_\theta(x_{k,1})$ 
(then $N_1\ge \cdots \ge N_k$ may no longer hold). 
With this choice, we can identify $(\iota_*p^*\L)_{(\u,e^{\iu\phi})}$ 
with $(p^*\L_{(\bx,e^{\iu\theta})})^{P(N_\bullet)}$, i.e.~
the set of MRSs on $(V,[\cdot,\cdot))$ with marking $\bx$ and 
phase $\theta$ which are $P(N_\bullet)$-invariant. 
The stalks $(\iota_*p^*\L)_{(\u',e^{\iu\phi'})}$ 
at other points $(\u', e^{\iu\phi'})$ in $O \cap (\tS(N_\bullet)\times S^1)$ 
can be also identified with the same set, and therefore $\iota_*p^*\L$ is locally 
constant along the stratum $\tS(N_\bullet) \times S^1$. 
On the other hand, it is easy to check that a semiorthonormal collection  
$(v_1,\dots,v_a)$ is invariant under the pure braid 
group $P_a\subset B_a$ if and only if $v_1,\dots,v_a$ are mutually 
orthogonal, i.e.~$[v_i,v_j) = [v_j,v_i) = 0$ for all $1\le i,j\le a$. 
Therefore $P(N_\bullet)$-invariant MRSs with marking $\bx$ and 
phase $\theta$ corresponds to block SOBs $(v_{i,j})$ in the proposition. 

To see the last statement, note that $\omega = (\omega_1,\dots,\omega_k) 
\in \frS_{N_1}\times \cdots \times \frS_{N_k}$ 
sends a block SOB $(v_{i,j})$ defining an element of 
$(p^*\L_{(\bx,e^{\iu\theta})})^{P(N_\bullet)}$ 
to a block SOB $(v_{i,\omega_i(j)})$ defining an element of 
$(p^*\L_{(\omega\cdot \bx, e^{\iu\theta})})^{P(N_\bullet)}$ 
where $\omega \cdot \bx = ( x_{i,\omega_i(j)})_{i,j}$. 
Since $(v_{i,1},\dots,v_{i,N_i})$ are mutually orthogonal, the 
parallel transport from $p^*\L_{(\omega\cdot \bx, e^{\iu\theta})}$ 
to $p^*\L_{(\bx,e^{\iu\theta})}$ sends the SOB 
$(v_{i,\omega_i(j)})$ to the SOB $(v_{i,\omega_i(j)})$, and the 
conclusion follows. 
\end{proof} 

The above proposition shows that the $\frS_N$-action on 
the \'etal\'e space $\tfrM$ of $\iota_*p^*\L$ is free, and 
thus the quotient $\frM = \tfrM/\frS_N$ is a smooth manifold. 
The natural map $\frM \to S^N(\C) \times S^1$ is a covering map 
restricted to each stratum $S(N_\bullet) \times S^1$. 
We show that MRSs on $(V,[\cdot,\cdot))$ form an open 
dense subset of $\frM$. 

\begin{proposition} 
For $(\u,e^{\iu\phi})\in S^N(\C)\times S^1$, 
we have a canonical inclusion 
\[
\left\{\text{MRSs on $(V,[\cdot,\cdot))$ with marking $\u$ and 
phase $\phi$} \right\} \subset \frM_{(\u,e^{\iu\phi})}  
\]
where $\frM_{(\u,e^{\iu\phi})}$ denotes the fiber of 
$\frM \to S^N(\C) \times S^1$ at $(\u,e^{\iu\phi})$. 
This inclusion is an equality if the phase $\phi$ is admissible for $\u$. 
\end{proposition}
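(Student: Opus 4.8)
The plan is to deduce both statements from Proposition \ref{prop:constructible}, which already computes the stalks of $\iota_*p^*\L$ along the stratum singled out by the multiplicities of $\u$. Fix $(\u,e^{\iu\phi})\in S^N(\C)\times S^1$, let $\lambda_1,\dots,\lambda_k$ be the distinct values occurring in $\u$ and let $N_\bullet=(N_1,\dots,N_k)$ record their multiplicities (reordered so that $N_1\ge\cdots\ge N_k$); then $\u\in S(N_\bullet)$ and every lift $\tilde\u\in\C^N$ lies in $\tS(N_\bullet)$. By Proposition \ref{prop:constructible}, once one fixes an ordering of the blocks the stalk $(\iota_*p^*\L)_{(\tilde\u,e^{\iu\phi})}$ is identified with the set of block semiorthonormal bases $(v_{i,j})_{1\le i\le k,\,1\le j\le N_i}$ of $(V,[\cdot,\cdot))$, and $\Stab(\u)=\frS_{N_1}\times\cdots\times\frS_{N_k}$ acts freely by permuting the vectors inside each block. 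Hence, since $\frS_N$ acts transitively on the lifts of $\u$ with stabilizer $\Stab(\u)$, we get $\frM_{(\u,e^{\iu\phi})}=(\iota_*p^*\L)_{(\tilde\u,e^{\iu\phi})}/\Stab(\u)$, the set of block SOBs modulo within-block permutations.

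Next I would construct the inclusion. Given an MRS $(V,[\cdot,\cdot),\{v_1,\dots,v_N\},m,e^{\iu\phi})$ with $\{m(v_1),\dots,m(v_N)\}=\u$, group the vectors according to the value of $m$, so that $m^{-1}(\lambda_i)$ has $N_i$ elements; since $[v_a,v_b)=[v_b,v_a)=0$ whenever $m(v_a)=m(v_b)$ and $v_a\neq v_b$ by \eqref{eq:semiorth_MRS}, the vectors in a common group are mutually orthogonal. Choosing an ordering of the blocks along which $h_\phi$ is non-increasing and an arbitrary ordering inside each block, the defining relations \eqref{eq:semiorth_MRS} of the MRS immediately yield the block-semiorthonormality relations \eqref{eq:block_SOB} for the resulting tuple $(v_{i,j})$, so it represents a point of $\frM_{(\u,e^{\iu\phi})}$. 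Its class is independent of the chosen orderings: inside a block the ambiguity is absorbed by $\Stab(\u)$, while two block orderings with $h_\phi$ non-increasing differ by transpositions of blocks whose markings have equal $h_\phi$-value, and the transition map in $\iota_*p^*\L$ realizing such a transposition — obtained by rotating the phase across the corresponding wall, keeping $\u$ fixed — is the block mutation of \S\ref{subsec:MRS_mutation} (cf.~Corollary \ref{cor:mutation}), which restricts to the identity on mutually orthogonal vectors. Equivalently, one perturbs $\phi$ to an admissible phase, observes that an MRS stays an MRS under such a perturbation because its tied pairs are orthogonal, and invokes Remark \ref{rem:adm_MRS}. The resulting map is visibly injective, since the MRS is recovered from the block data together with the markings $\lambda_i$.

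When $\phi$ is admissible for $\u$ the values $h_\phi(\lambda_1),\dots,h_\phi(\lambda_k)$ are pairwise distinct, so there is a canonical block ordering (strictly decreasing $h_\phi$), no perturbation is needed, and the identification of Proposition \ref{prop:constructible} is canonical. The inverse map sends a block SOB $(v_{i,j})$ to the unordered basis $\{v_{i,j}\}$ with marking $m(v_{i,j})=\lambda_i$: in this situation the relations \eqref{eq:block_SOB} are precisely the relations \eqref{eq:semiorth_MRS}, so the inclusion becomes a bijection. I expect the one point genuinely requiring care to be the well-definedness of the inclusion at a non-admissible phase — checking that moving between two chambers of admissible phases changes the block SOB only by a mutation acting trivially on the orthogonal pairs of vectors involved; everything else is bookkeeping with the partition $N_\bullet$ and with the $\frS_N$-action on $\tfrM$ already set up before the statement.
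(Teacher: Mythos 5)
Your proposal is correct and follows essentially the same approach as the paper: both constructions produce the germ of a section of $p^*\L$ near $(\u,e^{\iu\phi})$ by keeping the vectors $v_{i,j}$ fixed while reassigning markings and phase at nearby distinct points, and both hinge on the observation that vectors whose markings have equal $h_\phi$-value are mutually orthogonal, which makes the resulting germ independent of the auxiliary block ordering and reference point. The only stylistic difference is that the paper writes out the section $\MRS(\bx,\theta)$ explicitly over the open set $O$, whereas you route through the block-SOB description of Proposition \ref{prop:constructible}; the bijectivity in the admissible case reduces in both arguments to matching the relations \eqref{eq:semiorth_MRS} with \eqref{eq:block_SOB}.
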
 
\begin{proof} 
The fiber of $\frM$ at $(\u,e^{\iu\phi}) \in S^N(\C) \times S^1$ 
is given by $(\iota_*p^*\L)_{(\u,e^{\iu\phi})}/\Stab(\u)$. 
Here, by abuse of notation, $\u$ denotes both a point of $S^N(\C)$ 
and its lift in $\C^N$. 
Suppose that $\u\in S(N_\bullet)$ is of the form \eqref{eq:u_nondistinct}.
Let $(V,[\cdot,\cdot),\{v_{i,j}\}_{1\le i\le k, 1\le j\le N_i})$ be an SOB 
such that the marking $m(v_{i,j}) = \lambda_i$ defines an MRS 
of phase $\phi$. 
We first construct an element of the fiber $\frM_{(\u,e^{\iu\phi})}$ 
from these data. 
Take a sufficiently small 
open neighbourhood $O\subset \C^N \times S^1$ of $(\u,e^{\iu\phi})$ 
of the form \eqref{eq:openset_O} such that, 
for any $(\bx =(x_{i,j})_{1\le i\le k, 1\le j\le N_i},e^{\iu\theta})
\in O \cap (C_N^{\rm ord}(\C) \times S^1)$, 
\[
h_\phi(\lambda_{i_1}) > h_\phi(\lambda_{i_2}) 
\Longrightarrow 
\forall j_1, \forall j_2, \ h_\theta(x_{i_1,j_1}) > h_\theta(x_{i_2,j_2}).   
\]
For $(\bx,e^{\iu\theta}) \in O \cap (C_N^{\rm ord}(\C)\times S^1 
\setminus p^{-1}(W))$, 
we define an MRS $\MRS(\bx,\theta)$ by 
\[
\MRS(\bx,\theta) := 
\left( V,[\cdot,\cdot), \{v_{i,j}\}, m(v_{i,j}) = x_{i,j}, 
e^{\iu\theta} \right).  
\]
This is indeed an MRS because $v_{i,j}$'s with the same value of 
$h_\phi(\lambda_i)$ are orthogonal to each other. 
For the same reason, the family of MRSs $\MRS(\bx,\theta)$ parametrized by 
$(\bx,e^{\iu\theta}) \in O \cap 
(C_N^{\rm ord}(\C)\times S^1 \setminus p^{-1}(W))$ 
are related to each other by mutation, and extends to a locally constant 
section of $p^*\L$ over $O\cap (C_N^{\rm ord}(\C)\times S^1)$. 
Therefore, they define an element of the stalk 
$(\iota_*p^*\L)_{(\u,e^{\iu\phi})}$. 
This construction depends on an auxiliary choice of the 
ordering of each orthogonal collection $\{v_{i,1},\dots,v_{i,N_i}\}$, 
and changing this choice yields elements in a single $\Stab(\u)$-orbit 
in $(\iota_*p^*\L)_{(\u,e^{\iu\phi})}$. Thus we get an injection 
from the set of MRSs with marking $\u$ and phase $\phi$ to 
$\frM_{(\u,e^{\iu\phi})} \cong (\iota_*p^*\L)_{(\u,e^{\iu\phi})}/\Stab(\u)$. 
When $\phi$ is admissible for $\u$, every point $(\bx, e^{\iu\theta})$ 
in $O\cap (C^{\rm ord}_N(\C)\times S^1\setminus p^{-1}(W))$ 
satisfies the conditions for the reference point \eqref{eq:reference_point}
in the proof of Proposition \ref{prop:constructible}. Thus the argument 
and the result there show that this construction is an isomorphism. 
\end{proof}

\subsection{Exceptional collections} 
\label{subsec:exceptional} 
In this section we sketch only the necessary definitions 
of (full) exceptional collections
in triangulated categories and braid group action on them. 
For details we refer the reader to \cite{BP} or to the survey \cite{GK}.

Recall that an object $E$ in a triangulated category $\T$ over $\k$ 
is called exceptional if $\Hom(E,E) = \k$ and 
$\Hom(E,E[j]) = 0$ for all $j\neq 0$. 
An ordered tuple of exceptional objects $E_1,\dots,E_N$ is called 
an exceptional collection
if $\Hom(E_i,E_j[k]) = 0$ for all $i,j,k$ with $i>j$ and $k\in \Z$. 
If $E_1,\dots,E_N$ is an exceptional collection and $k_1,\dots,k_N$ 
is a set of integers 
then $E_1[k_1],\dots,E_N[k_N]$ is also an exceptional collection, 
in this case we say that two collections coincide up to shift. 
An exceptional object $E$ gives a pair of functors
called \emph{left mutation $L_E$} and \emph{right mutation $R_E$} 
which fit into the distinguished triangles\footnote
{What we write as $L_EF$, $R_EF$ here are denoted respectively 
by $L_E F[1]$, $R_E F[-1]$ in \cite{BP,GK}. }:  
\begin{align*} 
L_E F[-1] \to \Hom^\bullet(E,F) \otimes E \to F \to L_E F, \\ 
R_E F  \to F \to \Hom^\bullet(F,E)^* \otimes E \to R_E F[1]. 
\end{align*}  
The set of exceptional collections of $N$ objects in 
$\T$ admits an action of the braid group $B_N$ 
in a way parallel to the $B_N$-action \eqref{eq:braidaction}
on SOBs, i.e.~the generator 
$\sigma_i$ replaces $\{E_i,E_{i+1}\}$ 
with $\{E_{i+1}, R_{E_{i+1}} E_i\}$ 
and $\sigma_i^{-1}$ replaces $\{E_i, E_{i+1}\}$ with 
$\{L_{E_i}E_{i+1}, E_i\}$. 

The subcategory in $\T$ generated by $E_i$ 
(minimal full triangulated subcategory that contains all $E_i$) 
stays invariant under mutations.
An exceptional collection is called \emph{full} if it generates the 
whole category $\T$; clearly braid group and shift action 
respects fullness. 
When a triangulated category $\T$ has a full exceptional collection 
$E_1,\dots,E_N$, 
the Grothendieck $K$-group $K^0(\T)$ 
is the abelian group freely generated by the classes $[E_i]$.
The Gram matrix $G_{ij} = \chi(E_i,E_j)$ of the
Euler pairing $\chi(E,F) = \sum (-1)^i \dim \Hom(E,F[i])$ 
is uni-uppertriangular in the basis $\{[E_i]\}$.
Thus any exceptional collection $E_1,\dots,E_N$ produces an SOB
$(K^0(\T)\otimes \k; \chi(\cdot,\cdot); [E_1],\dots,[E_n])$.
The action of the braid group $B_N$ and shifts $E_i \to E_i [k_i]$ 
described above 
descends to the action of $B_N$ and change of signs on 
the $K$-group described in \S \ref{subsec:sobs}.

\subsection{Asymptotic basis and MRS of a Fano manifold} 
\label{subsec:MRS_Fano} 
Let $F$ be a Fano manifold such that $\star_\tau$ is convergent 
and semisimple for some $\tau\in H^\udot(F)$. 
Choose a phase $\phi\in\R$ that is admissible for the 
spectrum of $(E \star_\tau)$. 
By Proposition \ref{prop:repeated_eigenvalues}, 
we have a basis $y_1(\tau,z),\dots,y_N(\tau,z)$ 
of asymptotically exponential flat sections for the 
quantum connection $\nabla|_\tau$ defined over the 
sector $\phi - \frac{\pi}{2} -\epsilon < \arg z < \phi + \frac{\pi}{2} 
+ \epsilon$ for some $\epsilon>0$. 
They are canonically defined up to sign and ordering. 
The basis is marked by the spectrum $\u = \{u_1,\dots,u_N\}$ 
of $(E\star_\tau)$. 
We can naturally identify $y_i(\tau,z)$ on the universal cover of 
the $z$-plane $\C^\times$. 
By $\nabla$-parallel transportation of $y_i(\tau,z)$ to 
$\tau=0$, $\arg z =0$ and $|z|\gg 1$, 
we can identify the basis $y_1,\dots,y_N$ of 
flat sections with a basis $A_1,\dots,A_N$ of $H^\udot(F)$ 
via the fundamental solution $S(z) z^{-\mu} z^\rho$ in 
Proposition \ref{prop:fundsol}, i.e.~
\[
y_i(\tau,z) \Big|_{\substack{
\text{\tiny parallel transport} \\ 
\text{\tiny to $\tau=0$, $\arg z =0$}}} =  \Phi(A_i )(z) := 
(2\pi)^{-\frac{\dim F}{2}}
S(z) z^{-\mu}z^\rho A_i
\]
where $\Phi$ was given in \eqref{eq:coh_framing}. 
The basis $\{A_1,\dots,A_N\}$ is defined up to sign and 
ordering and is semiorthonormal with respect to the pairing 
$[\cdot,\cdot)$ in \eqref{eq:pairing_H}--\eqref{eq:[)_coh} 
by Proposition \ref{prop:Stokes}. 
We call it the \emph{asymptotic basis} of $F$ at $\tau$ 
with respect to the phase $\phi$. 

The \emph{marked reflection system} (MRS) of $F$ 
is defined to be the tuple:   
\[ 
\MRS(F,\tau, \phi) := (H^\udot(F), [\cdot,\cdot), \{A_1,\dots,A_N\}, 
m \colon A_i \mapsto u_i, e^{\iu\phi}). 
\]
The marking $m$ assigns to $A_i$ the corresponding 
eigenvalue $u_i$ of $(E\star_\tau)$. 
It is defined up to sign. 
Note that $\MRS(F,\tau,\phi)$ and $\MRS(F,\tau,\phi+2\pi)$ 
are not necessarily the same: they differ by the monodromy around $z=0$. 
By the discussion in \S \ref{subsec:mutation}, when $(\tau,\phi)$ varies, 
the corresponding MRSs change by mutation as described 
in the previous section \S \ref{subsec:MRS_mutation}. 

\begin{remark} 
When $\star_0$ is semisimple, we have the two MRSs 
$\MRS(F, 0, \pm \epsilon)$ 
defined at the canonical position $\tau=0$ 
for a sufficiently small $\epsilon>0$. 
When $\phi=0$ is admissible for the spectrum of $(c_1(F)\star_0)$, 
the two MRSs coincide. 
\end{remark}

\subsection{Dubrovin's (original) conjecture and Gamma conjecture II} 
\label{subsec:GammaII}
We recall \emph{Dubrovin's Conjecture} (see also Remark \ref{rem:Dubrovin}). 
Let $F$ be a Fano manifold. 
Dubrovin \cite[Conjecture 4.2.2]{Dubrovin98} conjectured: 
\begin{enumerate}
\item[(1)] the quantum cohomology of $F$ is semisimple if and only if 
$\D^b_{\rm coh}(F)$ admits a full exceptional collection $E_1,\dots,E_N$; 
\end{enumerate} 
and if the quantum cohomology of $F$ is semisimple, 
there exists a full exceptional collection $E_1,\dots,E_N$ of 
$\D^b_{\rm coh}(F)$ such that: 
\begin{enumerate}
\item[(2)] the Stokes matrix $S=(S_{ij})$ 
is given by $S_{ij}=\chi(E_i,E_j)$, $i, \, j=1, \dots, N$; 
\item[(3)]  
the central connection matrix has the form $C=C' C''$
when the columns of $C''$ are the components of 
$\Ch(E_j) \in H^\udot(F)$ 
and $C':H^\udot(F)\to H^\udot(F)$ 
is some operator satisfying 
$C'(c_1(F)a) = c_1(F) C'(a)$ for any $a\in H^\udot(F)$.
\end{enumerate}
The central connection matrix $C$ above is given by  
\[
C = \frac{1}{(2\pi)^{\dim F/2}}
\begin{pmatrix} 
\vert &  & \vert \\  
A_1 & \cdots& A_N  \\
\vert &  & \vert \\
\end{pmatrix} 
\]
where the column vectors $A_1,\dots,A_N\in H^\udot(F)$ 
are an asymptotic basis in \S \ref{subsec:MRS_Fano}. 
Gamma Conjecture II makes precise the operator $C'$ 
in Part (3) of Dubrovin's conjecture. 

\begin{conjecture}[Gamma Conjecture II] 
\label{conj:conj-g2}
Let $F$ be a Fano manifold such that the quantum product 
$\star_\tau$ is convergent and semisimple for some $\tau \in H^\udot(F)$ 
\emph{and} that $\D^b_{\rm coh}(F)$ has a full exceptional collection. 
There exists a full exceptional collection $\{E_1,\dots,E_N\}$ 
such that the asymptotic basis $A_1,\dots,A_N$ 
of $F$ at $\tau$ with respect to an admissible phase $\phi$ 
(see \S \ref{subsec:MRS_Fano}) 
is given by: 
\begin{equation}
\label{eq:Gamma_basis}
A_i = \Gg_F \Ch(E_i). 
\end{equation} 
In other words, the operator $C'$ in Part (3) of Dubrovin's conjecture 
is given by $C'(\alpha) =(2\pi)^{-\dim F/2} \Gg_F \alpha$. 
\end{conjecture}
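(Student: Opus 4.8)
The plan is not to prove Conjecture \ref{conj:conj-g2} in general—the grounds are insufficient for that—but to establish it for the two families in which every ingredient is available: projective spaces $\P=\P^{N-1}$ (Theorem \ref{thm:Gamma_P}) and Grassmannians $\G=G(r,N)$ (Theorem \ref{thm:Gamma_G}). For $\P$ one can work entirely by hand. The operator $(c_1(\P)\star_0)$ is multiplication by $Nh$ on $H^\udot(\P)=\C[h]/(h^N)$ with $h^{\star N}$ the quantum unit, so its eigenvalues are $\{N\zeta:\zeta^N=1\}$, each simple; hence $\P$ has semisimple quantum cohomology, Property $\O$ holds with $T=N$, and one may take $\tau=0$. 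The scalar quantum differential equation is a confluent hypergeometric equation whose solutions admit Mellin--Barnes/Laplace-type integral representations; I would compute the flat section with the smallest asymptotics $\sim e^{-N/z}$ directly from such a representation and read off that the corresponding cohomology class is $\Gamma(1+h)^N=\Gg_\P$, which is Gamma Conjecture I for $\P$ (this part is essentially in \cite{Dubrovin98a}).

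For Gamma Conjecture II on $\P$ I would then use the cyclic symmetry \eqref{eq:symmetry_c1star}: the spectrum of $(c_1(\P)\star_0)$ is invariant under multiplication by $e^{2\pi\iu/N}$, and conjugation by $e^{2\pi\iu\mu/N}$ together with the $z$-rescaling is a symmetry of the whole connection. Hence the asymptotically exponential flat section attached to the eigenvalue $Ne^{2\pi\iu k/N}$ is obtained from the one attached to $N$ by this symmetry, which on cohomology classes sends $\Gg_\P\Ch(\O)$ to $\Gg_\P\Ch(\O(k))$. This identifies the asymptotic basis with $\{\Gg_\P\Ch(\O(k)):0\le k\le N-1\}$, Beilinson's collection; Lemma \ref{lem:Gamma} then gives $S_{ij}=[\,\Gg_\P\Ch(\O(i)),\Gg_\P\Ch(\O(j))\,)=\chi(\O(i),\O(j))$, so all of Dubrovin's conjecture (parts (1)--(3)) follows for $\P$ with $C'(\alpha)=(2\pi)^{-(N-1)/2}\Gg_\P\alpha$, as in \eqref{eq:Gamma_basis}.

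For Grassmannians the plan is to reduce to $\P^{N-1}$ via the quantum Satake principle / abelian--nonabelian correspondence, first extended to the big quantum connection (Theorem \ref{thm:wedge_conn}) using \cite{GMa,BCFK,CFKS,KimSab}: the quantum connection of $G(r,N)$ is the $r$-th exterior power of that of $\P^{N-1}$, under an embedding $H^\udot(G(r,N))\hookrightarrow\bigwedge^r H^\udot(\P^{N-1})$. Then (i) eigenvalues of $(c_1(\G)\star_0)$ are sums of $r$ distinct eigenvalues of $(c_1(\P)\star_0)$, the one of largest modulus being attained at a unique $r$-subset, which gives Property $\O$ for $\G$ (with possible repeated eigenvalues elsewhere—this is why Propositions \ref{prop:repeated_eigenvalues} and \ref{prop:Stokes} allow multiplicities, and why one passes to an isomonodromic deformation $\tau^\circ$ with distinct eigenvalues, Proposition \ref{prop:isomonodromic_deformation}); (ii) the asymptotically exponential flat sections of $\G$ are wedge products $y_{i_1}\wedge\cdots\wedge y_{i_r}$ of those of $\P$, valid on overlapping sectors by Remark \ref{rem:bigger_sector}; (iii) under the Satake embedding the wedge $\bigwedge_j\,\Gg_\P\Ch(\O(i_j))$ corresponds to $\Gg_\G\Ch(S^\nu V^*)$ for the partition $\nu$ determined by $\{i_1,\dots,i_r\}$—the Jacobi--Trudi/Giambelli combinatorics in $K$-theory, combined with the multiplicativity of the Gamma class under the Satake map (a Chern-root computation using $TG(r,N)=V^*\otimes Q$). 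This exhibits the asymptotic basis of $\G$ as mutations of the Gamma basis of Kapranov's collection $\{S^\nu V^*\}$.

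The hardest part is step (iii) together with the bookkeeping of mutations. Matching $\Gg_\G$ precisely with the appropriately twisted $r$-fold wedge of $\Gg_\P$ requires a careful Chern-root and Schur-function calculation and a correct choice of normalizing twists; moreover the natural order on the wedge basis coming from $\P$ is not Kapranov's order, so the passage must be realized by an explicit, controlled sequence of braid-group mutations performed simultaneously on the flat sections and on the exceptional collection, using Corollary \ref{cor:mutation} and the compatibility of the $B_N$-action on exceptional collections with the $B_N$-action on semiorthonormal bases (\S\ref{subsec:exceptional}). One must also confirm that the wedge construction produces genuine analytic—not merely formal—solutions with the claimed asymptotics, which again rests on Remark \ref{rem:bigger_sector}. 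For arbitrary Fano manifolds I would not attempt a proof; as the introduction indicates, the natural route combines quantum Lefschetz with further instances of the abelian--nonabelian correspondence, but that lies beyond the present scope.
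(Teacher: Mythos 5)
Your outline follows essentially the same route the paper takes: for $\P$, the Mellin transform $\int\Gamma(s)^N t^{-Ns}\,ds$ and its stationary-phase asymptotics give the flat section with the smallest asymptotics and identify it with $\Gg_\P$ via the connection formula, and the rotation $t\mapsto e^{-2\pi\iu j/N}t$ produces the sections attached to $\Gg_\P\Ch(\O(j))$; for $\G$, big quantum Satake (Theorem \ref{thm:wedge_conn}), the wedge of asymptotically exponential flat sections (Proposition \ref{prop:MRS_G}), and the Chern-root/Schur computation matching the twisted $r$-fold wedge of Beilinson's Gamma basis with Kapranov's (Proposition \ref{prop:wedge-basis}). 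One point you should tighten in the $\P$ case: the rotated sections $y_j$ satisfy their exponential asymptotics only in $j$-dependent sectors, so they do \emph{not} directly give the asymptotically exponential fundamental solution of Proposition \ref{prop:repeated_eigenvalues}; one must first bend the Laplace contours to a common admissible direction, which amounts to a sequence of mutations — exactly why Theorem \ref{thm:Gamma_P} says ``formed by mutations of'' the Beilinson Gamma basis rather than asserting equality, a care you correctly anticipate for $\G$ but should also invoke for $\P$.
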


\begin{remark}
\label{rem:compatibility_mutation} 
The validity of Gamma Conjecture II does not depend on the choice 
of $\tau$ and $\phi$. Under deformation of $\tau$ and $\phi$, 
the marked reflection system $\MRS(F,\tau,\phi)$ changes by 
mutations (i.e.~defines a continuous path in the space $\frM$ 
of MRSs) 
as described in \S \ref{subsec:mutation} and \S \ref{subsec:MRS_mutation}. 
On the other hand, the braid group acts on full exceptional 
collections by mutations (\S \ref{subsec:exceptional}). 
These mutations are compatible in the $K$-group 
because the Euler pairing $\chi(\cdot,\cdot)$ 
is identified with the pairing $[\cdot,\cdot)$ \eqref{eq:[)_coh} 
by Lemma \ref{lem:Gamma}. 
\end{remark} 

\begin{remark} 
If $F$ satisfies Gamma Conjecture II, the quantum differential 
equations of $F$ specify a distinguished mutation-equivalence class of 
(the images in the $K$-group of) 
full exceptional collections of $F$ 
given by an asymptotic basis.  
It is not known in general whether any two given full exceptional collections 
are connected by mutations and shifts.   
\end{remark} 

\begin{remark} 
\label{rem:Dubrovin} 
While preparing this paper, we are informed that Dubrovin 
\cite{Dubrovin:Strasbourg} himself formulated the same conjecture 
as Gamma Conjecture II. 
\end{remark} 

\begin{proposition} 
Gamma Conjecture II implies Parts (2) and (3) of Dubrovin's conjecture. 
\end{proposition}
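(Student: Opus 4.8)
The plan is to check that Parts (2) and (3) follow formally from Gamma Conjecture II once the Stokes matrix and the central connection matrix are rewritten in terms of the asymptotic basis $\{A_1,\dots,A_N\}$. Throughout, $\{E_1,\dots,E_N\}$ denotes the full exceptional collection provided by Conjecture \ref{conj:conj-g2}, for which \eqref{eq:Gamma_basis} holds.

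First I would prove Part (2). By Proposition \ref{prop:Stokes}(1) the Stokes matrix has entries $S_{ij} = (y_i(\tau,e^{-\pi\iu}z),y_j(\tau,z))_F$; transporting the flat sections $y_i$ to $\tau=0$, $\arg z=0$ as in \S\ref{subsec:MRS_Fano} and using the definition \eqref{eq:pairing_H}--\eqref{eq:[)_coh} of the pairing $[\cdot,\cdot)$ on $H^\udot(F)$, this reads $S_{ij}=[A_i,A_j)$ --- this is precisely the semiorthonormality of the asymptotic basis already recorded in \S\ref{subsec:MRS_Fano}. Substituting \eqref{eq:Gamma_basis} gives $S_{ij}=[\Gg_F\Ch(E_i),\Gg_F\Ch(E_j))$. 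Lemma \ref{lem:Gamma} establishes $[\Gg_F\Ch(V_1),\Gg_F\Ch(V_2))=\chi(V_1,V_2)$ for vector bundles; both sides are additive on the Grothendieck group $K^0(\D^b_{\rm coh}(F))$ --- the left-hand side because $\Ch$ factors through $K^0$ and $[\cdot,\cdot)$ is bilinear, the right-hand side by the long exact $\Ext$-sequence --- so the identity extends to arbitrary objects $E_i,E_j$ of the derived category. Hence $S_{ij}=\chi(E_i,E_j)$, which is Part (2).

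Next I would prove Part (3). By definition the central connection matrix is $C=(2\pi)^{-\dim F/2}(A_1\,|\,\cdots\,|\,A_N)$, and \eqref{eq:Gamma_basis} gives $A_i=\Gg_F\Ch(E_i)$. Let $C''$ be the matrix whose $j$-th column lists the components of $\Ch(E_j)\in H^\udot(F)$ in the fixed basis, and define $C'\colon H^\udot(F)\to H^\udot(F)$ by $C'(\alpha)=(2\pi)^{-\dim F/2}\,\Gg_F\cup\alpha$. Then the $j$-th column of $C'C''$ equals $(2\pi)^{-\dim F/2}\,\Gg_F\cup\Ch(E_j)=A_j$, so $C=C'C''$ with $C''$ of the required shape. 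Moreover $C'$ intertwines multiplication by $c_1(F)$: for $a\in H^\udot(F)$, commutativity and associativity of the cup product yield $C'(c_1(F)\cup a)=(2\pi)^{-\dim F/2}\,\Gg_F\cup c_1(F)\cup a=c_1(F)\cup C'(a)$. This is exactly the hypothesis on $C'$ in Part (3), and it makes the operator $C'$ explicit as in Conjecture \ref{conj:conj-g2}.

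I do not expect a genuine obstacle here: both statements are bookkeeping on top of results already proved in the paper. The two points needing mild care are the identification $S_{ij}=[A_i,A_j)$, which is contained in Proposition \ref{prop:Stokes} together with the transport discussion of \S\ref{subsec:MRS_Fano}, and the extension of Lemma \ref{lem:Gamma} from vector bundles to objects of $\D^b_{\rm coh}(F)$, which is immediate from additivity of both sides. One may also add that the conclusion is independent of the auxiliary data $\tau$ and the admissible phase $\phi$: varying them changes $\{A_i\}$ by mutations and the exceptional collection by the compatible braid-group action of \S\ref{subsec:exceptional}, under which the Gram matrices $(S_{ij})$ and $(\chi(E_i,E_j))$ transform identically (cf.\ Remark \ref{rem:compatibility_mutation}).
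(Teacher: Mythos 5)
Your argument is correct and follows the same route the paper takes: Part (2) is obtained by combining Proposition \ref{prop:Stokes}, the identification $S_{ij}=[A_i,A_j)$ from \S\ref{subsec:MRS_Fano}, and Lemma \ref{lem:Gamma}, while Part (3) is just the observation that the Gamma Conjecture II statement already exhibits $C'$ and its commutation with $c_1(F)\cup$. Your write-up is somewhat more careful than the paper's (the paper proves only Part (2) explicitly and leaves Part (3) to the reader), but the method is identical.
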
 
\begin{proof} 
Recall from Proposition \ref{prop:Stokes} that 
the Stokes matrix is given by the pairing $[y_i, y_j)$ 
of asymptotically exponential flat sections. 
Under Gamma Conjecture II, we have 
$[y_i,y_j) = [A_i,A_j) = \chi(E_i,E_j)$, 
where we used the fact that the pairing $[\cdot,\cdot)$ is 
identified with the Euler pairing (Lemma \ref{lem:Gamma}). 
\end{proof} 

A relationship between Gamma Conjectures I and II is explained 
as follows. 

\begin{proposition} 
\label{prop:Gamma_I_II} 
Suppose that a Fano manifold $F$ satisfies Property $\O$ 
and that the quantum product $\star_0$ is semisimple. 
Suppose also that $F$ satisfies Gamma Conjecture II. 
If the exceptional object corresponding to the eigenvalue 
$T$ and an admissible phase $\phi$ with $|\phi|<\pi/2$ is 
the structure sheaf $\O$ (or its shift), then $F$ satisfies Gamma Conjecture I. 
\end{proposition}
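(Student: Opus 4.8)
The plan is to show directly that the asymptotically exponential flat section $y_{i_0}(0,z)$ attached to the eigenvalue $T$ spans, after restriction to the positive real line, the one-dimensional space $\AA$ of flat sections with the smallest asymptotics of \S\ref{subsec:smallest_asymptotics}; Gamma Conjecture~II then reads off the principal asymptotic class at once.

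First I would set up the asymptotic basis of $F$ at $\tau=0$ with respect to the given admissible phase $\phi$ with $|\phi|<\frac{\pi}{2}$, as in \S\ref{subsec:MRS_Fano}. Since $\star_0$ is semisimple, Proposition~\ref{prop:repeated_eigenvalues} applies at $\tau_0=0$ (where the Euler multiplication is $c_1(F)\star_0$) and produces flat sections $y_1(0,z),\dots,y_N(0,z)$ defined on the sector $|\arg z-\phi|<\frac{\pi}{2}+\epsilon$, with $y_i(0,z)e^{u_i/z}\to\Psi_i$ there. By part~(3) of Property~$\O$ there is a unique index $i_0$ with $u_{i_0}=T$, and $\Psi_{i_0}$ is a nonzero vector in the $T$-eigenspace $E(T)$ of $c_1(F)\star_0$. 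The hypothesis $|\phi|<\frac{\pi}{2}$ guarantees that $\R_{>0}$ lies inside this sector, so $e^{T/z}y_{i_0}(0,z)\to\Psi_{i_0}$ as $z\to+0$ along $\R_{>0}$; in particular $\|e^{T/z}y_{i_0}(0,z)\|$ is bounded, so the restriction of $y_{i_0}(0,\cdot)$ to $\R_{>0}$ belongs to the space $\AA$ of \eqref{eq:smallest_asymptotics}. For the same reason $|\phi|<\frac{\pi}{2}$, the $\nabla$-parallel transport to $\arg z=0$ used to define the asymptotic basis in \S\ref{subsec:MRS_Fano} is merely restriction, so $y_{i_0}(0,\cdot)|_{\R_{>0}}=\Phi(A_{i_0})$, where $A_{i_0}$ is the element of the asymptotic basis marked by $T$ and $\Phi$ is the map \eqref{eq:coh_framing}.

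Since $\AA$ is one-dimensional and spanned by $\Phi(A_F)$ (Proposition~\ref{prop:smallest_asymptotics} and Definition~\ref{def:principal_asymptotic_class}), and $\Phi(A_{i_0})$ is a nonzero element of $\AA$, injectivity of $\Phi$ gives $A_{i_0}=cA_F$ for some $c\in\C^\times$. By Gamma Conjecture~II, $A_{i_0}=\Gg_F\Ch(E_{i_0})$, where $E_{i_0}$ is the exceptional object corresponding to the eigenvalue $T$; by assumption $E_{i_0}\cong\O[k]$ for some $k\in\Z$, whence $\Ch(E_{i_0})=(-1)^k$ (as $\Ch(\O)=1$) and $A_{i_0}=(-1)^k\Gg_F$. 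Therefore $A_F$ is a nonzero scalar multiple of $\Gg_F$; since $\langle[\pt],\Gg_F\rangle=1$ (the degree-zero component of $\Gg_F$ is the identity class $1$), the normalization $\langle[\pt],A_F\rangle=1$ forces $A_F=\Gg_F$, which is Gamma Conjecture~I (Conjecture~\ref{conj:GammaI}).

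The step that needs genuine care is the claim $y_{i_0}(0,\cdot)|_{\R_{>0}}\in\AA$ together with its identification with $\Phi(A_{i_0})$: this is exactly where $|\phi|<\frac{\pi}{2}$ enters, since without it the section $y_{i_0}$ is not defined on $\R_{>0}$ and the comparison with the (non-semisimple) construction of $\AA$ via Proposition~\ref{prop:asymptoticsol_positivereal} would be more delicate. Fortunately, because $\AA$ is characterized purely by the growth bound $\|e^{T/z}s(z)\|=O(z^{-m})$ and is one-dimensional, it suffices to note that $y_{i_0}(0,\cdot)$ satisfies this bound on $\R_{>0}$, with no further comparison of the two constructions; if one wanted room to spare one could also widen the sector of validity of $y_{i_0}$ using Remark~\ref{rem:bigger_sector} and the fact that, by Property~$\O$, every eigenvalue $u_j\neq T$ has $\re(u_j)<T$, but this is not needed.
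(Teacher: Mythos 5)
Your proof is correct and follows essentially the same route as the paper's: by Gamma Conjecture II the asymptotic basis element marked by $T$ is $\pm\Gg_F$, and the hypothesis $|\phi|<\pi/2$ places the positive real line inside the sector of Proposition~\ref{prop:repeated_eigenvalues}, so $\Phi(\Gg_F)|_{\R_{>0}}\in\AA$, which is exactly Gamma Conjecture I. Your writeup is more explicit about the normalization step (tracking $\Ch(\O[k])=(-1)^k$ and pinning down $A_F$ via $\langle[\pt],A_F\rangle=1$) and about why parallel transport to $\arg z=0$ is trivial here, but these are elaborations of the paper's one-line argument rather than a different approach.
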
 
\begin{proof} 
Under the assumptions, the flat section $\Phi(\Gg_F)$ corresponding to 
$\O$ satisfies the asymptotics $\Phi(\Gg_F)(z) \sim e^{-T/z}\psi_0$ 
for some $\psi_0 \in E(T)$, as $z\to 0$ in the sector 
$\phi -\frac{\pi}{2} -\epsilon 
< \arg z < \phi + \frac{\pi}{2} + \epsilon$ 
(see Proposition \ref{prop:repeated_eigenvalues}). 
Therefore $\Phi(\Gg_F)$ belongs to the space $\AA$
\eqref{eq:smallest_asymptotics}. Gamma Conjecture I 
holds (see Conjecture \ref{conj:GammaI}, Theorem \ref{thm:asymptotic_class}). 
\end{proof} 

\subsection{Quantum cohomology central charges} 
Suppose that $F$ satisfies Gamma Conjecture II 
and that $\star_0$ is semisimple. 
Let $\psi_1,\dots,\psi_N$ be the idempotent basis 
for $\star_0$ and $u_1,\dots,u_N$ be the corresponding 
eigenvalues of $(c_1(F)\star_0)$  i.e.~$c_1(F) \star_0 \psi_i = u_i \psi_i$. 
Let $\phi\in \R$ be an admissible phase for $\{u_1,\dots,u_N\}$ 
and let $E_1,\dots,E_N$ be an exceptional collection corresponding 
(via \eqref{eq:Gamma_basis}) to the asymptotic basis $A_1,\dots,A_N$ 
at $\tau=0$ with respect to the phase $\phi$.  
Then the quantum cohomology central charges of $E_1,\dots,E_N$ 
(see \eqref{eq:centralcharge}, \eqref{eq:centralcharge_J}) 
have the following asymptotics: 
\[
Z(E_i) \sim \sqrt{(\psi_i,\psi_i)_F} (2\pi z)^{\dim F/2} e^{-u_i/z} 
\]
as $z\to 0$ in the sector $\phi-\frac{\pi}{2} -\epsilon <\arg z < 
\phi+\frac{\pi}{2} + \epsilon$ for a sufficiently small $\epsilon>0$. 
This follows from the definition \eqref{eq:centralcharge} of $Z(E_i)$ 
and the asymptotics $\Phi(A_i)(z) \sim e^{-u_i/z} \psi_i/\sqrt{(\psi_i,\psi_i)_F}$. 
See also \S \ref{subsec:centralcharges}.

\section{Gamma conjectures for projective spaces} 
\label{sec:Gamma_P} 

In this section we prove the Gamma Conjectures for projective spaces  
following the method of Dubrovin \cite[Example 4.4]{Dubrovin98a}. 
The Gamma conjectures for projective spaces also follow from 
the computations in \cite{Iritani07,Iritani09, KKP08}. 
\begin{theorem} 
\label{thm:Gamma_P}
Gamma Conjectures I and II hold for the projective space 
$\P = \P^{N-1}$. An asymptotic basis is formed by mutations 
of the Gamma basis $\Gg_\P \Ch(\O(i))$ associated to 
Beilinson's exceptional collection 
$\{\O(i) : 0\le i\le N-1\}$. 
\end{theorem}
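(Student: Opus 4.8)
The plan is to follow Dubrovin \cite[Example 4.4]{Dubrovin98a}: solve the quantum differential equation of $\P=\P^{N-1}$ explicitly near its irregular singularity $z=0$, identify the flat section with the smallest asymptotics and match it with $\Gg_\P$, and then propagate this to the whole asymptotic basis by means of the cyclic symmetry of the quantum connection.

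First I would record the relevant structure. One has $H^\udot(\P)=\C[h]/(h^N)$ with $(h\star_0)^N=1$, hence $(c_1(\P)\star_0)=N(h\star_0)$ has the $N$ simple eigenvalues $\{N\zeta:\zeta^N=1\}$; since the Fano index of $\P$ is $N$, this already verifies Property $\O$ with $T=N$. By Remark \ref{rem:anticanonical} the scalar quantum differential equation on the anticanonical line is a confluent hypergeometric equation, and Givental's $J$-function \eqref{eq:J} takes the closed form $J(t)=\Gamma(1+h)^N\sum_{d\ge 0}t^{N(d+h)}/\Gamma(d+1+h)^N$. To prove Gamma Conjecture I I would use Corollary \ref{cor:limit_J}(3), i.e. compute $\lim_{t\to+\infty}J(t)/\langle[\pt],J(t)\rangle$. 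Writing each cohomology component of $J(t)$ as a Mellin--Barnes integral and shifting the contour past the saddle at $s\sim -t$, the dominant term is $e^{Nt}$ times a ratio of $\Gamma$-functions, and the Stirling expansion $\log\Gamma(1+x)=-C_{\mathrm{eu}}x+\sum_{k\ge 2}\frac{(-1)^k\zeta(k)}{k}x^k$ makes this ratio converge to $\Gamma(1+h)^N=\Gg_\P$. Alternatively, in the spirit of Dubrovin's computation, one constructs the smallest-asymptotics flat section directly as a Laplace transform along $[N,+\infty)$ of the explicit hypergeometric solution of the Laplace-dual connection $\hnabla$ of \eqref{eq:2nd_conn}, which has regular singularities only at $\lambda\in\{N\zeta:\zeta^N=1\}\cup\{\infty\}$, and reads off its leading term. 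Either way this identifies the principal asymptotic class $A_\P=\Gg_\P=\Gg_\P\Ch(\O)$.

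Next I would exploit the $\Z/N$ symmetry \eqref{eq:symmetry_c1star}. With $G:=e^{-2\pi\iu\mu/N}$, the assignment $s(z)\mapsto G^{-1}s(e^{2\pi\iu/N}z)$ maps $\nabla$-flat sections to $\nabla$-flat sections, rotates the eigenvalues by $N\zeta^k\mapsto N\zeta^{k-1}$, and — since $(e^{2\pi\iu/N}z)^\rho=z^\rho e^{2\pi\iu h\cup}$ because $\rho/N=h\cup$ is nilpotent and $\Ch(\O(1))=e^{2\pi\iu h}$ — corresponds under the framing $\Phi$ of \eqref{eq:coh_framing} to $A\mapsto\Ch(\O(1))\cdot A$; it preserves unit length of normalized idempotents since $\mu$ is skew-adjoint. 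Applying it $k$ times to $A_\P=\Gg_\P\Ch(\O)$ shows that the asymptotically exponential flat section attached to the eigenvalue $N\zeta^{-k}$ corresponds to $\Gg_\P\Ch(\O(k))$. Consequently, for any admissible phase the asymptotic basis of $\P$ consists, up to sign and reordering, of $\{\Gg_\P\Ch(\O(i)):0\le i\le N-1\}$, and since varying the phase changes the asymptotic basis only by mutations (\S\ref{subsec:mutation}), every asymptotic basis of $\P$ is a mutation of this one. Finally, Beilinson's collection $\{\O,\O(1),\dots,\O(N-1)\}$ is a full exceptional collection of $\D^b_{\rm coh}(\P)$ with $\chi(\O(i),\O(i))=1$ and $\chi(\O(i),\O(j))=0$ for $i>j$, so $\{\Gg_\P\Ch(\O(i))\}$ is exactly the associated Gamma-basis; together with Remark \ref{rem:compatibility_mutation} this yields Gamma Conjecture II, and hence also Parts (2) and (3) of Dubrovin's conjecture.

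The main obstacle is Gamma Conjecture I, and within it the step of showing that $\lim_{t\to+\infty}J(t)/\langle[\pt],J(t)\rangle$ is not merely some cohomology class with constant term $1$ but exactly $\Gg_\P$. The delicate point is the cancellation of the spurious $\log t$ terms among the cohomology components of $J(t)$ — the very cancellation that makes the limit a well-defined class, cf.~Theorem \ref{thm:asymptotic_class} — together with the emergence of the zeta-values $\zeta(k)$ from Stirling's formula; carrying this out cleanly is where the Mellin--Barnes / saddle-point bookkeeping (equivalently, Dubrovin's explicit integral representations) is needed. By comparison, the symmetry argument upgrading Gamma Conjecture I to Gamma Conjecture II and the derived-category input are routine.
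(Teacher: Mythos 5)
Your treatment of Gamma Conjecture I is essentially the paper's: set up the scalar quantum differential equation, observe that $(c_1(\P)\star_0)=N(h\star_0)$ has simple eigenvalues $N\zeta$ with $\zeta^N=1$ (hence Property $\O$ with $T=N$), and exhibit the smallest-asymptotics solution as the Mellin--Barnes integral $\Psi(t)=\frac{1}{2\pi\iu}\int\Gamma(s)^N t^{-Ns}\,ds$; its saddle-point asymptotics $\Psi(t)\sim C\,t^{-(N-1)/2}e^{-Nt}$ on $|\arg t|<\pi/2$ and the residue computation of the connection formula $\Psi(t)=\int_\P\Gg_\P\cup\Pi(t;h)$ then give $A_\P=\Gg_\P$. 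Both of your alternative routes (the ratio $\lim_{t\to+\infty}J(t)/\langle[\pt],J(t)\rangle$ and the Laplace transform along $[N,\infty)$ of a $\hnabla$-flat section) appear in the paper and are two faces of the same computation, and the $\log t$-cancellation you worry about is handled automatically once the statement is phrased through Theorem \ref{thm:asymptotic_class}.

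The passage from Gamma Conjecture I to Gamma Conjecture II has a genuine gap. The rotation $s(z)\mapsto e^{2\pi\iu\mu/N}s(e^{2\pi\iu/N}z)$ is correct, and you rightly compute that under the framing $\Phi$ it acts as $A\mapsto\Ch(\O(1))\cup A$, so the $k$-fold rotate $y_k$ of the principal flat section corresponds to $\Gg_\P\Ch(\O(k))$. But the rotation also rotates the \emph{sector} of the asymptotics: $y_k(z)\sim e^{-N\zeta^{-k}/z}\Psi_k$ holds only for $|\arg z+2\pi k/N|<\frac{\pi}{2}+\epsilon$. The asymptotic basis at a fixed admissible phase $\phi$ (Proposition \ref{prop:repeated_eigenvalues} and \S\ref{subsec:MRS_Fano}) requires all $N$ sections to exhibit their asymptotics in the \emph{single} sector $|\arg z-\phi|<\frac{\pi}{2}+\epsilon$; for $N\ge3$ the rotated $y_k$ do not do this, so $\{y_k\}$ is not the asymptotically exponential fundamental solution for any one $\phi$, and your conclusion that ``for any admissible phase the asymptotic basis consists, up to sign and reordering, of $\{\Gg_\P\Ch(\O(i))\}$'' is false. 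The theorem asserts the asymptotic basis is obtained by \emph{mutations} of the Gamma basis, and these mutations are nontrivial. The paper closes the gap by writing each $y_k$ as a Laplace transform over the ray $N\zeta^{-k}+\R_{\ge 0}\zeta^{-k}$, deforming these paths to radial rays in the fixed admissible direction $e^{\iu\phi}$, and recognizing each crossing as a right mutation via Corollary \ref{cor:mutation}; only the straightened paths give the true asymptotic basis at $\tau=0$, which is then manifestly a mutation of $\{\Gg_\P\Ch(\O(i))\}$. You need to supply this mutation step (or, equivalently, an isomonodromic deformation to a point where the eigenvalues of $E\star_\tau$ align vertically as in Figure \ref{fig:vertical}, where the Beilinson basis literally is the asymptotic basis, and from which you deform back).
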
 
\begin{corollary}[Guzzetti \cite{Guzzetti99}, Tanab\'{e} \cite{Tanabe}]
Dubrovin's conjecture holds for $\P=\P^{N-1}$. 
\end{corollary}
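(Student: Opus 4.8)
The Corollary reduces at once to Theorem \ref{thm:Gamma_P}: part~(1) of Dubrovin's conjecture holds for $\P^{N-1}$ because $(H^\udot(\P^{N-1}),\star_0)$ is semisimple \emph{and} $\D^b_{\rm coh}(\P^{N-1})$ carries Beilinson's full exceptional collection, while parts~(2) and~(3) are exactly what Gamma Conjecture~II delivers (see the Proposition in \S\ref{subsec:GammaII}). So the plan is to prove Theorem~\ref{thm:Gamma_P}, following Dubrovin's treatment of the quantum differential equation of $\P^{N-1}$ \cite[Example 4.4]{Dubrovin98a}. I would begin with the explicit data: on $H^\udot(\P^{N-1})=\C[h]/(h^N)$ the operator $(c_1\star_0)=N(h\star_0)$ is the companion matrix of $x^N-N^N$, so its eigenvalues are the \emph{distinct} numbers $N\zeta^j$ with $\zeta=e^{2\pi\iu/N}$; hence $\star_0$ is semisimple and Property~$\O$ holds with $T=N$, $T'=N\cos(2\pi/N)$. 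Via Remark~\ref{rem:anticanonical} the quantum connection at $\tau=0$ becomes, up to gauge, the rank-$N$ hypergeometric-type equation whose cohomology-valued solution is $J(t)=t^{Nh}\sum_{d\ge0} t^{Nd}\big/\prod_{k=1}^d (h+k)^N$; its components are generalized hypergeometric functions of type ${}_0F_{N-1}$, and $\langle[\pt],J(t)\rangle$ is, up to elementary factors, $\sum_{d\ge 0} t^{Nd}/(d!)^N$.

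For Gamma Conjecture~I I would use Corollary~\ref{cor:limit_J}(3) and compute $\lim_{t\to+\infty} J(t)/\langle[\pt],J(t)\rangle$ along $\R_{>0}$. A saddle-point (Watson's lemma) analysis of the ${}_0F_{N-1}$-series — the dominant saddle lying over the conifold critical value $T=N$ of the Laurent-polynomial mirror $f=x_1+\cdots+x_{N-1}+(x_1\cdots x_{N-1})^{-1}$ — should show that each component of $J(t)$ is asymptotic to $e^{Nt}$ times powers of $t$ and $\log t$, that the ratio converges, and that the limit equals $\prod_{i}\Gamma(1+\delta_i)=\Gamma(1+h)^N$; the last identification is the standard expansion $\Gamma(1+h)^N=\exp\!\big(-C_{\rm eu}c_1+\sum_{k\ge2}(-1)^k(k-1)!\,\zeta(k)\,\ch_k(T\P^{N-1})\big)=\Gg_{\P}$. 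Alternatively one can write the asymptotically exponential flat sections directly as Mellin--Barnes integrals, the integrand a product of $\Gamma$-factors times $z^{Ns}$, and read the exponential type off the $\Gamma$-poles; this also yields the stronger estimate in Corollary~\ref{cor:limit_J}(2). Either way $\P^{N-1}$ satisfies Gamma Conjecture~I.

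For Gamma Conjecture~II I would first pin down the asymptotic basis $A_0,\dots,A_{N-1}$. Gamma Conjecture~I already identifies the flat section attached to the eigenvalue $T=N$ with $A_0=\Gg_\P=\Gg_\P\Ch(\O)$. The cyclic symmetry \eqref{eq:symmetry_c1star}, whose realization $e^{2\pi\iu\mu/N}$ permutes the eigenlines of $(c_1\star_0)$ cyclically and, on the $\Gg$-framed side, matches the autoequivalence $-\otimes\O(1)$ of $\D^b_{\rm coh}(\P^{N-1})$ (recall $\Ch(\O(1))=e^{2\pi\iu h}$), then propagates this to $A_j=\Gg_\P\Ch(\O(j))$ for every $j$, up to reordering and mutation. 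It remains to compute the Stokes matrix: by Proposition~\ref{prop:Stokes} it is uni-uppertriangular with $S_{ij}=[A_i,A_j)$, and the classical connection analysis of the $\P^{N-1}$ quantum differential equation (Dubrovin \cite{Dubrovin98a}, Guzzetti \cite{Guzzetti99}, Tanab\'{e} \cite{Tanabe}) gives $S_{ij}=\binom{N-1+j-i}{N-1}=\chi(\O(i),\O(j))$ for $i\le j$ — precisely the Euler-pairing Gram matrix of $\{\O(0),\dots,\O(N-1)\}$. Since $[\cdot,\cdot)$ is identified with the Euler pairing (Lemma~\ref{lem:Gamma}), the resulting marked reflection system $\MRS(\P^{N-1},0,\phi)$ is mutation-equivalent to the one built from Beilinson's collection, which is Gamma Conjecture~II (with $C'(\alpha)=(2\pi)^{-(N-1)/2}\Gg_\P\,\alpha$); the Corollary follows.

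The hard part is the analytic core: rigorously establishing (i) the limit for Gamma Conjecture~I — the uniform saddle-point asymptotics of the ${}_0F_{N-1}$-series, together with the $\zeta$-value identity — and (ii) the Stokes matrix of the $\P^{N-1}$ quantum connection, which rests on the connection formulae for confluent hypergeometric functions; this is the substance of Dubrovin's Example~4.4. By comparison the homological bookkeeping in the previous paragraph — checking that the cyclic symmetry matches $-\otimes\O(1)$ compatibly with the ordering of mutations and with the sign ambiguities of the normalized idempotents $\Psi_i$ — is elementary, but must be carried out carefully.
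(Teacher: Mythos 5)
Your proposal follows the same overall strategy as the paper: both reduce the Corollary to Theorem~\ref{thm:Gamma_P} and prove that theorem via explicit analysis of the $\P^{N-1}$ quantum differential equation, with the Gamma class read off from a Mellin--Barnes-type solution by saddle-point asymptotics, and the rest of the asymptotic basis obtained from the cyclic symmetry \eqref{eq:symmetry_c1star} together with path-bending/mutation. One substantive point of comparison: the paper does \emph{not} compute the Stokes matrix entries $S_{ij}=\binom{N-1+j-i}{N-1}$ at all. Once the argument shows that the asymptotic basis at an admissible phase is a mutation of $\{\Gg_\P\Ch(\O(j))\}$, Gamma Conjecture~II is established directly, and $S_{ij}=\chi(E_i,E_j)$ follows automatically from Lemma~\ref{lem:Gamma} — so the computation you cite from Guzzetti and Tanab\'e is a consequence of, not an ingredient in, the proof. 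Keep in mind too that matching Gram matrices by itself would \emph{not} suffice for Gamma Conjecture~II: two semiorthonormal bases can share the same Gram matrix without being mutation-equivalent, so the step "$A_j=\Gg_\P\Ch(\O(j))$ up to mutation," which you state but list after the Stokes computation, is the one that actually carries the proof, and it is obtained in the paper by explicitly constructing $y_j$ as Laplace transforms of $\hnabla$-flat sections along rays from $T\zeta^{-j}$ and then straightening the bent contours to a common admissible direction. Finally, for Gamma Conjecture~I the paper does not pass through the $J$-function limit of Corollary~\ref{cor:limit_J}(3); it identifies the flat section with smallest asymptotics directly as (the vector of derivatives of) the Mellin integral $\Psi(t)=\frac{1}{2\pi\iu}\int\Gamma(s)^N t^{-Ns}\,ds$, proves the connection formula $\Psi(t)=\int_\P\Gg_\P\cup\Pi(t;h)$ by a residue calculation, and gets the $e^{-Nt}$ asymptotics from Stirling — equivalent in substance to your plan, but more direct.
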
 

\subsection{Quantum connection of projective spaces}
Let $\P$ denote the projective space $\P^{N-1}$. 
Let $h = c_1(\O(1))\in H^2(\P)$ denote the hyperplane class. 
The quantum product by $h$ is given by:  
\[
h\star_0 h^i = 
\begin{cases} 
h^{i+1} & \text{if $0\le i\le N-2$;} \\
1  & \text{if $i=N-1$.} 
\end{cases} 
\]
It follows that the quantum multiplication $(c_1(\P)\star_0) 
= N (h\star_0)$ is a semisimple operator with pairwise distinct eigenvalues. 
Also Property $\O$ (Definition \ref{def:conjO}) holds for $\P$ with $T=N$. 
Let $\nabla$ be the quantum connection \eqref{eq:nabla_tau_zero} 
at $\tau=0$. 
The differential equation $\nabla f(z) =0$ for a cohomology-valued 
function $f(z) = \sum_{i=0}^{N-1} f_i(z) z^{i-\frac{N-1}{2}} h^{N-1-i}$ 
reads: 
\begin{align*}
f_i(z) & = N^{-i} ( z\partial_z )^i f_0(z) \qquad 1\le i\le N-1, 
\\ 
f_0(z) & = z^N N^{-N} (z \partial_z)^N f_0(z). 
\end{align*} 
Therefore a flat section $f(z)$ for $\nabla$ is determined by 
its top-degree component $z^{-\frac{N-1}{2}}
f_0(z)$ which satisfies the scalar differential equation: 
\begin{equation} 
\label{eq:QDE_P} 
\left(D^N - N^N (-t)^N \right) f_0 =0
\end{equation} 
where we set $t:= z^{-1}$ and write $D := t\partial_t = -z \partial_z$. 
\begin{remark} 
Equation \eqref{eq:QDE_P} is the 
quantum differential equation of $\P$ 
with $t$ replaced with $-t$ (see Remark \ref{rem:QDE}). 
\end{remark} 

\subsection{Frobenius solutions and Mellin solutions} 
Solving the differential equation \eqref{eq:QDE_P} 
by the Frobenius method, we obtain a series solution 
\begin{equation} 
\label{eq:Pi}
\Pi(t;h) := e^{-N h \log t} \sum_{n=0}^\infty 
\frac{\Gamma(h -n)^N}{\Gamma(h)^N} t^{Nn} 
\end{equation} 
taking values in the ring $\C[h]/(h^N)$. 
Expanding $\Pi(t;h)$ in the nilpotent element $h$, we obtain 
a basis $\{\Pi_k(t): 0\le k\le N-1\}$ of solutions as follows:  
\begin{align*} 
\Pi(t;h) = \Pi_0(t) + \Pi_1(t) h + \cdots 
+ \Pi_{N-1}(t) h^{N-1}. 
\end{align*} 
Since $h^N=0$, we may identify $h$ with 
the hyperplane class of $\P$. 
The basis $\{\Pi_k(t)\}$ yields the fundamental 
solution $S(z) z^{-\mu} z^\rho$ of $\nabla$ 
near the regular singular point $t=z^{-1} = 0$ 
in Proposition \ref{prop:fundsol}. More precisely, we have: 
\begin{equation}
\label{eq:fundsol_P}
S(z) z^{-\mu} z^\rho h^{N-1-k} =\begin{pmatrix}
t^{-\frac{N-1}{2}}(-\frac{1}{N} D)^{N-1} \Pi_k(t) \\
 \vdots \\
t^{\frac{N-3}{2}} (-\frac{1}{N}D) \Pi_k(t)   \\ 
t^{\frac{N-1}{2}}\Pi_k(t) 
\end{pmatrix}
\end{equation} 
where the right-hand side is presented in the basis $\{1,h,\dots,h^{N-1}\}$. 

Another solution to equation \eqref{eq:QDE_P} 
is given by the Mellin transform of $\Gamma(s)^N$: 
\begin{equation}
\label{eq:Psi} 
\Psi(t) := \frac{1}{2\pi\iu} \int_{c- \iu \infty}^{c + \iu \infty} 
\Gamma(s)^N t^{-Ns} ds 
\end{equation} 
with $c>0$. The integral does not depend on $c>0$. 
One can easily check that $\Psi(t)$ satisfies equation \eqref{eq:QDE_P} 
by using the identity $s\Gamma(s) = \Gamma(s+1)$. 
Comparing \eqref{eq:Pi} and \eqref{eq:Psi}, we can think of 
$\Psi(t)$ as being obtained from $\Pi(t;h)$ by 
multiplying $\Gamma(h)^N$ and replacing the 
discrete sum over $n$ with a ``continuous" sum 
(i.e.~integral) over $s$.

Note that we have a standard determination for $\Pi_k(t)$ and $\Psi(t)$ 
along the positive real line by requiring that $\log t \in \R$ 
and $t^{-Ns} = e^{-Ns \log t}$ for $t\in \R_{>0}$. 
We see that $\Psi(t)$ gives rise to a flat section 
with the smallest asymptotics along the positive real line 
(see \S \ref{subsec:smallest_asymptotics}). 

\begin{proposition}
\label{prop:Laplacemethod_P} 
The solution $\Psi(t)$ \eqref{eq:Psi} of \eqref{eq:QDE_P} satisfies the asymptotics 
\[
\Psi(t) \sim  C  t^{-\frac{N-1}{2}}e^{-Nt} (1 +O(t^{-1}))  
\]
as $t\to \infty$ in the sector $-\frac{\pi}{2} < \arg t < \frac{\pi}{2}$, 
where $C = N^{-1/2} (2\pi)^{(N-1)/2}$. 
Write the Gamma class of $\P=\P^{N-1}$ as 
$\Gg_\P = \Gamma(1+h)^N = \sum_{k=0}^{N-1} c_k h^{N-1-k}$. 
Then we have the following connection formula: 
\[
\Psi(t) = \sum_{k=0}^{N-1} c_k \Pi_k(t) = \int_{\P} 
\Gg_\P \cup \Pi(t;h) 
\]
under the analytic continuation along the positive real line. 
\end{proposition}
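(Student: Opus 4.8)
The plan is to prove the three assertions in Proposition \ref{prop:Laplacemethod_P} in turn: the asymptotic expansion of $\Psi(t)$, then the explicit constant $C$, and finally the connection formula expressing $\Psi(t)$ as a combination of the Frobenius solutions $\Pi_k(t)$.

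First I would establish the asymptotics. The idea is to shift the contour in \eqref{eq:Psi} to the left and apply the saddle point (steepest descent) method, or equivalently use the classical asymptotics of $\Gamma(s)^N$ combined with Stirling's formula. Write $\Gamma(s)^N t^{-Ns} = \exp\bigl(N(\log\Gamma(s) - s\log t)\bigr)$; the integrand on the vertical line $\re(s)=c$ has a saddle where $\frac{d}{ds}(\log\Gamma(s) - s\log t) = \psi(s) - \log t = 0$, i.e. $\psi(s)=\log t$, which for large $t$ gives $s \approx t$ since $\psi(s)\sim\log s$. Evaluating the Gaussian integral around this saddle, using $\Gamma(s)\sim\sqrt{2\pi}\,s^{s-1/2}e^{-s}$ and $\psi'(s)\sim 1/s$, produces the leading term $\propto t^{-\frac{N-1}{2}} e^{-Nt}$. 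The factor $t^{-(N-1)/2}$ comes from $(s^{s-1/2}e^{-s})^N$ evaluated at $s\approx t$ together with the width $\sqrt{2\pi/(N\psi'(t))}\sim\sqrt{2\pi t/N}$ of the Gaussian; carefully bookkeeping the powers of $t$ and the factors of $2\pi$ gives the asymptotic with an explicit constant. The subleading corrections $O(t^{-1})$ follow from the standard Watson-type expansion of the saddle point integral. Restricting to the sector $|\arg t|<\pi/2$ is what makes the contour shift legitimate and the saddle a genuine descent direction.

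Next, for the exact constant $C = N^{-1/2}(2\pi)^{(N-1)/2}$, I would pin down the Gaussian normalization precisely. At the saddle $s=s_0$ with $\psi(s_0)=\log t$, the second derivative of the exponent is $N\psi'(s_0)$ and $\psi'(s_0)\sim 1/s_0\sim 1/t$, so the saddle point formula gives a prefactor $\frac{1}{2\pi\iu}\cdot\sqrt{\frac{2\pi}{N\psi'(s_0)}}\cdot(\text{rotation factor }\iu)$; combined with the value $\Gamma(s_0)^N t^{-Ns_0}$ evaluated via Stirling, the net power of $2\pi$ is $(2\pi)^{N/2}/(2\pi) = (2\pi)^{(N-1)/2}$ and the net power of $N$ is $N^{-1/2}$. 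I would do this computation carefully by substituting $s_0 = t + O(\log t)$ and tracking the $e^{-Nt}$ factor; alternatively, since the normalization is a single scalar, one can fix it by matching against the known asymptotics of the Meijer $G$-function $G^{N,0}_{0,N}$, of which $\Psi(t)$ is an instance, or by evaluating a known special case such as $N=1$ where $\Psi(t) = e^{-t}$ directly.

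Finally, for the connection formula $\Psi(t) = \sum_k c_k \Pi_k(t) = \int_\P \Gg_\P\cup\Pi(t;h)$: the second equality is immediate from $\Gg_\P = \sum_k c_k h^{N-1-k}$, $\Pi(t;h)=\sum_k \Pi_k(t) h^k$, and $\int_\P h^{N-1}=1$. For the first equality, I would close the Mellin contour in \eqref{eq:Psi} to the left, picking up residues at the poles $s = -n$, $n\ge 0$, of $\Gamma(s)^N$. Each $\Gamma(s)^N$ has an $N$-fold pole at $s=-n$, so the residue involves the Laurent expansion of $\Gamma(s)^N$ there; the standard identity $\Gamma(s)\Gamma(1-s)=\pi/\sin(\pi s)$ and the translation $\Gamma(s+1)=s\Gamma(s)$ let one write, near $s=-n$, $\Gamma(s)^N = \Gamma(s+n+1)^N/\bigl((s)(s+1)\cdots(s+n)\bigr)^N$, and expanding in $\epsilon = s+n$ produces exactly the coefficient $\Gamma(h-n)^N/\Gamma(h)^N$ structure of \eqref{eq:Pi} with $h$ playing the role of $\epsilon$, while the factor $e^{-Nh\log t}$ arises from expanding $t^{-Ns} = t^{Nn}t^{-N\epsilon}$. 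Summing over $n$ and reorganizing by powers of $h$ matches $\sum_k c_k\Pi_k(t)$, provided one identifies $c_k$ with the Taylor coefficients of $\Gamma(1+h)^N$ — which is consistent since the residue at $s=0$ ($n=0$) contributes precisely $\Gamma(h)^N\cdot(\text{Frobenius series}) $ and the $\Gamma(h)^N$ is absorbed to turn $\Gamma(h-n)^N/\Gamma(h)^N$ into the normalized series while leaving $\Gamma(1+h)^N = h^N\Gamma(h)^N/\cdots$; I would be careful here that the bookkeeping of the nilpotent $h$ versus the formal residue variable is done consistently, writing everything in $\C[h]/(h^N)$.

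I expect the main obstacle to be the precise determination of the constant $C$ — not the qualitative asymptotics, which is routine steepest descent, but tracking all factors of $2\pi$, $\iu$, and $N$ through the Gaussian integral and Stirling's formula without sign or normalization errors. The safest route is probably to combine the saddle point computation for the $t$-dependence with a clean independent evaluation of the overall scalar (e.g. via the $N=1$ case or the classical asymptotics of $\Gamma$-ratios), so that the two computations cross-check each other. The contour-shifting and residue bookkeeping in the connection formula is conceptually straightforward but requires care with the $N$-fold poles and the identification of the nilpotent $h$ with the local expansion parameter.
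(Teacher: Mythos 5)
Your proposal matches the paper's proof essentially step for step: saddle-point analysis of $\frac{1}{2\pi\iu}\int e^{N(\log\Gamma(s)-s\log t)}\,ds$ with Stirling's formula locating the critical point near $s_0\approx t$ for the asymptotics, and closing the Mellin contour to the left to collect the $N$-fold residues of $\Gamma(s)^N$ at $s=-n$, using $\Gamma(1+h)^N=h^N\Gamma(h)^N$ to rewrite each residue as $\int_\P\Gamma(1+h)^N\cdot\Gamma(h-n)^N/\Gamma(h)^N\cdot t^{Nn-Nh}$, for the connection formula. The only blemish is the passing arithmetic slip $(2\pi)^{N/2}/(2\pi)$ (it should read $(2\pi)^{N/2}\cdot(2\pi)^{-1/2}$), but you state the correct final constant $C=N^{-1/2}(2\pi)^{(N-1)/2}$ and propose sensible cross-checks (the $N=1$ case, Meijer $G$-function asymptotics — the latter is in fact what the paper cites).
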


The connection formula in this proposition 
and equation \eqref{eq:fundsol_P} show that: 
\begin{equation} 
\label{eq:y_0_P} 
S(z) z^{-\mu} z^\rho \Gg_\P = 
\begin{pmatrix}
t^{-\frac{N-1}{2}}(-\frac{1}{N} D)^{N-1} \Psi(t) \\
 \vdots \\
t^{\frac{N-3}{2}} (-\frac{1}{N}D) \Psi(t)   \\ 
t^{\frac{N-1}{2}}\Psi(t) 
\end{pmatrix}. 
\end{equation} 
This flat section has the smallest asymptotics 
$\sim e^{-Nt}$ as $t\to +\infty$ 
(see \eqref{eq:smallest_asymptotics}). 
Therefore we conclude (recall Conjecture \ref{conj:GammaI}): 

\begin{corollary} 
Gamma Conjecture I holds for $\P$. 
\end{corollary}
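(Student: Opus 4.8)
The plan is to exhibit an explicit $\nabla$-flat section built from $\Gg_\P$ that lies in the one-dimensional space $\AA$ of flat sections with the smallest asymptotics, and then to read off $A_\P = \Gg_\P$ from Definition \ref{def:principal_asymptotic_class}. Essentially all the analytic content has already been packaged into Proposition \ref{prop:Laplacemethod_P} (the saddle-point evaluation of the Mellin--Barnes integral $\Psi(t)$ and its connection formula to the Frobenius basis), so the remaining argument is bookkeeping.

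First I would use \eqref{eq:fundsol_P} together with the connection formula $\Psi(t) = \int_\P \Gg_\P \cup \Pi(t;h)$ of Proposition \ref{prop:Laplacemethod_P} to identify the flat section $\Phi(\Gg_\P)(z) = (2\pi)^{-\frac{N-1}{2}} S(z) z^{-\mu} z^\rho \Gg_\P$ with the vector displayed in \eqref{eq:y_0_P}, whose $j$-th component (from the bottom) is $t^{\frac{N-1}{2}-j}\left(-\tfrac1N D\right)^j \Psi(t)$ with $t = z^{-1}$ and $D = t\partial_t$. Next I would feed in the asymptotics $\Psi(t) \sim C\, t^{-\frac{N-1}{2}} e^{-Nt}$ as $t \to \infty$ in the sector $-\tfrac{\pi}{2} < \arg t < \tfrac{\pi}{2}$, and observe that since $D e^{-Nt} = -Nt\,e^{-Nt}$, each application of $-\tfrac1N D$ multiplies the leading term by $t$ up to lower-order corrections --- term-by-term differentiation being legitimate because $\Psi$ solves the ODE \eqref{eq:QDE_P} and its expansion is of Poincar\'e type in the sector. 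The explicit prefactors $t^{\frac{N-1}{2}-j}$ in \eqref{eq:y_0_P} exactly cancel these powers of $t$, so every component of $\Phi(\Gg_\P)(z)$ is asymptotic to a nonzero constant times $e^{-Nt} = e^{-N/z}$ as $z \to +0$ along $\R_{>0}$.

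Since $\P$ satisfies Property $\O$ with $T = N$ (Definition \ref{def:conjO}), this shows that $e^{T/z}\Phi(\Gg_\P)(z)$ stays bounded as $z\to+0$, hence $\Phi(\Gg_\P) \in \AA$ in the sense of \eqref{eq:smallest_asymptotics}. By Proposition \ref{prop:smallest_asymptotics} the space $\AA$ is one-dimensional, so $\AA = \C\,\Phi(\Gg_\P)$, and therefore $A_\P$ is a nonzero scalar multiple of $\Gg_\P$. As Gamma Conjecture I (Conjecture \ref{conj:GammaI}) and Definition \ref{def:principal_asymptotic_class} determine $A_F$ only up to a constant, this already proves the conjecture; if one prefers the normalization $\langle[\pt],A_\P\rangle \neq 0$, one simply notes that the corresponding component of $\Gg_\P = \Gamma(1+h)^N$ is nonzero and rescales. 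The only genuinely nontrivial step is Proposition \ref{prop:Laplacemethod_P} itself, which we may assume; given it, the potential pitfalls are merely the legitimacy of differentiating the asymptotic expansion of $\Psi$ inside the sector and the cancellation of the powers of $t$ in \eqref{eq:y_0_P}, both of which are routine.
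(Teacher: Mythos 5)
Your proposal is correct and follows the same route as the paper: identify $\Phi(\Gg_\P)(z)$ with the vector in \eqref{eq:y_0_P} via the connection formula of Proposition \ref{prop:Laplacemethod_P}, then use the $\Psi(t)\sim C\,t^{-(N-1)/2}e^{-Nt}$ asymptotics to conclude membership in $\AA$ and invoke $\dim\AA=1$. The paper compresses this to a one-line remark ("This flat section has the smallest asymptotics $\sim e^{-Nt}$"); your explicit check that the prefactors $t^{(N-1)/2-j}$ cancel the powers of $t$ produced by the repeated $-\tfrac1N D$ is precisely the arithmetic behind that remark, and the appeal to differentiability of Poincar\'e-type asymptotics for ODE solutions is the standard justification (one may also observe that membership in $\AA$ as defined in \eqref{eq:smallest_asymptotics} only requires $\|e^{T/z}s(z)\|$ to have polynomial growth, so even crude bounds on $D^j\Psi$ suffice).
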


\begin{proof}[Proof of Proposition \ref{prop:Laplacemethod_P}]
The function $\Psi(t)$ is one of Meijer's $G$-functions 
\cite{Meijer46} and the large $t$ asymptotics here was obtained 
by Meijer, see equations (22), (23), (24) \emph{ibid}. 
Here we follow Dubrovin \cite[Example 4.4]{Dubrovin98a} and use 
the method of stationary phase to obtain the asymptotics of $\Psi(t)$. 
We write 
\[
\Psi(t) = \frac{1}{2\pi\iu} \int_{c-\iu \infty}^{c+ \iu \infty} 
e^{f_t(s)}  ds 
\]
with $f_t(s) = N (\log \Gamma(s) - s \log t)$. The integral 
is approximated by a contribution around the critical point of 
$f_t(s)$. Using Stirling's formula 
\[
\log \Gamma(s) \sim \left(s-\frac{1}{2}\right) 
\log s - s + \frac{1}{2} \log (2\pi) + O(s^{-1}) 
\]
we find a critical point $s_0$ of $f_t(s)$ such that 
$s_0 \sim t + \frac{1}{2} + O(t^{-1}) $ as $t \to \infty$. Then we have 
\begin{align*} 
f_t(s_0) & \sim -Nt + \frac{N}{2} \log (2\pi/t) + O(t^{-1}), \\
f_t''(s_0) &\sim \frac{N}{t}  + O(t^{-2}). 
\end{align*} 
From these we obtain the asymptotics: 
\[
\Psi(t) \sim \frac{1}{\sqrt{2\pi f_t''(s_0)}}e^{f_t(s_0)} 
= \frac{1}{\sqrt{N}} \left( \frac{2\pi}{t} \right)^{\frac{N-1}{2}} 
e^{-Nt}. 
\]
To show the connection formula, we close the integration contour 
in \eqref{eq:Psi} to the left and express $\Psi(t)$ as the sum of 
residues at $s=0, -1,-2,-3,\dots$. We have for $n\in \Z_{\ge 0}$ 
\begin{align*} 
\Res_{s=-n} \Gamma(s)^N t^{-Ns} ds & = 
\Res_{h=0} \Gamma(h - n)^N t^{Nn- Nh} dh
\\ 
& = \int_{\P} \Gamma(1+h)^N \frac{\Gamma(h -n)^N}{\Gamma(h)^N} 
t^{Nn - Nh}. 
\end{align*} 
In the second line we used the fact that $\Gamma(1+h)/\Gamma(h) = h$ and 
$\int_\P g(h) = \Res_{h=0} (g(h)/h^N) dh$ for 
any $g(h) \in \C[\![h]\!]$. 
Therefore we have 
\[
\Psi(t) = \sum_{n=0}^\infty \Res_{s=-n} \Gamma(s)^N t^{-Ns} ds 
= \int_{\P} \Gg_\P \cup \Pi(t;h) 
\]
as required. 
\end{proof}

\subsection{Monodromy transformation and mutation} 
We use monodromy transformation 
and mutation to deduce Gamma Conjecture II for $\P$ 
from the truth of Gamma Conjecture I for $\P$. 

The differential equation \eqref{eq:QDE_P} is invariant under 
the rotation $t \to e^{2\pi\iu/N}t$. 
Therefore the functions $\Psi^{(j)}(t)= \Psi(e^{-2\pi\iu j /N}t)$, $j\in \Z$ 
are also solutions to \eqref{eq:QDE_P}. 
By changing co-ordinates $t \to e^{-2\pi\iu j/N} t$ in 
Proposition \ref{prop:Laplacemethod_P}, we find that 
$\Psi^{(j)}$ satisfies the asymptotic condition
\begin{equation} 
\label{eq:asymp_Psi_j}
\Psi^{(j)}(t) \sim C_j t^{-\frac{N-1}{2}} e^{-N \zeta^{-j}} 
\end{equation} 
in the sector $-\frac{\pi}{2}+ \frac{2\pi j}{N}< \arg t 
<\frac{\pi}{2} + \frac{2\pi j}{N}$ 
where $\zeta = \exp(2\pi\iu/N)$ and $C_j = C e^{(N-1)\pi\iu j/N}$. 
Using $\Pi(e^{-2\pi\iu j/N} t;h)=\Ch(\O(i))\cup \Pi(t;h)$, 
we also find the connection formula 
\[ 
\Psi^{(j)}(t) = \int_\P \Gg_\P \Ch(\O(j)) \cup \Pi(t;h).  
\] 
Let $y_j(z)$ be the flat section corresponding to $\Psi^{(j)}$ 
(cf.~\eqref{eq:y_0_P}): 
\[
y_j(z) := (2\pi)^{-\frac{N-1}{2}} 
\begin{pmatrix}
t^{-\frac{N-1}{2}}(-\frac{1}{N} D)^{N-1} \Psi^{(j)}(t) \\
 \vdots \\
t^{\frac{N-3}{2}} (-\frac{1}{N}D) \Psi^{(j)}(t)   \\ 
t^{\frac{N-1}{2}}\Psi^{(j)}(t) 
\end{pmatrix}. 
\]
The above connection formula and \eqref{eq:fundsol_P} show that 
\begin{equation} 
\label{eq:y_j_O(j)}
y_j(z) = (2\pi)^{-\frac{N-1}{2}} 
S(z) z^{-\mu} z^\rho \left( \Gg_\P  \Ch(\O(i)) \right) 
= \Phi\left(\Gg_\P \Ch(\O(i))\right)  
\end{equation} 
where we recall that $\Phi$ was defined in \eqref{eq:coh_framing}. 
Since the sectors where the asymptotics \eqref{eq:asymp_Psi_j} 
of $\Psi^{(j)}$ hold depend on $j$, 
the flat sections $y_j(z)$ do not quite form the asymptotically exponential 
fundamental solution in the sense of Proposition \ref{prop:repeated_eigenvalues}.
We will see however that they give it after a sequence of mutations. 

Recall from Proposition \ref{prop:normalization} 
that the flat section $y_0(z)$ with the smallest asymptotics (along $\R_{>0}$) 
can be written as the Laplace integral: 
\[
y_0(z) = \frac{1}{z} \int_T^\infty \varphi(\lambda) e^{-\lambda/z} d\lambda
\]
for some $\hnabla$-flat section $\varphi(\lambda)$ holomorphic 
near $\lambda =T(=N)$. 
Recall that $\hnabla$ is the Laplace dual \eqref{eq:2nd_conn} 
of $\nabla$ at $\tau=0$. 
This integral representation is valid when $\re(z) >0$. We have 
\begin{align*} 
y_j(z) & = e^{2\pi\iu j \mu/N} y_0(e^{2\pi\iu j/N} z) 
= \frac{\zeta^{-j}}{z}  \int_T^\infty e^{2\pi\iu j\mu/N} \varphi(\lambda) 
e^{-\zeta^{-j} \lambda/z}d\lambda \\ 
& = \int_{T \zeta^{-j} + \R_{\ge 0} \zeta^{-j}} 
e^{2\pi\iu j \mu/N} 
\varphi(\zeta^j \lambda) e^{-\lambda/z} d \lambda. 
\end{align*} 
Set $\varphi_j (\lambda) = e^{2\pi\iu j \mu/N} 
\varphi(\zeta^j \lambda)$. 
Then $\varphi_j(\lambda)$ is holomorphic near $\lambda = \zeta^{-j} T$ 
and is flat for $\hnabla$. The latter fact follows easily 
from \eqref{eq:symmetry_c1star}. 
Thus we obtain an integral representation of $y_j(z)$:  
\[
y_j(z) = \frac{1}{z} \int_{T \zeta^{-j} + \R_{\ge 0} \zeta^{-j}} 
\varphi_j(\lambda) e^{-\lambda/z} d\lambda 
\]
which is valid when $-\frac{\pi}{2} - \frac{2\pi j}{N} 
<\arg z < \frac{\pi}{2} - \frac{2\pi j}{N}$. 
By bending the radial integration path, 
we can analytically continue $y_j(z)$ for arbitrary $\arg z$. 
When $\arg z$ is close to zero, for example, we bend the paths as  
shown in Figure \ref{fig:bendingpaths_P}. 
Here we choose a range $[j_0,j_0+N-1]$ of length $N$ 
and consider a system of integration paths for $y_j(z)$, $j\in [j_0,j_0+N-1]$ 
when $\arg z$ is close to zero. 

\begin{figure}[htbp]
\begin{center} 
\rotatebox{-100}{
\includegraphics[height=150pt, bb=0 0 168 193]{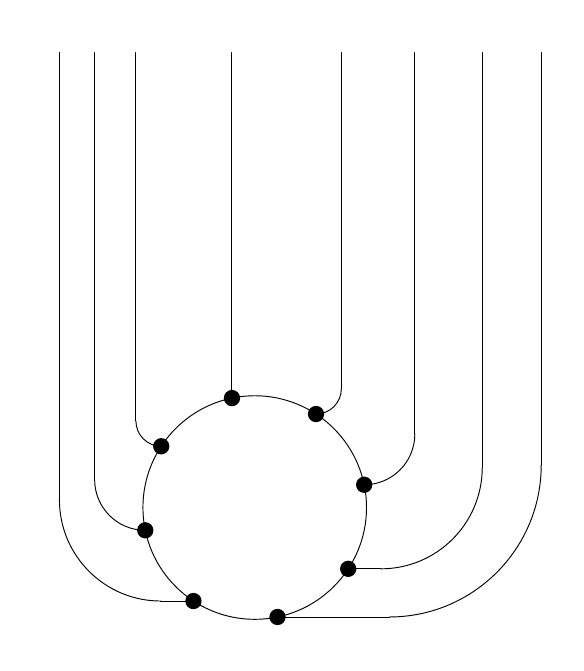}}
\end{center} 
\caption{Bent paths starting from $T \zeta^{-j}$, $j=-3,\cdots,4$ 
($N=8$).} 
\label{fig:bendingpaths_P}
\end{figure} 
\begin{figure}[htbp] 
\begin{center} 
\rotatebox{-100}{
\includegraphics[bb=0 0 91 186]{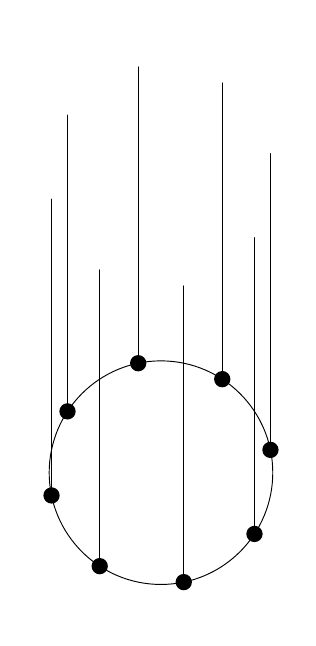}} 
\end{center} 
\caption{Straightened paths in the admissible direction $e^{\iu\phi}$.}
\label{fig:straightpaths_P} 
\end{figure} 

We can construct an asymptotically exponential fundamental solution 
in the sense of Proposition \ref{prop:repeated_eigenvalues} 
by straightening the paths  (see Figure \ref{fig:straightpaths_P}).  
Note that $\lim_{z\to +0}e^{T/z}y_0(z) = \varphi(T)$ is a 
$T$-eigenvector of $(c_1(\P)\star_0)$ of unit length 
by Proposition \ref{prop:normalization}. 
Therefore $\varphi_j(\zeta^{-j}T) = e^{2\pi\iu j \mu/N} \varphi(T)$ 
is also of unit length. It is a $\zeta^{-j} T$-eigenvector 
of $(c_1(\P)\star_0)$ by \eqref{eq:symmetry_c1star}. 
Therefore $\varphi_j(\zeta^{-j} T)$, $j\in [j_0,j_0+N-1]$ form 
a normalized idempotent basis for $\star_0$. 
Let $\phi$ be an admissible phase for the spectrum of $(c_1(\P)\star_0)$ 
which is close to zero. 
By the discussion in \S \ref{subsec:asymptotic_solution}, 
the $\nabla$-flat sections 
\[
x_j(z) = \frac{1}{z} \int_{T \zeta^{-j} + \R_{\ge 0} e^{\iu\phi}}  
\varphi_j(\lambda) e^{-\lambda/z} d\lambda 
\sim e^{-\zeta^{-j} T/z} \varphi_j(\zeta^{-j} T) 
\]
with $j\in [j_0, j_0+N-1]$ 
give the asymptotically exponential fundamental solution 
associated to $e^{\iu\phi}$. 
By the argument in \S \ref{subsec:mutation}, the two bases 
$\{y_j\}$ and $\{x_j\}$ are related by a sequence of mutations. 
Because $\{y_j\}$ corresponds to the Beilinson collection 
$\{\O(j)\}$ (see \eqref{eq:y_j_O(j)}), 
$\{x_j\}$ corresponds to a mutation of $\{\O(j)\}$  
(see \S \ref{subsec:mutation}
and Remark \ref{rem:compatibility_mutation}). 
This shows that the asymptotic basis at $\tau=0$ with respect to 
phase $\phi$ is given by a mutation 
of $\Gg_\P \Ch(\O(j))$, $j_0 \le j \le j_0+N-1$.

The proof of Theorem \ref{thm:Gamma_P} is now complete. 

\begin{figure}[htbp]
\begin{center}
\rotatebox{-90}{
\includegraphics[height=100pt,bb=0 0 220 186]{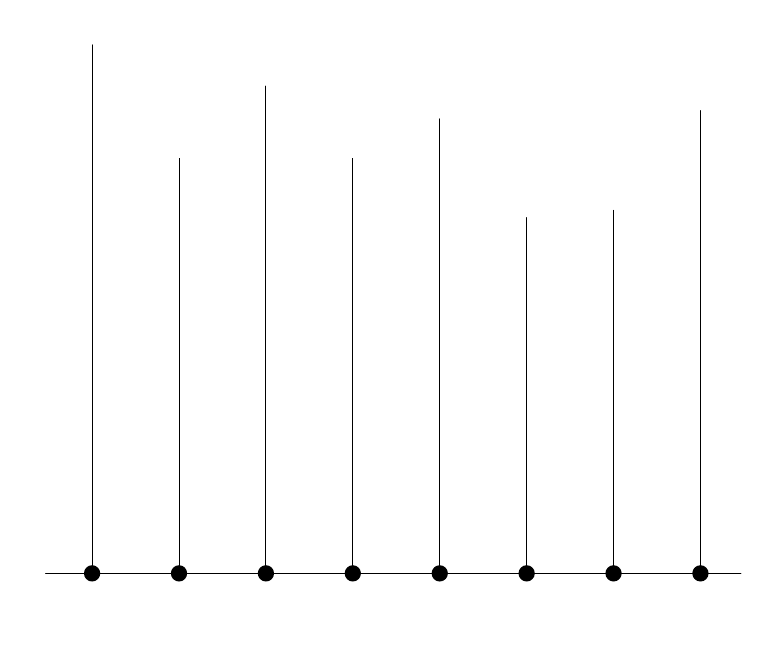}}
\end{center} 
\caption{A system of paths after isomonodromic deformation. } 
\label{fig:vertical}
\end{figure} 

\begin{remark} 
In \S \ref{subsec:mutation} and \S \ref{subsec:MRS_mutation}, 
we considered mutations for straight paths. 
Although our paths in Figure \ref{fig:bendingpaths_P} are not 
straight, we can make them straight \emph{after} 
isomonodromic deformation (see Figure \ref{fig:vertical}). 
We can therefore apply our mutation argument to such straight paths. 
When the eigenvalues of $E\star_\tau$ align as in Figure \ref{fig:vertical}, 
the corresponding flat sections $y_j$ form an asymptotically exponential 
fundamental solution and the Beilinson collection  
gives the asymptotic basis $\{\Gg_\P \Ch(\O(j))\}$ at such a point 
$\tau$.  
\end{remark}

\section{Gamma conjectures for Grassmannians} 
\label{sec:Gamma_G}
We derive Gamma Conjectures for Grassmannians of type A 
from the truth of Gamma Conjectures for projective spaces. 
Let $\G=G(r,N)$ denote the Grassmannian of $r$-dimensional 
subspaces of $\C^N$ and let 
$\P = \P^{N-1}=G(1,N)$ denote the projective space of 
dimension $N-1$. 

\subsection{Statement} 
Let $S^\nu$ denote the Schur functor for a partition $\nu
=(\nu_1\ge \nu_2 \ge \cdots\ge \nu_r)$.
Denote by $V$ the tautological bundle\footnote
{The dual $V^*$ can be identified with the universal quotient bundle 
on the dual Grassmannian $G(N-r,N)$.} over $\G=G(r,N)$. 
Kapranov \cite{Kapranov83,Kapranov} showed that 
the vector bundles $S^\nu V^*$ 
form a full exceptional collection of $\D^b_{\rm coh}(\G)$, 
where $\nu$ ranges over all partitions such that 
the corresponding Young diagrams are contained in 
an $r \times (N-r)$ rectangle, 
i.e.~$\nu_i \le N-r$ for all $i$. 

\begin{theorem} 
\label{thm:Gamma_G} 
Gamma Conjectures I and II hold for Grassmannians $\G$. 
An asymptotic basis of $\G$ is formed by mutations of the Gamma basis 
$\Gg_\G \Ch(S^\nu V^*)$ associated to 
Kapranov's exceptional collection 
$\{ S^\nu V^*: \nu \text{ is contained in an $r\times (N-r)$ rectangle }\}$. 
\end{theorem}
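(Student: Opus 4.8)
The plan is to deduce the Gamma Conjectures for $\G = G(r,N)$ from those for $\P = \P^{N-1}$ via the quantum Satake/abelian--nonabelian correspondence, which identifies the quantum connection of $\G$ with the $r$-th exterior power of the quantum connection of $\P$. First I would recall (and, for the big quantum setting, establish in Theorem \ref{thm:wedge_conn}) the precise statement: there is an isomorphism of meromorphic flat connections $(\wedge^r H^\udot(\P), \wedge^r\nabla^\P) \cong (H^\udot(\G), \nabla^\G)$, under which the grading operator $\mu_\G$ corresponds to the natural grading on $\wedge^r$ shifted by the constant $-\tfrac{r(r-1)(N-r)}{2}$ or similar, $c_1(\G)\star_0$ corresponds to the operator $\xi \mapsto \sum_{i} \xi_1 \wedge \cdots \wedge (c_1(\P)\star_0)\xi_i \wedge \cdots \wedge \xi_r$ on decomposable elements, and the Poincar\'e pairing of $\G$ matches the induced pairing on $\wedge^r$ up to sign. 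I would also need the classical fact (quantum Satake at the level of cohomology, cf.~\cite{GMa}) that under this isomorphism $S^\nu V^*$ corresponds to a wedge of line bundles $\O(a_1)\wedge\cdots\wedge\O(a_r)$ with $\{a_i\}$ determined by $\nu$ (a strictly decreasing sequence in $\{0,1,\dots,N-1\}$), and crucially that the Gamma class is multiplicative: $\Gg_\G$ corresponds to $\Gg_\P^{\wedge r}$ up to the scalar coming from the $\mu$-shift, because $T\G$ pulls back (in $K$-theory of the relevant correspondence) to an expression in $V^*$ and $\C^N$ whose Gamma class factors.

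Next, Property $\O$ for $\G$: the eigenvalues of $c_1(\P)\star_0$ are $N\zeta^j$, $\zeta = e^{2\pi\iu/N}$, $j = 0,\dots,N-1$, so the eigenvalues of $\wedge^r(c_1(\P)\star_0)$ are the sums $N(\zeta^{j_1} + \cdots + \zeta^{j_r})$ over $r$-subsets $\{j_1,\dots,j_r\}$. The unique one of maximal absolute value is obtained by taking $r$ consecutive powers clustered around $1$ (for $r \mid N$ there is a clean symmetry; in general one checks that the maximum of $|\sum \zeta^{j_i}|$ over $r$-subsets is attained exactly at the ``arc'' of $r$ consecutive roots nearest to the real axis, and is attained with multiplicity one up to the $\mu_r$-type symmetry predicted by \eqref{eq:symmetry_c1star} with the Fano index of $\G$). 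This gives $T = T_\G$ a simple eigenvalue and verifies parts (1)--(3) of Definition \ref{def:conjO}; I would present this as an elementary trigonometric estimate $|\sum_{i=0}^{r-1}\zeta^{k+i}| = |\sin(\pi r/N)/\sin(\pi/N)|$ and a convexity argument showing no other subset achieves it. Then Gamma Conjecture I for $\G$ follows: the flat section of $\nabla^\G$ with smallest asymptotics corresponds, under $\wedge^r$, to $y_{j_1}\wedge\cdots\wedge y_{j_r}$ where $y_j = \Phi_\P(\Gg_\P\Ch(\O(j)))$ are the Mellin/Meijer flat sections from Section \ref{sec:Gamma_P} and $\{j_1,\dots,j_r\}$ is the maximizing arc; since each $y_j \sim e^{-N\zeta^{-j}/z}\psi_j$, the wedge has asymptotics $e^{-T_\G/z}$ times the corresponding idempotent of $\G$, so it lies in the space $\AA$ of Proposition \ref{prop:smallest_asymptotics}, and the multiplicativity of $\Gg$ identifies the resulting cohomology class as $\Gg_\G$ (using $\chi(\O_\G,\O_\G)=1$ and Lemma \ref{lem:Gamma} to fix the normalization, exactly as in Proposition \ref{prop:normalization}).

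For Gamma Conjecture II, I would argue as in the projective case (Section \ref{sec:Gamma_P}) combined with isomonodromic deformation (Proposition \ref{prop:isomonodromic_deformation}). The $\binom{N}{r}$ flat sections $y_J := \bigwedge_{j\in J} y_j$, $J$ ranging over $r$-subsets of $\{0,\dots,N-1\}$ ordered appropriately, are $\nabla^\G$-flat and correspond via $\Phi_\G$ to $\Gg_\G\Ch(\bigwedge_{j\in J}\O(j))$; I must then show that, after a suitable isomonodromic deformation of $\tau$ moving the eigenvalues $u_J$ into ``general position'' (so that they become pairwise distinct and aligned as in Figure \ref{fig:vertical}), the system of bent Laplace-integration paths for the $y_J$ can be straightened into the admissible-direction paths defining the canonical asymptotically exponential fundamental solution of Proposition \ref{prop:repeated_eigenvalues}, so that the $\{y_J\}$ become an asymptotic basis after a sequence of mutations. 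Since $\{\bigwedge_{j\in J}\O(j)\}$ is mutation-equivalent to Kapranov's collection $\{S^\nu V^*\}$ in $\D^b_{\rm coh}(\G)$ (a statement about exceptional collections on $\G$ and their mutations, which I would cite or derive from the Borel--Weil--Bott description of these bundles, matching the $K$-theory classes via the quantum Satake dictionary), the compatibility of mutation of flat sections with mutation of exceptional collections in the $K$-group (Remark \ref{rem:compatibility_mutation}, Corollary \ref{cor:mutation}) gives Gamma Conjecture II. The main obstacle is the path-straightening/mutation bookkeeping: the eigenvalues $u_J = N\sum_{j\in J}\zeta^{-j}$ can have coincidences and the configuration of $\binom{N}{r}$ points is geometrically intricate, so verifying carefully that the braided system of paths for the wedge sections deforms to the standard one — and that the resulting mutation sequence matches a mutation connecting the wedge-of-Beilinson collection to Kapranov's collection — requires the most work; the rest (Property $\O$, the asymptotic estimate, multiplicativity of $\Gg$) is comparatively routine once Theorem \ref{thm:wedge_conn} is in place.
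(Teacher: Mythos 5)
Your overall strategy matches the paper's: quantum Satake/abelian--non-abelian correspondence to realize $\nabla^\G$ as $\wedge^r\nabla^\P$ (Theorem \ref{thm:wedge_conn}), then transport the Gamma basis of $\P$ to $\G$. However there are two concrete places where your outline is off in a way that would trip up a careful write-up.

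First, you write that $c_1(\G)\star_0$ corresponds to the wedge of $c_1(\P)\star_0$, and that the eigenvalues of $\wedge^r(c_1(\P)\star_0)$ are $N(\zeta^{j_1}+\cdots+\zeta^{j_r})$. This misses the crucial shift: the Satake correspondence (Proposition \ref{prop:q_Satake}) matches $c_1(\G)\star_0$ with $c_1(\P)\star_{(r-1)\pi\iu\sigma_1}$, not with $c_1(\P)\star_0$. Accordingly the eigenvalues of $c_1(\G)\star_0$ carry the phase factor $e^{(r-1)\pi\iu/N}$ (Remark \ref{rem:eigenvalues_G}); this is precisely what makes the maximal eigenvalue $T_\G=N\sin(\pi r/N)/\sin(\pi/N)$ real and positive, which Property $\O$ requires. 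The same shift then surfaces as the factor $e^{-(r-1)\pi\iu\sigma_1}$ in Propositions \ref{prop:MRS_G} and \ref{prop:wedge-basis}, and is exactly the ``scalar from the $\mu$-shift'' you gesture at without pinning down: without it the Gamma class identity $\Gg_\G\Ch(S^\nu V^*)=(2\pi\iu)^{-\binom r2}e^{-(r-1)\pi\iu\sigma_1}\Sat\bigl(\Gg_\P\Ch(\O(\nu_1+r-1))\wedge\cdots\wedge\Gg_\P\Ch(\O(\nu_r))\bigr)$ (Proposition \ref{prop:wedge-basis}) will not close, and the sign/phase bookkeeping in Lemma \ref{lem:Gamma_G} fails.

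Second, you structure the Kapranov comparison as a \emph{mutation} in $\D^b_{\rm coh}(\G)$ between a ``wedge-of-Beilinson'' collection $\{\bigwedge_{j\in J}\O(j)\}$ and Kapranov's $\{S^\nu V^*\}$. That step does not exist and would not make sense as stated: the wedge lives in $\wedge^r K^0(\P)$, not in $\D^b_{\rm coh}(\G)$, and Proposition \ref{prop:wedge-basis} is an exact algebraic identity in $H^\udot(\G)$, not a mutation-equivalence. The only mutations in play are on the projective-space side: Theorem \ref{thm:Gamma_P} produces the asymptotic basis of $\P$ as a mutation of $\{\Gg_\P\Ch(\O(j))\}$, and Proposition \ref{prop:MRS_G} transfers that wholesale to $\G$ via $\MRS(\G,\u,\phi)\cong\wedge^r\MRS(\P,\u,\phi)$. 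So the ``path-straightening bookkeeping for $\binom Nr$ points'' that you flag as the main obstacle is largely sidestepped by the wedge-of-MRS formalism; the genuine work lies instead in Theorem \ref{thm:wedge_conn} (Hertling--Manin universality through Kim--Sabbah) and in the algebraic Gamma identity — precisely the two points you treated as comparatively routine.
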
 

\begin{corollary}[Ueda \cite{Ue05}]
Dubrovin's Conjecture holds for $\G$. 
\end{corollary}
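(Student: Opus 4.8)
The plan is to obtain Dubrovin's Conjecture for $\G=G(r,N)$ as a formal consequence of Theorem~\ref{thm:Gamma_G}, i.e.\ of Gamma Conjectures I and II for $\G$, in exactly the way the Corollary after Theorem~\ref{thm:Gamma_P} deduces Dubrovin's Conjecture for $\P^{N-1}$ from the projective-space case. Recall from \S\ref{subsec:GammaII} that Dubrovin's Conjecture \cite{Dubrovin98} for a Fano manifold $F$ has three parts: (1) $QH^\udot(F)$ is semisimple if and only if $\D^b_{\rm coh}(F)$ admits a full exceptional collection; and, when $QH^\udot(F)$ is semisimple, there is a full exceptional collection $\{E_1,\dots,E_N\}$ such that (2) the Stokes matrix satisfies $S_{ij}=\chi(E_i,E_j)$, and (3) the central connection matrix factors as $C=C'C''$ with the columns of $C''$ the components of $\Ch(E_j)$ and $C'$ commuting with $(c_1(F)\cup)$. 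I would handle Part~(1) directly and derive Parts~(2)--(3) from Gamma Conjecture II via the Proposition of \S\ref{subsec:GammaII}.

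For Part~(1): for $\G$ both sides of the equivalence hold unconditionally, so the biconditional is automatically true. Kapranov \cite{Kapranov83,Kapranov} provides the full exceptional collection $\{S^\nu V^*\}$ in $\D^b_{\rm coh}(\G)$; and the small quantum cohomology $QH^\udot(\G)$ is semisimple. The latter is classical, and it also follows from the quantum Satake principle (Theorem~\ref{thm:wedge_conn}, \cite{GMa}) together with the semisimplicity of $QH^\udot(\P^{N-1})$: the $r$-th wedge of a semisimple commutative Frobenius algebra is again semisimple, its idempotents being the wedges of $r$ distinct idempotents (the eigenvalues of $c_1(\G)\star_0$ may repeat, but semisimplicity persists — this is precisely the situation anticipated in \S\ref{subsec:UV}). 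Hence Part~(1) holds for $\G$.

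For Parts~(2) and (3): by Theorem~\ref{thm:Gamma_G}, $\G$ satisfies Gamma Conjecture II (Conjecture~\ref{conj:conj-g2}), so there is a full exceptional collection $\{E_i\}$ — a mutation of Kapranov's collection, hence itself full and exceptional — whose asymptotic basis at $\tau=0$ for an admissible phase is $A_i=\Gg_\G\Ch(E_i)$. Then the Proposition of \S\ref{subsec:GammaII} applies: by Proposition~\ref{prop:Stokes} the Stokes matrix is $S_{ij}=(y_i(\tau,e^{-\pi\iu}z),y_j(\tau,z))_F=[A_i,A_j)$, and by Lemma~\ref{lem:Gamma} the pairing $[\cdot,\cdot)$ on $H^\udot(\G)$ coincides with the Euler pairing, so $S_{ij}=[\Gg_\G\Ch(E_i),\Gg_\G\Ch(E_j))=\chi(E_i,E_j)$, which is Part~(2). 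For Part~(3), the central connection matrix is $C=(2\pi)^{-\dim\G/2}(A_1|\cdots|A_N)$, so $C=C'C''$ where $C''$ has columns $\Ch(E_j)$ and $C'$ is the operator $\alpha\mapsto (2\pi)^{-\dim\G/2}\Gg_\G\alpha$; since $C'$ is, up to scalar, multiplication by the fixed even-degree invertible class $\Gg_\G$ and the cup product is commutative, $C'(c_1(\G)a)=c_1(\G)C'(a)$, which is Part~(3).

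There is no genuine obstacle here — all the substance lies in Theorem~\ref{thm:Gamma_G}, and this statement is a bookkeeping deduction. The only points needing a moment's care are: (a) confirming semisimplicity of $QH^\udot(\G)$, so that the ``semisimple'' branch of Dubrovin's Conjecture is the operative one (and so that Gamma Conjecture II is even meaningful for $\G$) — handled above via quantum Satake; and (b) noting that the exceptional collection supplied by Gamma Conjecture II is a mutation of Kapranov's collection rather than Kapranov's collection on the nose, which is harmless since mutations of a full exceptional collection are again full exceptional collections (\S\ref{subsec:exceptional}) and Dubrovin's Conjecture only asserts the existence of \emph{some} such collection with the prescribed Stokes and connection-matrix behaviour.
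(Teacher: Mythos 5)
Your proposal is correct and follows the same route the paper intends: Part (1) of Dubrovin's conjecture holds because Kapranov's collection exists and $\star_\tau$ is semisimple near $\tau=0$ (Corollary \ref{cor:semisimple_G}), while Parts (2) and (3) follow from Gamma Conjecture II for $\G$ (Theorem \ref{thm:Gamma_G}) via the Proposition of \S\ref{subsec:GammaII}, with $C'=(2\pi)^{-\dim\G/2}\Gg_\G\cup$ visibly commuting with $c_1(\G)\cup$. This matches the paper's (essentially immediate) deduction.
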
 

The proof of Gamma Conjecture II for $\G$ will be 
completed in \S \ref{subsec:wedge_Gamma}; 
the proof of Gamma Conjecture I for $\G$ 
will be completed in \S \ref{subsec:GammaI_G}. 

\subsection{Quantum Pieri and quantum Satake} 
For background material on classical cohomology rings of 
Grassmannians, we refer the reader to \cite{Fulton}. 
Let $x_1,\dots,x_r$ be the Chern roots of the dual $V^*$ 
of the tautological bundle over $\G= G(r,N)$. 
Every cohomology class of $\G$ can be written as a symmetric 
polynomial in $x_1,\dots,x_r$. We have: 
\[
H^{\udot}(\G) \cong \C[x_1,\dots,x_r]^{\frS_r} \big/
\langle h_{N- r+1}, \dots, h_N \rangle 
\]
where $h_i = h_i(x_1,\dots,x_r)$ is the $i$th complete symmetric polynomial 
of $x_1,\dots,x_r$. 
An additive basis of $H^\udot(\G)$ is given by 
the Schur polynomials\footnote
{The Satake identification $H^\udot(\G)\cong \wedge^r H^\udot(\P)$ 
is also indicated by this expression.} 
\begin{equation} 
\label{eq:Schur} 
\sigma_\lambda = \sigma_\lambda(x_1,\dots,x_r) = 
\frac{\det  (x_i^{\lambda_j+r-j})_{1\le i,j \le r}}
{\det (x_i^{r-j})_{1\le i,j \le r}}
\end{equation} 
with partition $\lambda$ in an $r\times (N-r)$ rectangle. 
The class $\sigma_\lambda$ is Poincar\'{e} dual to 
the Schubert cycle $\Omega_\lambda$ and $\deg \sigma_\lambda 
= 2|\lambda| = 2 \sum_{i=1}^r \lambda_i$. 
For $1\le k\le N-r$, we write $\sigma_k =h_k(x_1,\dots,x_r) =c_k(Q)$ 
for the Schubert class corresponding to the partition $(k\ge 0\ge \cdots\ge 0)$. 
Here $Q$ is the universal quotient bundle. 
They are called the \emph{special} Schubert classes. 
The first Chern class  is given by $c_1(\G) =N\sigma_1$. 
The classical Pieri formula describes 
the multiplication by the special Schubert classes: 
\begin{equation}
\label{eq:cl_Pieri}
\sigma_k \cup \sigma_\lambda = \sum \sigma_\nu 
\end{equation} 
where the sum ranges over all partitions $\nu =(\nu_1\ge \cdots 
\ge \nu_r)$ in an $r \times (N-r)$ rectangle such that $|\nu| = |\lambda|+k$ and 
$\nu_1 \ge \lambda_1 \ge \nu_2 \ge \lambda_2 \ge 
\cdots \ge \nu_r \ge \lambda_r$. 
Bertram's quantum Pieri formula gives the quantum multiplication 
by special Schubert classes: 
\begin{proposition}[Quantum Pieri \cite{Bertram}, see also \cite{Buch,FW04}] 
\label{prop:Pieri} 
We have 
\[
\sigma_k \star_{\sigma_1 \log q} \sigma_\lambda = \sum \sigma_\nu 
+q \sum \sigma_\mu 
\]
where the first sum is the same as the classical Pieri formula \eqref{eq:cl_Pieri}
and the second sum ranges over all $\mu$ in an $r\times (N-r)$ rectangles 
such that $|\mu| = |\lambda| + k - N$ and 
$\lambda_1 - 1 \ge \mu_1 \ge \lambda_2 -1 \ge \mu_2 \ge 
\cdots \ge \lambda_r -1 \ge \mu_r\ge 0$. 
\end{proposition}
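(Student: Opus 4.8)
The statement is Bertram's quantum Pieri formula \cite{Bertram} (see also \cite{Buch,FW04}), so strictly speaking it suffices to cite it; the plan below records how it is proved and isolates the feature we shall use later, namely that the quantum corrections are linear in $q$ (this is what makes the wedge computation in Theorem \ref{thm:wedge_conn} manageable).

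First I would translate the assertion into Gromov--Witten invariants. By the definition of the quantum product \eqref{eq:quantumproduct} together with the divisor axiom (cf.\ Remark \ref{rem:qc_formalseries}), the coefficient of $\sigma_\nu$ in $\sigma_k\star_{\sigma_1\log q}\sigma_\lambda$ equals $\sum_{d\ge 0}\langle\sigma_k,\sigma_\lambda,\sigma_{\nu^\vee}\rangle_{0,3,d}^{\G}\,q^{d}$, where $\nu^\vee$ is the partition complementary to $\nu$ in the $r\times(N-r)$ box, so that $\sigma_{\nu^\vee}$ is Poincar\'{e} dual to $\sigma_\nu$. The dimension axiom forces $k+|\lambda|-|\nu|=Nd$; in particular the $q^0$-coefficient is by definition the cup product, hence reproduces the classical Pieri rule \eqref{eq:cl_Pieri}, and everything comes down to evaluating the invariants with $d\ge 1$.

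Second I would bring in the geometry of low-degree rational curves in $\G$, following Bertram. A degree-$d$ stable map $\mathbb{P}^1\to\G$ determines an exact sequence $0\to\mathcal S\to\mathcal O^{\oplus N}\to\mathcal Q\to 0$ of sheaves on $\mathbb{P}^1$ with $\mathcal S$ of rank $r$ and degree $-d$, and one passes to Grothendieck's Quot scheme $\mathrm{Quot}_d$, which is smooth of the expected dimension and carries three evaluation loci. Since $\sigma_k$ is a \emph{special} Schubert class, the condition it imposes at a point of the curve is a single incidence condition with a fixed linear subspace, which lifts to a tractable cycle on $\mathrm{Quot}_d$, and $\langle\sigma_k,\sigma_\lambda,\sigma_{\nu^\vee}\rangle_{0,3,d}$ becomes an intersection number there. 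The heart of the matter — and the step I expect to be the main obstacle — is the vanishing of these numbers for $d\ge 2$: a naive dimension count does \emph{not} give it (for suitable $G(r,N)$ and suitable Pieri inputs a $q^2$-term is numerically allowed), so one must genuinely use that the kernel $\bigcap_x\mathcal S_x$ and span $\sum_x\mathcal S_x$ of a curve of degree $\ge2$ are too constrained to meet the special incidence locus together with two general Schubert varieties; equivalently, through the ``quantum $=$ classical'' principle \cite{Buch}, the two-step flag variety $\mathrm{Fl}(r-d,r+d;N)$ leaves no room for the corresponding classical triple product once $d\ge2$ and one factor is special.

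Finally, for $d=1$ I would compute the intersection number explicitly: $\mathrm{Quot}_1$ (or the associated incidence variety) fibers over $\mathrm{Fl}(r-1,r+1;N)$, the three-point invariant becomes a classical triple intersection there, and a direct Pieri-type computation — propagating the Schubert conditions attached to $\lambda$ through the rank-$(r-1)$ kernel and the rank-$(r+1)$ span — produces exactly the sum over $\mu$ with $|\mu|=|\lambda|+k-N$ subject to the interlacing inequalities $\lambda_1-1\ge\mu_1\ge\lambda_2-1\ge\cdots\ge\lambda_r-1\ge\mu_r\ge0$, which may of course be vacuous and then contribute nothing. Assembling the $d=0$ and $d=1$ evaluations yields the formula; in the paper itself it is enough to state the reduction and cite \cite{Bertram,Buch,FW04} for the curve-counting computations.
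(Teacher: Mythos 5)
The paper states this quantum Pieri rule as a citation to Bertram, Buch, and Fulton--Woodward with no proof of its own, so your opening observation that citing suffices is exactly what the paper does. Your outline of Bertram's Quot-scheme argument and of the quantum-equals-classical reduction is a correct (if not needed here) account of the cited proofs, including the accurate caveat that the vanishing of $d\ge 2$ contributions does not follow from a dimension count alone.
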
 

From the quantum Pieri formula, one can deduce 
that the quantum connection of $\G$ is the wedge power 
of the quantum connection of $\P$. 
This is an instance of the \emph{quantum Satake principle} 
of Golyshev-Manivel \cite{GMa}. 
The geometric Satake correspondence of Ginzburg 
\cite{Ginzburg95} implies that the intersection 
cohomology of an affine Schubert variety $X_\lambda$ 
in the affine Grassmannian of a complex Lie group $G$ 
becomes an irreducible representation of the 
Langlands dual Lie group $G^L$.  
Our target spaces $\P$, $\G$ arise as certain minuscule 
Schubert varieties in the affine Grassmannian of $GL_N(\C)$. 
Therefore the cohomology groups of $\P$ and $\G$ are representations 
of $GL_N(\C)$: 
$H^\udot(\P)$ is the standard representation of $GL(H^\udot(\P))\cong GL_N(\C)$ 
and $H^\udot(\G)$ is the $r$-th wedge power of the standard representation. 
In fact, by the `take the span' rational map 
$\P\times \cdots  \times \P \text{ ($r$ factors)}\dashrightarrow \G$, 
we obtain the \emph{Satake identification}  
\begin{align} 
\label{eq:Satake_id} 
\Sat \colon \wedge^r H^\udot(\P)  \overset{\cong}{\longrightarrow} 
H^\udot(\G),  
\qquad
\sigma_{\lambda_1 +r-1} \wedge \sigma_{\lambda_2 + r -2} 
\wedge\cdots \wedge \sigma_{\lambda_r}  \longmapsto \sigma_\lambda 
\end{align} 
where $\sigma_{\lambda_i + r - i}= (\sigma_1)^{\lambda_i + r-i}$ 
is the special Schubert class of $\P =G(1,N)$. 
The Satake identification endows $H^\udot(\G)$ with the structure of a 
$\mathfrak{gl}(H^\udot(\P))$-module. 
\begin{proposition}[{\cite[Theorem 2.5]{BCFK}, \cite[\S 1.6]{GMa}}]
\label{prop:q_Satake} 
The quantum product $(c_1(\G)\star_0)$ on $H^\udot(\G)$ 
coincides, upon the Satake identification $\Sat$, with the action of 
$(c_1(\P) \star_{\pi\iu(r-1)\sigma_1}) \in \mathfrak{gl}(H^\udot(\P))$ 
on the $r$-th wedge representation $\wedge^r H^\udot(\P)$. 
\end{proposition}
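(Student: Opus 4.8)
The plan is to compare the two operators directly on the Schur basis $\{\sigma_\lambda\}$ of $H^\udot(\G)$, using Bertram's quantum Pieri formula (Proposition \ref{prop:Pieri}) for the left-hand side and the standard derivation action of $\mathfrak{gl}(H^\udot(\P))$ on its $r$-th exterior power for the right-hand side. First I would reduce to the divisor class: since $c_1(\G)=N\sigma_1$ and $c_1(\P)=N\sigma_1$ (here $\sigma_1=h$ is the hyperplane class of $\P$), it suffices to show that $(\sigma_1\star_0)$ on $H^\udot(\G)$ is intertwined by the Satake identification $\Sat$ in \eqref{eq:Satake_id} with the derivation on $\wedge^r H^\udot(\P)$ induced by $(\sigma_1\star_{\pi\iu(r-1)\sigma_1})\in\mathfrak{gl}(H^\udot(\P))$.

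Next I would pin down the small quantum product of $\P$ at the shifted point. Either directly from $QH^\udot(\P^{N-1})\cong\C[h,q]/(h^N-q)$, or via the rescaling \eqref{eq:qprod_scaling} with $c_1(\P)\log s=\pi\iu(r-1)\sigma_1$ (so that $s^N=e^{\pi\iu(r-1)}=(-1)^{r-1}$), one obtains
\[
\sigma_1\star_{\pi\iu(r-1)\sigma_1}h^i=\begin{cases} h^{i+1} & 0\le i\le N-2,\\ (-1)^{r-1}\cdot 1 & i=N-1.\end{cases}
\]
The point of the shift by $\pi\iu(r-1)\sigma_1$ is precisely to place the sign $(-1)^{r-1}$ in front of the wrap-around term.

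Then I would evaluate the derivation on a basis vector. Under $\Sat$, $\sigma_\lambda$ corresponds to $h^{b_1}\wedge\cdots\wedge h^{b_r}$ with $b_i=\lambda_i+r-i$, hence $b_1>b_2>\cdots>b_r$ in $\{0,\dots,N-1\}$. In the Leibniz sum, each summand in which the new exponent (which is $b_i+1$, or $0$ for the wrap term) already occurs among the $b_j$ vanishes. The surviving non-wrap summands add $1$ to some $b_i<N-1$ with $b_i+1\notin\{b_j\}$; re-expressing via $b_i=\lambda_i+r-i$, the condition $b_i+1\notin\{b_j\}$ is exactly $\lambda_i<\lambda_{i-1}$ together with $\lambda_i+1\le N-r$, i.e.\ the partition obtained from $\lambda$ by adding one box stays in the $r\times(N-r)$ rectangle — which is precisely the classical Pieri sum \eqref{eq:cl_Pieri}, the $q=1$ classical part of Proposition \ref{prop:Pieri}. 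The wrap summand occurs only when $b_1=N-1$, i.e.\ $\lambda_1=N-r$; then $h\star h^{N-1}=(-1)^{r-1}h^0$, and moving $h^0$ past the $r-1$ remaining factors contributes a further $(-1)^{r-1}$, so the total sign is $(-1)^{2(r-1)}=1$ and the result is $h^0\wedge h^{b_2}\wedge\cdots\wedge h^{b_r}$, corresponding to the partition $\mu=(\lambda_2-1,\dots,\lambda_r-1,0)$. One checks directly that the interlacing constraints in Proposition \ref{prop:Pieri} for $k=1$, $q=1$ force exactly this single $\mu$. Thus the two operators agree on each $\sigma_\lambda$, and $\Sat$-equivariance follows.

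The hard part will be the sign-and-combinatorics bookkeeping of the wrap-around term: one has to match Bertram's quantum correction — in particular the fact that for $k=1$ it contributes at most one Schur class — with the exterior-power computation, and verify that the Koszul reordering sign $(-1)^{r-1}$ cancels exactly against the phase $(-1)^{r-1}$ coming from the shift $\tau=\pi\iu(r-1)\sigma_1$. Once this is settled, the box-by-box matching of the classical parts and the identification of the two module structures via $\Sat$ are routine; the full details also appear in \cite{BCFK,GMa}.
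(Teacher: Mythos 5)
Your proposal is correct and follows essentially the same route as the paper's proof: reduce to $\sigma_1$, evaluate the Leibniz derivation on the wedge basis $h^{b_1}\wedge\cdots\wedge h^{b_r}$ (with the shifted small quantum product of $\P$ producing the $(-1)^{r-1}$ wrap-around), match the non-wrap terms to classical Pieri and the wrap term to Bertram's quantum correction, and observe the Koszul reordering sign cancels the phase sign. The only slip is at the very end: after the two $(-1)^{r-1}$ factors cancel, the expression you should identify via $\Sat$ is the reordered $h^{b_2}\wedge\cdots\wedge h^{b_r}\wedge h^0$, not $h^0\wedge h^{b_2}\wedge\cdots\wedge h^{b_r}$ (which still carries the Koszul sign); your identification of the partition $\mu=(\lambda_2-1,\dots,\lambda_r-1,0)$ is nonetheless correct.
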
 
\begin{proof} 
This follows from quantum Pieri in Proposition \ref{prop:Pieri}. 
Recall that $c_1(\P) = N\sigma_1$ and $c_1(\G) = N \sigma_1$. 
Thus it suffices to examine the quantum product by $\sigma_1$.  
The action of $(\sigma_1 \star_{\pi\iu(r-1) \sigma_1})$ on the basis element 
$\sigma_{\lambda_1 +r-1} \wedge \sigma_{\lambda_2 + r-2} \wedge 
\cdots \wedge \sigma_{\lambda_r}$ of the $r$-th wedge 
$\wedge^r H^\udot(\P)$ 
is given by 
\begin{multline*} 
\sum_{i=1}^r \sigma_{\lambda_1 + r-1} \wedge 
\cdots \wedge (\sigma_1 \star_{\pi\iu(r-1) \sigma_1}\sigma_{\lambda_i + r-i}) 
\wedge \cdots \wedge \sigma_{\lambda_r} \\ 
= \left( \sum_{i=1}^r \sigma_{\lambda_1 + r-1} \wedge 
\cdots \wedge \overset{{}^{\text{$i$th}}}
{\sigma_{\lambda_i + r- i +1}} \wedge 
\cdots \wedge \sigma_{\lambda_r} \right)  + (-1)^{r-1} 
\delta_{N-1, \lambda_1 + r-1} \sigma_0 \wedge \sigma_{\lambda_2 + r-2} 
\wedge \cdots \wedge \sigma_{\lambda_r} 
\end{multline*} 
Under the Satake identification, 
the first term corresponds to the classical Pieri formula 
and the second term corresponds to the quantum correction. 
The sign $(-1)^{r-1}$ cancels the sign coming from 
the permutation of $\sigma_0$ and $\sigma_{\lambda_2 + r-2} \wedge \cdots 
\wedge \sigma_{\lambda_r}$. 
\end{proof} 

\begin{remark}[\cite{GMa}] 
More generally, the action of 
the power sum $(x_1^k + \cdots + x_r^k)\star_0$ on $H^\udot(\G)$ 
coincides, upon the Satake identification with the action of 
$(\sigma_k\star_{(r-1)\pi\iu\sigma_1})\in \mathfrak{gl}(H^\udot(\P))$ 
on the $r$-th wedge representation 
$\wedge^r H^\udot(\P)$, 
cf.~Theorem \ref{thm:wedge_conn}. 
\end{remark} 

We have a similar result for the grading operator. 
The following lemma together with Proposition \ref{prop:q_Satake} 
implies that the quantum connection of $\G$ is the wedge product of the 
quantum connection of $\P$. 

\begin{lemma} 
\label{lem:Satake_grading} 
Let $\mu^\P$ and $\mu^\G$ be the grading operators of $\P$ and 
$\G$ respectively (see \S \ref{subsec:q_conn}). 
The action of $\mu^\G$ coincides, upon the Satake identification 
with the action of $\mu^\P\in \mathfrak{gl}(H^\udot(\P))$ 
on the $r$-th wedge representation $\wedge^r H^\udot(\P)$. 
\end{lemma}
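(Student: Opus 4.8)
The plan is to compute both grading operators explicitly on the natural bases and check that the Satake identification \eqref{eq:Satake_id} intertwines them. Recall that $\mu^\P$ acts on $H^{2p}(\P)$ by the scalar $p - \frac{N-1}{2}$, so on the special Schubert class $\sigma_k = (\sigma_1)^k = h^k \in H^{2k}(\P)$ (for $0 \le k \le N-1$) we have $\mu^\P \sigma_k = \bigl(k - \tfrac{N-1}{2}\bigr)\sigma_k$. Thus the basis vectors $\sigma_0,\dots,\sigma_{N-1}$ of $H^\udot(\P)$ are eigenvectors of $\mu^\P$ with eigenvalues $-\frac{N-1}{2}, -\frac{N-3}{2},\dots,\frac{N-1}{2}$. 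On the $r$-th wedge power, $\mu^\P$ acts as a derivation: on a wedge $\sigma_{a_1}\wedge\cdots\wedge\sigma_{a_r}$ it acts by the scalar $\sum_{i=1}^r\bigl(a_i - \tfrac{N-1}{2}\bigr) = \bigl(\sum_i a_i\bigr) - \tfrac{r(N-1)}{2}$.

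First I would recall that $\dim_\C \G = r(N-r)$, so $\mu^\G$ acts on $\sigma_\lambda \in H^{2|\lambda|}(\G)$ by the scalar $|\lambda| - \tfrac{r(N-r)}{2}$. Then, applying the derivation computation above to the specific wedge $\sigma_{\lambda_1+r-1}\wedge\sigma_{\lambda_2+r-2}\wedge\cdots\wedge\sigma_{\lambda_r}$ which corresponds to $\sigma_\lambda$ under $\Sat$, I get the eigenvalue
\[
\sum_{i=1}^r\bigl(\lambda_i + r - i\bigr) - \frac{r(N-1)}{2}
= |\lambda| + \binom{r}{2} - \frac{r(N-1)}{2}
= |\lambda| - \frac{r(N-r)}{2},
\]
using $\sum_{i=1}^r(r-i) = \binom{r}{2}$ and simplifying $\binom{r}{2} - \tfrac{r(N-1)}{2} = \tfrac{r(r-1) - r(N-1)}{2} = -\tfrac{r(N-r)}{2}$. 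This matches the eigenvalue of $\mu^\G$ on $\sigma_\lambda$ exactly.

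Since both $\mu^\G$ and the wedge action of $\mu^\P$ are diagonal in the Schubert basis $\{\sigma_\lambda\}$ (resp. its preimage under $\Sat$), and we have just shown they have the same eigenvalues on each basis vector, they coincide as operators. This completes the proof. The only point requiring any care is the bookkeeping of the shift $r-i$ in the exponents appearing in the Satake identification \eqref{eq:Satake_id} and the resulting constant $\binom{r}{2}$; there is no genuine obstacle here, as the statement reduces to the elementary identity $\binom{r}{2} = \tfrac{r(N-1)}{2} - \tfrac{r(N-r)}{2}$. One could alternatively phrase the argument more structurally: $\mu^\P$ on $H^\udot(\P)$ is $\operatorname{diag} + \text{const}$, and passing to $\wedge^r$ the derivation extension of a shifted diagonal operator is again a shifted diagonal operator, with the shift adjusted by exactly the amount that converts $\tfrac{N-1}{2}$ (half the dimension of $\P$) into $\tfrac{r(N-r)}{2}$ (half the dimension of $\G$); but the direct eigenvalue comparison above is the cleanest route.
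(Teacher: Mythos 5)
Your proof is correct and is essentially the same as the paper's: the paper simply remarks that the lemma is immediate from the definition using the identity $\dim \G + r(r-1) = r\dim\P$, which is precisely the identity $\binom{r}{2} - \tfrac{r(N-1)}{2} = -\tfrac{r(N-r)}{2}$ that your eigenvalue bookkeeping reduces to. You have just written out the terse one-line calculation in full detail.
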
 
\begin{proof} 
This is immediate from the definition: we use 
$\dim \G + r(r-1) = r \dim \P$. 
\end{proof}

The Satake identification also respects the Poincar\'{e} pairing 
up to sign. 
The Poincar\'{e} pairing on $H^\udot(\P)$ induces the pairing on 
the $r$-th wedge $\wedge^r H^\udot(\P)$: 
\[
(\alpha_1\wedge \cdots \wedge \alpha_r, 
\beta_1\wedge \cdots \wedge \beta_r)_{\wedge \P} 
:= \det((\alpha_i,\beta_j)_{\P})_{1\le i,j\le r}. 
\]
\begin{lemma} 
\label{lem:Satake_pairing} 
We have $(\alpha,\beta)_{\wedge \P} = (-1)^{r(r-1)/2} 
(\Sat(\alpha), \Sat(\beta))_\G$ for 
$\alpha,\beta \in \wedge^r H^\udot(\P)$. 
\end{lemma}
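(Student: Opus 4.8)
The plan is to reduce the statement to an elementary determinant computation in the Schubert basis. The key observation is that \emph{both} pairings are anti-diagonal in their natural bases: on $\P = \P^{N-1}$ one has $(\sigma_i,\sigma_j)_\P = \delta_{i+j,N-1}$ for the powers $\sigma_i = (\sigma_1)^i$ of the hyperplane class $\sigma_1$, while on $\G = G(r,N)$ one has $(\sigma_\lambda,\sigma_\mu)_\G = \delta_{\mu,\lambda^\vee}$, where $\lambda^\vee$ is the partition complementary to $\lambda$ in the $r\times(N-r)$ rectangle, $\lambda^\vee_j = (N-r) - \lambda_{r+1-j}$.

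First I would evaluate both sides on a pair of basis vectors. Write $\alpha = \sigma_{a_1}\wedge\cdots\wedge\sigma_{a_r}$ and $\beta = \sigma_{b_1}\wedge\cdots\wedge\sigma_{b_r}$ with $N-1 \ge a_1 > \cdots > a_r \ge 0$ and $N-1 \ge b_1 > \cdots > b_r \ge 0$; by \eqref{eq:Satake_id} these map under $\Sat$ to $\sigma_\lambda$ and $\sigma_\mu$ with $a_i = \lambda_i + r - i$ and $b_j = \mu_j + r - j$. By the definition of the induced pairing,
\[
(\alpha,\beta)_{\wedge\P} = \det\big((\sigma_{a_i},\sigma_{b_j})_\P\big)_{1\le i,j\le r} = \det\big(\delta_{a_i+b_j,\,N-1}\big)_{1\le i,j\le r}.
\]

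Next I would analyse this determinant. Since for a fixed $a_i$ there is at most one index $j$ with $a_i + b_j = N-1$, the expansion over $\frS_r$ has at most one non-vanishing term, corresponding to a permutation $\pi$ with $a_i + b_{\pi(i)} = N-1$ for all $i$; and because the $a_i$ are strictly decreasing while the $b_j$ are strictly decreasing, such a $\pi$ is forced to be the order-reversing permutation $i\mapsto r+1-i$, whose sign is $(-1)^{r(r-1)/2}$. Hence the determinant equals $(-1)^{r(r-1)/2}$ precisely when $b_j = N-1-a_{r+1-j}$ for all $j$, and $0$ otherwise. Substituting $a_i = \lambda_i + r - i$ and $b_j = \mu_j + r - j$ shows that the condition $b_j = N-1-a_{r+1-j}$ is exactly $\mu_j = (N-r)-\lambda_{r+1-j}$, i.e.\ $\mu = \lambda^\vee$, which is in turn exactly the condition under which $(\sigma_\lambda,\sigma_\mu)_\G = 1$. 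Comparing the two, $(\alpha,\beta)_{\wedge\P} = (-1)^{r(r-1)/2}(\Sat(\alpha),\Sat(\beta))_\G$ on basis vectors, and bilinearity of both sides finishes the proof.

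There is no serious obstacle here; the computation is routine once the two pairings are recognised as anti-diagonal. The only point demanding care is the bookkeeping with the index shift $a_i = \lambda_i + r - i$ and with the definition of the complementary partition, to confirm that ``reversal sends the $a$-sequence to the $b$-sequence'' translates \emph{exactly} into ``$\mu$ is the box-complement of $\lambda$''; the factor $(-1)^{r(r-1)/2}$ is simply the sign of the longest element of $\frS_r$, and this is the sole source of the sign discrepancy between the two pairings.
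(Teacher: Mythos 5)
Your proof takes the same route as the paper: reduce to Schubert/monomial basis vectors, recognise both pairings as anti-diagonal, observe the wedge pairing is the determinant $\det(\delta_{a_i+b_j,N-1})$ whose unique surviving term is the order-reversing permutation of sign $(-1)^{r(r-1)/2}$, and match the nonvanishing condition with the box-complement condition for $(\sigma_\lambda,\sigma_\mu)_\G$. Your version is a bit more explicit than the paper's about why the off-complement cases vanish, but the argument is identical in substance.
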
 
\begin{proof} 
For two partitions $\lambda$, $\mu$ in an $r\times (N-r)$ rectangle, 
we have $(\sigma_\lambda,\sigma_\mu)_\G = 1$ if 
$\lambda_i + \mu_{r-i+1} = N-r$ for all $i$, 
and $(\sigma_\lambda, \sigma_\mu)_\G =0$ otherwise. 
On the other hand, if $\lambda_i + \mu_{r-i+1} = N-r$ for all $i$, we have 
\begin{multline*} 
(\sigma_{\lambda_1+r-1} \wedge \cdots \wedge \sigma_{\lambda_r}, 
\sigma_{\mu_1+r-1} \wedge \cdots \wedge \sigma_{\mu_r}
)_{\wedge \P} \\
= (\sigma_{\lambda_1 + r-1} \wedge \cdots \wedge \sigma_{\lambda_r}, 
\sigma_{N-1-\lambda_r} \wedge \cdots \wedge 
\sigma_{N-r-\lambda_1})_{\wedge \P}  
= (-1)^{r(r-1)/2}. 
\end{multline*} 
Otherwise, the pairing can be easily seen to be zero. 
\end{proof}

\begin{remark} 
\label{rem:eigenvalues_G} 
Proposition \ref{prop:q_Satake} implies 
the well-known formula 
that the spectrum of 
$(c_1(\G)\star_0)$ consists of sums of $r$ distinct eigenvalues 
of $(c_1(\P)\star_{(r-1)\pi\iu \sigma_1})$, i.e.
\[
\Spec(c_1(\G)\star_0) = 
\left\{ 
N e^{(r-1)\pi\iu/N} (\zeta_N^{i_1} + \cdots + \zeta_N^{i_r}) 
: 0 \le i_1< i_2 < \cdots < i_r \le N-1\right\}
\] 
where $\zeta_N = e^{2\pi\iu/N}$. 
In particular $\G$ satisfies Property $\O$ (Definition \ref{def:conjO}) 
with $T = N \sin(\pi r/N)/\sin(\pi/N)$. 
The quantum product $(c_1(\G)\star_0)$ has pairwise distinct eigenvalues 
if $k!$ and $N$ are coprime, where $k=\min(r,N-r)$. 
\end{remark} 

\subsection{The wedge product of the big quantum connection of $\P$} 
\label{subsec:wedge_conn} 
The quantum Satake principle 
implies that the quantum connection for $\G$ (at $\tau=0$) 
is the $r$-th wedge 
product of the quantum connection for $\P$ (at $\tau = (r-1)\pi\iu$). 
By using the abelian/non-abelian correspondence of 
Bertram--Ciocan-Fontanine--Kim--Sabbah \cite{BCFK, CFKS, KimSab}, 
we observe that this is also true for the isomonodromic deformation 
corresponding to the big quantum cohomology of $\P$ and $\G$. 
When the eigenvalues of $(c_1(\G)\star_0)$ are pairwise distinct, 
this can be also deduced from the quantum Pieri (Proposition 
\ref{prop:Pieri}) and Dubrovin's 
reconstruction theorem \cite{Dubrovin94}; however it is not 
always true that $(c_1(\G)\star_0)$ has pairwise distinct 
eigenvalues (see Remark \ref{rem:eigenvalues_G}). 

The quantum product $\star_{(r-1)\pi\iu\sigma_1}$ of $\P=\P^{N-1}$ 
is semisimple and $(c_1(\P)\star_{(r-1)\pi\iu\sigma_1})$ 
has pairwise distinct eigenvalues 
$\u^\circ = \{N e^{\pi\iu (r-1)/N}e^{2\pi\iu k/N}
: 0\le k \le N-1\}$. 
As explained in \S \ref{subsec:isomonodromy}, the quantum 
connection of $\P$ has an isomonodromic deformation over the 
universal cover $C_N(\C)\sptilde$ of the configuration space \eqref{eq:config} 
of distinct $N$ points in $\C$. 
Let $\nabla^\P$ be the connection on the trivial bundle $H^\udot(\P) \times 
(C_N(\C)\sptilde \times \P^1) \to (C_N(\C)\sptilde \times \P^1)$ 
which gives the isomonodromic deformation 
(see Proposition \ref{prop:isomonodromic_deformation}). 
Here the germ $(C_N(\C)\sptilde, \u^\circ)$ at $\u^\circ$ 
is identified with the germ 
$(H^\udot(\P), (r-1)\pi\iu \sigma_1)$ by the eigenvalues of $(E\star_\tau)$ 
and $\nabla^\P$ 
is identified with the big quantum connection \eqref{eq:big_q_conn} 
of $\P$ near $\u^\circ$. 
Consider the $r$-th wedge product of this bundle: 
\[
\left( \wedge^r H^\udot(\P) \right) \times 
(C_N(\C)\sptilde \times \P^1) \to (C_N(\C)\sptilde \times \P^1)
\]
equipped with the meromorphic flat connection $\nabla^{\wedge \P}$: 
\[
\nabla^{\wedge \P} (\xi_1\wedge \xi_2 \wedge \cdots \wedge \xi_r) 
:= \sum_{i=1}^r \xi_1 \wedge \cdots \wedge (\nabla^{\P} \xi_i) 
\wedge  \cdots \wedge \xi_r. 
\] 
\begin{theorem} 
\label{thm:wedge_conn}
There exists an embedding 
$f\colon (H^\udot(\P), (r-1)\pi\iu\sigma) \cong 
(C_N(\C)\sptilde,\u^\circ) \to (H^\udot(\G), 0)$ 
between germs of complex manifolds such that the big quantum 
connection $\nabla^\G$ of $\G$ pulls back (via $f$) to 
the $r$-th wedge $\nabla^{\wedge \P}$ of the big quantum 
connection of $\P$ under the Satake identification $\Sat$ \eqref{eq:Satake_id}. 
More precisely, the bundle map 
\begin{align*} 
\wedge^r H^\udot(\P) \times (C_N(\C)\sptilde \times \P^1) 
& \longrightarrow H^\udot(\G) \times (C_N(\C)\sptilde \times \P^1)  \\ 
(\alpha, (\u,z)) &\longmapsto (\iu^{r(r-1)/2}\Sat(\alpha), (\u,z)) 
\end{align*} 
intertwines the connections $\nabla^{\wedge \P}$, 
$f^* \nabla^{\G}$ and the pairings $(\cdot,\cdot)_{\wedge \P}$, 
$(\cdot,\cdot)_\G$. 
\end{theorem}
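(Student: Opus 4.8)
The plan is to prove Theorem \ref{thm:wedge_conn} by combining the quantum Satake principle (Propositions \ref{prop:q_Satake}, \ref{prop:Pieri}, Lemmas \ref{lem:Satake_grading}, \ref{lem:Satake_pairing}) at the base point $\tau=0$ with the abelian/non-abelian correspondence of \cite{BCFK, CFKS, KimSab}, which controls the \emph{big} quantum cohomology of $\G$ in terms of that of $\P$. First I would set up the two isomonodromic systems: by \S\ref{subsec:isomonodromy} and Remark \ref{rem:eigenvalues_G}, the big quantum connection $\nabla^\P$ of $\P$ extends to an isomonodromic deformation $\nabla^\P$ over $C_N(\C)\sptilde$ with pairwise distinct eigenvalues at $\u^\circ$, and hence $\nabla^{\wedge\P}$ is a well-defined flat meromorphic connection on $\wedge^r H^\udot(\P) \times (C_N(\C)\sptilde\times\P^1)$. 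On the other side, the big quantum connection $\nabla^\G$ of $\G$ is initially defined only in a neighbourhood of $0\in H^\udot(\G)$; by Proposition \ref{prop:isomonodromic_deformation} it too admits an isomonodromic deformation. The strategy is to show that $\nabla^{\wedge\P}$, which is an isomonodromic family by construction (wedge powers preserve flatness and the monodromy data are determined by those of $\nabla^\P$), restricts at $\u^\circ$ to $f^*\nabla^\G$ for a suitable embedding $f$, and then invoke uniqueness of isomonodromic deformations (Proposition \ref{prop:isomonodromic_deformation}) to conclude they agree on the whole germ.

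The key steps, in order, are as follows. (1) Verify the statement at the special point: at $\u^\circ\leftrightarrow (r-1)\pi\iu\sigma_1$ on the $\P$-side and $\tau=0$ on the $\G$-side, Proposition \ref{prop:q_Satake} gives that $(c_1(\G)\star_0)$ equals the $r$-th wedge action of $(c_1(\P)\star_{(r-1)\pi\iu\sigma_1})$ under $\Sat$, Lemma \ref{lem:Satake_grading} does the same for the grading operators $\mu^\G = \mu^\P$ on $\wedge^r$, and Lemma \ref{lem:Satake_pairing} matches the pairings up to the sign $(-1)^{r(r-1)/2}$, which is exactly absorbed by the twist $\alpha\mapsto \iu^{r(r-1)/2}\Sat(\alpha)$ (note $(\iu^{r(r-1)/2})^2 = (-1)^{r(r-1)/2}$). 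So the $z\partial_z$-component and the pairing match at this point. (2) Construct the germ embedding $f\colon (H^\udot(\P),(r-1)\pi\iu\sigma_1) \cong (C_N(\C)\sptilde,\u^\circ) \to (H^\udot(\G),0)$: since the eigenvalues of $(E\star_\tau)$ give local coordinates on both $H^\udot(\P)$ near $(r-1)\pi\iu\sigma_1$ and $H^\udot(\G)$ near $0$ (\S\ref{subsec:UV}, \S\ref{subsec:isomonodromy}), and the spectrum of $(c_1(\G)\star_0)$ is the set of $r$-element sums of the spectrum of $(c_1(\P)\star_{(r-1)\pi\iu\sigma_1})$ (Remark \ref{rem:eigenvalues_G}), the map ``take the $r$-element subsets'' identifies a neighbourhood of $\u^\circ$ in $C_N(\C)\sptilde$ with a neighbourhood of $0$ in $H^\udot(\G)$; this defines $f$. (3) Use the abelian/non-abelian correspondence \cite{BCFK,CFKS,KimSab} to show that the \emph{whole} big quantum connection of $\G$, not merely its restriction at $\tau=0$, is the $r$-th wedge of the big quantum connection of $\P$ along the image of $f$ — equivalently, that $f^*\nabla^\G$ and $\nabla^{\wedge\P}$ are two isomonodromic deformations of the same connection at $\u^\circ$. (4) Apply the uniqueness in Proposition \ref{prop:isomonodromic_deformation}: an isomonodromic deformation of $\nabla|_{\u^\circ}$ over $C_N(\C)\sptilde$ is unique, so $f^*\nabla^\G = \nabla^{\wedge\P}$ as connections, and the $H^\udot$-direction components $\nabla_\alpha$ then also match automatically. (5) Check the pairing is intertwined globally: both $(\cdot,\cdot)_{\wedge\P}$ and $(\cdot,\cdot)_\G$ are flat for the respective connections (\S\ref{subsec:q_conn}), they agree up to the sign at one point by Lemma \ref{lem:Satake_pairing}, and a flat bilinear form is determined by its value at a point, so the twist by $\iu^{r(r-1)/2}$ makes them match everywhere.

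The main obstacle will be step (3): rigorously extracting from the abelian/non-abelian correspondence the precise statement that the big quantum $D$-module of $\G$ pulls back to the $r$-th exterior power of the big quantum $D$-module of $\P$ over the relevant germ. The references \cite{BCFK,CFKS} establish a relationship between the $J$-functions (or Givental cones / Frobenius structures) of $\G$ and of $\P$ via the ``take the span'' correspondence, typically first along the small parameter space and then, in \cite{KimSab}, at the level of the full Frobenius/$tt^*$ or $D$-module structure; one must carefully match their normalizations (the shift $\tau = (r-1)\pi\iu\sigma_1$, the sign twist, the antidiagonal restriction) with ours and confirm that the correspondence is compatible with the isomonodromic (Dubrovin) connection in the $z\partial_z$-direction, not only the flat $\nabla_\alpha$-directions. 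A clean alternative, which I would pursue in parallel to minimize reliance on the heaviest form of the non-abelian correspondence, is: prove the identity of connections first only in the small quantum locus (where quantum Pieri, Proposition \ref{prop:Pieri}, plus Proposition \ref{prop:q_Satake} give it by direct computation on the Schur basis), then extend to the big locus purely by the uniqueness of isomonodromic deformation in Proposition \ref{prop:isomonodromic_deformation} — this reduces the role of \cite{BCFK,CFKS,KimSab} to guaranteeing that $f^*\nabla^\G$ is genuinely isomonodromic and that the germ $f$ lands inside the (a priori possibly small) domain of convergence of $\star_\tau$ for $\G$. The remaining bookkeeping — signs, the $\iu^{r(r-1)/2}$ twist, the degree shift $\dim\G + r(r-1) = r\dim\P$ — is routine once these structural points are in place.
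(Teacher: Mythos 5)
Your plan diverges substantially from the paper's actual proof, and the alternative route you sketch at the end — the one you hope would minimize reliance on the full abelian/non-abelian correspondence — has a gap at exactly the step you lean on most.

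The gap is in your use of Proposition \ref{prop:isomonodromic_deformation} to extend from the base point. That proposition gives uniqueness of an isomonodromic deformation of a connection on a rank-$M$ bundle over the universal cover of $C_M(\C)$, with the $M$ \emph{distinct} eigenvalues of $\U$ serving as the $M$ base coordinates. For $\G = G(r,N)$ one has $M = \binom{N}{r}$, so the relevant configuration space is $C_{\binom{N}{r}}(\C)\sptilde$. By contrast, $\nabla^{\wedge\P}$ lives over $C_N(\C)\sptilde$, an $N$-dimensional base, while its operator $\U^{\wedge r}$ has (generically) $\binom{N}{r}$ distinct eigenvalues $u_{i_1}+\cdots+u_{i_r}$ that are functionally dependent — they are all determined by only $N$ parameters. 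Thus $\nabla^{\wedge\P}$ is \emph{not} an isomonodromic deformation in the normal form of Proposition \ref{prop:isomonodromic_deformation} over its base, and the uniqueness statement you want to invoke simply does not apply to it. Agreement at the single point $\u^\circ$ together with ``both restrict from some isomonodromic family'' is not enough to conclude $f^*\nabla^\G = \nabla^{\wedge\P}$; you would have to show, in addition, that $\nabla^{\wedge\P}$ is the pullback of the unique isomonodromic family over $C_{\binom{N}{r}}(\C)\sptilde$ along the ``$r$-element sums'' map — which is essentially the full strength of the statement you are trying to prove.

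This is precisely the obstacle the paper circumvents by invoking Hertling--Manin's universal unfolding theorem for pre-Saito structures, as packaged by Kim--Sabbah: one builds the external tensor product $\nabla^{\times r}$ over $(H^2(\P))^r$, takes its universal deformation $\nabla^{\otimes r}$ over $\otimes^r H^\udot(\P)$, restricts to the symmetric locus, takes the anti-symmetric fiber to get $\nabla^{\Sym}$, restricts further to $\Elem^r H^\udot(\P)$, and takes a universal deformation $\nabla^{\wedge r}$ over $\wedge^r H^\udot(\P)$, which \cite{CFKS,KimSab} identify with $\nabla^\G$. The map $f$ is then \emph{produced} by the universal property (it is $\tchi_2\circ\psi$ in the paper's notation), rather than postulated via eigenvalue coordinates and matched by isomonodromy. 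The universal-deformation formalism is doing real work here: it supplies the ``extra structure'' that lets one pass from an $N$-dimensional family (the wedge) to a $\binom{N}{r}$-dimensional one (all of $H^\udot(\G)$), which no amount of isomonodromy bookkeeping over $C_N(\C)\sptilde$ can provide on its own. Your steps (1), (2), and (5) — the base-point verification via Propositions \ref{prop:q_Satake}, Lemmas \ref{lem:Satake_grading}, \ref{lem:Satake_pairing}, the sign check $(\iu^{r(r-1)/2})^2=(-1)^{r(r-1)/2}$, and the argument that a flat pairing is determined by its value at one point — are all sound, but they are the easy parts; the engine that makes the theorem go is the Hertling--Manin universality, and that is missing from your plan.
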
 
\begin{proof} 
The proposition follows by unpacking the definition of the
``alternate product of Frobenius manifolds" in \cite{KimSab}. 
The key ingredient is a universal property of the 
universal deformation (unfolding) of meromorphic connections 
due to Hertling and Manin \cite{HerMan}. 
The argument is straightforward, but technical. 
We recommend the reader unfamiliar with Hertling-Manin's 
universal deformation to begin by reading \cite[\S 1]{KimSab}. 

We first review the construction of the alternate product 
for the big quantum cohomology Frobenius manifold of $\P$. 
A \emph{pre-Saito structure} \cite[\S 1.1]{KimSab} with base $M$ 
is a meromorphic flat connection $\nabla$ on a trivial vector bundle 
$E^0\times (M\times \P^1) \to M\times \P^1$ 
of the form: 
\[
\nabla = d + \frac{1}{z} \CC + \left(-\frac{1}{z}U + V\right) \frac{dz}{z} 
\]
for some $\CC \in \End(E^0)\otimes \Omega^1_M$ 
and $U, V \in \End(E^0) \otimes \O_M$ (it follows from the 
flatness of $\nabla$ that 
$V$ is constant). Here $E^0$ is a finite dimensional 
complex vector space and the base space $M$ is a complex 
manifold. 
The big quantum connection \eqref{eq:big_q_conn} 
is an example of a pre-Saito structure. 
Consider the quantum connection of $\P$ 
restricted to $H^2(\P)\times \P^1$. This gives a pre-Saito structure. 
The external tensor product of this pre-Saito structure yields 
a pre-Saito structure $\nabla^{\times r}$ on the bundle: 
\[
\otimes^r H^\udot(\P) \times ((H^2(\P))^r \times \P^1) 
\to (H^2(\P))^{r} \times \P^1.  
\]
We choose a base point $t^\circ\in H^2(\P)$.  
Let $\Delta \colon H^\udot(\P) \to (H^\udot(\P))^r$ 
denote the diagonal map $\Delta(\tau) = (\tau,\dots,\tau)$. 
For notational convenience, for a point $x$ on a manifold $M$, 
we write $M_x$ for the germ $(M,x)$ of $M$ at $x$. 
We restrict the pre-Saito structure $\nabla^{\times r}$ to 
the germ $(H^2(\P))^r_{\Delta(t^\circ)}$. 
By Hertling-Manin's reconstruction theorem \cite{HerMan}, 
\cite[Corollary 1.7]{KimSab} 
and \cite[Lemma 2.1]{KimSab}, the pre-Saito structure $\nabla^{\times r}$ 
admits a universal deformation over the base $\otimes^r H^\udot(\P)$. 
Here we embed the base $(H^2(\P))^r_{\Delta(t^\circ)}$ into 
$\otimes^r H^\udot(\P)_{\chi_1(\Delta(t^\circ))}$ by the map 
\[
\chi_1: (H^2(\P))^r \hookrightarrow \otimes^r H^\udot(\P), \quad 
\chi_1 (\tau_1,\dots,\tau_r) 
= \sum_{i=1}^r 1\otimes \cdots \otimes 1 \otimes 
\overset{\text{$i$th}}{\tau_i} \otimes 1\otimes 
\cdots \otimes 1   
\]
and the universal deformation is a pre-Saito structure 
$\nabla^{\otimes r}$ on the bundle 
\[
\otimes^r H^\udot(\P) \times 
\left( \otimes^r H^\udot(\P)_{\chi_1(\Delta(t^\circ))} \times \P^1\right) 
\to \otimes^r H^\udot(\P)_{\chi_1(\Delta(t^\circ))} \times \P^1
\] 
such that $\chi_1^*\nabla^{\otimes r} = \nabla^{\times r}$. 
The construction of $\nabla^{\otimes r}$ is given in 
\cite[\S 2.2, p.234]{KimSab}; the embedding $\chi_1$ is a primitive 
of the infinitesimal period mapping (see \cite[\S 1.2]{KimSab})  
of the pre-Saito structure $\nabla^{\times r}$ attached 
to $1\otimes 1\otimes \cdots \otimes 1 \in \otimes^r H^\udot(\P)$. 
Geometrically this is the big quantum connection of 
$\P \times \cdots \times \P$ ($r$ factors).  
The pre-Saito structure $\nabla^{\otimes r}$ is equivariant with respect to 
the natural $W:=\frS_r$-action. 
We restrict the pre-Saito structure $\nabla^{\otimes r}$ to the 
$W$-invariant base 
$\Sym^r H^\udot(\P) \subset \otimes^r H^\udot(\P)$; 
fibers of the restriction are representations of $W$. 
By taking the anti-symmetric part, 
we obtain a pre-Saito structure $\nabla^{\Sym}$ on 
the bundle 
\[
\wedge^r H^\udot(\P) 
\times \left(\Sym^r H^\udot(\P)_{\chi_1(\Delta(t^\circ))} \times \P^1\right)
\to \Sym^r H^\udot(\P)_{\chi_1(\Delta(t^\circ))} \times \P^1. 
\]
Here the superscript `$\Sym$' of $\nabla^{\Sym}$ 
signifies the base space, not the fiber.  
We further restrict this pre-Saito structure to the 
subspace $\Elem^r H^\udot(\P)\subset \Sym^r H^\udot(\P)$ 
spanned by ``elementary symmetric" vectors
\[
\Elem^r H^\udot(\P) = \bigoplus_{k=1}^r 
\C \sum_{\substack{i_1,\dots,i_r \in \{0,1\} \\ 
\sum_{a=1}^r i_a = k}} 
\sigma_{i_1}\otimes \cdots \otimes \sigma_{i_r} 
\subset \Sym^r H^\udot(\P) 
\]
which contains $\chi_1(\Delta(H^2(X))$. 
By Hertling-Manin's reconstruction theorem and \cite[Lemma 2.9]{KimSab}, 
we obtain a universal deformation of the pre-Saito structure 
$\nabla^{\Sym}|_{\Elem^r H^\udot(\P)}$ over the base 
$\wedge^r H^\udot(\P)$. 
More precisely, we have an embedding
\[
\chi_2 \colon \Elem^r H^\udot(\P)_{\chi_1(\Delta(t^\circ))} 
\hookrightarrow \wedge^r H^\udot(\P)_{\chi_2(\chi_1(\Delta(t^\circ)))}  
\]
and a pre-Saito structure $\nabla^{\wedge r}$ on 
the bundle 
\begin{equation}
\label{eq:preSaito_wedge}
\wedge^r H^\udot(\P) \times 
\left(\wedge^r H^\udot(\P)_{\chi_2(\chi_1(\Delta(t^\circ)))} 
\times \P^1\right) \to 
\wedge^r H^\udot(\P)_{\chi_2(\chi_1(\Delta(t^\circ)))}  \times \P^1 
\end{equation} 
which is a universal deformation of $\nabla^{\Sym}|_{\Elem^r H^\udot(\P)}$ 
via the embedding $\chi_2$. 
The construction of $\nabla^{\wedge r}$ is given in 
\cite[\S 2.2, p.235--236]{KimSab}. 
By the construction in the proof of \cite[Corollary 1.7]{KimSab}, 
the embedding $\chi_2$ is a primitive of the infinitesimal period mapping 
of the pre-Saito structure $\nabla^{\Sym}|_{\Elem^r H^\udot(\P)}$ 
attached to $\sigma_{r-1} \wedge \sigma_{r-2} \wedge 
\cdots \wedge \sigma_0$, i.e.~
\[
d \chi_2 = \left( 
z\nabla^{\Sym} \sigma_{r-2} \wedge \sigma_{r-1} \wedge \cdots \wedge \sigma_0
\right)\Bigr|_{\Elem^r H^\udot(\P)\times \{z=0\}}. 
\]
We normalize $\chi_2$ by the initial condition 
$\chi_2(\chi_1(\Delta(t^\circ))) =t^\circ \sigma_{r-1}\wedge \sigma_{r-2} 
\wedge \cdots \wedge \sigma_0$.  
Since the restriction of $\nabla^{\Sym}$ to the diagonal 
$\chi_1 \circ \Delta \colon H^2(\P) \hookrightarrow \Elem^r H^\udot(\P)$ 
can be identified with the $r$-th wedge product of the small quantum connection 
of $\P$, we have 
\[
\chi_2(\chi_1(\Delta(\tau))) = \tau\sigma_{r-1} \wedge 
\sigma_{r-2} \wedge \cdots \wedge \sigma_0, 
\qquad \text{for $\tau \in H^2(\P)$}. 
\]
The pre-Saito structure $\nabla^{\wedge r}$ together with a 
primitive section $c \sigma_{r-1} \wedge \cdots \wedge \sigma_0$ 
(for some $c\in \C^\times$) 
and the metric $(\cdot,\cdot)_{\wedge \P}$ 
endows the base space $\wedge^r H^\udot(\P)$ with 
a Frobenius manifold structure \cite[Corollary 2.10]{KimSab}. 

The main result of Ciocan-Fontanine--Kim--Sabbah 
\cite[Theorem 2.13]{KimSab}, \cite[Theorem 4.1.1]{CFKS} 
implies that the germ of the pre-Saito structure $\nabla^{\wedge r}$ 
at $\chi_2(\chi_1(\Delta(t^\circ)))$ with 
$t^\circ = (r-1)\pi\iu \sigma_1$ is isomorphic to the 
pre-Saito structure on the bundle 
\begin{equation}
\label{eq:preSaito_G} 
H^\udot(\G) \times ((H^\udot(\G),0)\times \P^1) \to 
(H^\udot(\G),0)\times \P^1
\end{equation} 
defined by the big quantum connection of $\G$. 
By the construction in \cite[\S 3]{CFKS}, 
the isomorphism between the above two pre-Saito 
bundles \eqref{eq:preSaito_wedge}, \eqref{eq:preSaito_G} 
is induced by the Satake identification $\Sat$ 
between the fibers (up to a scalar multiple). 

By the universal property 
(see \cite[Definition 2.3]{HerMan}, \cite[\S 1.2]{KimSab})  
of the pre-Saito structures $\nabla^{\otimes r}$ 
and $\nabla^{\wedge r}$, we obtain maps 
$\tchi_1$, $\tchi_2$ extending $\chi_1$ and $\chi_2$ 
which fit into the following commutative diagram. 
\[
\xymatrix{
(H^2(\P))^r_{\Delta(t^\circ)}  \ar@{^{(}->}[r]^{\chi_1}  
\ar@{^{(}->}[d]
& \otimes^r H^\udot(\P)_{\chi_1(\Delta(t^\circ))}  
& \ar@{_{(}->}[l] \Sym^r H^\udot(\P)_{\chi_1(\Delta(t^\circ))}  
\ar[dr]_{\tchi_2} 
& \ar@{_{(}->}[l] \Elem^r H^\udot(\P)_{\chi_1(\Delta(t^\circ))} 
\ar@{^{(}->}[d]^{\chi_2} 
\\
(H^\udot(\P))^r_{\Delta(t^\circ)}  \ar[ru]_{\tchi_1} & & & 
\wedge^r H^\udot(\P)_{\chi_2(\chi_1(\Delta(t^\circ)))} 
\\ 
H^\udot(\P)_{t^\circ} \ar@{_{(}->}[u]^{\Delta} 
\ar@{^{(}->}[rruu]_{\psi} \ar[rrru]_{f} 
& & & 
}
\]
The maps $\tchi_1$, $\tchi_2$ are such that 
\begin{itemize} 
\item the pre-Saito structure over $(H^\udot(\P))^r_{\Delta(t^\circ)}$ 
defined by the $r$-fold external tensor product of the big quantum connection 
of $\P$ equals the pull-back of $\nabla^{\otimes r}$ by $\tchi_1$; 
\item the pre-Saito structure $\nabla^{\Sym}$ 
over $\Sym^r H^\udot(\P)_{\chi_1(\Delta(t^\circ))}$ 
equals the pull-back of $\nabla^{\wedge r}$ by $\tchi_2$. 
\end{itemize} 
Again by the proof of \cite[Corollary 1.7]{KimSab}, the 
(non-injective) map $\tchi_1$ is a primitive 
of the infinitesimal period mapping of the pre-Saito structure 
on $(H^\udot(\P))^r_{\Delta(t^\circ)}$ (which is the $r$-fold 
external tensor product of the big quantum connection 
of $\P$) attached to $1\otimes 1\otimes \cdots \otimes 1$, 
and thus given by: 
\[
\tchi_1(\tau_1,\dots,\tau_r) = \sum_{i=1}^r 1 \otimes \cdots \otimes
\overset{\text{$i$th}}{\tau_i} \otimes \cdots \otimes 1. 
\]
Pre-composed with the diagonal map 
$\Delta\colon H^\udot(\P) \to (H^\udot(\P))^r$, 
$\tchi_1$ induces a map $\psi \colon H^\udot(\P)_{t^\circ} 
\to \Sym^r H^\udot(\P)_{\chi_1(\Delta(t^\circ))}$. 
We have that the pre-Saito structure on $H^\udot(\P)_{t^\circ}$ 
defined as the $r$-fold tensor product of the big quantum connection of 
$\P$ is isomorphic to the pull-back of $\nabla^{\otimes r}|_{\Sym^r H^\udot(\P)}$ 
by $\psi$, where the identification of fibers is the identity 
of $\otimes^r H^\udot(\P)$. 
Therefore the pull-back of the pre-Saito structure $\nabla^{\Sym}$ (defined 
as the anti-symmetric part of $\nabla^{\otimes r}|_{\Sym^r H^\udot(\P)}$) 
by $\psi$ is naturally identified with the $r$-th wedge power $\nabla^{\wedge \P}$ 
of the big quantum connection of $\P$. 
Hence the composition $f = \tchi_2 \circ \psi$ pulls back the 
pre-Saito structure $\nabla^{\wedge r}$ to the 
pre-Saito structure $\nabla^{\wedge \P}$. 
Since the pre-Saito structure $\nabla^{\wedge r}$ is identified  
with the quantum connection $\nabla^{\G}$ of $\G$ near $f(t^\circ)$, 
we have $f^* \nabla^\G \cong \nabla^{\wedge \P}$ 
under the Satake identification. 
The scalar factor $\iu^{r(r-1)/2}$ is put to make the pairings 
match (see Lemma \ref{lem:Satake_pairing}). 

Finally we show that the map $f$ 
is an embedding of germs. 
It suffices to show that the differential of $f$ at the base point 
$t^\circ = (r-1)\pi\iu\sigma_1$ is injective. 
Since we already know that $\nabla^\G$ is pulled back to 
$\nabla^{\wedge \P}$, it suffices to check that 
$z \nabla^{\wedge \P}_{\sigma_k} 
(\sigma_{r-1} \wedge \cdots \wedge \sigma_0)|_{t^\circ}$, 
$k=0,\dots,N-1$ are linearly independent, as they 
correspond to $z\nabla^\G_{df(\sigma_k)} 1 = df(\sigma_k)$. 
This follows from a straightforward computation. 
\end{proof}

\begin{corollary} 
\label{cor:semisimple_G}
Let $\psi_1,\dots,\psi_N$ be the idempotent basis of the quantum 
cohomology of $\P$ at $\u \in C_N(\C)\sptilde$ near $\u^\circ$ 
and write $\Delta_i = (\psi_i,\psi_i)_\P^{-1}$. 
Let $f$ be the embedding in Theorem \ref{thm:wedge_conn}. 
Then we have: 
\begin{enumerate}
\item the quantum product $\star_\tau$ of $\G$ is semisimple 
near $\tau=0$; 
\item the idempotent basis of $\G$ at $\tau=f(\u)$ 
is given by 
\[
\left( {\textstyle\prod_{a=1}^r \Delta_{i_a}}\right) 
\det\left((\psi_{i_a}, \sigma_{r-b})_{\P}\right)_{1\le a,b\le r}
\Sat(\psi_{i_1}\wedge \cdots \wedge \psi_{i_r}) 
\]
with $1\le i_1<i_2<\cdots<i_r\le N$; 
\item the eigenvalues of the Euler multiplication $(E^\G\star_\tau)$ 
of $\G$ at  $\tau = f(\u)$ are given by $u_{i_1} + \cdots  + u_{i_r}$ with 
$1\le i_1< i_2 < \cdots < i_r \le N$. 
\end{enumerate} 
\end{corollary}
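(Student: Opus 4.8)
The plan is to derive Corollary \ref{cor:semisimple_G} directly from Theorem \ref{thm:wedge_conn}, using the fact that the $r$-th wedge of a semisimple pre-Saito structure is again semisimple with an explicitly described idempotent basis. First I would note that, by Theorem \ref{thm:wedge_conn}, the big quantum connection $\nabla^\G$ of $\G$ is identified (via the rescaled Satake map $\iu^{r(r-1)/2}\Sat$ and the embedding $f$) with the $r$-th wedge connection $\nabla^{\wedge \P}$ of the big quantum connection of $\P$ near $\u^\circ$. Since the endomorphism $U^\G$ of $\nabla^\G$ (the Euler multiplication $E^\G\star_\tau$) corresponds under this identification to the derivation $\xi_1\wedge\cdots\wedge\xi_r \mapsto \sum_i \xi_1\wedge\cdots\wedge(U^\P\xi_i)\wedge\cdots\wedge\xi_r$ induced on $\wedge^r H^\udot(\P)$ by $U^\P = E^\P\star_\tau$, everything reduces to a piece of linear algebra about wedge powers of a diagonalizable operator.

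The linear-algebra step goes as follows. At a point $\u\in C_N(\C)\sptilde$ near $\u^\circ$, the operator $U^\P = c_1(\P)\star_\tau$ on $H^\udot(\P)$ is semisimple with eigenbasis the normalized idempotents (or just the idempotents $\psi_1,\dots,\psi_N$), with $U^\P\psi_i = u_i\psi_i$. Then the induced derivation on $\wedge^r H^\udot(\P)$ has eigenvectors $\psi_{i_1}\wedge\cdots\wedge\psi_{i_r}$ with eigenvalue $u_{i_1}+\cdots+u_{i_r}$, for $1\le i_1<\cdots<i_r\le N$; this immediately gives parts (1) and (3) (semisimplicity of $\star_\tau^\G$ near $\tau=0$, since $\tau=0$ corresponds to $\tau^\P = (r-1)\pi\iu\sigma_1$ which is in the semisimple locus, and the stated eigenvalue formula). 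For part (2) I need the idempotents rather than just the eigenvectors: the idempotent of $(H^\udot(\G),\star_\tau)$ corresponding to the eigenvalue $u_{i_1}+\cdots+u_{i_r}$ is the image under $\Sat$ of the normalized multivector, but one must correct for the Poincaré pairing. The ring-theoretic characterization $\psi_i^\G\star\psi_j^\G = \delta_{ij}\psi_i^\G$ together with $(\psi_i^\G,\psi_i^\G)_\G = \Delta_i^{-1}$ (the inverse metric coefficient) pins down $\psi_i^\G$ up to nothing once we know the eigenline. So I would compute $(\Sat(\psi_{i_1}\wedge\cdots\wedge\psi_{i_r}),\Sat(\psi_{i_1}\wedge\cdots\wedge\psi_{i_r}))_\G$ using Lemma \ref{lem:Satake_pairing}, which says $(\cdot,\cdot)_{\wedge\P} = (-1)^{r(r-1)/2}(\Sat(\cdot),\Sat(\cdot))_\G$, and then express $(\psi_{i_1}\wedge\cdots\wedge\psi_{i_r},\psi_{i_1}\wedge\cdots\wedge\psi_{i_r})_{\wedge\P}$ as the Gram determinant $\det((\psi_{i_a},\psi_{i_b})_\P)_{a,b}$. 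Because the $\psi_i$ are orthogonal under the Satake-pulled-back pairing — they are idempotents for $\star_\tau^\P$ at a non-torsion point and the pairing is the Frobenius pairing — this determinant is diagonal, $\prod_a \Delta_{i_a}^{-1}$, so normalizing forces the scalar $\prod_a\Delta_{i_a}$ times a sign, and the determinant factor $\det((\psi_{i_a},\sigma_{r-b})_\P)_{a,b}$ in the statement comes from expressing the Satake basis vectors $\sigma_{r-b}$ (equivalently the $\sigma_{\lambda_i+r-i}$'s appearing in \eqref{eq:Satake_id}) in terms of the idempotents $\psi_i$.

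More concretely, the determinant $\det((\psi_{i_a},\sigma_{r-b})_\P)_{1\le a,b\le r}$ arises because the idempotent of $\G$ is not literally $\Sat(\psi_{i_1}\wedge\cdots\wedge\psi_{i_r})$ — that multivector need not lie in the span of the basis $\{\sigma_{\lambda_1+r-1}\wedge\cdots\wedge\sigma_{\lambda_r}\}$ in a way that directly reads off as $\Sat$ applied to it unless one re-expands. Writing each $\psi_{i_a} = \sum_b (\psi_{i_a},\sigma_{N-1-b})_\P\, \sigma_b\cdot(\text{normalization})$ in the special Schubert basis $\{\sigma_0,\dots,\sigma_{N-1}\}$ of $H^\udot(\P)$, wedging, and applying $\Sat$, one gets the stated determinantal expression up to the overall factor $\prod_a\Delta_{i_a}$; the sign $(-1)^{r(r-1)/2}$ from Lemma \ref{lem:Satake_pairing} must cancel against the sign $\iu^{r(r-1)}=(-1)^{r(r-1)/2}$ coming from the $\iu^{r(r-1)/2}$-rescaling in the Satake bundle map, which is exactly why that scalar was inserted. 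I would verify this sign bookkeeping carefully, as that is the only genuinely delicate point.

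I expect the main obstacle to be precisely this sign and normalization bookkeeping in part (2): matching the $\iu^{r(r-1)/2}$ rescaling factor from Theorem \ref{thm:wedge_conn}, the $(-1)^{r(r-1)/2}$ from Lemma \ref{lem:Satake_pairing}, and the change-of-basis determinant $\det((\psi_{i_a},\sigma_{r-b})_\P)$, and checking that the resulting vector is genuinely the idempotent (i.e. squares to itself under $\star_\tau^\G$ with the correct normalization $(\cdot,\cdot)_\G$) rather than merely an eigenvector. Parts (1) and (3) should be essentially immediate once Theorem \ref{thm:wedge_conn} is invoked, since semisimplicity and the eigenvalue formula are formal consequences of taking wedge powers of a diagonalizable operator with distinct eigenvalues (and semisimplicity is an open condition, so it persists in a neighborhood of $f(\u^\circ)$, which contains $\tau=0$).
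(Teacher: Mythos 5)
Your plan follows the right general strategy—identify the quantum product of $\G$ with the wedge structure on $\wedge^r H^\udot(\P)$ via Theorem \ref{thm:wedge_conn} and Lemma \ref{lem:Satake_pairing}, diagonalize, and normalize—and part (3) is handled correctly. But there are two genuine gaps.

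For part (1), your argument appeals to the Euler field. You claim that since the derivation of $E^\P\star_\u$ on $\wedge^r H^\udot(\P)$ has eigenvectors $\psi_{i_1}\wedge\cdots\wedge\psi_{i_r}$, semisimplicity follows ``immediately.'' This is not quite right: the eigenvalues $u_{i_1}+\cdots+u_{i_r}$ of $(E^\G\star_{f(\u)})$ need \emph{not} be pairwise distinct (the paper points this out explicitly in Remark \ref{rem:eigenvalues_G}), and diagonalizability of a single multiplication operator with repeated eigenvalues does not by itself imply that the commutative algebra is semisimple. The paper's proof picks a \emph{generic} $w\in H^\udot(\P)$ whose derivation on $\wedge^r$ does have pairwise distinct eigenvalues, then uses the crucial feature of Theorem \ref{thm:wedge_conn} that the identification preserves \emph{all} multiplication operators (not just the Euler one), so $df_\u(w)\star_{f(\u)}$ has $\dim H^\udot(\G)$ distinct eigenvalues; that is what forces the algebra to be semisimple. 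You should replace the Euler field by such a generic $w$.

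For part (2), the normalization constant $c$ in $\psi_{i_1,\dots,i_r} = c\,\Sat(\psi_{i_1}\wedge\cdots\wedge\psi_{i_r})$ is pinned down not by independently ``knowing'' the norm of the $\G$-idempotent, but by the Frobenius-algebra identity $(\psi,\psi)_\G = (\psi\star\psi,1)_\G = (\psi,1)_\G$, which is where the factor $\det\bigl((\psi_{i_a},\sigma_{r-b})_\P\bigr)$ actually comes from: one has $1 = \Sat(\sigma_{r-1}\wedge\cdots\wedge\sigma_0)$, so $(\psi^\G,1)_\G$ translates under Lemma \ref{lem:Satake_pairing} to $c\,(-1)^{r(r-1)/2}\det\bigl((\psi_{i_a},\sigma_{r-b})_\P\bigr)$, while $(\psi^\G,\psi^\G)_\G$ becomes $c^2(-1)^{r(r-1)/2}\prod_a\Delta_{i_a}^{-1}$, and equating gives $c = \bigl(\prod_a\Delta_{i_a}\bigr)\det\bigl((\psi_{i_a},\sigma_{r-b})_\P\bigr)$. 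Your proposal to obtain this determinant by re-expanding $\sigma_{r-b}$ in the idempotent basis is not what is needed, and your worry about a delicate cancellation between the $\iu^{r(r-1)/2}$ rescaling in Theorem \ref{thm:wedge_conn} and the $(-1)^{r(r-1)/2}$ in Lemma \ref{lem:Satake_pairing} is a red herring here: both sides of $(\psi,\psi)_\G=(\psi,1)_\G$ pick up the same $(-1)^{r(r-1)/2}$ upon passing to $(\cdot,\cdot)_{\wedge\P}$, so the sign drops out without ever interacting with the $\iu$-factor.
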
 
\begin{proof} 
The quantum product $\star_\tau$ for $\G$ is semisimple if and only 
if there exists a class $v \in H^\udot(\G)$ such that the endomorphism 
$(v \star_\tau)$ is semisimple with pairwise distinct eigenvalues. 
In this case, each eigenspace of $(v\star_\tau)$ contains a unique 
idempotent basis vector. 
On the other hand, since the quantum product of $\P$ is semisimple, 
we can find $w \in H^\udot(\P)$ such that the action 
of $(w\star_\u)$ on $\wedge^r H^\udot(\P)$ is semisimple with 
pairwise distinct eigenvalues. 
Theorem \ref{thm:wedge_conn} implies that $w\star_\u$ is 
conjugate to $df_\u(w) \star_{f(\u)}$ under $\Sat$. 
This proves Part (1). 
Moreover the eigenspace $\C \Sat(\psi_{i_1} \wedge 
\cdots \wedge \psi_{i_r})$, $i_1<i_2<\cdots<i_r$ 
of $df_\u(w) \star_{f(\u)}$ contains a unique idempotent basis vector 
$\psi_{i_1,\dots,i_r}\in H^\udot(\G)$. 
Set $\psi_{i_1,\dots,i_r} = c \Sat(\psi_{i_1} 
\wedge \cdots \wedge \psi_{i_r})$. 
Then we have 
\[
(\psi_{i_1,\dots,i_r}, \psi_{i_1,\dots,i_r})_\G 
= (\psi_{i_1,\dots, i_r} \star_{f(\u)} \psi_{i_1,\dots,i_r}, 1)_\G 
= (\psi_{i_1,\dots,i_r},1)_\G.  
\]
This implies by Lemma \ref{lem:Satake_pairing} that: 
\[
c^2 (\psi_{i_1}\wedge \cdots \wedge \psi_{i_r}, 
\psi_{i_1}\wedge \cdots \wedge \psi_{i_r})_{\wedge \P} 
= c (\psi_{i_1}\wedge \cdots \wedge \psi_{i_r}, 
\sigma_{r-1} \wedge \cdots \wedge \sigma_0)_{\wedge \P}. 
\]
Part (2) follows from this. 
Part (3) follows from the fact that the Euler multiplication 
$(E^\P\star_\tau)$ on $\wedge^r H^\udot(\P)$ is conjugate 
to $(E^\G\star_{f(\u)})$ on $H^\udot(\G)$ by $\Sat$. 
\end{proof} 

\subsection{The wedge product of MRS}
\label{subsec:wedge_MRS} 
Let $M= (V,[\cdot,\cdot), \{v_1,\dots,v_N\}, m: v_i \mapsto u_i, e^{\iu\phi})$ 
be an MRS (see \S \ref{subsec:MRS}). 
The $r$-th wedge product $\wedge^r M$ is defined by the data: 
\begin{itemize} 
\item the vector space $\wedge^r V$; 
\item the pairing $[\alpha_1\wedge \cdots \wedge \alpha_r, 
\beta_1 \wedge \cdots \wedge \beta_r) := 
\det([\alpha_i, \beta_j))_{1\le i,j\le r}$; 
\item the basis $\{v_{i_1}\wedge \cdots \wedge v_{i_r} :
1\le i_1<\cdots<i_r\le N\}$; 
\item the marking $m: v_{i_1} \wedge \cdots \wedge v_{i_r} 
\mapsto u_{i_1}+ \cdots + u_{i_r}$; 
\item the same phase $e^{\iu\phi}$. 
\end{itemize} 
Note that the basis $\{v_{i_1}\wedge \cdots \wedge v_{i_r}\}$ 
is determined up to sign because $\{v_1,\dots,v_N\}$ is an 
unordered basis. In other words, $\wedge^r M$ is \emph{defined up to sign}. 
The following lemma shows that the above data is indeed an MRS.  
Recall that $h_\phi(u) = \im(e^{-\iu\phi} u)$. 
\begin{lemma} 
Let $1\le i_1 < \cdots < i_r \le N$, $1\le j_1 < \cdots < j_r \le N$ 
be increasing sequences of integers. 
If $h_{\phi}(u_{i_1} + \cdots + u_{i_r}) = h_{\phi}(u_{j_1} + \cdots 
+ u_{j_r})$, we have 
\[
[v_{i_1}\wedge \cdots \wedge v_{i_r}, v_{j_1} \wedge \cdots \wedge 
v_{j_r}) = 
\begin{cases} 
1 & \text{ if } i_a = j_a \text{ for all $a=1,\dots,r$;} \\ 
0 & \text{ otherwise}. 
\end{cases}
\]
\end{lemma}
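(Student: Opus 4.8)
The plan is to reduce the statement to the semiorthonormality relation \eqref{eq:semiorth_MRS} for the original MRS $M$ by a careful combinatorial analysis of the determinant $\det([v_{i_a},v_{j_b}))_{1\le a,b\le r}$. First I would reorder the eigenvalues: since an MRS carries an unordered basis, I may choose the enumeration so that $h_\phi(u_1)\ge h_\phi(u_2)\ge\cdots\ge h_\phi(u_N)$, with ties broken arbitrarily but fixed once and for all; this makes $(V,[\cdot,\cdot),v_1,\dots,v_N)$ into an SOB, so $[v_i,v_j)=\delta_{ij}$ whenever $i\ge j$, and more generally $[v_i,v_j)=0$ whenever $h_\phi(u_i)\le h_\phi(u_j)$ and $i\neq j$. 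The key observation is then the following monotonicity fact: if $I=(i_1<\cdots<i_r)$ and $J=(j_1<\cdots<j_r)$ satisfy $h_\phi(u_{i_1}+\cdots+u_{i_r})=h_\phi(u_{j_1}+\cdots+u_{j_r})$, i.e.\ $\sum_a h_\phi(u_{i_a})=\sum_b h_\phi(u_{j_b})$, then for each $k$ one has $h_\phi(u_{i_k})\ge h_\phi(u_{j_k})$ \emph{or} $h_\phi(u_{i_k})\le h_\phi(u_{j_k})$ cannot both fail to be forced in a useful direction — more precisely, I would prove that if $I\neq J$ then there is an index $k$ with $h_\phi(u_{i_k})<h_\phi(u_{j_k})$ \emph{and} an index $\ell$ with $h_\phi(u_{i_\ell})>h_\phi(u_{j_\ell})$, unless the multiset $\{h_\phi(u_{i_a})\}$ equals $\{h_\phi(u_{j_b})\}$.

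Granting that, I would split into two cases. If the multisets $\{h_\phi(u_{i_a})\}_a$ and $\{h_\phi(u_{j_b})\}_b$ are \emph{not} equal, then (by the equality of the sums and the monotone rearrangement inequality) there exists $k$ with $h_\phi(u_{i_k})<h_\phi(u_{i_{k-1}})$-type gap forcing a whole block of entries $[v_{i_a},v_{j_b})$ with $a\le k$, $b\ge k$ to vanish by \eqref{eq:semiorth_MRS}; this makes the determinant block-lower-triangular with a vanishing diagonal block, hence $[v_{i_1}\wedge\cdots\wedge v_{i_r},v_{j_1}\wedge\cdots\wedge v_{j_r})=0$, and one also checks $I\neq J$ here so the claimed value is $0$. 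If instead $\{h_\phi(u_{i_a})\}_a=\{h_\phi(u_{j_b})\}_b$ as multisets, I group the indices into \emph{level sets} according to the common value of $h_\phi$: within each level set the relevant $v_i$'s are mutually orthogonal with $[v_i,v_i)=1$ (by \eqref{eq:semiorth_MRS}, since $i\neq j$ and $h_\phi(u_i)=h_\phi(u_j)$ forces $[v_i,v_j)=[v_j,v_i)=0$), and across level sets the matrix is block-triangular (entries vanish whenever $h_\phi(u_{i_a})<h_\phi(u_{j_b})$). So the Gram matrix of the wedge vectors is block upper-triangular with each diagonal block being a permutation-of-identity submatrix of an identity matrix; its determinant is therefore $\pm1$ if $I=J$ and $0$ otherwise — and tracking the signs shows it is exactly $+1$ when $I=J$ (the diagonal block is literally the identity since both increasing sequences restricted to a level set coincide if the two multisets of levels agree and $I=J$; if $I\neq J$ but the level multisets agree, some diagonal block is a non-identity $0/1$ matrix with a zero on its diagonal relative to the increasing order, forcing determinant $0$).

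The main obstacle I anticipate is the bookkeeping in the mixed case where $h_\phi$-values coincide across several indices but the index sets $I,J$ differ: one must argue that block-triangularity together with the structure of the diagonal blocks (submatrices of identity matrices indexed by increasing sequences) forces the determinant to vanish unless $I=J$, and to be $+1$ (not merely $\pm1$) when $I=J$. This is where I would be most careful: the sign comes out right precisely because, after sorting by $h_\phi$ and then by the original index within each level, the diagonal blocks are \emph{identity} matrices rather than arbitrary permutation matrices, so no sign is introduced. Everything else — the reduction to \eqref{eq:semiorth_MRS}, the rearrangement-inequality argument producing a vanishing off-diagonal block in the unequal-multiset case — is routine, and I would present it compactly rather than expanding every inequality.
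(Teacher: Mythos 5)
Your approach is genuinely different from the paper's, and as written it has a real error in Case~1.

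The paper's proof is a single observation applied to the Leibniz expansion: a term $\prod_{a}[v_{i_a},v_{j_{\sigma(a)}})$ can be nonzero only if, for every $a$, either $i_a=j_{\sigma(a)}$ or $h_\phi(u_{i_a})>h_\phi(u_{j_{\sigma(a)}})$, by \eqref{eq:semiorth_MRS}. Summing over $a$ gives $\sum_a h_\phi(u_{i_a})\ge\sum_b h_\phi(u_{j_b})$ with equality only when $i_a=j_{\sigma(a)}$ for all $a$; the hypothesis forces equality, hence $\{i_a\}=\{j_b\}$, so $I=J$ and then $\sigma=\mathrm{id}$. Thus every Leibniz term vanishes when $I\neq J$, and when $I=J$ only the identity term survives, equal to $1$. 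This one step handles both of your cases simultaneously, with no level-set or block decomposition.

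In your Case~1 the stated zero block is in the wrong corner. You place it at $a\le k$, $b\ge k$; but with the enumeration $h_\phi(u_1)\ge\cdots\ge h_\phi(u_N)$, rows $a\le k$ carry the \emph{largest} values of $h_\phi(u_{i_a})$ and columns $b\ge k$ the \emph{smallest} values of $h_\phi(u_{j_b})$, so \eqref{eq:semiorth_MRS} (which zeros $[v_i,v_j)$ only when $h_\phi(u_i)\le h_\phi(u_j)$) gives nothing there. The vanishing block sits in the opposite corner: take the smallest index $m$ with $h_\phi(u_{i_m})<h_\phi(u_{j_m})$ (such an $m$ exists when the level multisets differ but the sums agree), and then $h_\phi(u_{i_a})<h_\phi(u_{j_b})$, hence $i_a\neq j_b$ and $[v_{i_a},v_{j_b})=0$, for all $a\ge m$ and $b\le m$; this is a zero block with $(r-m+1)+m=r+1>r$ rows plus columns, which kills the determinant. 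The ``$h_\phi(u_{i_k})<h_\phi(u_{i_{k-1}})$-type gap'' you invoke compares two row indices and is not the relevant inequality; what matters is a row-versus-column comparison. Your Case~2 is correct once Case~1 is repaired, but the whole case split can be avoided: the paper's sum-over-permutations argument is both shorter and uniform.
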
 
\begin{proof} 
Suppose that $\prod_{a=1}^r [v_{i_a},v_{j_{\sigma(a)}}) \neq 0$ 
for some permutation $\sigma \in \frS_r$. 
Then for each $a$, we have either $i_a = j_{\sigma(a)}$ 
or $h_\phi(u_{i_a})>h_\phi(u_{j_{\sigma(a)}})$ 
by \eqref{eq:semiorth_MRS}. 
The assumption implies that $i_a = j_{\sigma(a)}$ for all $a$; 
this happens only when $\sigma=\id$. 
The lemma follows. 
\end{proof} 

The wedge product of an admissible MRS is not necessarily admissible. 
\begin{remark} 
We can define the tensor product of two MRSs similarly. 
\end{remark} 

\begin{remark} 
We can show that, when two MRSs $M_1$ and $M_2$ are related by 
mutations (see \S \ref{subsec:MRS_mutation}), 
$\wedge^r M_1$ and $\wedge^r M_2$ are also related by 
mutations. We omit a proof of this fact since we do not use 
it in this paper; the details are left to the reader. 
\end{remark}

\subsection{MRS of Grassmannian} 
Using the result from \S \ref{subsec:wedge_conn}, 
we show that the MRS of $\G$ is isomorphic to the wedge product 
of the MRS of $\P$. 
By the results in \S \ref{subsec:wedge_conn}, the flat connections 
$\nabla^{\P}$ and $\nabla^{\wedge\P}$ on the base $C_N(\C)\sptilde$ 
give isomonodromic deformations of the quantum connections 
of $\P$ and $\G$ respectively. 
Therefore we can define the MRSs of $\P$ or $\G$ 
at a point $\u\in C_N(\C)\sptilde$ with respect to 
an admissible phase $\phi$ 
following \S \ref{subsec:isomonodromy} 
(and \S\ref{subsec:MRS_Fano}). 

\begin{proposition} 
\label{prop:MRS_G}
Let $(H^\udot(\P), [\cdot,\cdot), \{A_1,\dots,A_N\}, 
A_i \mapsto u_i, e^{\iu\phi})$ 
be the MRS of $\P$ 
at $\u \in C_N(\C)\sptilde$ with respect to phase $\phi$. 
Suppose that the phase $\phi$ is admissible for the $r$-th wedge 
$\{u_{i_1}+\cdots + u_{i_r} : i_1<i_2<\cdots<i_r\}$ of the spectrum 
of $(E\star_\u)$. 
Then the MRS of $\G$ at $\u$ with respect to $\phi$ 
is given by the asymptotic basis 
\[
\frac{1}{(2\pi \iu)^{r(r-1)/2}} 
e^{-(r-1) \pi\iu \sigma_1}  
\Sat(A_{i_1}\wedge \cdots \wedge A_{i_r})
\]
marked by $u_{i_1} + \cdots + u_{i_r}$ 
with $1\le i_1<i_2<\cdots<i_r\le N$, where 
$\Sat$ is the Satake identification \eqref{eq:Satake_id}.   
In other words, one has $\MRS(\G,\u,\phi)\cong 
\wedge^r \MRS(\P,\u,\phi)$ (see \S \ref{subsec:wedge_MRS}). 
\end{proposition}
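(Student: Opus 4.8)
\textbf{Proof strategy for Proposition \ref{prop:MRS_G}.}

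The plan is to transport the statement about flat sections along the quantum connection into the wedge-power picture established in Theorem \ref{thm:wedge_conn}, and then verify that the asymptotically exponential flat sections for $\G$ are precisely the wedge products of those for $\P$ (up to the scalar $\iu^{r(r-1)/2}$ and the gauge factor $z^{-\mu}$ read at $z=\infty$). First I would recall from \S\ref{subsec:MRS_Fano} that the MRS of a Fano manifold at $\u$ with admissible phase $\phi$ is built from the asymptotically exponential fundamental solution $y_1(\u,z),\dots,y_N(\u,z)$ of Proposition \ref{prop:repeated_eigenvalues}, parallel-transported to $\tau=0$, $\arg z=0$, $|z|\gg 1$ and read off against the fundamental solution $S(z)z^{-\mu}z^\rho$. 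By Theorem \ref{thm:wedge_conn}, the bundle map $\alpha\mapsto \iu^{r(r-1)/2}\Sat(\alpha)$ intertwines $\nabla^{\wedge\P}$ with $f^*\nabla^\G$ and the pairings; hence if $y_{i_1}(\u,z),\dots,y_{i_N}(\u,z)$ are the asymptotically exponential flat sections for $\nabla^\P$ at $\u$, then $\iu^{r(r-1)/2}\Sat(y_{i_1}\wedge\cdots\wedge y_{i_r})$ are flat sections for $\nabla^\G$ at $f(\u)$, marked by $u_{i_1}+\cdots+u_{i_r}$.

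The key step is to check that these wedge sections satisfy the defining asymptotic condition \eqref{eq:Y_asymp} of Proposition \ref{prop:repeated_eigenvalues}, i.e.~that $\Sat(y_{i_1}\wedge\cdots\wedge y_{i_r})\,e^{(u_{i_1}+\cdots+u_{i_r})/z} \to$ a normalized idempotent of $\G$ as $z\to 0$ in the sector $|\arg z-\phi|<\tfrac{\pi}{2}+\epsilon$. From $y_{i_a}(\u,z)e^{u_{i_a}/z}\to \Psi_{i_a}^\P = \psi_{i_a}/\sqrt{(\psi_{i_a},\psi_{i_a})_\P}$ and multiplicativity of $e^{U/z}$ on wedge powers, the product converges to $\Psi_{i_1}^\P\wedge\cdots\wedge\Psi_{i_r}^\P$; applying $\iu^{r(r-1)/2}\Sat$ and comparing with Corollary \ref{cor:semisimple_G}(2) shows this is exactly the normalized idempotent $\Psi^\G_{i_1,\dots,i_r}$ of $\G$ at $\tau=f(\u)$ — here one uses Lemma \ref{lem:Satake_pairing} to match the normalizing square roots, and the admissibility hypothesis on $\phi$ for the wedge spectrum guarantees the sector has angle $>\pi$, so that uniqueness (Proposition \ref{prop:repeated_eigenvalues}(1)) identifies these as \emph{the} asymptotically exponential flat sections of $\nabla^\G$. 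This pins down the asymptotic basis of $\G$ as $\iu^{r(r-1)/2}\Sat(A_{i_1}\wedge\cdots\wedge A_{i_r})$ up to the change of framing.

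It then remains to reconcile the framing. The asymptotic basis $A_i$ of $\P$ is extracted at $\tau=0$, but the relevant base point for the wedge construction is $\u^\circ$, corresponding to $\tau=(r-1)\pi\iu\sigma_1$ for $\P$ and $\tau=0$ for $\G$ via $f$; the shift operator in Lemma \ref{lem:S_shift}-type formulas (cf.~\eqref{eq:qprod_scaling}) produces the factor $e^{-(r-1)\pi\iu\sigma_1}$, and the Satake pairing discrepancy produces the $(2\pi\iu)^{-r(r-1)/2}$ normalization. More precisely, I would track how $S^\P(z)z^{-\mu}z^\rho$ on $\wedge^r H^\udot(\P)$ relates to $S^\G(z)z^{-\mu}z^\rho$ under $f^*$: Theorem \ref{thm:wedge_conn} gives an isomorphism of connections, so the two canonical fundamental solutions (uniquely characterized in Proposition \ref{prop:fundsol} by $S(\infty)=\id$ and $T(z)$ regular) must agree up to the constant gauge $\iu^{r(r-1)/2}\Sat$ composed with the $z^\rho$-commuting shift $e^{-(r-1)\pi\iu\sigma_1}$ coming from the base-point translation along $H^2$. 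Assembling these constants yields the stated formula $\tfrac{1}{(2\pi\iu)^{r(r-1)/2}} e^{-(r-1)\pi\iu\sigma_1}\Sat(A_{i_1}\wedge\cdots\wedge A_{i_r})$, and the marking $u_{i_1}+\cdots+u_{i_r}$ is forced by Corollary \ref{cor:semisimple_G}(3).

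\textbf{Main obstacle.} The delicate point is the bookkeeping of the two scalar/gauge factors — the power of $2\pi\iu$ and the exponential shift $e^{-(r-1)\pi\iu\sigma_1}$ — since Theorem \ref{thm:wedge_conn} identifies connections only up to the scalar $\iu^{r(r-1)/2}$ on fibers and up to the embedding $f$ of germs, whereas the MRS is defined using the \emph{specific} normalized fundamental solution at $z=\infty$ and the \emph{specific} base points ($\tau=0$ for both $\P$ and $\G$ in \S\ref{subsec:MRS_Fano}, but $\u^\circ\leftrightarrow(r-1)\pi\iu\sigma_1$ here). One must carefully check that moving the $\P$-framing from $\tau=0$ to $\tau=(r-1)\pi\iu\sigma_1$ contributes exactly $e^{-(r-1)\pi\iu\sigma_1}$ (via \eqref{eq:qprod_scaling} and the compatibility of $S(\tau,z)$ with $H^2$-shifts) and that the wedge of $S^\P z^{-\mu}z^\rho$ differs from $S^\G z^{-\mu}z^\rho$ by precisely the claimed constant — the $z^{-\mu}$ and $z^\rho$ parts match automatically by Lemma \ref{lem:Satake_grading} and $c_1(\G)=N\sigma_1 \leftrightarrow c_1(\P)=N\sigma_1$ acting diagonally on wedge powers, so the entire discrepancy is a genuine constant that must be computed once and for all.
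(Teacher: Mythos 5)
Your proposal is essentially the paper's proof: wedge the asymptotically exponential flat sections of $\P$, check their asymptotics against the normalized idempotents of $\G$ via Theorem \ref{thm:wedge_conn} and Corollary \ref{cor:semisimple_G}, invoke uniqueness from Proposition \ref{prop:repeated_eigenvalues}, and then account for the two normalization factors (base-point shift and Satake scalar). One point worth sharpening in your framing-reconciliation step: at $\tau=t^\circ=(r-1)\pi\iu\sigma_1$ the solution $S^\P(t^\circ,z)z^{-\mu^\P}z^{\rho^\P}$ is \emph{not} a canonical fundamental solution in the sense of Proposition \ref{prop:fundsol}, since $z^{\mu^\P}S^\P(t^\circ,z)z^{-\mu^\P}\to e^{-(t^\circ\cup)}\neq\id$ at $z=\infty$ (Lemma \ref{lem:S_shift}); the paper handles this by \emph{defining} $S^\G(z)\Sat(\alpha):=e^{(t^\circ\cup)/z}\Sat(S^\P(t^\circ,z)\alpha)$ with the compensating gauge $e^{(t^\circ\cup)/z}$, verifying this is the canonical solution for $\G$, and then the factor $e^{-(t^\circ\cup)}$ drops out exactly where you predicted. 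Your appeal to uniqueness of canonical solutions is therefore the right instinct, but it requires inserting this explicit gauge rather than saying the solutions ``must agree up to constants.''
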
 
\begin{proof} 
Let $y_1(z),\dots,y_N(z)$ be the basis of asymptotically 
exponential flat sections for $\nabla^\P|_{\u}$ 
corresponding to $A_1,\dots,A_N$. 
They are characterized by the asymptotic condition 
$y_i(z) \sim e^{-u_i/z} (\Psi_i + O(z))$ in the sector 
$|\arg z - \phi|<\frac{\pi}{2}+ \epsilon$ for some $\epsilon>0$, 
where $\Psi_1,\dots,\Psi_N$ are normalized idempotent basis 
for $\P$. 
For $1\le i_1<i_2<\cdots<i_r\le N$, 
$y_{i_1,\dots,i_r}(z) := y_{i_1}(z) \wedge \cdots \wedge y_{i_r}(z)$ 
gives a flat section 
for $\nabla^{\wedge \P}|_\u$ and satisfies 
the asymptotic condition 
\[
y_{i_1,\dots,i_r}(z) \sim 
e^{-(u_{i_1} + \cdots +u_{i_r})/z} 
(\Psi_{i_1} \wedge \cdots \wedge \Psi_{i_r} + O(z))
\]
in the same sector. 
Note that $\iu^{r(r-1)/2} \Sat(\Psi_{i_1} \wedge \cdots \wedge \Psi_{i_r})$ 
give (analytic continuation of) the normalized idempotent basis 
for $\G$ by Theorem \ref{thm:wedge_conn} and 
Corollary \ref{cor:semisimple_G}. 
Therefore the asymptotically exponential flat sections
$\iu^{r(r-1)/2}\Sat(y_{i_1,\dots,i_r}(z))$ 
give rise to the asymptotic basis of $\G$ for $\u$ and $\phi$. 

Let $S^\P(\tau,z) z^{-\mu^\P} z^{\rho^\P}$ with 
$\tau\in H^\udot(\P)$ 
denote the fundamental solution for the big quantum connection 
of $\P$ as in Remark \ref{rem:S_general_tau}. 
The group elements $S^\P(\tau,z), z^{-\mu^\P}, z^{\rho^\P} 
\in GL(H^\udot(\P))$ naturally act on the 
$r$-th wedge representation $\wedge^r H^\udot(\P)$; 
we denote these actions by the same symbols. 
Define the $\End(H^\udot(\G))$-valued function $S^\G(z)$ by 
\[
S^\G(z) \Sat(\alpha) 
= e^{(t^\circ \cup)/z} \Sat(S^\P(t^\circ,z) \alpha) 
\qquad \text{with }
t^\circ := (r-1)\pi\iu \sigma_1
\]
for all $\alpha \in \wedge^r H^\udot(\P)$. 
This satisfies $S^\G(z=\infty) =\id_{H^\udot(\G)}$. 
Using Lemma \ref{lem:Satake_grading} and the `classical' Satake 
for the cup product by $c_1$ 
(cf.~Proposition \ref{prop:q_Satake}), we find: 
\begin{align*}
z^{\rho^\G} \Sat(\alpha) 
=  \Sat( z^{\rho^\P} \alpha), \qquad 
z^{\mu^\G} \Sat(\alpha) 
= \Sat( z^{\mu^\P}\alpha)
\end{align*}
where $\mu^\G$ is the grading operator of $\G$ and 
$\rho^\G = (c_1(\G) \cup)$. 
Therefore: 
\begin{equation}
\label{eq:SG_Sat}
S^\G(z) z^{-\mu^\G} z^{\rho^\G} 
\Sat(\alpha) 
= e^{(t^\circ \cup)/z} \Sat( 
S^\P(t^\circ,z)  z^{-\mu^\P} z^{\rho^\P} \alpha). 
\end{equation} 
These sections are flat for $\nabla^\G|_{\tau=0}$ by the `quantum' Satake 
(or Theorem \ref{thm:wedge_conn}). 
Note that we have: 
\[
z^{\mu^\G} S^\G(z) z^{-\mu^\G} 
\Sat(\alpha) \\
= e^{(t^\circ \cup)} \Sat(z^{\mu^\P}S^\P(t^\circ,z) z^{-\mu^\P} 
\alpha). 
\] 
By Lemma \ref{lem:S_shift} below, 
we have $[z^{\mu^\G} S^\G(z) z^{-\mu^\G}]_{z=\infty} 
=\id_{H^\udot(\G)}$.  
Hence $S^\G(z) z^{-\mu^\G} z^{\rho^\G}$ coincides 
with the fundamental solution of $\G$ from Proposition \ref{prop:fundsol}. 

The asymptotic basis $A_1,\dots,A_N$ of $\P$ are related to 
$y_1(z),\dots,y_N(z)$ as (see \S \ref{subsec:MRS_Fano}) 
\[
y_i(z)\Big|_{\substack{\text{\tiny parallel transport} \\ 
\text{\tiny to $\u^\circ =\{\tau_\P=t^\circ\}$}}} = 
\frac{1}{(2\pi)^{\dim \P/2}}S^\P(t^\circ,z) z^{-\mu^\P} z^{\rho^\P} A_i. 
\]
This together with \eqref{eq:SG_Sat} and the definition of 
$y_{i_1,\dots,i_r}(z)$ implies that
\begin{multline*} 
\iu^{r(r-1)/2}\Sat( y_{i_1,\dots,i_r}(z)) 
\Big|_{\text{\tiny parallel transport to $\u^\circ$}} \\  
= \frac{1}{(2\pi)^{\dim \G/2}}  
S^\G(z) z^{-\mu^\G} z^{\rho^\G} 
\left[ \frac{1}{(-2\pi\iu)^{r(r-1)/2}} 
e^{-(t^\circ \cup)} 
\Sat(A_{i_1}\wedge \cdots \wedge A_{i_r})\right]. 
\end{multline*}
Recall that the base point $\u^\circ \in C_N(\C)\sptilde$ 
corresponds to $0\in H^\udot(\G)$ for $\G$ and 
to $t^\circ\in H^\udot(\P)$ for $\P$.  
The conclusion follows from this. 
(Note also that the asymptotic basis is defined only up to sign.) 
\end{proof} 

\begin{lemma}
\label{lem:S_shift} 
The fundamental solution $S(\tau,z)z^{-\mu} z^\rho$ 
in Remark \ref{rem:S_general_tau} satisfies 
$[z^{\mu} S(\tau,z)z^{-\mu}]_{z=\infty} = e^{-(\tau\cup)}$ 
for $\tau\in H^2(F)$.   
\end{lemma}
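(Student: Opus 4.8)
The plan is to unwind the definition of $S(\tau,z)$ from Remark \ref{rem:S_general_tau} restricted to $\tau \in H^2(F)$, using the divisor-axiom structure of the descendant invariants. Recall that the fundamental solution is given by the generating series
\[
(S(\tau,z)\alpha,\beta)_F = (\alpha,\beta)_F + \sum_{m\ge 0} \frac{1}{z^{m+1}} \sum_{d} \Ang{\alpha\psi^m, \beta \, e^{\tau}}_{0,2,d}^F e^{\int_d \tau_2}
\]
(the appropriate $\tau$-dependent extension of the formula in Remark \ref{rem:S_general_tau}); when $\tau = h \in H^2(F)$, the divisor axiom lets one pull out a factor $e^{\int_d h}$, so that the whole $\tau$-dependence of $S(h,z)$ organizes itself into the scaling that I want to isolate. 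First I would record that $T(\tau,z):= z^\mu S(\tau,z) z^{-\mu}$ is the gauge that puts the big quantum connection in Birkhoff normal form along $z=\infty$, and that for $\tau \in H^2(F)$ the scaling relation \eqref{eq:qprod_scaling} for the quantum product (together with the analogous $H^2$-translation symmetry of Gromov–Witten invariants, i.e.\ $S(\tau + h, z) = e^{-h/z} S(\tau, z) e^{h/z}$ up to the $H^2$-grading) forces $T(\tau,z)$ to be essentially independent of $\tau \in H^2(F)$ modulo a shift by $e^{-(\tau\cup)}$-type factors.

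The cleanest route, I expect, is the following. For $\tau = h \in H^2(F)$, use the known fact (stated implicitly in Remark \ref{rem:anticanonical} and Remark \ref{rem:S_general_tau}) that $S(h,z)$ can be written as $S(0,z) \cdot e^{-(h\cup)/z}$ adjusted by the $z^{-\mu}z^\rho$ framing; more precisely, flat sections of the big quantum connection restricted to $H^2(F)$ are obtained from those at $\tau=0$ by the substitution dictated by the divisor axiom, which yields $S(h,z) z^{-\mu} z^\rho = S(0,z) z^{-\mu} z^\rho e^{h}$ where $e^h$ acts by cup product. Rearranging, $S(h,z) = S(0,z) z^{-\mu} z^\rho \, e^{h} \, z^{-\rho} z^{\mu}$. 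Then conjugating by $z^\mu$:
\[
z^\mu S(h,z) z^{-\mu} = \left(z^\mu S(0,z) z^{-\mu}\right) z^{\mu} z^{-\mu} z^{\rho} e^h z^{-\rho} z^{\mu} z^{-\mu} = T(0,z) \cdot z^{\rho} e^{h} z^{-\rho}.
\]
Since $h \in H^2(F)$ has $z^\rho e^h z^{-\rho} = e^{h}$ is not quite right — one must track that $\rho = c_1(F)\cup$ commutes with $h\cup$, so $z^\rho e^h z^{-\rho} = e^h$ — and then evaluate at $z=\infty$ using $T(0,\infty) = \id$ from Proposition \ref{prop:fundsol}, giving $[z^\mu S(h,z) z^{-\mu}]_{z=\infty} = e^{h} = e^{(\tau\cup)}$. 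This has the wrong sign, so the bookkeeping of whether the framing shift is $e^{h}$ or $e^{-h}$ — equivalently, whether one transports from $\tau=0$ to $\tau=h$ or vice versa — is exactly the point that needs care; the correct normalization, matching the usage in the proof of Proposition \ref{prop:MRS_G} where $S^\G$ is built with a factor $e^{(t^\circ\cup)/z}$ and one wants $[z^{\mu^\G} S^\G(z) z^{-\mu^\G}]_{z=\infty} = \id$, forces the answer $e^{-(\tau\cup)}$.

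The main obstacle is therefore not any deep input but rather pinning down the precise $\tau$-translation law for $S(\tau,z)$ on $H^2(F)$ with the correct sign and the correct placement relative to the $z^{-\mu}z^\rho$ factors — i.e.\ verifying the identity $S(h,z) z^{-\mu} z^\rho = S(0,z) z^{-\mu} z^\rho e^{-h}$ (or its mirror) directly from the divisor axiom applied to the descendant potential in Remark \ref{rem:S_general_tau}, and then checking that $h\cup$ commutes with both $\mu$-conjugation (it shifts degree, so $z^\mu (h\cup) z^{-\mu} = z (h\cup)$, which is harmless at $z=\infty$ only after the $e^{\pm h/z}$ is absorbed) and with $\rho = c_1(F)\cup$. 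I would carry this out by writing $s(\tau,z) = S(\tau,z) z^{-\mu} z^\rho \alpha$, noting $\nabla_h s = 0$ means $\partial_h s + z^{-1}(h\star_\tau) s = 0$, and observing that at $\tau \in H^2(F)$ one has $h \star_\tau = h\cup$ on the associated graded / in the relevant limit, so the $h$-dependence integrates to $e^{-(h\cup)/z}$ acting on the left; then $z^\mu e^{-(h\cup)/z} z^{-\mu} = e^{-(h\cup)}$ exactly (the $z^{-1}$ cancels the degree shift), giving the clean evaluation at $z=\infty$. Once the sign is fixed consistently with \eqref{eq:qprod_scaling} and Remark \ref{rem:anticanonical}, the lemma follows in two lines.
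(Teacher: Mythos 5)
Your final paragraph does land on the paper's actual argument, but the middle ``cleanest route'' is based on an identity that is false, and the crucial finiteness input (Fano + divisor axiom) never gets named, so the proposal as written has real gaps.

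The relation $S(h,z)\,z^{-\mu}z^\rho = S(0,z)\,z^{-\mu}z^\rho\,e^{\pm h}$ in your middle paragraph is not correct. Since $S(\tau,z)z^{-\mu}z^\rho\alpha$ is flat in $\tau$ as well as in $z$, the ODE $\partial_h S + z^{-1}(h\star_\tau)S = 0$ along the segment $\tau(t)=th$ gives $S(h,z) = \overline{\exp}\bigl(-z^{-1}\int_0^1 (h\star_{th})\,dt\bigr)\,S(0,z)$: the transport factor sits on the \emph{left}, is $z$-dependent, and involves the full quantum product, not $h\cup$. It cannot be absorbed into a $z$-independent right-multiplication by $e^{\pm h}$; trying to push it across $z^{-\mu}$ replaces $e^{-(h\cup)/z}$ by $e^{-(h\cup)/z^2}$, not $e^{\pm h}$. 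So that route does not close.

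Your third paragraph is the right argument, but the step ``$h\star_\tau = h\cup$ in the relevant limit'' needs to be made precise, and the precise version is exactly what the paper proves: pass to $T(\tau,z) = z^\mu S(\tau,z)z^{-\mu}$ \emph{first}, so the $\tau$-direction ODE becomes $\partial_h T + z^{-1}z^\mu(h\star_\tau)z^{-\mu}\,T = 0$; then decompose $(h\star_\tau) = (h\cup) + \sum_{d\neq 0}Q_d$ and observe that $z^{-1}z^\mu(h\cup)z^{-\mu} = (h\cup)$ is $z$-independent while each $z^{-1}z^\mu Q_d z^{-\mu}$ carries $z^{-c_1(F)\cdot d}$ with $c_1(F)\cdot d\geq 1$ by Fano-ness and the divisor axiom (Remark \ref{rem:qc_formalseries}); hence the coefficient is regular at $z=\infty$ and equals $(h\cup)$ there. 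Only then do you solve the resulting constant-coefficient linear ODE $\partial_h T(\tau,\infty) = -(h\cup)T(\tau,\infty)$ with $T(0,\infty)=\id$ to get $T(\tau,\infty)=e^{-(\tau\cup)}$. Your order of operations (integrate the $S$-equation to a path-ordered exponential, then conjugate, then set $z=\infty$) gives the same answer, but to make it rigorous you would have to argue that the quantum part of the path-ordered exponential dies in the limit, and that argument is again the degree count above. The paper's ordering (conjugate, evaluate at $z=\infty$, integrate) avoids having to discuss the path-ordered exponential at all; I would restructure the write-up that way and drop both the descendant-formula opening and the ``cleanest route'' middle.
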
 
\begin{proof} 
The differential equation for $T(\tau,z) = z^{\mu} S(\tau,z) z^{-\mu}$ in 
the $\tau$-direction reads 
$\partial_\alpha T(\tau,z) + z^{-1} 
(z^{\mu} (\alpha\star_\tau) z^{-\mu}) T(\tau,z) =0$ 
for $\alpha \in H^\udot(F)$. 
If $\tau,\alpha \in H^2(F)$, we have that 
$z^{-1} (z^{\mu} (\alpha\star_\tau) z^{-\mu})$ 
is regular at $z=\infty$ and equals $(\alpha \cup)$ there.  
Here we use the fact that $F$ is Fano and the 
divisor axiom (see Remark \ref{rem:qc_formalseries}). 
The conclusion follows by solving the differential 
equation along $H^2(F)$. 
\end{proof} 

\begin{remark} 
When we identify the MRS of $\G$ with the $r$-th wedge of the MRS 
of $\P$, we should use the identification 
$(2\pi\iu)^{-r(r-1)/2} e^{-(r-1)\pi\iu \sigma_1} 
\Sat \colon \wedge^r H^\udot(\P) \cong H^\udot(\G)$ 
that respects the pairing $[\cdot,\cdot)$ in \eqref{eq:[)_coh}. 
\end{remark}

\subsection{The wedge product of Gamma basis}
\label{subsec:wedge_Gamma}

We show that the $r$-th wedge of the Gamma basis 
$\{\Gg_\P \Ch(\O(i))\}$ given by 
Beilinson's exceptional collection for $\P$ 
matches up with the 
Gamma basis $\{\Gg_\G \Ch(S^\nu V^*)\}$ 
given by Kapranov's exceptional collection for $\G$. 
In view of the truth of Gamma Conjecture II for $\P$ 
(Theorem \ref{thm:Gamma_P}) 
and Proposition \ref{prop:MRS_G},  
the following proposition completes the proof of  
Gamma Conjecture II for $\G$. 

\begin{proposition}
\label{prop:wedge-basis} 
Let $N-r \ge \nu_1 \ge \nu_2 \ge \cdots \ge \nu_r \ge 0$ 
be a partition in an $r \times (N-r)$ rectangle. 
We have 
\[
\Gg_\G \Ch(S^\nu V^*) = (2\pi\iu)^{-\binom{r}{2}}
e^{-(r-1)\pi\iu \sigma_1} 
\Sat \left(\Gg_\P \Ch(\O(\nu_1+r-1)) \wedge 
\cdots \wedge \Gg_\P \Ch(\O(\nu_r)) \right).  
\]
\end{proposition}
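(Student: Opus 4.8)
The plan is to reduce the identity to an elementary statement about symmetric functions and Chern characters, using the Satake identification as a bridge. First I would recall the explicit form of the Satake identification \eqref{eq:Satake_id}: it sends $\sigma_{\lambda_1+r-1}\wedge\sigma_{\lambda_2+r-2}\wedge\cdots\wedge\sigma_{\lambda_r}$ to $\sigma_\lambda$, where on the $\P$-side $\sigma_j=h^j$ is a power of the hyperplane class. The key observation is that the Chern roots of $V^*$ on $\G$ are (up to Satake) the hyperplane classes $h$ on the $r$ copies of $\P$, in the sense that for a symmetric polynomial $p(x_1,\dots,x_r)$ one has $p(x_1,\dots,x_r)\cup\sigma_\lambda = \Sat\bigl(\text{antisymmetrization of }\prod_i p(h)\text{ acting on }\wedge^r H^\udot(\P)\bigr)$ — this is the statement underlying the ``geometric Satake'' realization of $H^\udot(\G)$ as $\wedge^r H^\udot(\P)$ (compare the remark after Proposition~\ref{prop:q_Satake} about power sums). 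Since both $\Ch$ and $\Gg$ are defined as symmetric functions of Chern roots, it suffices to express $\Ch(S^\nu V^*)$ and $\Gg_\G$ in terms of the $x_i$ and compare with the wedge on the other side.

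Second I would handle $\Ch(S^\nu V^*)$. The Chern character of a Schur functor $S^\nu$ applied to a bundle with Chern roots $x_1,\dots,x_r$ is $s_\nu(e^{x_1},\dots,e^{x_r})$, the Schur polynomial evaluated at the exponentials; by the bialternant (Jacobi–Trudi-type) formula \eqref{eq:Schur} this equals $\det(e^{(\nu_j+r-j)x_i})/\det(e^{(r-j)x_i})$. Under the Satake dictionary the numerator $\det(e^{(\nu_j+r-j)x_i})$ is exactly the antisymmetrization producing $\Ch(\O(\nu_1+r-1))\wedge\cdots\wedge\Ch(\O(\nu_r))$ in $\wedge^r H^\udot(\P)$ (recall $\Ch(\O(k))=e^{2\pi\iu k h}$, so the $2\pi\iu$ factors must be tracked), and the Vandermonde denominator $\det(e^{(r-j)x_i})$ together with $\Gg_\G$ must reproduce $\Gg_\P^{\wedge r}$. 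Concretely, I would show
\[
\Gg_\G \;=\; \Sat\Bigl(\tfrac{\Gg_\P\wedge\cdots\wedge\Gg_\P}{\Delta}\Bigr),
\qquad \Delta := \det\bigl((2\pi\iu h)^{r-j}\text{ on the }i\text{-th factor}\bigr)/(2\pi\iu)^{\binom r2},
\]
using that the tangent bundle of $\G$ is $\Hom(V,Q)=V^*\otimes Q$ and hence has Chern roots $\{x_i - x_j\}_{i\ne j}\cup\{\text{Chern roots of }Q\}$, while $TV^{\oplus r}$ on $\P^{\times r}$ has Chern roots governed by $\Gg_\P^{\boxtimes r}$; the discrepancy is precisely the contribution of the off-diagonal roots $x_i-x_j$ which, after the Gamma-function reflection identity \eqref{eq:Gamma_identity}, collapses to the Vandermonde $\Delta$. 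This is essentially a bookkeeping computation with $\prod_{i<j}\Gamma(1+x_i-x_j)\Gamma(1-(x_i-x_j))$.

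Third I would assemble the pieces: combining the formula for $\Ch(S^\nu V^*)$ as numerator-over-Vandermonde with the factorization of $\Gg_\G$ as $\Gg_\P^{\wedge r}$-over-Vandermonde, the two Vandermonde factors cancel and one is left with
\[
\Gg_\G\Ch(S^\nu V^*) = \Sat\bigl(\text{antisymmetrization of }\Gg_\P(h)e^{(\nu_1+r-1)\cdot 2\pi\iu h}\otimes\cdots\otimes\Gg_\P(h)e^{\nu_r\cdot 2\pi\iu h}\bigr),
\]
which is exactly $\Sat(\Gg_\P\Ch(\O(\nu_1+r-1))\wedge\cdots\wedge\Gg_\P\Ch(\O(\nu_r)))$ up to the explicit scalar $(2\pi\iu)^{-\binom r2}e^{-(r-1)\pi\iu\sigma_1}$; the power of $2\pi\iu$ counts the $\binom r2$ factors $x_i-x_j$ removed from the $\Ch$/$\Gg$ normalization, and the exponential factor $e^{-(r-1)\pi\iu\sigma_1}$ arises because the natural shift between ``$\sigma_{\lambda_i+r-i}$'' and the honest twist $\O(\nu_i)$ on the $i$-th $\P$-factor contributes $r-i$ units of $h$, summing to $\binom r2$, and $\sigma_1$ on $\G$ pulls back to $\sum_i h_{(i)}$. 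I would verify the scalar by evaluating both sides on the trivial partition $\nu=0$, where $S^0V^*=\O_\G$ and the left side is just $\Gg_\G$.

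The main obstacle I expect is the Gamma-class bookkeeping in the second step: showing cleanly that $\Gg_\G$ equals $\Gg_\P^{\wedge r}$ divided by the Vandermonde of the $x_i$, i.e. correctly accounting for the Chern roots $x_i-x_j$ of $T\G = V^*\otimes Q$ versus those appearing in $\bigoplus^r T\P$, and matching the resulting $\prod_{i<j}\bigl(\text{something}\bigr)$ against the bialternant denominator $\det(e^{(r-j)x_i})$ via the reflection formula for $\Gamma$. This is exactly parallel to the classical Satake computation of Golyshev--Manivel and Ciocan-Fontanine--Kim--Sabbah and I would cite \cite{GMa,CFKS} for the ``geometric Satake'' identification of $H^\udot(\G)$ with $\wedge^r H^\udot(\P)$ as a module over the cohomology ring, so that the only genuinely new content is tracking the transcendental ($\Gamma$, $2\pi\iu$) factors — a finite but slightly delicate calculation that I would carry out by comparing both sides as symmetric functions of the $x_i$ and invoking that the Satake identification is a ring isomorphism onto the appropriate quotient.
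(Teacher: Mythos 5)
Your proposal follows essentially the same route as the paper's elementary proof in \S\ref{subsec:wedge_Gamma}: (i) the Satake determinant identity $\Sat(f_1(h)\wedge\cdots\wedge f_r(h)) = \det(f_j(x_i))/\prod_{i<j}(x_i-x_j)$ (Lemma~\ref{lem:Satake_det}); (ii) the bialternant formula for $\Ch(S^\nu V^*)$ as a ratio of determinants in $e^{2\pi\iu x_i}$ (Lemma~\ref{lem:ChSQ}); and (iii) a formula for $\Gg_\G$ with the Gamma reflection identity yielding exactly the stray factors $(2\pi\iu)^{-\binom r2} e^{-(r-1)\pi\iu\sigma_1}\prod_{i<j}\frac{e^{2\pi\iu x_i}-e^{2\pi\iu x_j}}{x_i-x_j}$ (Lemma~\ref{lem:Gamma_G}). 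The ``main obstacle'' you identify, namely accounting for the off-diagonal Chern roots $x_i-x_j$ via the reflection formula and matching against the Vandermonde, is precisely what Lemma~\ref{lem:Gamma_G} does, so your plan is sound.

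Two local slips worth correcting. First, the Chern roots of $T\G=V^*\otimes Q$ are the sums $x_i+y_j$ ($y_j$ the Chern roots of $Q$), not ``$\{x_i-x_j\}_{i\ne j}\cup\{\text{Chern roots of }Q\}$''; the $x_i-x_j$ actually come from writing $[T\G]=N[V^*]-[V^*\otimes V]$ in K-theory via the Euler sequence $0\to V\to\O^{\oplus N}\to Q\to 0$, which is how the paper obtains $\Gg_\G = \prod_i\Gamma(1+x_i)^N/\prod_{i,j}\Gamma(1+x_i-x_j)$ before applying the reflection identity. Second, the displayed formula $\Gg_\G=\Sat(\Gg_\P\wedge\cdots\wedge\Gg_\P/\Delta)$ cannot be literally correct: $\Gg_\P\wedge\cdots\wedge\Gg_\P$ vanishes by antisymmetry for $r\ge 2$. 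What you want is the \emph{symmetric} tensor $\Gg_\P(x_1)\cdots\Gg_\P(x_r)=\prod_i\Gamma(1+x_i)^N$ as a symmetric function of the $x_i$, together with the Vandermonde-type correction, rather than a wedge; the paper avoids this pitfall by working directly with symmetric-function formulas on the $\G$-side and only invoking $\Sat$ via the determinant identity of Lemma~\ref{lem:Satake_det}.
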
 

We give an elementary algebraic proof 
of Proposition \ref{prop:wedge-basis} in this section. 
A geometric proof will be discussed in \S \ref{subsec:abnonab}. 
Let $x_1,\dots,x_r$ denote the Chern roots of $V^*$ as before. 
(Recall that $V$ is the tautological bundle on $\G$.)
Let $h= c_1(\O(1))$ denote the hyperplane class on $\P = \P^{N-1}$.

\begin{lemma} 
\label{lem:Satake_det}
Let $f_1(z), \dots, f_r(z)$ be power series in $\C[\![z]\!]$. One has 
\[ 
\Sat (f_1(h) \wedge f_2(h)  
\wedge \dots \wedge f_r(h) )=
\frac{\det (f_j(x_i))_{1\le i,j\le r}}{\prod_{i<j} (x_i - x_j)}. 
\]
\end{lemma}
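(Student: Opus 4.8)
\textbf{Proof plan for Lemma \ref{lem:Satake_det}.}

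The plan is to reduce the statement to the definition \eqref{eq:Satake_id} of the Satake identification $\Sat$ together with the Jacobi--Trudi-type formula \eqref{eq:Schur} for Schur polynomials. First I would note that both sides are $\C$-multilinear and alternating in the triple $(f_1,\dots,f_r)$: the left-hand side because the wedge product is alternating, and the right-hand side because the determinant $\det(f_j(x_i))$ is alternating in the columns indexed by $j$. Hence it suffices to verify the identity when each $f_j(z)$ is a monomial $z^{k_j}$, and by antisymmetry we may assume $k_1 > k_2 > \cdots > k_r \ge 0$; writing $k_j = \lambda_j + r - j$ for a partition $\lambda = (\lambda_1 \ge \cdots \ge \lambda_r \ge 0)$, the claim becomes
\[
\Sat\left(h^{\lambda_1 + r - 1} \wedge h^{\lambda_2 + r - 2} \wedge \cdots \wedge h^{\lambda_r}\right)
= \frac{\det\left(x_i^{\lambda_j + r - j}\right)_{1\le i,j\le r}}{\det\left(x_i^{r-j}\right)_{1 \le i,j \le r}},
\]
since the Vandermonde $\prod_{i<j}(x_i - x_j)$ equals $\det(x_i^{r-j})_{i,j}$ up to the sign $(-1)^{r(r-1)/2}$, and one must track that sign convention carefully against the ordering of rows/columns.

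Next I would invoke the definition \eqref{eq:Satake_id}, which says precisely that $\Sat$ sends $\sigma_{\lambda_1 + r - 1} \wedge \cdots \wedge \sigma_{\lambda_r}$ to $\sigma_\lambda$, where on $\P = G(1,N)$ one has $\sigma_{\lambda_i + r - i} = h^{\lambda_i + r - i}$ (the special Schubert class being a power of $h$). Therefore the left-hand side is exactly $\sigma_\lambda(x_1,\dots,x_r)$, the Schur polynomial in the Chern roots $x_1,\dots,x_r$ of $V^*$. But \eqref{eq:Schur} is the statement that $\sigma_\lambda = \det(x_i^{\lambda_j + r - j})/\det(x_i^{r-j})$, which is exactly the right-hand side. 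This closes the monomial case.

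The only genuine subtlety — and the step I expect to need the most care — is bookkeeping: matching the sign and ordering conventions in the antisymmetrization (in particular the $(-1)^{r(r-1)/2}$ relating the ordered wedge $h^{\lambda_1+r-1}\wedge\cdots\wedge h^{\lambda_r}$ to $\prod_{i<j}(x_i-x_j)$ in the denominator), and checking that reducing to strictly decreasing exponents via multilinearity does not introduce spurious factors. One also needs that both sides vanish when two of the exponents coincide — automatic on the left from the wedge and on the right from the vanishing determinant — so the reduction to the partition case is clean. Once the monomial identity is established with the correct normalization, the general case follows by extending multilinearly over $\C[\![z]\!]$ (all sums being finite modulo the relations $h^N = 0$ and $h_{N-r+1} = \cdots = h_N = 0$, so convergence is not an issue).
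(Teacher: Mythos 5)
Your proposal matches the paper's proof, which simply reduces to the case of monomial $f_i$ by linearity and then invokes the definitions of $\Sat$ and the Schur polynomial; you have just filled in the multilinearity/alternation reduction and the partition bookkeeping more explicitly. One small correction: $\det(x_i^{r-j})_{1\le i,j\le r}$ equals $\prod_{i<j}(x_i-x_j)$ exactly, with no extra sign $(-1)^{r(r-1)/2}$, since reversing the columns of the Vandermonde and the change from $\prod_{i<j}(x_j-x_i)$ to $\prod_{i<j}(x_i-x_j)$ each contribute a sign $(-1)^{\binom{r}{2}}$ and these cancel.
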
 
\begin{proof} 
For monomial $f_i(z)$'s this follows by the definition of 
the Schur polynomials \eqref{eq:Schur} and 
the Satake identification $\Sat$ \eqref{eq:Satake_id}. 
The general case follows by linearity. 
\end{proof} 

\begin{lemma} 
\label{lem:ChSQ} 
One has 
\[
\Ch(S^\nu V^*) = \frac{
\det( e^{2\pi\iu x_i (\nu_j+ r-j)})_{1\le i,j\le r}}
{\prod_{i<j} (e^{2\pi\iu x_i} - e^{2\pi\iu x_j})}
\]
\end{lemma}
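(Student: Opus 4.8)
The plan is to reduce the identity to the classical bialternant formula for Schur polynomials by means of the splitting principle. First I would recall the purely representation-theoretic fact that the Schur functor $S^\nu$ applied to the standard representation of $GL_r$ has character equal to the Schur polynomial $s_\nu$ in the eigenvalues; concretely, for line bundles $L_1,\dots,L_r$, the bundle $S^\nu(L_1\oplus\cdots\oplus L_r)$ carries a filtration whose associated graded is a direct sum of line bundles $L_1^{\otimes a_1}\otimes\cdots\otimes L_r^{\otimes a_r}$, with the monomial $t_1^{a_1}\cdots t_r^{a_r}$ occurring with the same multiplicity as in $s_\nu(t_1,\dots,t_r)$. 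By the splitting principle we may assume $V^*=L_1\oplus\cdots\oplus L_r$ with $c_1(L_i)=x_i$; since the Chern character is additive on short exact sequences it only sees the associated graded, so
\[
\ch(S^\nu V^*)=\sum_{(a_1,\dots,a_r)}\bigl(\text{mult. of }t_1^{a_1}\cdots t_r^{a_r}\text{ in }s_\nu\bigr)\,e^{a_1x_1+\cdots+a_rx_r}=s_\nu(e^{x_1},\dots,e^{x_r}),
\]
viewed as an element of $\C[\![x_1,\dots,x_r]\!]^{\frS_r}$ and then pushed forward to $H^\udot(\G)$ via the presentation recalled in \S\ref{subsec:wedge_conn}.

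Next I would account for the normalization $\Ch=(2\pi\iu)^{\deg/2}\ch$: a monomial of total degree $k$ in $x_1,\dots,x_r$ lies in $H^{2k}$, so applying $(2\pi\iu)^{\deg/2}$ amounts to substituting $x_i\mapsto 2\pi\iu x_i$ in the power series $s_\nu(e^{x_1},\dots,e^{x_r})$. Hence $\Ch(S^\nu V^*)=s_\nu(e^{2\pi\iu x_1},\dots,e^{2\pi\iu x_r})$. Finally I would invoke the bialternant definition of the Schur polynomial: for variables $y_1,\dots,y_r$,
\[
s_\nu(y_1,\dots,y_r)=\frac{\det\bigl(y_i^{\nu_j+r-j}\bigr)_{1\le i,j\le r}}{\det\bigl(y_i^{r-j}\bigr)_{1\le i,j\le r}}=\frac{\det\bigl(y_i^{\nu_j+r-j}\bigr)_{1\le i,j\le r}}{\prod_{i<j}(y_i-y_j)},
\]
the denominator being the Vandermonde determinant; substituting $y_i=e^{2\pi\iu x_i}$ gives exactly the claimed formula. (This also makes Lemma \ref{lem:Satake_det} the natural companion statement, so the two fit together cleanly in the proof of Proposition \ref{prop:wedge-basis}.)

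I do not anticipate a genuine obstacle. The only step needing some care is the first one — that the Chern character of a Schur functor is the corresponding Schur polynomial in the exponentiated Chern roots — which is the standard assertion that $\ch$ is a $\lambda$-ring homomorphism; I would just make sure the filtration/additivity argument is stated cleanly and that the computation is carried out in the free ring $\C[\![x_1,\dots,x_r]\!]^{\frS_r}$ before specializing to the cohomology of $\G$, where the defining relations $h_{N-r+1}=\cdots=h_N=0$ are imposed.
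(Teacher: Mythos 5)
Your proposal is correct and follows essentially the same route as the paper's proof: both express $[S^\nu V^*]$ as the Schur polynomial in the $K$-theoretic Chern roots $L_i$ of $V^*$ (you unpack this via the splitting principle and the filtration on $S^\nu$ of a sum of line bundles, which the paper states in one line), then apply $\Ch(L_i)=e^{2\pi\iu x_i}$ and the bialternant/Vandermonde form of the Schur polynomial. Your version is just a more explicit write-up of the identical argument.
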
 
\begin{proof} 
Let $L_1,\dots,L_r$ be the $K$-theoretic Chern roots of $V^*$ 
so that $[V^*] = L_1 + \cdots + L_r$. 
The $K$-class $[S^\nu V^*]$ can be expressed as the 
Schur polynomial $\sigma_\nu(L_1,\dots, L_r)$ 
in $L_1,\dots,L_r$. 
The lemma follows from the definition of the Schur polynomial 
\eqref{eq:Schur} and $\Ch(L_i) = e^{2\pi\iu x_i}$. 
\end{proof} 

\begin{lemma} 
\label{lem:Gamma_G} 
The Gamma class of $\G$ is given by 
\[
\Gg_\G =
(2\pi\iu)^{-\binom{r}{2}} e^{-(r-1) \pi\iu \sigma_1} 
\prod_{i<j}  \frac{e^{2\pi\iu x_i} - e^{2\pi\iu x_j}}
{x_i - x_j}  \prod_{i=1}^r \Gamma(1+x_i)^N 
\]
\end{lemma}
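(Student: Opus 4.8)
The goal is to compute the Gamma class of $\G=G(r,N)$ in terms of the Chern roots $x_1,\dots,x_r$ of $V^*$ and to recognize the resulting formula as built out of $N$ copies of the one-variable Gamma series $\Gamma(1+x_i)^N$ together with a Vandermonde-type discrepancy factor. The starting point is the short exact sequence $0\to V\to \O^{\oplus N}\to Q\to 0$ on $\G$, which gives $c(TG)=c(V^*\otimes Q)$ and, passing to Chern roots, $T\G$ has Chern roots $\{y_a - x_i\}$ where $x_1,\dots,x_r$ are the Chern roots of $V^*$ and $y_1,\dots,y_{N-r}$ are the Chern roots of $Q$. First I would use the relation $\prod_i(1+x_i t)\cdot\prod_a(1-y_a t)=1$ coming from $[V^*]+[Q^\vee]=[\O^{\oplus N}]$ in $K$-theory (equivalently the defining relations $h_{N-r+1}=\cdots=h_N=0$) to rewrite any symmetric expression in the $y_a$'s as a symmetric expression in the $x_i$'s; in particular the ``extra'' Chern roots of $Q$ can be traded for the $x_i$'s.

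The key computational identity is the following. By definition
\[
\Gg_\G=\prod_{i=1}^r\prod_{a=1}^{N-r}\Gamma(1+y_a-x_i).
\]
I would first isolate $\prod_{i=1}^r\prod_{a=1}^{N-r}\Gamma(1+y_a-x_i)$ by comparing it with $\prod_{i=1}^r\prod_{j=1}^{N}\Gamma(1+z_j-x_i)$ where $z_1,\dots,z_N$ would be ``Chern roots of the trivial bundle'' — but the cleaner route, and the one matching the statement, is to use that the product over all pairs of Chern roots of $\mathrm{Hom}(V,\C^N)=V^*\otimes\O^{\oplus N}$ factors through $V^*\otimes V$ and $V^*\otimes Q$. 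Concretely, $T\G\oplus (V^*\otimes V)\cong V^*\otimes\O^{\oplus N}$ as bundles only in $K$-theory, so
\[
\Gg_\G\cdot\prod_{i,j=1}^r\Gamma(1+x_j-x_i)=\prod_{i=1}^r\Gamma(1+x_i)^N\cdot(\text{correction}),
\]
because $V^*\otimes\O^{\oplus N}$ has Chern roots $\{x_i\}$ each with multiplicity $N$ — wait, its Chern roots are $\{x_i\}_{i=1}^r$ each appearing $N$ times, so $\prod\Gamma(1+\text{Chern roots})=\prod_{i=1}^r\Gamma(1+x_i)^N$. The diagonal factor $\prod_{i=j}\Gamma(1+x_j-x_i)=\Gamma(1)^r=1$, so
\[
\Gg_\G=\prod_{i=1}^r\Gamma(1+x_i)^N\Big/\prod_{i\neq j}\Gamma(1+x_j-x_i).
\]
Then I would apply the reflection formula from \eqref{eq:Gamma_identity}, $\Gamma(1+w)\Gamma(1-w)=2\pi\iu w/(e^{\iu\pi w}-e^{-\iu\pi w})$, to pair up $(i,j)$ with $(j,i)$: for each unordered pair $i<j$, setting $w=x_i-x_j$,
\[
\Gamma(1+x_j-x_i)\Gamma(1+x_i-x_j)=\frac{2\pi\iu(x_i-x_j)}{e^{\iu\pi(x_i-x_j)}-e^{-\iu\pi(x_i-x_j)}}
=e^{\iu\pi(x_i+x_j)}\cdot e^{-2\iu\pi x_j}\cdot\frac{2\pi\iu(x_i-x_j)}{e^{2\pi\iu x_i}-e^{2\pi\iu x_j}},
\]
and after multiplying over all $i<j$ the phase factors collect into $e^{(r-1)\pi\iu\sigma_1}$ (since $\sigma_1=\sum x_i$ and each $x_i$ appears in $r-1$ pairs), while the $e^{-2\iu\pi x_j}$-type terms must be tracked carefully to produce exactly the claimed power of $2\pi\iu$ and the sign. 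This yields $\prod_{i\neq j}\Gamma(1+x_j-x_i)=(2\pi\iu)^{\binom r2}e^{(r-1)\pi\iu\sigma_1}\prod_{i<j}\dfrac{x_i-x_j}{e^{2\pi\iu x_i}-e^{2\pi\iu x_j}}$, and substituting back gives the Lemma.

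\textbf{Expected main obstacle.} The genuinely delicate point is the bookkeeping of the constant $(2\pi\iu)^{-\binom r2}$, the exponential prefactor $e^{-(r-1)\pi\iu\sigma_1}$, and the signs hidden in the antisymmetric Vandermonde $\prod_{i<j}(x_i-x_j)$ versus $\prod_{i<j}(e^{2\pi\iu x_i}-e^{2\pi\iu x_j})$ — in particular, the asymmetry of the reflection formula (which distinguishes $w$ from $-w$ through the factor $e^{\pm\iu\pi w}$) means one has to fix an ordering of each pair and check that the leftover phases reassemble into $e^{-(r-1)\pi\iu\sigma_1}$ rather than, say, its inverse or a shifted version. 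I would organize this by writing $\prod_{i<j}(e^{2\pi\iu x_i}-e^{2\pi\iu x_j}) = \prod_{i<j} e^{\pi\iu(x_i+x_j)}\big(e^{\pi\iu(x_i-x_j)}-e^{-\pi\iu(x_i-x_j)}\big)$ and matching term by term, and then cross-check the final formula against the $r=1$ case (where it must reduce to $\Gg_\P=\Gamma(1+h)^N$, the normalization used throughout \S\ref{sec:Gamma_P}) and against the known fact that $\Gg_\G$ is a real cohomology class, which pins down the phase unambiguously. The rest — reducing $Q$-roots to $V^*$-roots via the $K$-theory relation, and the Taylor-expansion convergence issues — is routine given the finite-dimensionality of $H^\udot(\G)$.
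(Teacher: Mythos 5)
Your approach is the same as the paper's: both derive $\Gg_\G=\prod_{i}\Gamma(1+x_i)^N\big/\prod_{i,j}\Gamma(1+x_i-x_j)$ from the Euler sequence $0\to V\to\O^{\oplus N}\to Q\to 0$ (so $[T\G]=N[V^*]-[V^*\otimes V]$) and then pair $(i,j)$ with $(j,i)$ via the reflection formula and collect the phases into $e^{(r-1)\pi\iu\sigma_1}$. The only slip is in your displayed intermediate identity, where the factor $e^{-2\iu\pi x_j}$ is spurious --- the correct statement is $\Gamma(1+x_j-x_i)\Gamma(1+x_i-x_j)=e^{\iu\pi(x_i+x_j)}\,\frac{2\pi\iu(x_i-x_j)}{e^{2\pi\iu x_i}-e^{2\pi\iu x_j}}$ --- but your stated product over $i\neq j$ and the resulting formula for $\Gg_\G$ are both correct and agree with the paper's computation.
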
 
\begin{proof} 
The tangent bundle $T\G$ of $\G$ is isomorphic to 
$\Hom(V, Q)$, where $Q$ is the universal quotient bundle. 
The exact sequence $0 \to V \to \O_\G^{\oplus N} \to Q \to 0$ 
implies that $[T\G] = [\Hom(V, \O)^{\oplus N}] - [\Hom(V,V)]
=N[V^*] - [V^* \otimes V]$ in the $K$-group. 
Thus we have: 
\[  
\Gg_\G = \frac{\prod_{i=1}^r \Gamma(1+x_i)^N}
{\prod_{1\le i,j\le r} \Gamma(1 + x_i -x_j)}. 
\]
The denominator of the right-hand side equals 
\begin{align*} 
\prod_{i<j} \bigl( \Gamma(1 + x_i - x_j) \Gamma(1 - x_i + x_j) \bigr) 
& = \prod_{i<j} \frac{2\pi\iu(x_i-x_j)}
{ e^{\pi\iu (x_i - x_j)} - e^{-\pi\iu (x_i-x_j)}} \\ 
& = (2\pi\iu)^{\binom{r}{2}} 
\prod_{i<j} (x_i - x_j) 
\frac{e^{(r-1)\pi \iu \sigma_1}}
{\prod_{i<j} (e^{2\pi\iu x_i} - e^{2\pi\iu x_j})}
\end{align*} 
where we used the Gamma function identity \eqref{eq:Gamma_identity} and 
$e^{\pi\iu(x_i-x_j)} - e^{-\pi\iu(x_i-x_j)}
= (e^{2\pi\iu x_i }- e^{2\pi\iu x_j}) e^{-\pi\iu (x_i + x_j)}$. 
The Lemma follows. 
\end{proof} 

\begin{proof}[Proof of Proposition \ref{prop:wedge-basis}] 
By Lemmata \ref{lem:ChSQ} and \ref{lem:Gamma_G}, we have 
\[
\Gg_\G \Ch(S^\nu V^*) = (2\pi\iu)^{-\binom{r}{2}} 
e^{-(r-1) \pi \iu \sigma_1} 
\frac{\det(e^{2\pi\iu x_i (\nu_j + r -j)})_{1\le i,j \le r}} 
{\prod_{i<j} (x_i - x_j)}
\prod_{i=1}^r \Gamma(1+x_i)^N. 
\]
By Lemma \ref{lem:Satake_det}, we have 
\[
\Sat \left( \Gg_\P \Ch(\O(\nu_1+ r-1)) \wedge \cdots \wedge 
\Gg_\P \Ch(\O(\nu_r)) \right) 
= \frac{\det \left(\Gamma(1+x_i)^N 
e^{2\pi\iu x_i(\nu_j + r-j)}\right)_{1\le i,j \le r}}
{\prod_{i<j} (x_i - x_j)}. 
\]
The conclusion follows from these formulas. 
\end{proof} 

We have now completed the proof of Gamma Conjecture II 
for $\G$. 

\subsection{Gamma Conjecture I for Grassmannians} 
\label{subsec:GammaI_G}
Here we prove that $\G$ satisfies Gamma Conjecture I. 
We may assume that $r\le N/2$ by replacing $r$ with $N-r$ if necessary. 
Recall that $\G$ satisfies Property $\O$ (Remark \ref{rem:eigenvalues_G}) 
and that the quantum product $\star_\tau$ of $\G$ is semisimple 
near $\tau=0$ (Corollary \ref{cor:semisimple_G}). 
Let $\phi$ be an admissible phase for the spectrum of $(c_1(\G)\star_0)$, 
sufficiently close to zero. 
Let $T_\G = N \sin(\pi r/N)/\sin(\pi/N)$ denote the biggest 
eigenvalue of $(c_1(\G)\star_0)$. 
In view of Proposition \ref{prop:Gamma_I_II}, it suffices to 
show that the member $A_\G$ of the asymptotic basis (at $\tau=0$ with respect 
to $\phi$) corresponding to $T_\G$ is $\pm \Gg_\G$. 
By Proposition \ref{prop:MRS_G}, we have 
\[
A_\G = (2\pi\iu)^{-\binom{r}{2}} e^{-(r-1)\pi\iu \sigma_1} 
\Sat(A_0 \wedge \cdots \wedge A_{r-1}) 
\]
where $A_k \in H^\udot(\P)$ is the member of the 
asymptotic basis for $\P$ at $\tau=(r-1)\pi\iu\sigma_1$ with respect to 
$\phi$, corresponding to the eigenvalue  
$N e^{\pi\iu(r-1)/N} e^{-2\pi\iu k/N}$. 
By the discussion in \S \ref{sec:Gamma_P}, we know that 
$A_k = \Gg_\P \Ch(\O(k))$. 
Here we use the condition that $\phi$ is close to zero and 
$r\le N/2$. 
Thus Proposition \ref{prop:wedge-basis} implies the equality 
$A_\G =\pm \Gg_\G$. 
Gamma Conjecture I for $\G$ is proved. 

The proof of Theorem \ref{thm:Gamma_G} is now complete.

\subsection{The abelian/non-abelian correspondence and the Gamma basis} 
\label{subsec:abnonab} 
We give an alternative proof of Proposition \ref{prop:wedge-basis} 
in the spirit of the abelian/non-abelian correspondence \cite{BCFK,CFKS}.  
The product $\P^{\times r} = \P^{N-1}\times \cdots \times \P^{N-1}$ 
($r$ factors) of projective spaces and the Grassmannian 
$\G = G(r,N)$ arise respectively as the GIT quotients  
$\Hom(\C^r,\C^N)/\!/ (\C^\times)^r$ and 
$\Hom(\C^r, \C^N)/\!/ GL(r,\C)$ 
of the same vector space $\Hom(\C^r, \C^N)$. 
We relate them by the following diagram: 
\[
\begin{CD} 
\F := Fl(1,2,\dots,r,N) @>{p}>> \G \\ 
@V{\iota}VV \\
\P^{\times r}  
\end{CD} 
\]
where $\F$ is the partial flag variety parameterizing 
flags $0= V_0 \subset V_1 \subset V_2 \subset \cdots 
\subset V_r \subset \C^N$ 
with $\dim V_i = i$, $p$ is the projection forgetting 
the intermediate flags 
$V_1,\dots,V_{r-1}$ and $\iota$ is the real-analytic inclusion 
sending a flag $\{V_1\subset \cdots \subset V_r\}$ to a collection 
$(V_1, V_2 \ominus V_1, \dots, V_r \ominus V_{r-1})$ of lines, 
where $V_i \ominus V_{i-1}$ is the orthogonal complement of 
$V_{i-1}$ in $V_i$. 
The projection $p$ is a fiber bundle with fiber $Fl(1,2,\dots,r)$. 
The normal bundle $\N_\iota$ of the inclusion $\iota$ is isomorphic 
to the conjugate $\overline{T_p}$ of the relative tangent bundle 
$T_p$ of $p$, as a topological vector bundle. 

Let $L_i = (V_i/V_{i-1})^*$ be the line bundle on $\F$ 
and set $x_i := c_1(L_i)$. 
We have $\Euler(T_p) = \prod_{i<j} (x_i-x_j) 
= (-1)^{\binom{r}{2}} \Euler(\N_\iota)$ and: 
\begin{align*} 
H^\udot(\G) & \cong \C[x_1,\dots,x_r]^{\frS_r}/ 
\langle h_{N-r+1},\dots,h_N \rangle; \\
H^\udot(\F) & \cong \C[x_1,\dots,x_r]/\langle h_{N-r+1},\dots,h_N \rangle; \\ 
H^\udot(\P^{\times r}) & \cong \C[x_1,\dots,x_r]/
\langle x_1^N, \dots, x_r^N \rangle. 
\end{align*} 
The injective map 
\[
\iota_* p^* \colon H^\udot(\G) \longrightarrow 
H^\udot(\P^{\times r}) \cong \otimes^r H^\udot(\P) 
\]
identifies $H^\udot(\G)$ with the anti-symmetric part 
$\wedge^r H^\udot(\P) \subset \otimes^r H^\udot(\P)$, 
where we embed $\wedge^r H^\udot(\P)$ into $\otimes^r H^\udot(\P)$ 
by $\alpha_1 \wedge \cdots \wedge \alpha_r 
\mapsto \sum_{\sigma \in \frS_r} \sgn(\sigma) 
\alpha_{\sigma(1)} \otimes \cdots \otimes \alpha_{\sigma(r)}$. 
This is inverse to the Satake identification 
$(-1)^{\binom{r}{2}} \Sat$ \eqref{eq:Satake_id}. 
The Kapranov exceptional bundle $S^\nu V^*$ is given by 
\[
S^\nu V^* = p_*(L^\nu) 
\]
with $L^\nu = L_1^{\otimes \nu_1} \otimes \cdots \otimes L_r^{\otimes \nu_r}$. 
Because $\iota^* T(\P^{\times r}) \cong 
p^* T\G \oplus T_p \oplus \overline{T_p}$ 
as topological bundles, we find  
\begin{equation} 
\label{eq:Gamma_abnonab} 
\iota^* \Gg_{\P^{\times r}} = p^* \Gg_\G \cup 
\Gg(T_p) \cup \Gg(\overline{T_p}) = p^* \Gg_\G \cup 
\Td(T_p) e^{-\pi\iu c_1(T_p)}. 
\end{equation} 
By the Grothendieck-Riemann--Roch theorem, we have  
\begin{equation} 
\label{eq:GRR}
\Ch(S^\nu V^*) = 
\Ch(p_*(L^\nu)) = (2\pi\iu)^{-\binom{r}{2}}p_*(\Ch(L^\nu) \Td(T_p) ). 
\end{equation} 
Combining \eqref{eq:Gamma_abnonab} and \eqref{eq:GRR}, 
we have: 
\begin{align*} 
\iota_* p^* & (\Gg_\G \Ch(S^\nu V^*)) 
= (2\pi\iu)^{-\binom{r}{2}} 
\iota_* p^* p_* \left( 
\Ch(L^\nu)\Td(T_p) \cup p^*\Gg_\G    \right)  \\
& = (2\pi\iu)^{-\binom{r}{2}} 
\iota_* p^* p_* \left( 
\Ch(L^\nu) e^{\pi\iu c_1(T_p)} \iota^* \Gg_{\P^{\times r}}\right) \\
& = (2\pi\iu)^{-\binom{r}{2}} 
\iota_* p^* p_*  \iota^* \left( 
\Ch\left(\O(\nu_1+ r-1) \boxtimes 
\cdots \boxtimes \O(\nu_r)\right) 
e^{-(r-1)\pi\iu \sigma_1} \Gg_{\P^{\times r}}\right). 
\end{align*} 
In the last line we used 
$\pi \iu c_1(T_p) = \pi \iu ((r-1) x_1 + (r-3) x_2 + \cdots  -(r-1)x_r)  
= - (r-1) \pi \iu \sigma_1 + 2\pi\iu ((r-1)x_1+ (r-2) x_2+ \cdots + x_{r-1})$. 
The map $\iota_*p^* p_* \iota^*\colon \otimes^r H^\udot(\P) 
\to \otimes^r H^\udot(\P)$ is the anti-symmetrization map 
and the quantity in the last line can be identified an element 
\[
(-2\pi\iu)^{-\binom{r}{2}} 
e^{-(r-1)\pi\iu \sigma_1} 
\left(\Gg_\P \Ch(\O(\nu_1+r-1)) \wedge \cdots \wedge \Gg_\P \Ch(\O(\nu_r))
\right). 
\]
of the wedge product $\wedge^r H^\udot(\P)$. 
Since $\iota_* p^*$ is inverse to $(-1)^{\binom{r}{2}}\Sat$, 
the conclusion of Proposition \ref{prop:wedge-basis} follows. 

\begin{remark} 
Most of the above discussions can be applied to  
a general abelian/non-abelian correspondence. 
The bundle $T_p$ corresponds to the sum of positive roots. 
\end{remark}

\appendix
\section{$\zeta$-function regularization}
\label{app:zeta}
The $\zeta$-function regularization \eqref{eq:regularization} 
of the $S^1$-equivariant Euler class $e_{S^1}(\N_+)$ 
has been computed by Lu \cite[Proposition 3.5]{Lu}. 
In this appendix, we recall the definition of the $\zeta$-function 
regularization and explain the meaning of \eqref{eq:regularization}. 

For a sequence $\{\lambda_n\}_{n=1}^\infty$ of complex numbers, 
the associated $\zeta$-function is defined to be $f(s) = \sum_{n=1}^\infty 
\lambda_n^{-s}$. If $f(s)$ can be analytically continued to a holomorphic 
function around $s=0$, we define the \emph{$\zeta$-regularized product} 
of $\{\lambda_n\}_{n=1}^\infty$ to be $\exp(-f'(0))$ and 
write $\prod_{n=1}^\infty\lambda_n \sim \exp(-f'(0))$. 
This is called the \emph{$\zeta$-function regularization}. 

Let $\delta_1,\dots,\delta_{\dim X}$ denotes the Chern roots of $TX$. 
The equation \eqref{eq:regularization} follows by regarding 
$\delta_i$, $z$ as positive real numbers and 
applying the $\zeta$-function regularization to the infinite product 
\[
\frac{1}{e_{S^1}(\N_+)} = 
\prod_{i=1}^{\dim X} \prod_{n=1}^\infty \frac{1}{\delta_i + n z}. 
\]
Indeed, the $\zeta$-function regularization gives 
\[
\prod_{n=1}^\infty 
\frac{1}{\delta_i +nz} 
\sim 
\sqrt{\frac{z}{2\pi}} z^{\delta_i/z}\Gamma(1+\delta_i/z) 
\] 
where we note that the associated $\zeta$-function is 
the Hurwitz zeta function $z^{s} \zeta(-s;\delta/z+1)$. 

\bigskip 

\noindent 
\bf Acknowledgments: \rm 
We thank Boris Dubrovin, Ionut Ciocan-Fontanine, Kohei Iwaki, 
Bumsig Kim, Etienne Mann, Anton Mellit, Kaoru Ono for 
many useful suggestions and simplifications. 
We also thank anonymous referees for valuable comments 
and suggestions. 

\medskip
\noindent
\bf Funding Sources: \rm
S.G.~was supported by grant MK-1297.2014.1; 
AG Laboratory NRU-HSE, RF government grant, ag.~11.G34.31.0023; 
Grant of Leading Scientific Schools (N.Sh.~2998.2014.1); 
World Premier International Research Center Initiative (WPI Initiative), 
MEXT, Japan; and JSPS KAKENHI Grant Number 10554503. 
H.I.~was supported by JSPS KAKENHI (Kinban-C)  
Grant Number 25400069; 
JSPS KAKENHI (Wakate-B) Grant Number 19740039;  
and EPSRC (EP/E022162/1). 

\nocite{Golyshev08a}
\nocite{Guzzetti99}
\nocite{Iritani07}
\nocite{Iritani09}
\nocite{KKP08}

\bibliographystyle{amsplain}

\begin{thebibliography}{abc}
\bibitem{Ablowitz-Fokas}
Mark J.~Ablowitz and Athanassios S.~Fokas, 
\emph{Complex variables: introduction and applications.} 
Second edition. Cambridge Texts in Applied Mathematics. 
Cambridge University Press, Cambridge, 2003. 

\bibitem{AvSZ} 
Gert Almkvist, Duco van Straten and Wadim Zudilin: 
\emph{Ap\'{e}ry limits of differential equations of order 4 and 5}, 
Yui, Noriko (ed.) et al., Modular forms and string duality.
Proceedings of a workshop, Banff, Canada, June 3--8, 2006. 
Providence, RI: American
Mathematical Society (AMS); Toronto: 
The Fields Institute for Research in Mathematical
Sciences. Fields Institute Communications 54, 
105--123 (2008)., 2008.

\bibitem{Atiyah:circular}
M. F. Atiyah: 
\emph{Circular symmetry and stationary phase approximation},  
pp. 43--59 in Colloque en l'honneur de Laurent Schwartz 
(\'{E}cole Polytechnique, Palaiseau, 30 May -- 3 June 1983). 
Ast\'{e}risque 131. Soci\'{e}t\'{e} 
math\'{e}matique de France (Paris), 1985. 

\bibitem{BJL79}
Welner Balser, Wolfgang B.~Jurkat, and Donald A.~Lutz,  
\emph{Birkhoff invariants and Stokes' multipliers for meromorphic 
linear differential equations},   
J.\ Math.\ Anal.\ Appl.\ 71 (1979), no. 1, 48--94. 

\bibitem{BJL81}
Welner Balser, Wolfgang B.~Jurkat, and Donald A.~Lutz, 
\emph{On the reduction of connection problems for differential equations 
with an irregular singular point to ones with only regular singularities. I.} 
SIAM J.\ Math.\ Anal. 12 (1981), no. 5, 691--721. 


\bibitem{Beilinson78}
Alexander Beilinson:
\emph{Coherent sheaves on $\P^n$ and problems in linear algebra},
{\em Funktsional. Anal. i Prilozhen.}, 12(3):68--69, 1978.

\bibitem{BP}
Alexey Bondal and Alexander Polishchuk: 
{\it Homological properties of associative algebras: the method of helices},
Russian Academy of Sciences. Izvestiya Mathematics, 1994, 42:2, 219--260.

\bibitem{Bertram} 
Aaron Bertram:  
\emph{Quantum Schubert calculus}, 
Adv.\ Math.\ 128 (1997), no. 2, 289--305, 
\arxiv{alg-geom/9410024}

\bibitem{BCFK}
Aaron Bertram, Ionut Ciocan-Fontanine, and Bumsig Kim:
\emph{Two proofs of a conjecture of Hori and Vafa},
Duke Math. J. 126, No. 1, 101--136 (2005),
\arxiv{math.AG/0304403}.


\bibitem{Buch} 
Anders Skovsted Buch: 
\emph{Quantum cohomology of Grassmannians}, 
Compositio Math. 137 (2003), no. 2, 227--235,
\arxiv{math/0106268}.

\bibitem{Borisov-Horja} 
Lev Borisov and Richard Paul Horja: 
\emph{Mellin--Barnes integrals as Fourier--Mukai transforms},  
Adv.\ Math.\ 207 (2006), no. 2, 876--927,
\arxiv{math/0510486}. 

\bibitem{BTL}
Tom Bridgeland and Valerio Toledano-Laredo, 
\emph{Stokes factors and multilogarithms}, 
J. Reine Angew. Math. 682 (2013), 89--128,
\arxiv{1006.4623v6}. 

\bibitem{CDGP} 
Philip Candelas, 
Xenia C.~de la Ossa, 
Paul S.~Green and Linda Parkes:   
\emph{An exactly soluble superconformal theory from a mirror pair of Calabi--Yau manifolds},  
Phys.\ Lett. B 258 (1991), no.1-2, pp.118--126. 


\bibitem{CFKS}
Ionut Ciocan-Fontanine, Bumsig Kim, and Claude Sabbah:
\emph{The abelian/non-abelian correspondence and Frobenius manifolds},
Invent.\ Math.\ 171 (2008), no. 2, 301--343,
\arxiv{math/0610265}.


\bibitem{CCGGK} 
Tom Coates, Alessio Corti, Sergey Galkin, Vasily Golyshev, Al Kasprzyk, 
\emph{Mirror symmetry and Fano manifolds}, 
European Congress of Mathematics (Krak{\'o}w, 2-7 July, 2012), 
November 2013 (824 pages), 
pp.\ 285--300, ISBN 978-3-03719-120-0, 
DOI \href{http://dx.doi.org/10.4171/120-1/16}{10.4171/120-1/16}, 
\arxiv{1212.1722}.   

	
\bibitem{Dubrovin94}
Boris Dubrovin: 
\emph{Geometry of 2D topological field theories},
{Francaviglia, M. (ed.) et al., Integrable systems and quantum groups.  
Lectures given at the 1st session of the Centro Internazionale Matematico Estivo 
(CIME) held in Montecatini Terme, Italy, June 14--22, 1993. 
Berlin: Springer-Verlag. Lect. Notes Math. 1620, 120--348 (1996).}, 1996,
\arxiv{hep-th/9407018}.

\bibitem{Dubrovin98a}
Boris Dubrovin:
\emph{Painlev\'{e} transcendents and two-dimensional topological field theory},
In {\em The Painlev\'e property}, CRM Ser. Math. Phys., 
287--412. Springer, New York, 1999,
\arxiv{math/9803107}.

\bibitem{Dubrovin98}
Boris Dubrovin: 
\emph{Geometry and analytic theory of Frobenius manifolds},
In {\em Proceedings of the International Congress of Mathematicians, 
Vol. II (Berlin, 1998)}, 315--326,
\arxiv{math/9807034}.

\bibitem{Dubrovin:Strasbourg} 
Boris Dubrovin 
\emph{Quantum cohomology and isomonodromic deformation}, 
Lecture at ``Recent Progress in the Theory of Painlev\'{e} Equations: 
Algebraic, asymptotic and topological aspects'', Strasbourg, November 2013.  

\bibitem{vEnckevort-vStraten}
Christian van Enckevort and Duco van Straten: 
\emph{Monodromy calculations of fourth order equations of Calabi--Yau type},
Mirror symmetry. V, 539--559, 
AMS/IP Stud. Adv. Math., 38, Amer. Math. Soc., Providence, RI, 2006, 
\arxiv{math/0412539}.

\bibitem{Fulton} 
William Fulton: 
\emph{Young tableaux}, 
With applications to representation theory and geometry. 
London Mathematical Society Student Texts, 35. 
Cambridge University Press, Cambridge, 1997. 

\bibitem{FW04}
William Fulton and Chris Woodward: 
\emph{On the quantum product of Schubert classes},
J. Algebr. Geom. \textbf{13} (2004), no.~4, 641--661,
\arxiv{math/0112183}.

\bibitem{Galkin:Apery} 
Sergey Galkin: 
\emph{Apery constants of homogeneous varieties}, 
preprint SFB45 (2008). 

\bibitem{Galkin:conifoldpoint} 
Sergey Galkin: 
\emph{The conifold point},
\arxiv{1404.7388}.

\bibitem{GI:mirror} 
Sergey Galkin and Hiroshi Iritani, 
\emph{Gamma conjecture via mirror symmetry},
\arxiv{1508.00719}. 

\bibitem{Ginzburg95}
Victor Ginzburg: 
\emph{Perverse sheaves on a Loop group and Langlands' duality}, 
\arxiv{alg-geom/9511007}.

\bibitem{Givental:ICM} 
Alexander B.~Givental: 
\emph{Homological geometry and mirror symmetry}, 
In: Proceedings of the ICM, Z\"urich, 1994, 
Birkh\"auser, Basel, 1995, vol 1, pp.472--480. 


\bibitem{Givental:elliptic} 
Alexander B.~Givental: 
\emph{Elliptic Gromov-Witten Invariants and the generalized 
mirror conjecture}, 
In: \emph{Integrable Systems and algebraic geometry 
(Kobe/Kyoto 1997)}, pp.107--155, World Sci.~Publ., 
River Edge, NJ, 1998. 


\bibitem{Givental:symplectic} 
Alexander B.~Givental: 
\emph{Symplectic geometry of Frobenius structures}, 
Frobenius manifolds, 91--112, Aspects Math., 
E36, Friedr.\ Vieweg, Wiesbaden, 2004,
\arxiv{math/0305409}.

\bibitem{Golyshev01}
Vasily Golyshev:
\emph{Riemann--Roch variations},
Izvestiya:Mathematics, 2001, {\bf 65}:5, 853--881, 
\href{http://www.mathnet.ru/php/archive.phtml?wshow=paper&jrnid=im&paperid=355&option_lang=eng}{link}.

\bibitem{Golyshev08a}
Vasily Golyshev: 
\emph{Deresonating a Tate period},
\arxiv{0908.1458}.

\bibitem{GMa}
Vasily Golyshev and Laurent Manivel: 
\emph{Quantum cohomology and the Satake isomorphism},
\arxiv{1106.3120}.

\bibitem{Golyshev-Mellit} 
Vasily Golyshev and Anton Mellit: 
\emph{Gamma structures and Gauss's contiguity}, 
J.~Geom.~Phys.  {\bf 78} (2014), 12--18, 
\arxiv{0902.2003}


\bibitem{GK}
Alexey Gorodentsev and Sergey Kuleshov:
\emph{Helix theory},
Mosc.~Math.~J., 2004, {\bf 4}:2, 377--440.

\bibitem{Guzzetti99}
Davide Guzzetti: 
\emph{Stokes matrices and monodromy of the quantum cohomology 
of projective spaces},
Comm. Math. Phys. \textbf{207} (1999), no.~2, 341--383,
\arxiv{math/9904099}.

\bibitem{HJLM} 
James Halversona, Hans Jockers, Joshua M.~Lapan 
and David R.~Morrison: 
\emph{Perturbative Corrections to K\"{a}hler Moduli Spaces}, 
\arxiv{1308.2157}. 

\bibitem{HerMan} 
Claus Hertling and Yuri I. Manin: 
\emph{Unfoldings of meromorphic connections and a construction of Frobenius manifolds}, 
Frobenius manifolds, 113--144, 
Aspects Math., E36, Vieweg, Wiesbaden, 2004, 
\arxiv{math/0207089}

\bibitem{Hertling-Sevenheck:nilpotent}
Claus Hertling and Christian Sevenheck, 
\emph{Nilpotent orbits of a generalization of Hodge structures}, 
J.\ Reine Angew.\ Math.\ 609 (2007), pp.23--80. 

\bibitem{Hori-Romo} 
Kentaro Hori and Mauricio Romo:  
\emph{Exact results in two-dimensional $(2,2)$ sypersymmetric gauge theory with boundary}, 
\arxiv{1308.2438}.



\bibitem{Horja} 
Richard Paul Horja: 
\emph{Hypergeometric functions and mirror symmetry in toric varieties},  
\arxiv{math/9912109}.   

\bibitem{Hosono} 
Shinobu Hosono: 
\emph{Central charges, symplectic forms, and hypergeometric series in local mirror symmetry}, 
Mirror symmetry. V, pp.405--439, 
AMS/IP Stud.\ Adv.\ Math., 38, Amer.\ Math.\ Soc., 
Providence, RI, 2006,
\arxiv{hep-th/0404043}.

\bibitem{HKTY} 
Shinobu Hosono, Albrecht Klemm, Stefan Theisen 
and Shing-Tung Yau:  
\emph{Mirror symmetry, mirror map and applications to complete intersection Calabi--Yau spaces}, 
Nuclear Phys. B 433 (1995), no. 3, 501--552,
\arxiv{hep-th/9406055}. 

\bibitem{Hukuhara_II_III} 
Masuo Hukuhara, 
\emph{Sur les points singuliers des \'{e}quations diff\'{e}rentielles 
lin\'eaires II, III}, Jour.\ Fac.\ Sci.\ Hokkaido Imp.\ Univ., 5 (1937), 
pp.123--166; 
Mem.\ Fac.\ Sci.\ Kyushu Imp.\ Univ., 2 (1941), pp.125--137. 


\bibitem{Iritani07}
Hiroshi Iritani: 
\emph{Real and integral structures in quantum cohomology I: toric orbifolds}, 
\arxiv{0712.2204}.

\bibitem{Iritani09}
Hiroshi Iritani: 
\emph{An integral structure in quantum cohomology and mirror symmetry for toric orbifolds}, 
Adv. Math. \textbf{222} (2009), no.~3, 1016--1079,
\arxiv{0903.1463}.

\bibitem{Iritani:ttstar} 
Hiroshi Iritani:   
\emph{$tt^*$-geometry in quantum cohomology},
\arxiv{0906.1307}. 


\bibitem{Kapranov83}
Mikhail Kapranov: 
\emph{The derived category of coherent sheaves on Grassmann varieties}, 
Funktsional.\ Anal.\ i Prilozhen. 17 (1983), no. 2, 78--79. 


\bibitem{Kapranov}
Mikhail Kapranov: 
\emph{On the derived categories of coherent sheaves on some homogeneous spaces},
Invent. Math. 92, 479--508 (1988).

\bibitem{KKP08}
Ludmil Katzarkov, Maxim Kontsevich, and Tony Pantev, 
\emph{Hodge theoretic aspects of mirror symmetry}, 
\arxiv{0806.0107}.

\bibitem{KimSab} 
Bumsig Kim and Claude Sabbah: 
\emph{Quantum cohomology of the Grassmannian and alternate Thom--Sebastiani}, 
Compos.\ Math.\ 144 (2008), no.1, 221--246, 
\arxiv{math/0611475}.

\bibitem{Lib99}
Anatoly S.~Libgober, 
\emph{Chern classes and the periods of mirrors}, 
Math.\ Res.\ Lett., 6 (1999), 141--149, 
\arxiv{math/9803119}.

\bibitem{Lu}
Rongmin Lu, 
\emph{The $\Gg$-genus and a regularization of an $S^1$-equivariant Euler class}, 
J.\ Phys.\ A 41 (2008), no.42, 425204 (13pp),
\arxiv{0804.2714}.

\bibitem{Malmquist_I_II_III} 
J.~Malmquist, 
\emph{Sur l'\'etude analytique des solutions d'un syst\`{e}me 
d'\'{e}quations diff\'{e}rentielles dans le voisinage d'un point 
singulier d'ind\'{e}termination I, II, III}, 
Acta Math.\ 73, (1940), pp.87--129;  
Acta Math.\ 74, (1941), pp.1-64;  
Acta Math.\ 74, (1941), pp.109--128. 

\bibitem{Man99}
Yuri I.~Manin, 
\emph{Frobenius manifolds, quantum cohomology, and moduli spaces},  
American Mathematical Society Colloquium Publications, 47. 
American Mathematical Society, Providence, RI, 1999. 


\bibitem{Meijer46}
Cornelis S.~Meijer, 
\emph{On the $G$-function. I--VIII}, 
Indagationes Math. 8, pp.124--134, pp.213--225, pp.312--324, 
pp.391--400, pp.468--475, pp.595--602, pp.661--670, pp.713--723,  
(1946). 



\bibitem{Pan98} 
Rahul Pandharipande, 
\emph{Rational curves on hypersurfaces (after A.~Givental)},
S\'eminaire Bourbaki. Vol.\ 1997/98, Ast\'erisque, 
252, 1998, Exp.\ No.\ 848, 5, 307--340, 
\arxiv{math/9806133}. 



\bibitem{Sabbah:Frobenius_manifold} 
Claude Sabbah, 
\emph{Isomonodromic deformations and Frobenius manifolds. 
An Introduction}, 
Translated from the 2002 French edition, 
Universitext,   
Springer-Verlag London, Ltd., London; EDP Sciences, Les Ulis, 2007. 
 

\bibitem{Sibuya:simplification} 
Yasutaka Sibuya, 
\emph{Simplification of a system of linear ordinary differential 
equations about a singular point}, 
Funkcialaj Ekvacioj, 4 (1962), pp.29--56. 

\bibitem{Sibuya:book}
Yasutaka Sibuya, 
\emph{Linear differential equations in the complex domain: 
problems of analytic continuation}, 
Translated from the Japanese by the author. 
Translations of Mathematical Monographs, 82. 
American Mathematical Society, Providence, RI, 1990. 

\bibitem{Tanabe} 
Susumu Tanab\'{e},  
\emph{Invariant of the hypergeometric group associated 
to the quantum cohomology of the projective space}, 
Bulletin des Sciences Math\'{e}matiques, 
Vol 128, 10 (2004), pp.811--827, 
\arxiv{math/0201090}. 


\bibitem{Tel10}
Constantin Teleman, 
\emph{The structure of 2D semisimple field theories}, 
Invent. Math. 188, No. 3, 525--588 (2012),
\arxiv{0712.0160}.

\bibitem{Trjitzinsky} 
Waldemar J.~Trjitzinsky,  
\emph{Analytic theory of linear differential equations}, 
Acta Math.~62 (1933), no.1, pp.167-226. 

\bibitem{Turrittin} 
Hugh L.~Turrittin, 
\emph{Convergent solutions of ordinary linear homogeneous 
differential equations in the neighborhood of an irregular singular point}, 
Acta Math.\ 93, (1955). pp.27--66. 

\bibitem{Ue05}
Kazushi Ueda:
\emph{Stokes matrices for the quantum cohomologies of Grassmannians},
Int. Math. Res. Not. 2005, No. 34, 2075--2086 (2005),
\arxiv{math/0503355}.

\bibitem{Wasow} 
Wolfgang Wasow: 
\emph{Asymptotic expansions for ordinary differential equations}, 
Pure and Applied Mathematics, Vol.~XIV 
Interscience Publishers John Wiley \& Sons, Inc., 
New York-London-Sydney 1965. 

\bibitem{Yoshida-Takano} 
Masaaki Yoshida and Kyoichi Takano, 
\emph{Local theory of Fuchsian systems. I.} 
Proc.\ Japan Acad.\ 51 (1975), no. 4, 219--223. 

\bibitem{Zaslow}
Eric Zaslow:
\emph{Solitons and helices: the search for a math-physics bridge},
{\em Comm. Math. Phys.}, 175(2):337--375, 1996.
\arxiv{hep-th/9408133}.

\end{thebibliography}
\providecommand{\bysame}{\leavevmode\hbox to3em{\hrulefill}\thinspace}
\providecommand{\MR}{\relax\ifhmode\unskip\space\fi MR }
\providecommand{\MRhref}[2]{\href{http://www.ams.org/mathscinet-getitem?mr=#1}{#2}}
\providecommand{\arxiv}[1]{\href{http://arxiv.org/abs/#1}{arXiv:#1}}

\end{document}